\author{Stefano Scrobogna}
\title{Highly rotating fluids with vertical stratification for periodic data and vanishing vertical viscosity.}
\DeclareMathAlphabet{\mathcal}{OMS}{cmsy}{m}{n}
\renewcommand{\d}{\textnormal{d}}
\newcommand{\cPLtwo}{{L^2\left(\mathbb{T}^3\right)}}
\newcommand{\dx}{\textnormal{d}{x}}
\newcommand{\dive}{\textnormal{div\hspace{0.7mm}}}
\newcommand{\diveh}{\textnormal{div}_h\hspace{0.7mm}}
\newcommand{\fine}{\hfill$\blacklozenge$}
\newcommand{\osc}{\textnormal{osc}}
\newcommand{\QG}{\textnormal{QG}}
\newcommand{\loc}{\textnormal{loc}}
\newcommand{\nhp}{\nabla_h^\perp}
\newcommand{\intT}{\int_{\mathbb{T}^3}}
\newcommand{\nh}{\nabla_h}
\newcommand{\Lplus}{\mathcal{L}\left(\frac{t}{\varepsilon}\right)}
\newcommand{\Lminus}{\mathcal{L}\left(-\frac{t}{\varepsilon}\right)}
\newcommand{\Lh}{\Lambda_h}
\newcommand{\Lv}{\Lambda_v}
\newcommand{\definizioneVQG}{V_\QG=\left(
\begin{array}{c}
\nhp\\
0\\
-F\partial_3 
\end{array}\right)\Delta^{-1}_F\Omega}
\newcommand{\PA}{\mathbb{P}\mathcal{A}}
\newcommand{\Hs}{{H^{0,s}}}
\newcommand{\anuno}{{\mathcal{C}\left(\mathbb{R}_+;\Hs\right)}}
\newcommand{\andue}{{L^2\left(\mathbb{R}_+;\Hs\right)}}
\newcommand{\hra}{\hookrightarrow}
\newcommand{\Hud}{{H^{1/2,0}}}
\newcommand{\Hz}{{H^{0,s_0}}}
\newcommand{\Hmud}{{H^{0,-1/2}}}
\newcommand{\qg}{{quasi-geostrophic}}
\newcommand{\NS}{Navier-Stokes }
\newcommand{\LF}{\textnormal{LF}}
\newcommand{\HF}{\textnormal{HF}}
\newcommand{\PLF}{\Psi^{\varepsilon,N}_\LF}
\newcommand{\RLF}{\widetilde{R}^{\varepsilon, N}_{\osc,\LF}}
\newcommand{\SLF}{\widetilde{S}^{\varepsilon, N}_{\osc,\LF}}
\newcommand{\R}{\mathbb{R}}
\newcommand{\T}{\mathbb{T}}
\newcommand{\C}{\mathbb{C}}
\newcommand{\sumf}{\sum_{\left| q-q' \right|\leqslant 4}}
\newcommand{\sumi}{\sum_{ q'> q-4}}
\newcommand{\set}[1]{\left\lbrace #1 \right\rbrace}
\newcommand{\cPLp}{L^p\left(\mathbb{T}^3\right)}
\newcommand{\cPtv}{\triangle^\textnormal{v}_q}
\newcommand{\cPTv}{\triangle^\textnormal{v}_{q'}}
\newcommand{\cPSvq}{S^\textnormal{v}_{q'-1}}
\newcommand{\cPHs}{{H^{0,s}}}
\theoremstyle{theorem}
\newtheorem{theorem}{Theorem}[section]
\newtheorem{prop}[theorem]{Proposition}
\newtheorem{lemma}[theorem]{Lemma}
\newtheorem{cor}[theorem]{Corollary}
\theoremstyle{definition}
\newtheorem{definition}[theorem]{Definition}
\newtheorem{rem}[theorem]{Remark}
\newcommand{\settheoremtag}[1]{
  \let\oldthetheorem\thetheorem
  \renewcommand{\thetheorem}{#1}
  \g@addto@macro\endtheorem{
    \addtocounter{theorem}{-1}
    \global\let\thetheorem\oldthetheorem}
  }
\newcommand{\setword}[2]{%
  \phantomsection
  #1\def\@currentlabel{\unexpanded{#1}}\label{#2}%
}
\numberwithin{equation}{section}
\begin{document}
\maketitle

\begin{abstract}
We prove that the primitive equations without vertical diffusivity are globally well-posed (if the Rossby and Froude number are sufficiently small) in suitable Sobolev anisotropic spaces. Moreover if the Rossby and Froude number tend to zero at a comparable rate the global solutions of the primitive equations converge globally to the global solutions of a suitable limit system. The space domain considered has to belong to a class of tori which is general enough to include all non-resonant tori and many resonant tori as well.
\end{abstract}

\section{Introduction}
The primitive equations describe the hydro-dynamical flow in a  large scale (of order of hundreds or thousands of kilometers) on the Earth, typically the ocean or the atmosphere, under the assumption that the vertical motion is much smaller than the horizontal one and that the fluid layer depth is small compared to the radius of the earth. Concerning the difference between horizontal and vertical scale, it is also observed that for geophysical fluids the vertical component of the diffusion term (viscosity or thermal diffusivity in the case of primitive equations) is much smaller than the horizontal components. In the case of rotating fluids between two planes (see \cite{ekmanwell} for the first work in which the initial data is well prepared, in the sense that it is a two-dimensional vector field, \cite{ekmanperiodic} and \cite{ekmanill} for the generic case) the viscosity assumes the form $\left( -\nu_h \Delta_h - \varepsilon \beta \partial^2_3\right)$, with $ \Delta_h= \partial_1^2 + \partial_2^2 $, whence it makes sense to consider primitive equations with vanishing vertical diffusivity. 
\\ 
The primitive system consists in the following equations 
\begin{equation}\tag{\ref{primitive equations}}
\left\lbrace
\begin{array}{lll}
\partial_t v^{1,\varepsilon} + v^\varepsilon\cdot \nabla v^{1,\varepsilon} - \nu_h \Delta_h v^{1,\varepsilon}-\nu_v \partial_3^2 v^{1,\varepsilon}-\displaystyle\frac{1}{\varepsilon} v^{2,\varepsilon} & = & -\displaystyle\frac{1}{\varepsilon} \partial_1 \Phi_\varepsilon +f_1\\[3mm]
\partial_t v^{2,\varepsilon} + v^\varepsilon\cdot \nabla v^{2,\varepsilon} - \nu_h \Delta_h v^{2,\varepsilon}-\nu_v \partial_3^2 v^{2,\varepsilon}+\displaystyle\frac{1}{\varepsilon} v^{1,\varepsilon} & = & -\displaystyle \frac{1}{\varepsilon} \partial_2 \Phi_\varepsilon +f_2\\[3mm]
\partial_t v^{3,\varepsilon} + v^\varepsilon\cdot \nabla v^{3,\varepsilon} - \nu_h \Delta_h v^{3,\varepsilon}-\nu_v \partial_3^2 v^{2,\varepsilon}+\displaystyle\frac{1}{F  \varepsilon} T^\varepsilon & = & -\displaystyle \frac{1}{\varepsilon} \partial_3 \Phi_\varepsilon +f_3\\[3mm]
\partial_t T^\varepsilon +v^\varepsilon\cdot \nabla T^\varepsilon -\nu_h'\Delta_h T^\varepsilon -\nu'_v \partial_3^2 T^\varepsilon - \displaystyle\frac{1}{F  \varepsilon} v^{3,\varepsilon} & = & f_4\\[3mm]
\dive v^\varepsilon= 0\hfill\\
\left( v^\varepsilon,T^\varepsilon\right)\Bigr|_{t=0}=\left(v_0,T_0\right)= V_0,\hfill
\end{array}
\right.
\end{equation}
in the unknown $v^\varepsilon=\left(v^{1,\varepsilon},v^{2\varepsilon},v^{3,\varepsilon}\right), T^\varepsilon, \Phi_\varepsilon$. We will  in the following denote by $V^\varepsilon= \left( v^\varepsilon,T^\varepsilon\right) = \linebreak \left( V^{1,\varepsilon},  V^{2,\varepsilon},  V^{3,\varepsilon},  V^{4,\varepsilon}\right)$ where $v^\varepsilon$ is a vector field in $\mathbb{R}^3$(three dimensional velocity field) and $T^\varepsilon$ is a scalar function (the density fluctuation, in the case of the air it depends on the scalar (potential) temperature, in the case of the  oceans it depends on both temperature and salinity), and $\Phi_\varepsilon$ represents the hydrostatic pressure. All the functions described depend on a couple $\left( x,t\right) \in \mathbb{T}^3\times \mathbb{R}_+$ where $\mathbb{T}^3$
represents the torus
$$
\mathbb{T}^3=
\R^3 \left/ \prod_{i=1}^3 a_i \ \mathbb{Z} \right. =
 \prod_{i=1}^3 \bigl[ 0,2\pi a_i \bigr).
$$

The only assumption which is made on the vertical viscosity is $\nu_v,\nu'_v\geqslant 0$. On the other hand the horizontal viscosities  $\nu_h,\nu'_h$ are strictly positive constants. In fact the results obtained will be uniform with respect to the vertical viscosities $\left(\nu_v, \nu'_v\right)$ and hence from now on we can suppose them zero without loss of generality. We refer to \cite{paicu_critical_spaces} for the result of global well-posedness of the Navier-Stokes equation in critical spaces in the whole space with anisotropic viscosity and to \cite{paicu_NS_periodic} for the periodic case.\\

Under the  assumption $\nu_v=\nu'_v=0$  we can rewrite the System \eqref{primitive equations} in the more compact form
\begin{equation}
\tag{PE$_\varepsilon$}\label{primitive equations}
\left\lbrace
\begin{array}{l}
\partial_t V^\varepsilon +v^\varepsilon\cdot\nabla V^\varepsilon-\mathbf{D}V^\varepsilon +\frac{1}{\varepsilon} \mathcal{A}V^\varepsilon=\frac{1}{\varepsilon}\left(-\nabla \Phi_\varepsilon,0\right)+f\\
\dive v^\varepsilon=0\\
\bigl. V^\varepsilon \bigr|_{t=0}=V_0
\end{array}
\right.
\end{equation}

where
\begin{align}\label{matrici}
\mathbf{D}=&\left(
\begin{array}{cccc}
\nu_h\Delta_h&0&0&0\\
0& \nu_h\Delta_h& 0&0\\
0&0& \nu_h\Delta_h&0\\
0&0&0&\nu'_h\Delta_h
\end{array}
\right) &
\mathcal{A}=& \left(
\begin{array}{cccc}
0&-1&0&0\\
1 &0 &0 &0 \\
0&0&0&F^{-1}\\
0&0&-F^{-1}&0
\end{array}
\right)
\end{align}
with $\nu_h , \nu'_h >0 $ and $V^\varepsilon=\left( v^\varepsilon,T^\varepsilon\right)$.\\
This system is obtained combining the effects of the Coriolis force and the vertical stratification induced by the Boussinesq approximation (see for instance \cite{CDGGanisotranddispersion} for the rotating fluids in the whole space, \cite{gallagher_schochet} for the periodic case), we refer to \cite{monographrotating}, \cite{Pedlosky87} for a discussion on the model and its derivations.\\

In the study of hydrodynamical flows on this  scale two important phenomena have to be taken in consideration: the Earth rotation and the vertical stratification due to the gravity.  The Coriolis force induces a vertical rigidity on the fluid. Namely, in the asymptotic regime, 
the high rotation tends to stabilize the motion, which becomes constant in the direction parallel
to the rotation axis: the fluid moves along vertical columns (the so called "Taylor-Proudman
columns"), and the flow is purely horizontal. 
\\
Gravity forces the fluid masses to have a vertical structure: heavier layers lay under lighter ones. Internal movements of the fluid tend to destroy this structure and gravity tries to restore it, which gives a horizontal rigidity (to be opposed to the vertical rigidity induced by the rotation). In order to formally estimate the  importance of this rigidity we also compare the typical time scale of the system with the Brunt-V\"ais\"al\"a frequency and define the Froude number $Fr$. We shall not give more details in here, we refer to \cite{Pedlosky87}, \cite{monographrotating}, \cite{cushman2011introduction}.\\
The primitive equations are obtained with moment, energy and mass conservation (see \cite{embid_majda}).
The coefficient $\varepsilon >0$ denotes the  Rossby number Ro, which is defined as
$$
\varepsilon=\frac{\text{displacement due to inertial forces}}{\text{displacement due to Coriolis force}}.
$$
 As the characteristic displacement of  a particle in the ocean within a day is very small compared to the displacement caused by the rotation of the earth (generally $\varepsilon$ is of order $ 10^{-3} $ outside persistent currents such as the gulf stream), the Rossby number is supposed to be very small hence it makes sense to study the behavior of the solutions to \eqref{primitive equations} in the limit regime as $\varepsilon \to 0$.\\
With a slight abuse of notation we denote as well the coefficient $F$ Froude number, and $Fr=\varepsilon F$.
Assuming that the Brunt-V\"ais\"al\"a frequency is constant, in the whole space $\mathbb{R}^3$, when $\varepsilon\to 0$, the formal limit of the system \eqref{primitive equations} is the following quasi-geostrophic system 
\begin{equation}
 \left\lbrace
 \begin{array}{l}
 \tag{QG}
 \partial_t V_{\QG} +\Gamma \left( D\right)V_{\QG}=- \left(
 \begin{array}{c}
 \nh^\perp\\
 0\\
 -F \partial_3 
 \end{array}\right)
 \Delta^{-1}_F\left( v_{\QG}^h \cdot \nh \Omega\right),\\
  \dive v_{\QG}=0\hfill\\
 V_{\QG}\Bigr|_{t=0}=V_{{\QG},0},
  \end{array}
 \right.
 \end{equation}
 and $\Gamma\left( D \right)$ is the  pseudo-differential operator given by the formula
 $$
 \Gamma \left( D \right) u= \mathcal{F}^{-1} \left( \frac{\left| \xi \right|^2 \left( \nu \left| \xi_h \right|^2 + \nu' F^2 \xi_3^2 \right)}{\left| \xi_h \right|^2 +  F^2 \xi_3^2}  \hat{u}\left( \xi \right) \right)
 $$
We remark that in this case, since we are referring to the works \cite{charve2} and \cite{charve1}, the viscosities $ \nu, \nu' $ are to be understand as isotropic, i.e. spherically symmetric.   In particular the operator $ \Delta_F^{-1} $ is the Fourier multiplier
 $$
 -\Delta_F^{-1} f = \mathcal{F}^{-1}\left( \frac{1}{\xi_1^2+ \xi_2^2 + F^2\xi_3^2}\hat{f}\right),
 $$
and $ \Delta_F $ is the differential operator $ \Delta_F = \partial_1^2 + \partial_2^2 + F^2 \partial_3^2 $. The quantities
 $ V_{\QG} $ and $ \Omega $ are respectively called the \textit{\qg\; flow} and the \textit{potential vorticity}. We focus on the latter first, the potential vorticity is defined as
 $$
 \Omega = -\partial_2 V^1_{\QG}+ \partial_1 V^2_{\QG}- F \partial_3 V^4_{\QG},
 $$ 
 and it is related to the \qg\; flow via the 2D-like Biot -Savart law
$$
V_{\QG}=\left(
 \begin{array}{c}
 -\partial_2\\
 \partial_1 \\
 0\\
 -F \partial_3 
 \end{array}\right)
 \Delta^{-1}_F
 \Omega,
$$ 
while $ v^h_{\QG} $ and $ v_{\QG} $ are respectively the first two and three components of the vector field $ V_{\QG} $. We refer to \cite{charve_ngo_primitive} for further detail on the problem in the whole space. \\
 In our setting, i.e. with periodic data the limit system is more involved than the one mentioned above. This can easily be explained. In this case as well as in many problem with singular perturbation the idea is to decompose the unknown (in the case of the system \eqref{primitive equations} is $V^\varepsilon$) into two parts $V^\varepsilon = V_{\ker}^\varepsilon +V^\varepsilon_{\osc}$, where $V_{\ker}^\varepsilon$ belongs to the kernel of the perturbation $ \mathbb{P}\mathcal{A} $, where $ \mathbb{P}$ is the Leray projector in the first three components which leaves untouched the fourth one, and $V^\varepsilon_{\osc}$ to its orthogonal complement. In the whole space it can be proved that the oscillating part $V^\varepsilon_{\osc}$  go to zero \textit{strongly} since the waves go to infinity instantaneously. In the case of periodic data instead these  perturbations interact constructively, as in \cite{BMN1},\cite{BMNresonantdomains}, \cite{gallagher_schochet} and \cite{paicu_rotating_fluids}, only to mention some work, whence the limit system is different from the {\qg} system mentioned above (see \eqref{lim syst}). In fact \eqref{primitive equations} converge to \eqref{quasi geostrophic equation}  only weakly ( see \cite{embid_majda} and \cite{embid_majda2}).  In the present work we aim to study the behavior of strong solutions of \eqref{primitive equations} in the regime $ \varepsilon\to 0 $ in the periodic setting for a large class of tori (see Definition \ref{def:condition_P}) which may as well present resonant effects. In particular we prove that the equation \eqref{nonosc limit} is globally well posed in some suitable space of low-regularity, hence we prove the (global) convergence of solutions of \eqref{primitive equations} to solutions of \eqref{Slim}, which is globally well posed. 
 \\

We recall some result on primitive equations. We refer to J.-L. Lions, R. Temam and S. Wang (\cite{LionsTemam1} and \cite{LionsTemam2}) for the asymptotic expansion of the primitive equations with respect the Rossby number $\varepsilon$ in spherical and Cartesian geometry.\\
J.T. Beale and A. J. Bourgeois in \cite{BealeBourgeois} study the primitive equations (without viscosity, and with a simplified equation for the density) in a domain which is periodic in the horizontal direction and bounded in the vertical one. By the use of a change of variables they recover a purely periodic setting, on which they prove their result. They study as well the {\qg} system (fist on short times, then globally) as well as the convergence of primitive equations for very regular (i.e. $ H^3 $) and well prepared initial data.\\
 In \cite{embid_majda} P. Embid and A. Majda present a general formulation for the movement of geophysical fluids in the periodic setting and derive the limit equation for the kernel part of the solution. \\

Several others are the results for the primitive equation in the whole space, a non exhaustive list is \cite{charve1}, \cite{charve2}. 
In \cite{chemin_prob_antisym} J.-Y. Chemin  proved the convergence of solutions of the primitive equations toward those of the quasi geostrophic system in the case $F=1$ (hence there is no dispersive effect, see \eqref{eigenvalues}), for regular, well prepared data and under the assumption that $\left| \nu-\nu'\right|$ (the difference between the diffusion and the thermal diffusivity) is small. In this case the data is considered to be "well prepared" as long as there is a smallness hypotheses on a part of it, which is denoted  by the author as $ U_{\osc} $. 
In the case $F\neq 1$, the dissipation allows not only to prove the convergence to the primitive system to the quasi-geostrophic system but also to prove global existence for solutions when the Rossby number $\varepsilon$ is small. Thus the {\qg}  system can be interpreted as the asymptotic of the primitive system when the rotation and the stratification have important influence.\\
 For the inviscid case in the whole space when $F=1$ we mention the work of D. Iftimie \cite{iftimie_primitive} which proves that the potential vorticity $ \Omega $ propagates $ H^s \left( \R^3 \right), s>3/2 $ data under the hypothesis $ U^\varepsilon_{{\osc},0} = o_\varepsilon \left( 1 \right) $ in $ L^2 \left( \R^3 \right) $. If $ F\neq 1 $  A. Dutrifoy proved in  \cite{Dutrifoy04} the same result under much weaker assumptions, i.e. $ \Omega_0 $ is a vortex patch and $ \left\| U^\varepsilon_0 \right\|_{H^s \left( \R^3 \right)}= \mathcal{O}\left( \varepsilon^{-\gamma} \right), \gamma >0 $ and small. For the viscid case in the periodic setting I. Gallagher in \cite{gallagher_schochet} achieved the result thanks to some normal form techniques introduced by S. Schochet in \cite{schochet} which we will exploit as well in the following. We mention at last the work of F. Charve and V.-S. Ngo in \cite{charve_ngo_primitive} for the primitive equation in the whole space for $F\neq 1$ and anisotropic vanishing (horizontal) viscosity.
We recall that the primitive equations and the rotating fluid system 
\begin{equation}
\tag{RF$_\varepsilon$}\label{RF}
\partial_t v + v\cdot \nabla v -\nu\Delta v + \frac{e^3\wedge v}{\varepsilon} =-\nabla p,
\end{equation}
are intimately connected. The first result of global well posedness in the whole space for \eqref{RF} has been achieved in \cite{chemin_et_al} thanks to some Strichartz estimates due to the dispersion of the system. E. Grenier in \cite{grenierrotatingeriodic} and A. Babin et al. in \cite{BMN1} proved independently the global well posedness in the periodic case and M. Paicu in \cite{paicu_rotating_fluids} in the periodic case for anisotropic viscosity. We recall as well the results in \cite{GallagherSaint-Raymondinhomogeneousrotating} in which I. Gallagher and L. Saint-Raymond proved a weak convergence result for weak solutions for fast rotating fluids in which the rotation is inhomogeneous and given by $\frac{1}{\varepsilon} v\wedge b(x_h)e_3$.

\subsection{A survey on the notation adopted.}

\begin{rem}

In this article we are interested to obtain a global-in-time result of existence and uniqueness  for solutions of the system \eqref{primitive equations} and to some results of convergence when the Rossby and Froude number tend to zero at a comparable rate.  \\
Before stating the results that we prove let us give a brief introduction about the spaces that we are going to use.\\
All the vector fields that we consider are real i.e we consider applications of the following form $V:\mathbb{T}^3 \to \mathbb{R}^4$. We will often associate to a vector field $V$ the vector field $v:\mathbb{T}^3 \to \mathbb{R}^3$, which is simply the projection on the first three components of $V$. Moreover all the vector fields considered are periodic in all their components $x_i,i=1,2,3$ and have zero global average, i.e. $\int_{\mathbb{T}^3}v \dx =0$, which is equivalent to assume that the first Fourier coefficient $\hat{V}_{{0}} =0$. We remark that this property is preserved for the Navier-Stokes equations as well as for the primitive equations \eqref{primitive equations}. We will always work with divergence-free vector fields.\fine
\end{rem}

The anisotropy of the problem forces to introduce  anisotropic spaces, i.e. spaces which behave differently in the horizontal and vertical directions. Let us recall that, in the periodic case, the non-homogeneous Sobolev anisotropic spaces are defined by the norm
\begin{equation}
\label{eq:anisot_Sob_norm}
\left\| u\right\|_{H^{s,s'}\left(\mathbb{T}^3\right)}^2
= \left\| u\right\|_{H^{s,s'}}^2
= \sum_{n=\left(n_h,n_3\right)\in\mathbb{Z}^3} \left(1 + \left|\check{n_h}\right|^2\right)^s \left(1 + \left|\check{n_3}\right|^2\right)^{s'} \left| \hat{u}_n \right|^2 ,
\end{equation}
where we denoted $\check{n}_i=n_i/a_i, \check{n}_h=\left(\check{n}_1,\check{n}_2\right)$ and the Fourier coefficients $\hat{u}_n$ are given by $u=\sum_n \hat{u}_n e^{2\pi i\check{n}\cdot x}$. In the whole text $\mathcal{F}$ denotes the Fourier transform and $\mathcal{F}^v$ the Fourier transform in the vertical variable. \\
 We are interested to study the regularity of the product of two distributions (which is a priori not well defined), in the framework of Soboled spaces it can be proved (see \cite{gallagher_schochet}) the following product rule
\begin{lemma}
Let $ u, v $ be two distributions with zero average  defined on $ H^s \left( \T^d \right) $ and $  H^t \left( \T^d \right)  $ respectively, with $ s+t>0, \ s, t <d/2 $, then 
\begin{equation*}
\left\| u\cdot v \right\|_{H^{s+t-d/2} \left( \T^d \right)} \leqslant C_{s, t}\left\| u \right\|_{H^s \left( \T^d \right)}\left\| v \right\|_{H^t \left( \T^d \right)}.
\end{equation*}
\end{lemma}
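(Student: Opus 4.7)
The plan is to argue via Bony's paraproduct decomposition coupled with a Littlewood--Paley analysis. Since both $u$ and $v$ have vanishing mean, I may work with homogeneous dyadic blocks $\Delta_q$ without having to separate out the zero-frequency mode. Writing
$$
uv \;=\; T_u v \,+\, T_v u \,+\, R(u,v), \qquad T_u v = \sum_{q} S_{q-1} u \; \Delta_q v, \quad R(u,v) = \sum_{|q-q'|\leqslant 1} \Delta_q u \; \Delta_{q'} v,
$$
I would estimate each of the three pieces separately in $H^{s+t-d/2}(\mathbb{T}^d)$.

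First I would handle the paraproduct $T_u v$. Because of the frequency localization, $\Delta_q(T_u v)$ only picks up terms with $|q'-q|\leqslant N_0$ for some fixed $N_0$. The hypothesis $s<d/2$ lets me sum up Bernstein bounds to get
$$
\|S_{q'-1} u\|_{L^\infty} \;\lesssim\; \sum_{k\leqslant q'-1} 2^{kd/2}\|\Delta_k u\|_{L^2} \;\lesssim\; 2^{q'(d/2-s)}\|u\|_{H^s},
$$
the sum converging precisely because $d/2-s>0$. Combining with $\|\Delta_{q'} v\|_{L^2}\lesssim 2^{-q't} c_{q'} \|v\|_{H^t}$ where $(c_{q'})\in\ell^2$, multiplying by $2^{q(s+t-d/2)}$ and taking $\ell^2$-norms yields $\|T_u v\|_{H^{s+t-d/2}}\lesssim \|u\|_{H^s}\|v\|_{H^t}$. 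The symmetric hypothesis $t<d/2$ gives the analogous bound for $T_v u$.

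Second, for the remainder $R(u,v)$ the spectral supports of $\Delta_{q'} u \,\widetilde{\Delta}_{q'} v$ can pile up at arbitrarily high frequencies, so I would use Bernstein in the ``opposite'' direction to trade $L^2$ for $L^1$:
$$
\|\Delta_q R(u,v)\|_{L^2} \;\lesssim\; 2^{qd/2}\,\|\Delta_q R(u,v)\|_{L^1} \;\lesssim\; 2^{qd/2} \sum_{q'\geqslant q-N_0} \|\Delta_{q'} u\|_{L^2}\,\|\widetilde{\Delta}_{q'} v\|_{L^2}.
$$
Inserting the dyadic bounds $\|\Delta_{q'} u\|_{L^2}\lesssim 2^{-q's} c_{q'} \|u\|_{H^s}$ and likewise for $v$, the outer sum features $\sum_{q'\geqslant q-N_0} 2^{-q'(s+t)} c_{q'} d_{q'}$, whose convergence is guaranteed by $s+t>0$. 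A discrete Young-type inequality then produces an $\ell^2$ sequence after multiplication by $2^{q(s+t-d/2)}$, concluding the estimate for $R(u,v)$.

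The only subtlety, rather than a true obstacle, is to keep the roles of the three hypotheses cleanly separated: $s<d/2$ and $t<d/2$ serve to turn low-frequency blocks into $L^\infty$ bounds for the two paraproducts, while $s+t>0$ is what saves the remainder term by making the high-frequency geometric series summable. The assumption of zero average lets me identify homogeneous and inhomogeneous Sobolev norms throughout, which keeps the Bernstein inequalities clean.
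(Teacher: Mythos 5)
Your argument is correct, but note that the paper itself does not prove this lemma: it is stated with a reference to Gallagher's work \cite{gallagher_schochet}, so there is no in-paper proof to compare against. What you give is the standard paradifferential proof of the Sobolev product law, and it is sound: the two paraproducts only need the frequency comparability $q'\sim q$ together with the Bernstein bound $\left\| S_{q'-1}u \right\|_{L^\infty}\lesssim 2^{q'\left( d/2-s \right)}\left\| u \right\|_{H^s}$, whose geometric sum converges exactly because $s<d/2$ (and symmetrically $t<d/2$ for $T_v u$); the remainder, whose blocks spread over all $q'\geqslant q-N_0$, is saved by the $L^1\to L^2$ Bernstein inequality and the summability of $2^{\left( q-q' \right)\left( s+t \right)}$ for $s+t>0$, with the discrete Young inequality $\ell^1\ast\ell^2\subset\ell^2$ (note $\left( c_{q'}d_{q'} \right)\in\ell^1$) closing the estimate. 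Two minor points worth keeping in mind: on $\mathbb{T}^d$ the ``homogeneous'' dyadic decomposition is unproblematic precisely because the zero-average hypothesis removes the $n=0$ mode and the nonzero frequencies are bounded away from zero, so only finitely many low blocks occur and homogeneous and inhomogeneous norms coincide, which is indeed the only place the zero-average assumption enters; and the Bernstein inequalities you invoke are the isotropic analogues of the anisotropic ones recalled in the paper, valid for zero-average periodic functions with localized spectrum.
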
 
As in classical isotropic spaces (see \cite{adams}) if $ s>1/2 $ the space $ H^s \left( \T^1_v \right) $ is a Banach algebra. Combining this fact with the above lemma we deduce the following result which we shall apply all along the paper
\begin{lemma}\label{lem:prod_Sob_anisotropic}
Let $ u\in H^{s_1, s'}, \ v\in H^{s_2, s'} $ distributions with zero horizontal average with $ s_1 + s_2 > 0, \ s_1, s_2 <1 $ and $ s' > 1/2 $, then $ u\cdot v \in H^{s_1+s_2 -1, s'} $ and the following bound holds true
\begin{equation*}
\left\| u \cdot v  \right\|_{H^{s_1+s_2 -1, s'}} \leqslant C \left\| u \right\|_{H^{s_1, s'}} \left\| v \right\|_{H^{s_2 , s'}}.
\end{equation*}
\end{lemma}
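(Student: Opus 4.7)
The plan is to reduce the anisotropic estimate to the isotropic two-dimensional product rule already recalled, by transferring the vertical variable into a Banach algebra factor. Concretely, identify $H^{s,s'}(\mathbb{T}^3)$ with the vector-valued horizontal Sobolev space $H^s(\mathbb{T}^2_h;H^{s'}(\mathbb{T}^1_v))$ via the partial Fourier transform $\mathcal{F}^h$: for $w$ with zero horizontal average and $\tilde w(n_h,x_3)=\mathcal{F}^h w(n_h,x_3)$,
\begin{equation*}
\|w\|_{H^{s,s'}}^2 \; =\; \sum_{n_h\in\mathbb{Z}^2}\bigl(1+|\check n_h|^2\bigr)^{s}\bigl\|\tilde w(n_h,\cdot)\bigr\|_{H^{s'}(\mathbb{T}^1_v)}^2.
\end{equation*}
This is just Plancherel in the horizontal variable combined with the definition of the vertical Sobolev norm.

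Next I would exploit the assumption $s'>1/2$, which ensures that $H^{s'}(\mathbb{T}^1_v)$ is a Banach algebra. Since $\widetilde{uv}(n_h,x_3)=\sum_{k_h+l_h=n_h}\tilde u(k_h,x_3)\,\tilde v(l_h,x_3)$, the algebra property applied pointwise in $n_h$ yields
\begin{equation*}
\bigl\|\widetilde{uv}(n_h,\cdot)\bigr\|_{H^{s'}_v} \; \leqslant\; C\sum_{k_h+l_h=n_h} U(k_h)\,V(l_h),
\end{equation*}
where I set $U(k_h):=\|\tilde u(k_h,\cdot)\|_{H^{s'}_v}$ and $V(l_h):=\|\tilde v(l_h,\cdot)\|_{H^{s'}_v}$. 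The right-hand side is exactly the convolution $(U*V)(n_h)$ on the horizontal frequency lattice.

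The final step is to re-read this convolution as a product in $\mathbb{T}^2$. I introduce the auxiliary scalar functions on the horizontal torus $\widetilde U(x_h):=\sum_{k_h}U(k_h)\,e^{2\pi i\check k_h\cdot x_h}$ and $\widetilde V(x_h):=\sum_{l_h}V(l_h)\,e^{2\pi i\check l_h\cdot x_h}$. Because $u$ and $v$ have zero horizontal average, $U(0)=V(0)=0$, so $\widetilde U$ and $\widetilde V$ have zero mean on $\mathbb{T}^2$. By construction
\begin{equation*}
\|\widetilde U\|_{H^{s_1}(\mathbb{T}^2)}=\|u\|_{H^{s_1,s'}},\qquad \|\widetilde V\|_{H^{s_2}(\mathbb{T}^2)}=\|v\|_{H^{s_2,s'}},
\end{equation*}
and the Fourier coefficients of the product $\widetilde U\,\widetilde V$ coincide with $U*V$. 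Since $s_1+s_2>0$ and $s_1,s_2<1=d/2$ with $d=2$, the previous lemma (with $\mathbb{T}^d=\mathbb{T}^2$) gives
\begin{equation*}
\|\widetilde U\,\widetilde V\|_{H^{s_1+s_2-1}(\mathbb{T}^2)}\leqslant C_{s_1,s_2}\,\|u\|_{H^{s_1,s'}}\|v\|_{H^{s_2,s'}},
\end{equation*}
and combining with the pointwise-in-$n_h$ bound above yields the claim.

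The main technical point — and essentially the only non-bookkeeping step — is securing the Banach-algebra inequality uniformly in $n_h$, which is precisely where the hypothesis $s'>1/2$ is used; the lower-dimensional hypotheses $s_1+s_2>0$ and $s_1,s_2<1$ are exactly those required to apply the preceding lemma in the horizontal plane. I expect no real obstruction beyond verifying the zero-average condition for $\widetilde U,\widetilde V$, which follows automatically from the horizontal zero-mean assumption on $u$ and $v$.
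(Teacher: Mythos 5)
Your argument is correct and is exactly the route the paper intends: it deduces the lemma by combining the Banach-algebra property of $H^{s'}\left(\mathbb{T}^1_v\right)$ for $s'>1/2$ with the isotropic product rule applied on $\mathbb{T}^2_h$ (with $s_1+s_2>0$, $s_1,s_2<1=d/2$), which is precisely your reduction via the horizontal Fourier transform and the auxiliary zero-mean functions $\widetilde U,\widetilde V$. No gaps beyond routine density/convergence bookkeeping.
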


Let us recall as well the definition of the anisotropic Lebesgue spaces, we denote with $L^p_hL^q_v$ the space $L^p\left( \mathbb{T}^2_h; L^q\left(\mathbb{T}^1_v\right)\right)$, defined by the norm:
$$
\left\| f\right\|_{L^p_hL^q_v}=\left\| \left\| f\left( x_h,\cdot\right) \right\|_{L^q\left(\mathbb{T}^1_v\right)} \right\|_{L^p\left( \mathbb{T}^2_h\right)}=
\left( \int_{\mathbb{T}^2_h}\left( \int_{\mathbb{T}^1_v} \left| f\left( x_h,x_3\right)\right|^q \d x_3\right) ^\frac{p}{q} \d x_h \right)^\frac{1}{p},
$$
in a similar way we define the space $L^q_vL^p_h$. It is well-known that the order of integration is important as it is described in the following lemma
\begin{lemma} 
Let $1\leqslant p \leqslant q$ and $f: X_1\times X_2 \to \mathbb{R}$ a function belonging to $L^p\left( X_1; L^q\left( X_2\right) \right)$ where $\left( X_1; \mu_1\right),\left(X_2;\mu_2\right)$ are measurable spaces, then $f\in L^q\left( X_2; L^p\left( X_1 \right)\right)$ and we have the inequality
$$
\left\|f\right\|_{L^q\left( X_2; L^p\left( X_1 \right)\right)}\leqslant \left\|f\right\|_{L^p\left( X_1; L^q\left( X_2\right) \right)}
$$
\end{lemma}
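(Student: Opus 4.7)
The plan is to reduce the stated inequality to the classical Minkowski integral inequality. Setting $h(x_1,x_2) := |f(x_1,x_2)|^p \ge 0$ and $r := q/p \ge 1$, and raising both sides to the $p$-th power, the claim is equivalent to
\begin{equation*}
\left(\int_{X_2}\left(\int_{X_1} h \, \d \mu_1\right)^{r} \d \mu_2\right)^{1/r} \le \int_{X_1}\left(\int_{X_2} h^{r} \d \mu_2\right)^{1/r} \d \mu_1,
\end{equation*}
i.e.\ Minkowski's integral inequality with exponent $r\ge 1$ applied to the non-negative measurable function $h$ (the edge case $q=\infty$ is immediate from $\sup_{x_2}\int h \le \int \sup_{x_2} h$, and the case $p=q$ is a straightforward consequence of Fubini--Tonelli).

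To prove the reduced inequality I would argue by duality against $L^{r'}$. Set $F(x_2) := \int_{X_1} h(x_1,x_2)\, \d \mu_1$; writing $F^r = F^{r-1}\cdot F$ and exchanging the order of integration (which is justified by Tonelli since $h\ge 0$), one obtains
\begin{equation*}
\int_{X_2} F(x_2)^r \d \mu_2 = \int_{X_1} \int_{X_2} F(x_2)^{r-1}\, h(x_1,x_2) \, \d \mu_2\, \d \mu_1.
\end{equation*}
Applying H\"older's inequality in the variable $x_2$ to the inner integral, with conjugate exponents $r/(r-1)$ and $r$, bounds the inner integral by $\|F\|_{L^r(X_2)}^{r-1}\,\|h(x_1,\cdot)\|_{L^r(X_2)}$. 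Integrating in $x_1$ and dividing by the factor $\|F\|_{L^r(X_2)}^{r-1}$ then yields the required bound.

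The only technical subtlety is handling the degenerate cases where $\|F\|_{L^r(X_2)}$ is $0$ or $+\infty$: the former is trivial, while the latter is dealt with by a standard truncation $h_N := \min(h,N)\,\mathbf{1}_{A_N}$ along an exhaustion $A_N\nearrow X_1\times X_2$ by sets of finite product measure, applying the inequality to each $h_N$, and passing to the limit by monotone convergence on both sides. There is no substantive obstacle: the statement is a classical consequence of Fubini--Tonelli and H\"older, and the hypothesis $p \le q$ is exactly what is needed to make the reduction exponent $r = q/p$ admissible in Minkowski's integral inequality.
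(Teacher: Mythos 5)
Your proof is correct. The paper itself offers no proof of this lemma: it is stated as a well-known fact (it is exactly Minkowski's integral inequality written in mixed-norm form) and used as a black box, so there is nothing to compare against. Your argument — raising to the $p$-th power to reduce to the inequality for $h=|f|^p$ with exponent $r=q/p\geqslant 1$, then proving that inequality by Tonelli plus H\"older against $F^{r-1}$, with the cases $p=q$ and $q=\infty$ and the zero/infinite-norm degeneracies handled separately — is the standard proof and is carried out correctly. The only implicit hypothesis worth flagging is $\sigma$-finiteness of $\left( X_1,\mu_1\right)$ and $\left( X_2,\mu_2\right)$, which both your application of Tonelli and your exhaustion by sets of finite product measure require; this is harmless in context, since the paper only applies the lemma to $L^p_hL^q_v$ spaces on tori with Lebesgue measure.
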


In the anisotropic setting the H\"older inequality becomes;
$$
\left\| fg\right\|_{L^p_hL^q_v} \leqslant \left\|f\right\|_{L^{p'}_hL^{q'}_v}\left\| g\right\|_{L^{p''}_hL^{q''}_v},
$$
where $1/p=1/p'+1/p'',1/q=1/q'+1/q''$.

\subsection{Results.}

We recall at first the result of local existence and uniqueness of solutions for Navier-Stokes equations without vertical viscosity  and periodic initial conditions.
\begin{theorem}\label{thm:local_ex_strong_solutions}
Let $s>1/2$ and $V_0 \in \cPHs \left( \mathbb{T}^3\right)$  a divergence-free vector field. Then there exists a time $T>0$ independent of $\varepsilon$ and a unique solution $V^\varepsilon$ for the system \eqref{primitive equations} which belongs to the space
\begin{align*}
V^\varepsilon \in & \ \mathcal{C}\left(\left[0,T\right]; \cPHs\right),
 &
  \nh V^\varepsilon \in &\ L^2\left(\left[0,T\right]; \cPHs\right),
\end{align*}
moreover $ \left( V^\varepsilon \right)_{\varepsilon>0} $ is uniformly bounded (in $ \varepsilon $) in the space
\begin{align*}
V^\varepsilon \in & \ L^\infty \left(\R_+; \cPLtwo\right),
 &
  \nh V^\varepsilon \in &\ L^2\left(\R_+ ; \cPLtwo \right).
\end{align*}
\end{theorem}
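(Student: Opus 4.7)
The scheme is classical: construct approximate solutions $V_n^\varepsilon$ by a Friedrichs spectral truncation, derive a priori estimates in $\Hs$ that are uniform in $n$ and in $\varepsilon$, and then pass to the limit by a standard compactness/uniqueness argument. The absence of vertical diffusion forces us to measure vertical regularity \emph{algebraically}, exploiting that $H^s(\T^1_v)$ is a Banach algebra for $s>1/2$ via Lemma \ref{lem:prod_Sob_anisotropic}.

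\textbf{Global $L^2$ bound.} Since $\mathcal{A}$ is antisymmetric one has $\ps{\mathcal{A}V}{V}_{L^2}=0$; the pressure is killed by $\dive v=0$ after Leray projection; and incompressibility forces $\ps{v\cdot\nabla V}{V}_{L^2}=0$. Testing the truncated equation against $V_n^\varepsilon$ therefore gives
\begin{equation*}
\tfrac12\tfrac{d}{dt}\|V_n^\varepsilon\|_{L^2}^2 + c_0 \|\nh V_n^\varepsilon\|_{L^2}^2 \leqslant \ps{f}{V_n^\varepsilon}_{L^2},
\end{equation*}
which by Gronwall yields the $\varepsilon$- and $n$-uniform bound $V_n^\varepsilon\in L^\infty(\R_+;L^2)\cap L^2(\R_+;\dot H^1_h)$ claimed in the second half of the statement.

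\textbf{Local $\Hs$ bound.} Pairing the equation with $\Lambda_v^{2s}V_n^\varepsilon$ in $L^2$ again removes the singular and pressure terms (antisymmetry of $\mathcal{A}$ commuting with $\Lambda_v$, and Leray projection) and leaves
\begin{equation*}
\tfrac12 \tfrac{d}{dt}\|V_n^\varepsilon\|_{\Hs}^2 + c_0\|\nh V_n^\varepsilon\|_{\Hs}^2 \leqslant \left|\ps{v_n^\varepsilon\cdot\nabla V_n^\varepsilon}{V_n^\varepsilon}_{\Hs}\right| + \|f\|_{\Hs}\|V_n^\varepsilon\|_{\Hs}.
\end{equation*}
The incompressibility rewriting $v\cdot\nabla V=\nh\cdot(v^h V)+\partial_3(v^3 V)$ allows, after integration by parts in the $\Hs$ pairing and a careful isolation of the part of each field with nontrivial horizontal average, to bound the convective term through Lemma \ref{lem:prod_Sob_anisotropic} by
$\tfrac{c_0}{2}\|\nh V_n^\varepsilon\|_{\Hs}^2 + C\|V_n^\varepsilon\|_{\Hs}^2\bigl(1+\|V_n^\varepsilon\|_{\Hs}^2\bigr)$.
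A standard bootstrap of the resulting differential inequality produces a time $T=T(\|V_0\|_{\Hs})>0$, independent of $\varepsilon$ and $n$, on which $V_n^\varepsilon$ stays bounded in $L^\infty([0,T];\Hs)$ with $\nh V_n^\varepsilon\in L^2([0,T];\Hs)$.

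\textbf{Passage to the limit and uniqueness.} The equation itself yields a uniform estimate on $\partial_t V_n^\varepsilon$ in a negative-regularity space, so Aubin-Lions in the horizontal variables (thanks to the horizontal viscosity) extracts a subsequence converging strongly in $L^2([0,T];L^2)$, which suffices to pass to the limit in the quadratic term. Uniqueness follows by an $L^2$ energy estimate on the difference $W=V^\varepsilon-\widetilde V^\varepsilon$ of two solutions, controlling the convective difference once more via Lemma \ref{lem:prod_Sob_anisotropic} and closing by Gronwall. The main obstacle is the convective estimate in the $\Hs$ step: since no vertical dissipation is available to absorb any $\partial_3$, every vertical derivative produced by the nonlinearity must be converted into a horizontal one using $\dive v=0$, and the residual product then controlled using only the algebra property of $H^s(\T^1_v)$; this is precisely the situation where Lemma \ref{lem:prod_Sob_anisotropic} is decisive.
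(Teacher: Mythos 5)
Your outline cannot be compared with an argument in the paper, because the paper does not prove Theorem \ref{thm:local_ex_strong_solutions} at all: existence is quoted from \cite{chemin_et_al}, uniqueness from \cite{iftimie_uniqueness_NS_anisotropic}, and the periodic refinement (lifespan independent of the regularity index) from \cite{paicu_NS_periodic}. Measured against those proofs, your sketch has a genuine gap at its central step. The bound you assert for the convective term, $\left|\left(\left. v\cdot\nabla V\right| V\right)_{H^{0,s}}\right|\leqslant \tfrac{c_0}{2}\left\| \nabla_h V\right\|_{H^{0,s}}^2 + C\left\| V\right\|_{H^{0,s}}^2\left(1+\left\| V\right\|_{H^{0,s}}^2\right)$, is not attainable with horizontal smoothing only. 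Using Lemma \ref{lem:prod_Sob_anisotropic} together with the interpolation $\left\| u\right\|_{H^{1/2,s}}\lesssim \left\| u\right\|_{H^{0,s}}^{1/2}\left\| \nabla_h u\right\|_{H^{0,s}}^{1/2}$ (itself valid only for zero horizontal average), the terms $\nabla_h\cdot(v^hV)$ and $v^3\partial_3 V$ (after $\partial_3 v^3=-\mathrm{div}_h v^h$) produce at best quantities of the type $\left\| V\right\|_{H^{0,s}}\left\| \nabla_h V\right\|_{H^{0,s}}^2$ and $\left\| V\right\|_{H^{0,s}}^{1/2}\left\| \nabla_h V\right\|_{H^{0,s}}^{1/2}\left\| V\right\|_{H^{0,s}}^{1/2}\left\| \nabla_h V\right\|_{H^{0,s}}^{3/2}$: the dissipation enters exactly to the power two, with a prefactor that is not small for large data, so no Young inequality converts this into your claimed form. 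Such estimates close only under a smallness condition (global small-data existence); local existence for arbitrary data is obtained in \cite{chemin_et_al} and \cite{paicu_NS_periodic} by splitting $V_0$ into finitely many vertical frequencies plus a small remainder, and the resulting $T$ depends on the profile of the data, not merely on $\left\| V_0\right\|_{H^{0,s}}$ as your bootstrap would give. In the periodic setting there is the further issue, which you name but do not resolve, that the horizontal mean $\underline{V}$ sees no horizontal viscosity and no Poincar\'e inequality, and the Gagliardo--Nirenberg-type inequalities fail for it; this is precisely why the statements borrowed later from \cite{paicu_NS_periodic} (Propositions \ref{propagation norms NS horizontal} and \ref{uniqueness anisotropic NS}) carry explicit hypotheses on $\underline{v}$.

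The uniqueness step has a second gap: an $L^2$ Gronwall estimate on the difference $W$ of two solutions does not close at this regularity. The critical term $\left(\left. W^3\partial_3 V\right| W\right)_{L^2}$ cannot be handled with $V\in H^{0,s}$, $s>1/2$, and $W$ known only in $L^2$ with $\nabla_h W\in L^2$: $\partial_3 V$ has vertical regularity $s-1$, which is close to $-1/2$ for $s$ near $1/2$, while the product $W^3W$ has no positive vertical regularity to offer, and Lemma \ref{lem:prod_Sob_anisotropic} requires vertical regularity larger than $1/2$ on both factors. This is exactly why uniqueness in the energy space is a separate theorem (Iftimie \cite{iftimie_uniqueness_NS_anisotropic}), proved by estimating the difference in $H^{0,-1/2}$ rather than $L^2$, so that the negative vertical regularity sits on $W$ and the positive one on the reference solution; the same device appears in Proposition \ref{uniqueness anisotropic NS}. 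The remaining elements of your scheme are sound: the Friedrichs truncation, the $\varepsilon$-uniformity coming from the skew-symmetry of $\mathcal{A}$ (which is orthogonal to $V$ and to $\Lambda_v^{2s}V$), the global $L^2$ bound, and the Aubin--Lions passage to the limit. The two points above are, however, precisely the content of the cited papers and cannot be dismissed as standard.
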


The existence part of Theorem \ref{thm:local_ex_strong_solutions} has been proved in \cite{chemin_et_al}, while the uniqueness (in the same energy space) has been proved in \cite{iftimie_uniqueness_NS_anisotropic}.

\begin{rem}
 We want to point out that, as  it has been proved by M. Paicu in the work \cite{paicu_NS_periodic} (see Proposition \ref{propagation norms NS horizontal}) the maximal lifespan does not depend on the regularity of the initial data, as long as $ V_0 \in \cPHs, s>1/2 $.\fine
\end{rem}

In order to state the result of convergence for the sequence of solutions of \eqref{primitive equations} and the global existence results for the limit system we briefly introduce the "filtering operator" and the limit system. Let $\mathcal{L}\left( \tau \right)$ be the semigroup generated by $\mathbb{P} \mathcal{A}$, where $\mathbb{P}$ is the Leray projector on the divergence-free vector fields on the first three components, while leaves unchanged the fourth. In particular the Leray projector in three dimensions  is given by the formula $\mathbb{P}^{(3)}=1-\mathcal{R}^{(3)}\otimes \mathcal{R}^{(3)}$, where $\mathcal{R}^{(3)}$ is the three dimensional Riesz transform given by
$$
\mathcal{R}^{(3)}=\left(
\begin{array}{ccc}
\frac{\partial_1}{\sqrt{-\Delta}},& \frac{\partial_2}{\sqrt{-\Delta}},& \frac{\partial_3}{\sqrt{-\Delta}}
\end{array}\right),
$$
while $\mathcal{A}$ is the matrix defined in \eqref{matrici}. In the same way we define the operators $ \Lambda=\sqrt{-\Delta}, \Lh=\sqrt{-\Delta_h}, \Lv=  \left| \partial_3 \right| $.
\\
 Let $\mathcal{L}(t) V_0$ be the unique global solution of 
$$
\left\lbrace
\begin{array}{l}
\partial_t V_{\text{L}}+ \mathbb{P} \mathcal{A}\ V_{\text{L}}=0\\
V_{\text{L}}\bigr|_{t=0}=V_0.
\end{array}\right.
$$
Let us further define $U^\varepsilon=\Lminus  V^\varepsilon$. We will denote $U^\varepsilon$ as the sequence of filtered solutions, we define
\begin{align*}
\mathcal{Q}^\varepsilon \left( U,V\right)=& \Lminus \mathbb{P} \left[ \Lplus U \cdot \nabla \Lplus V \right], &
\mathbb{D}^\varepsilon U =& \Lminus \mathbf{D} \Lplus U,
\end{align*}
where \textbf{D} is defined in \eqref{matrici}, and we consider their limits $\mathcal{Q},\mathbb{D}$ in $\mathcal{D}'$ (we shall see that these limit exists). We introduce the following system which we will denote as limit system
\begin{equation}\tag{S}\label{Slim}
\left\lbrace
\begin{array}{l}
\partial_t U +\mathcal{Q}\left( U, U \right) -\mathbb{D}U = 0\\
\dive u =0\\
\left. U\right|_{t=0}=V_0,
\end{array}
\right.
\end{equation}
We stress out the fact that the limit system \eqref{Slim} is similar to a 3D \NS\ system. We use on purpose the ambiguous word "similar" since the bilinear form $ \mathcal{Q} $ shares many similarities with the transport form, but it is substantially different as far as concerns product laws. This improved regularity is what will allow us to prove that \eqref{Slim} is globally well posed in some suitable critical space of low regularity.

Since we are working in the periodic setting resonant effect may play an important role. Other authors (see, for instance,  \cite{BMNresonantdomains} and \cite{paicu_rotating_fluids} ) have already dealt with this kind of problem.\\
\begin{definition}\label{resonance set}
The resonant set $\mathcal{K}^\star$ is the set of frequencies such that
\begin{align*}
\mathcal{K}^\star = &
\left\lbrace \left( k,m,n \right)\in \mathbb{Z}^9 \left|\hspace{3mm} \omega^a(k)+ \omega^b(m)=\omega^c(n) \text{ with } k+m=n, \hspace{3mm} \left( a,b,c\right) \in \left\lbrace -,+ \right\rbrace
 \right.\right\rbrace,\\
 = &
\left\lbrace \left( k,n \right)\in \mathbb{Z}^6 \left|\hspace{3mm} \omega^a(k)+ \omega^b(n-k)=\omega^c(n), \hspace{3mm} \left( a,b,c\right) \in \left\lbrace -,+ \right\rbrace
 \right.\right\rbrace, 
\end{align*}
where $\omega^a, \ a=\pm$ are the eigenvalues of a suitable operator (see Section \ref{filtering operator} for further details) which in particular take the following form
\begin{align*}
i\;\omega^\pm (n) = & \pm \frac{i}{F} \frac{\sqrt{{\left|\check{n}_h\right|^2+ {F}^2 \check{n}_3^2}} }{\left|\check{n}\right|}.
\end{align*}
We may as well associate a resonant  space to a determinate frequency $n$, in this case we define 
$$
\mathcal{K}^\star_n = 
\left\lbrace \left( k,m \right)\in \mathbb{Z}^6 \left|\hspace{3mm} \omega^a(m)+ \omega^b(k)=\omega^c(n) \text{ with } k+m=n, \hspace{3mm} \left( a,b,c\right) \in \left\lbrace -,+ \right\rbrace
 \right.\right\rbrace.
$$
\end{definition}
\begin{definition}
We say that the torus $\mathbb{T}^3$ is non-resonant if $\mathcal{K}^\star=\emptyset$.
\end{definition}

Tori which are non-resonant, are, generally, a better choice since the oscillating part of the solution satisfies a linear equation (see  \cite{gallagher_schochet}). Indeed though a generic torus may as well present resonant effects. For this reason we introduce the following definition
\begin{definition}\label{def:condition_P}
We say that a torus $ \T^3\subset \R^3 $ satisfies the condition \setword{$ \left( \mathcal{P} \right) $}{cP} if either one or the other of the following situation is satisfied:
\begin{enumerate}
\item $ \T^3 $ is non-resonant.
\item \label{item:second_cond_P} If $ \T^3 $ is resonant, the Froude number $ F^2 $ is rational, and either
    \begin{itemize}
    \item $ a_3^2/a_1^2 \in \mathbb{Q} $ and $ a_3^2/a_2^2 $ is not algebraic of degree smaller or equal than four.
    \item$ a_3^2/a_2^2 \in \mathbb{Q} $ and $ a_3^1/a_2^2 $ is not algebraic of degree smaller or equal than four.
    \end{itemize}
\end{enumerate}
\end{definition} 
\begin{rem}
The above definition (Definition \ref{def:condition_P})  is motivated  in Section \ref{sec:propagation_horizontal_average}. In fact the point \ref{item:second_cond_P} ensures us that even with resonant effects  we can propagate the horizontal average of the initial data, thing that, generally, is not true for \NS\ equations.\fine
\end{rem}

Although \eqref{Slim} is an hyperbolic system in the vertical variable we are able to prove that there exist weak (in the sense of distributions) global solutions. This has been first remarked by M. Paicu in \cite{paicu_rotating_fluids} and it due to the fact that the limit bilinear form $ \mathcal{Q} $ has in fact better product rules than the standard bilinear form in \NS\ equations (see as well Lemma \ref{product rule}). The complete statement of the theorem is the following one.
\begin{theorem}\label{thm:Leray_sol}
Let $\mathbb{T}^3$ be a 3-dimensional torus in $\mathbb{R}^3$ and let $ F\neq 1 $, for each divergence-free vector field $V_0\in \cPLtwo$ and $ \Omega_0 = -\partial_2  v^1_0 + \partial_1  v^2_0 -F\partial_3 T_0 \in \cPLtwo $ there exists a distributional solution to the system
\begin{equation} \tag{\ref{Slim}}
\label{lim syst}
\left\lbrace
\begin{array}{l}
\partial_t U+ \mathcal{Q}\left(U,U\right)-\mathbb{D}U= 0\\
\dive u=0\\
\bigl. U \bigr|_{t=0}= V_0,
\end{array}
\right.
\end{equation}
in the space $ \mathcal{D}' \left( \R_+\times\T^3 \right) $ 
which moreover belongs to the space 
\begin{align*}
U\in & L^\infty \left( \mathbb{R}_+; \cPLtwo\right) & \nh U \in & L^2 \left( \mathbb{R}_+; \cPLtwo\right),
\end{align*}
and satisfies the following energy estimate
$$
\left\| U(t)\right\|_\cPLtwo^2 +c \int_0^t \left\|\nh U(s)\right\|_\cPLtwo^2 \d s \leqslant  C  \left\| U_0\right\|_\cPLtwo^2,
$$
where the constant $c=\min \left\{\nu_h, \nu'_h\right\}>0$.\\
\end{theorem}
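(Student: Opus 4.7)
My plan is a Friedrichs-type (spectral truncation) approximation scheme inspired by the strategy of Paicu \cite{paicu_rotating_fluids} for rotating fluids with horizontal viscosity. Let $J_N$ denote the projection on Fourier modes of size at most $N$; I solve
\begin{equation*}
\partial_t U^N + J_N \mathcal{Q}(J_N U^N, J_N U^N) - \mathbb{D}J_N U^N = 0, \qquad U^N\bigr|_{t=0} = J_N V_0,
\end{equation*}
as an ODE on the finite-dimensional divergence-free space of trigonometric polynomials of degree at most $N$. Since $\mathcal{Q}$ is obtained from the transport term by conjugating with the unitary semigroup $\Lplus$ and passing to the limit, it inherits the skew-symmetry property $(\mathcal{Q}(W,W)\mid W)_\cPLtwo = 0$ on divergence-free fields. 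Testing against $U^N$ and using that $\mathbb{D}$ contains only horizontal derivatives yields the uniform $L^2$ energy estimate with horizontal dissipation stated in the theorem; this gives global existence of each $U^N$ and a uniform bound in $L^\infty(\R_+; \cPLtwo) \cap L^2(\R_+; H^{1,0})$.

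The main obstacle is the passage to the limit in $\mathcal{Q}(U^N, U^N)$: the absence of vertical viscosity prevents a direct application of Aubin--Lions. The key is to exploit the structure of $\mathcal{Q}$ by splitting $U^N = U^N_\QG + U^N_\osc$ into its quasi-geostrophic and oscillating components, where $U^N_\QG$ is reconstructed from the potential vorticity $\Omega^N = -\partial_2 U^{N,1} + \partial_1 U^{N,2} - F\partial_3 U^{N,4}$ through the Biot--Savart formula $\definizioneVQG$. Because the $\QG$-$\QG$ interactions within $\mathcal{Q}$ reduce to the purely horizontal transport of $\Omega$ by the horizontal $\QG$ velocity, the evolution of $\Omega^N$ is of transport--diffusion type in the horizontal variables; the assumption $\Omega_0 \in \cPLtwo$ therefore propagates $\Omega^N$ uniformly in $L^\infty(\R_+; \cPLtwo) \cap L^2(\R_+; H^{1,0})$. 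Via Biot--Savart this endows $U^N_\QG$ with an additional vertical derivative, and Aubin--Lions yields strong convergence of $U^N_\QG$ in $L^2_{\mathrm{loc}}(\R_+\times\T^3)$.

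For the oscillating component the resonance condition $\omega^a(k)+\omega^b(m)=\omega^c(n)$ with $k+m=n$ drastically restricts the interactions surviving in $\mathcal{Q}$; combined with the anisotropic product rule of Lemma~\ref{lem:prod_Sob_anisotropic} and a Bony paraproduct decomposition in the vertical variable, this allows one to control each of the four types of bilinear interactions ($\QG$-$\QG$, $\QG$-$\osc$, $\osc$-$\QG$, $\osc$-$\osc$) in a negative-order anisotropic Sobolev space using only the available horizontal dissipation. The crucial $\osc$-$\osc$ contribution is handled as in \cite{paicu_rotating_fluids, gallagher_schochet}, the assumption $F\neq 1$ ensuring that the phases $\omega^\pm$ are genuinely non-degenerate so that the resonance set has a tractable structure. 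Passing to the limit in $\mathcal{D}'(\R_+\times\T^3)$ identifies the limit as a distributional solution of \eqref{Slim}, and the claimed energy inequality follows from the weak-$\star$ lower semicontinuity of the $L^\infty(\cPLtwo)$ and $L^2(H^{1,0})$ norms applied to the uniform bounds on $U^N$.
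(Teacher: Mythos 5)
Your scheme reproduces the skeleton of the paper's proof: Galerkin truncation, skew-symmetry of $\mathcal{Q}$ giving the uniform bound in $L^\infty\left(\R_+;\cPLtwo\right)\cap L^2\left(\R_+;H^{1,0}\right)$, global solvability of the truncated systems, the splitting $U^N=U^N_{\QG}+U^N_{\osc}$ through the potential vorticity, and the extra regularity of the $\QG$ part coming from $\Omega_0\in\cPLtwo$ together with the Biot--Savart gain of one derivative. Up to and including the passage to the limit in the $\QG$--$\QG$, $\QG$--$\osc$ and $\osc$--$\QG$ interactions, your argument coincides with the paper's.

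The gap is in the $\osc$--$\osc$ term. All that is available for $U^N_{\osc}$ is boundedness in $L^\infty\left(\R_+;\cPLtwo\right)\cap L^2\left(\R_+;H^{1,0}\right)$ and strong convergence only in $L^2_{\loc}\left(\R_+;H^{-\varepsilon}\right)$; for a product in which one factor converges merely in a negative-index space and neither factor converges strongly with positive regularity, Lemma \ref{lem:prod_Sob_anisotropic} and a vertical Bony decomposition give boundedness of the trilinear expression, not identification of its limit. Deferring this step to ``as in \cite{paicu_rotating_fluids,gallagher_schochet}'' does not close it: in \cite{gallagher_schochet} the viscosity is isotropic, so the difficulty never arises, and Paicu's argument is tied to the resonance set of the pure rotation operator, whereas here the eigenvalues $\omega^{\pm}(n)=\pm F^{-1}\sqrt{\left|\check{n}_h\right|^2+F^2\check{n}_3^2}\,/\left|\check{n}\right|$ generate a different resonant set whose structure must be re-established (the paper states explicitly that it proceeds by a different technique than Paicu's). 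The paper's mechanism, in Section \ref{sec:conv_bilin_osc_existence}, is to write the resonance relation as a polynomial equation $\wp\left(k_3\right)=0$ with coefficients polynomial in $\left(k_h,m_h,n\right)$ and to invoke Fujiwara's bound (Proposition \ref{Fujiwara_bound}) to obtain $\left|k_3\right|\lesssim\left|n\right|^{\alpha_1}\left|m_h\right|^{\alpha_2}\left|k_h\right|^{\alpha_3}$ on the resonant set; this is exactly what permits trading the weight $\left|k\right|^{-\varepsilon/2}$ carried by the weakly convergent factor for horizontal frequencies (controlled by the horizontal dissipation) and frequencies of the test function, yielding \eqref{eq:conv_bilin_osc_existence} through Lemma \ref{product rule}. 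Some argument of this kind, controlling the vertical frequency of resonant modes by the horizontal ones, is the real content of the existence proof, and it is precisely the piece your proposal leaves unproved; the role you assign to $F\neq 1$ (``non-degeneracy of the phases'') does not substitute for it.
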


\noindent
We remark that Theorem \ref{thm:Leray_sol} holds for any three-dimensional torus. We do not require the condition \ref{cP} to hold.\\

\noindent
A natural question we address to is whether the system \eqref{primitive equations} converges (even in a weak sense) to the limit system \eqref{lim syst} as $ \varepsilon \to 0 $. This is the scope of the following theorem:

\begin{theorem}\label{thm:weak_conv_limit_system_cP}
Let be the initial data $ V_0 $ be as in Theorem \ref{thm:Leray_sol}, then defining the operator 
\begin{equation*}
\mathcal{L} \left( \tau \right) = e^{-\tau \PA},
\end{equation*}
and denoting as $ U $ the distributional solution of the limit system \eqref{lim syst} identified in Theorem \ref{thm:Leray_sol} the following convergence holds in the sense of distributions
\begin{equation*}
V^\varepsilon - \Lplus U \to 0.
\end{equation*}
Moreover the unknown $ U $ satisfying the evolution of the limit system \eqref{Slim} can be described as the superposition of the evolution of $U=U_{\QG}+U_{\osc}= V_{\QG}+U_{\osc}$ where $V_{\QG}$ satisfies the following system
 \begin{equation*}
 \left\lbrace
 \begin{array}{l}
 \partial_t V_{\QG} +a_{\QG}\left( D_h\right)V_{\QG}=- \left(
 \begin{array}{c}
 \nh^\perp\\
 0\\
 -F \partial_3 
 \end{array}\right)
 \Delta^{-1}_F\left( v_{\QG}^h \cdot \nh \Omega\right),\\
 \diveh v^h_{\QG}= \dive  v_{\QG}=0\hfill\\
 V_{\QG}\Bigr|_{t=0}=V_{{\QG},0},
  \end{array}
 \right.
 \end{equation*}
and $U_{\osc}$ satisfies 
\begin{equation*}
\left\lbrace
\begin{array}{rcl}
\partial_t U_{\osc}+\mathcal{Q}\left(V_{\QG},U_{\osc}\right)+
\mathcal{Q}\left(U_{\osc},V_{\QG}\right) + \mathcal{Q}\left(U_{\osc},U_{\osc}\right) +a_{\osc}\left(D\right)U_{\osc}&=&0\\
\dive u_{\osc} =0\hfill\\
U_{\osc}\Bigr|_{t=0}=U_{{\osc},0}=\left(V_0\right)_{\osc}.\hfill
\end{array}
\right.
\end{equation*}
\end{theorem}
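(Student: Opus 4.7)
The strategy is to work with the filtered unknown $U^\varepsilon = \Lminus V^\varepsilon$, which absorbs the singular penalization $\varepsilon^{-1}\PA$. Applying the Leray projector to \eqref{primitive equations} (to remove the pressure) and then $\Lminus$, and using that $\mathcal{L}(\tau)$ is a unitary group on the space of divergence-free, mean-zero vector fields, one obtains
\begin{equation*}
\partial_t U^\varepsilon + \mathcal{Q}^\varepsilon(U^\varepsilon, U^\varepsilon) - \mathbb{D}^\varepsilon U^\varepsilon = 0, \qquad U^\varepsilon\bigr|_{t=0} = V_0.
\end{equation*}
The unitarity of $\mathcal{L}$ combined with Theorem \ref{thm:local_ex_strong_solutions} furnishes uniform bounds
\begin{equation*}
U^\varepsilon \in L^\infty(\R_+; \cPLtwo), \qquad \nh U^\varepsilon \in L^2(\R_+; \cPLtwo),
\end{equation*}
independent of $\varepsilon$.

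From these bounds extract a weakly converging subsequence $U^\varepsilon \rightharpoonup U$; reading $\partial_t U^\varepsilon$ out of the evolution equation one obtains control in a sufficiently negative anisotropic Sobolev norm so that Aubin-Lions upgrades the convergence to strong convergence on compact sets, enough to pass to the limit in the quadratic term. To identify the limit one decomposes $\PA$ spectrally: its kernel parametrizes $U_\QG$ while its nonzero eigenvalues $i\omega^\pm$ given in Definition \ref{resonance set} span $U_\osc$. Writing $\mathcal{Q}^\varepsilon(U^\varepsilon, U^\varepsilon)$ in this basis produces bilinear contributions modulated by the phase
\begin{equation*}
\exp\!\left(\frac{it}{\varepsilon}\bigl(\omega^a(k)+\omega^b(n-k)-\omega^c(n)\bigr)\right),
\end{equation*}
and a non-stationary phase argument (or equivalently the Schochet normal-form trick of \cite{schochet} and \cite{gallagher_schochet}, i.e.\ integration by parts in time against the nonresonant phases) shows that in the distributional limit only the triplets in $\mathcal{K}^\star_n$ survive. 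This delivers $\mathcal{Q}$ and, by the same analysis applied to the linear diffusion term, $\mathbb{D}$; hence $U$ is a distributional solution of \eqref{lim syst} as built in Theorem \ref{thm:Leray_sol}. Undoing the filter yields $V^\varepsilon - \Lplus U \to 0$ in $\mathcal{D}'$.

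The second assertion is then obtained by projecting the limit equation onto the kernel of $\PA$ (producing $V_\QG$, reconstructed from the potential vorticity $\Omega$ via the Biot-Savart law of Section~1) and onto its orthogonal (producing $U_\osc$), and classifying the resonances in $\mathcal{K}^\star$ according to the sign pattern $(a,b,c)$. Kernel-kernel interactions yield the two-dimensional quadratic $v_\QG^h\cdot\nh\Omega$ and its companion diffusion $a_\QG(D_h)$; mixed interactions generate $\mathcal{Q}(V_\QG,U_\osc)+\mathcal{Q}(U_\osc,V_\QG)$; osc-osc interactions supply $\mathcal{Q}(U_\osc,U_\osc)$ and a diagonal diffusion $a_\osc(D)$. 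The main obstacle is precisely the resonance bookkeeping: one has to show that no osc-osc resonance feeds back into the kernel (so that $V_\QG$ decouples from $U_\osc$ in its forcing) and that the horizontal average of $U_\osc$ is propagated so that the reduced operators $a_\QG(D_h)$ and $a_\osc(D)$ are well defined. Both failures can occur on generic resonant tori; hypothesis \ref{cP} is tailored exactly to exclude them, either by emptying $\mathcal{K}^\star$ outright (case~1) or, in case~\ref{item:second_cond_P}, by forcing any putative resonance, once the explicit $\omega^\pm$ of Definition \ref{resonance set} is substituted, to contradict the transcendence assumption on the ratios $a_i^2/a_j^2$.
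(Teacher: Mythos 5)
The first half of your argument --- filtering, uniform energy bounds, a bound on $\partial_t U^\varepsilon$ in a negative-index Sobolev space, Aubin--Lions compactness in $L^2_{\loc}\left( \R_+; H^{-\eta} \right)$, and identification of the limit of $\mathcal{Q}^\varepsilon$ through the resonant phases --- is the same route as the paper's (Proposition \ref{prop:topological_convergence}, Lemma \ref{lem:NSPT}). One step you gloss over: the non-stationary phase theorem cannot be applied directly to $U^\varepsilon, U$, which are merely of finite energy; the paper first truncates in frequency, applies the phase argument to the smooth truncations, and then removes the truncation using the strong $L^2_{\loc}\left( \R_+; H^{-\eta} \right)$ convergence (this three-step regularization is where the compactness is actually used, both in Lemma \ref{lem:NSPT} and in Lemma \ref{lemma:limit_QG_part_bilinear_form}), and your write-up needs that step or an equivalent one.

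The genuine flaw is in your justification of the splitting into the $V_{\QG}$ and $U_{\osc}$ systems. You claim that the decoupling --- no oscillating-oscillating interaction forcing the potential-vorticity equation, and well-definedness of $a_{\QG}\left( D_h \right)$, $a_{\osc}\left( D_h \right)$ --- ``can fail on generic resonant tori'' and is rescued by hypothesis \ref{cP}. This is incorrect on both counts, and note that Theorem \ref{thm:weak_conv_limit_system_cP} does not assume \ref{cP} at all. Resonant triplets coupling two wave modes to a slow mode exist on \emph{every} torus: for instance $\omega^+\left( k \right)+\omega^-\left( k \right)=0=\omega^0\left( 2k \right)$, so $\left( k,k,2k \right)$ with signs $\left( +,-,0 \right)$ always lies in the summation set of \eqref{limit quadratic}, and no arithmetic condition on $a_1,a_2,a_3,F$ can remove such triplets. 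The reason they do not appear in the $\Omega$-equation is the algebraic cancellation in the projection onto $\left| n \right|_F e^0$ due to Embid--Majda (Lemma \ref{lem:splitting_bilinear_QG}, Corollary \ref{cor:splitting_bilinear_QG}), together with Lemma \ref{lem:splitting_bilinear_osc} showing that $\omega^{0,0,\pm}_{k,m,n}=0$ forces $n=0$; both facts are unconditional in the torus, and your argument must invoke (or reprove) them --- condition \ref{cP} cannot substitute for them. Likewise $a_{\QG}$ and $a_{\osc}$ are simply the Fourier projections of $\mathbb{D}$ onto the two subspaces and require no average condition to be defined. Condition \ref{cP} enters the paper elsewhere: it guarantees that the limit system propagates the zero horizontal average (Lemma \ref{propagation horizontal mean osc part}), which is what is needed for the $H^{0,s}$ propagation and the strong convergence of Theorems \ref{GWP2} and \ref{conv theorem}, not for the present statement.
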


In particular $ a_{\QG} $ and $ a_{\osc} $ are Fourier symbols such that there exist two positive constants $ 0<c_1 \leqslant c_2 <\infty $ such that, for any tempered distribution $ u $
$$
c_1 \left| \xi_h \right|^2 \leqslant \left| a_{\QG} \left( \xi \right) \hat{u} \left( \xi \right)\right|, \ \left| a_{\osc} \left( \xi \right) \hat{u} \left( \xi \right)\right| \leqslant c_2 \left| \xi_h \right|^2,
$$
and $\mathcal{Q}$ is a bilinear form which shares  many aspects with the more classical bilinear form of \NS\ equations, but has better properties as far as the
regularity of the product is concerned (see Section \ref{bil form limit Section} for the product rules and \eqref{limit quadratic} for the proper definition).\\

Thanks to some a priori estimates on the solutions of the limit system \eqref{Slim} we can state the following improvement of the above theorem, at the cost of having well prepared initial data and tori which satisfy Condition \ref{cP}. 
We say that a data $V_0$ is well prepared if it has zero horizontal mean, i.e. $\int_{\mathbb{T}^2_h}V_0\left( x_h, x_3\right)\d x_h=0$. This property is conserved by the limit system \eqref{Slim}  for almost every torus in $\mathbb{R}^3$ (see Lemma \ref{propagation horizontal mean VQG}, \ref{propagation horizontal mean osc part}). Moreover we ask as well that the potential vorticity, defined as
\begin{equation}
\label{prima definizione Omega}
\Omega \left(t,x\right)= -\partial_2 U^{1}\left(t,x\right) +\partial_1 U^{2}\left(t,x\right) -F\partial_3 U^4 \left(t,x\right).
\end{equation}
 is an $\cPHs$ function at time $t=0$, i.e. $\Omega_0\in\cPHs$.

\begin{theorem}\label{GWP2}
Let $\mathbb{T}^3 $ satisfy the condition \ref{cP} and consider a vector field $U_0\in \cPHs$ with zero horizontal average and $\Omega_0\in \cPHs$, for $s\geqslant 1$ such that $\dive u_0=0$,  $ F\neq 1 $ the limit system \eqref{Slim} admits a global solution 
\begin{align*}
U\in & L^\infty\left(\mathbb{R}_+; \cPHs\right), &
\nh U\in & L^2 \left(\mathbb{R}_+; \cPHs\right),
\end{align*}
such that satisfies the following energy bound
$$
\left\|U(t)\right\|^2_\cPHs + c \int_0^t \left\|\nh U(s)\right\|^2_\cPHs \leqslant \mathcal{E}\left( \left\| U_0 \right\|_\cPHs^2 \right),
$$
where $ \mathcal{E} $ is a suitable (bounded on compact sets) function. Moreover the solution $ U $ is unique in the space $ L^\infty \left( \R_+; H^{0,\sigma} \right)\cap L^2 \left( \R_+, H^{1,\sigma} \right) $ for $ \sigma\in [-1/2, s) $.
\end{theorem}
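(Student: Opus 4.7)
The natural approach is to exploit the splitting $U=V_\QG+U_\osc$ identified in Theorem~\ref{thm:weak_conv_limit_system_cP}, and solve first for the quasi-geostrophic component and then for the oscillating one, the latter seeing $V_\QG$ as a given coefficient. The hypothesis $F\neq 1$ ensures the dispersive properties of the linear semigroup needed for $\mathcal{Q}$ to possess the improved product rules alluded to in Section~\ref{bil form limit Section}.

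\textbf{Step 1: Quasi-geostrophic part.} The combination \eqref{prima definizione Omega} applied to the $V_\QG$-system yields a scalar transport-diffusion equation of the form $\partial_t\Omega+v_\QG^h\cdot\nh\Omega-\mu\Delta_h\Omega=0$, while $V_\QG$ is reconstructed from $\Omega$ through the Biot-Savart-type operator built on $\Delta_F^{-1}$. Since at every fixed $x_3$ the velocity $v_\QG^h$ is two-dimensional and horizontally divergence-free, this equation behaves as a two-dimensional Navier-Stokes vorticity equation parametrized by the vertical variable. An $\cPHs$ energy estimate, using Lemma~\ref{lem:prod_Sob_anisotropic} with $s\geqslant 1>1/2$ to control the transport term, closes globally in time and produces $\Omega\in L^\infty(\R_+;\cPHs)\cap L^2(\R_+;H^{1,s})$, hence the same bounds on $V_\QG$ via Biot-Savart.

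\textbf{Step 2: Oscillating part.} With $V_\QG$ in hand, we solve the equation for $U_\osc$ by a standard $\cPHs$ energy argument. Three ingredients are decisive: (i) the Fourier symbol $a_\osc(D)$ provides horizontal dissipation comparable to $|\xi_h|^2$; (ii) the bilinear form $\mathcal{Q}$ enjoys improved product laws relative to the classical Navier-Stokes trilinear form, so that the transport-like terms do not lose a vertical derivative; (iii) condition~\ref{cP} guarantees that the zero horizontal average of $U_\osc$ is propagated (Lemma~\ref{propagation horizontal mean osc part}), whence a horizontal Poincaré inequality $\|U_\osc\|_{L^2_hL^2_v}\lesssim\|\nh U_\osc\|_{L^2_hL^2_v}$ compensates the absence of vertical viscosity. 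The resulting estimate reads, schematically, $\tfrac{d}{dt}\|U_\osc\|_\cPHs^2+c\|\nh U_\osc\|_\cPHs^2\leqslant \Phi(\|V_\QG\|_{H^{1,s}})\,\|U_\osc\|_\cPHs^2$, and Gronwall closes globally thanks to the $L^2_t H^{1,s}$ bound on $V_\QG$ from Step~1.

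\textbf{Step 3: Uniqueness.} For the uniqueness claim in $L^\infty(H^{0,\sigma})\cap L^2(H^{1,\sigma})$, $\sigma\in[-1/2,s)$, take the difference $W=U-\widetilde U$ of two solutions sharing the same initial datum and perform an $H^{0,\sigma}$ energy estimate. The anisotropic product rule of Lemma~\ref{lem:prod_Sob_anisotropic} bounds $\mathcal{Q}(W,U)+\mathcal{Q}(\widetilde U,W)$ in $H^{0,\sigma-1}$ by $\|W\|_{H^{0,\sigma}}\|\nh W\|_{H^{0,\sigma}}$ times a factor controlled by the higher-regularity solutions; Young's inequality absorbs one half of the horizontal dissipation, and Gronwall concludes.

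\textbf{Main obstacle.} The crux is Step~2: the underlying system is genuinely three-dimensional and hyperbolic in the vertical variable, so global $\cPHs$ propagation cannot come from a naive energy estimate. Both the horizontal Poincaré inequality (and thus the condition~\ref{cP}, which is exactly what makes the zero horizontal average persist under resonances) and the refined product estimates for $\mathcal{Q}$ are essential; without either, the system would behave as a three-dimensional anisotropic Navier-Stokes equation, for which only local-in-time $\cPHs$-regularity is available. The algebraic content of condition~\ref{cP}, which prevents pathological resonances between the three axis parameters $a_1,a_2,a_3$ and the Froude parameter $F$, is therefore used in an essential and non-removable way.
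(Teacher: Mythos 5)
Your overall strategy coincides with the paper's: propagate $H^{0,s}$ regularity for the potential vorticity $\Omega$, recover $V_{\QG}$ with a gain of one derivative through the $\Delta_F^{-1}$ Biot--Savart law (Lemma \ref{higher regularity VQG}), then close an anisotropic energy estimate for $U_{\osc}$ using the improved product properties of $\mathcal{Q}$ together with the propagation of the zero horizontal average under condition \ref{cP} (Propositions \ref{propagation H0s norms for Omega} and \ref{propagation H0s norms for oscillating part}), and finally prove uniqueness by a low-regularity stability estimate. However, two of your steps do not close as written. In Step 2 the schematic inequality $\frac{\d}{\d t}\left\| U_{\osc} \right\|^2_{\cPHs}+c\left\| \nh U_{\osc} \right\|^2_{\cPHs}\leqslant \Phi\left( \left\| V_{\QG} \right\|_{H^{1,s}} \right)\left\| U_{\osc} \right\|^2_{\cPHs}$ cannot hold, because the equation for $U_{\osc}$ contains the genuinely quadratic term $\mathcal{Q}\left( U_{\osc},U_{\osc} \right)$, so the Gronwall coefficient necessarily involves $U_{\osc}$ itself. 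The paper's bound \eqref{terza stima termine bilineare parte oscillante}, obtained from Lemma \ref{product rule} (finiteness of the resonant fibers of $\mathcal{K}^\star$), the Gagliardo--Nirenberg inequalities for zero-horizontal-average fields and a Bony decomposition, produces after Young a coefficient of the form $\left( 1+\left\| U_{\osc} \right\|^2_{\cPLtwo} \right)\left\| \nh U_{\osc} \right\|^2_{\cPLtwo}$, and global integrability of this coefficient comes from the Leray-type $L^2$ energy bound of Theorem \ref{thm:Leray_sol}; this ingredient is missing from your sketch. A related smaller point: in Step 1 you invoke Lemma \ref{lem:prod_Sob_anisotropic}, which requires zero horizontal average, while $\Omega$ need not have it; the paper splits $\Omega=\underline{\Omega}+\tilde{\Omega}$ and treats the averaged part separately in the proof of \eqref{stima forma bilineare omega}.

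Your uniqueness step also has a genuine gap: a direct $H^{0,\sigma}$ energy estimate based on Lemma \ref{lem:prod_Sob_anisotropic} is impossible for $\sigma\in\left[ -1/2,1/2 \right]$, since that lemma needs vertical regularity larger than $1/2$ on both factors and therefore cannot bound $\mathcal{Q}\left( W,U \right)$ when the difference $W$ only lies in $H^{0,\sigma}$ with $\sigma\leqslant 1/2$. The paper instead performs the stability estimate at the single level $H^{0,-1/2}$ via Paicu's Proposition \ref{uniqueness anisotropic NS}, which is precisely designed to pair an $H^{0,-1/2}$ unknown with coefficients in $H^{0,s}$, $s>1/2$, under a smallness condition on the horizontal average, and then interpolates with the propagated $H^{0,s}$ bound to cover $\sigma\in\left[ -1/2,s \right)$; you would need this (or an equivalent paraproduct argument) to justify the claim. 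Finally, a conceptual correction: on the torus $F\neq 1$ provides no dispersion; it enters through the resonance computations (e.g. in Lemma \ref{propagation horizontal mean osc part}, where the relation $\left( F^2-1 \right)\left| k_h \right|^2=0$ forces $k_h=0$) and hence through the propagation of the zero horizontal average, while the improved product rule for $\mathcal{Q}$ comes from the finiteness of the fibers of the resonant set, independently of any dispersive mechanism.
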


This refined regularity is be crucial in order to prove the convergence result to solutions of \eqref{primitive equations}.
\begin{rem}\label{rk_difference_with_paicu}
Compared to the work of M. Paicu \cite{paicu_rotating_fluids} the author requires only $ s>1/2 $. This discrepancy is due to the fact that in the present work the limit system is well-posed only for $ s\geqslant 1 $, indeed we are able to propagate $ \cPHs, s\geqslant 0 $ norms for the potential vorticity $ \Omega $, and, as explained in Lemma \ref{higher regularity VQG}, $ \left\| V_{\QG} \right\|_{H^{0,s+1}}\lesssim \left\| \Omega \right\|_\cPHs $.
\end{rem}

The main idea in the propagation of regularity stated in Theorem \ref{GWP2} is that we can recover the missing viscosity in the vertical direction using the fact that the vector field $ u $ is divergence-free. We can in fact observe that in the nonlinear term the vertical derivative is always multiplied by the third component $ u^3 $ of the vector field considered (i.e. terms of the form $ u^3 \partial_3 $). We hence remark the fact that the term $\partial_3 u^3$ is more regular thanks to the relation $ -\partial_3 u^3 = \diveh u^h $, and due to the fact that the horizontal viscosity has a regularizing effect on the derivatives in the horizontal variable $ x_h $.

We can at this point state the theorem of convergence that connects the solution of \eqref{primitive equations} with the  solutions of the limit system \eqref{Slim}.
\begin{theorem}\label{conv theorem}
Let $\mathbb{ T}^3\subset\mathbb{R}^3 $ satisfy the condition \ref{cP}, $\Omega_0= -\partial_2 v^1_0 + \partial_1 v^2_0 - F\partial_3 T_0\in \cPHs $, $ U_0\in \cPHs $ with zero horizontal average. Let $V_0\in \cPHs$ for $s>1$ a divergence free vector field,  let $ V^\varepsilon $ be a local solution of \eqref{primitive equations} and $ U $ be the unique global solution of the limiti system \eqref{Slim}. Then the following convergences take place
\begin{align*}
\lim_{\varepsilon\to 0} \left( V^\varepsilon-\Lplus U\right)&=0 & \text{in } &\mathcal{C}\left(\mathbb{R}_+; H^{0,\sigma}\right) \\
\lim_{\varepsilon\to 0} \nh\left( V^\varepsilon-\Lplus U\right)&=0 & \text{in } & L^2\left(\mathbb{R}_+; H^{0,\sigma}\right)
\end{align*}
for $\sigma \in \left[ 1, s \right) $.
\end{theorem}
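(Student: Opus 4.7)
The plan is to filter the primitive system by setting $U^\varepsilon = \Lminus V^\varepsilon$, which solves
\begin{equation*}
\partial_t U^\varepsilon + \mathcal{Q}^\varepsilon(U^\varepsilon, U^\varepsilon) - \mathbb{D}^\varepsilon U^\varepsilon = 0, \qquad U^\varepsilon(0) = V_0,
\end{equation*}
and then to perform an energy estimate on the difference $W^\varepsilon = U^\varepsilon - U$ in the anisotropic space $H^{0,\sigma}$ for $\sigma \in [1, s)$. Since $\mathcal{L}(\tau)$ is a unitary group on every $H^{0,\sigma}$, the two convergences claimed are equivalent to $W^\varepsilon \to 0$ in $\mathcal{C}(\R_+; H^{0,\sigma})$ and $\nh W^\varepsilon \to 0$ in $L^2(\R_+; H^{0,\sigma})$. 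The evolution of the difference reads
\begin{equation*}
\partial_t W^\varepsilon - \mathbb{D}^\varepsilon W^\varepsilon + \mathcal{Q}^\varepsilon(W^\varepsilon, U^\varepsilon) + \mathcal{Q}^\varepsilon(U, W^\varepsilon) = \mathcal{R}^\varepsilon(U, U) + \mathcal{S}^\varepsilon U,
\end{equation*}
with forcing $\mathcal{R}^\varepsilon(U,U) = (\mathcal{Q} - \mathcal{Q}^\varepsilon)(U, U)$ and $\mathcal{S}^\varepsilon U = (\mathbb{D}^\varepsilon - \mathbb{D}) U$, both built solely from the smooth limit solution $U$.

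Before estimating, I would promote the local $\cPHs$ solution of Theorem \ref{thm:local_ex_strong_solutions} to a global one, uniformly in $\varepsilon$. This is done by combining the uniform $L^\infty_t L^2 \cap L^2_t H^{1,0}$ bound for $U^\varepsilon$ with the propagation-of-regularity mechanism that underlies Theorem \ref{GWP2}: the divergence-free cancellation $-\partial_3 u^{3,\varepsilon} = \diveh v^{h,\varepsilon}$ recovers the missing vertical viscosity for $U^\varepsilon$ just as it does for $U$. The core analytical step is then to show that the right-hand side of the $W^\varepsilon$-equation tends to zero in $L^2(\R_+; H^{-1,\sigma})$ as $\varepsilon\to 0$. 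Expanding $\mathcal{R}^\varepsilon$ and $\mathcal{S}^\varepsilon$ in Fourier series, each triadic interaction carries an oscillatory factor $\exp(it(\omega^a(k) + \omega^b(n-k) - \omega^c(n))/\varepsilon)$; by the very definition of the limit $\mathcal{Q}$, the contributions from $(k,n) \in \mathcal{K}^\star$ cancel exactly, while those off $\mathcal{K}^\star$ admit a non-vanishing phase. A Schochet-type integration by parts in $t$ then converts them into $\mathcal{O}(\varepsilon)$ terms multiplied by trilinear expressions in $U$, which are finite thanks to the $H^{0,s}$ control, $s>1$, supplied by Theorem \ref{GWP2}. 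Condition \ref{cP}, together with the propagation of zero horizontal average from Section \ref{sec:propagation_horizontal_average}, excludes frequencies with $\check{n}_h = 0$ where the horizontal dissipation would fail to control the product.

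The final step is an $H^{0,\sigma}$ energy estimate on $W^\varepsilon$, which yields
\begin{equation*}
\tfrac{1}{2}\tfrac{d}{dt}\|W^\varepsilon\|_{H^{0,\sigma}}^2 + c\,\|\nh W^\varepsilon\|_{H^{0,\sigma}}^2 \lesssim \eta^\varepsilon(t)\,\|W^\varepsilon\|_{H^{0,\sigma}} + \bigl|(\mathcal{Q}^\varepsilon(W^\varepsilon, U^\varepsilon) + \mathcal{Q}^\varepsilon(U, W^\varepsilon),\, W^\varepsilon)_{H^{0,\sigma}}\bigr|,
\end{equation*}
where $\|\eta^\varepsilon\|_{L^2_t} \to 0$ by the previous step. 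The trilinear remainders are estimated via Lemma \ref{lem:prod_Sob_anisotropic} together with the refined product laws of Section \ref{bil form limit Section}, and the strict inequality $\sigma < s$ is essential: it provides spare vertical regularity on $U$ and $U^\varepsilon$ which, combined with $\partial_3 u^3 = -\diveh u^h$, allows the hyperbolic vertical transport $u^3\,\partial_3 W^\varepsilon$ to be absorbed by the horizontal viscosity. Gronwall's lemma, together with $W^\varepsilon(0) = 0$, then delivers both convergences. The main obstacle I anticipate is precisely this last absorption: since $W^\varepsilon$ itself carries no vertical smoothing, a careful Littlewood-Paley decomposition in the vertical variable, in the spirit of Paicu's work on periodic rotating fluids, is needed in order to keep the trilinear term bounded uniformly in $\varepsilon$ over the whole half-line.
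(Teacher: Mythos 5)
Your overall skeleton (filter, set $W^\varepsilon=U^\varepsilon-U$, treat the oscillating remainders by a Schochet-type normal form, close with Gronwall) is the same as the paper's, but two of your intermediate claims are genuine gaps. First, you cannot ``promote'' the local solution $U^\varepsilon$ to a global one, uniformly in $\varepsilon$, before the convergence estimate: the mechanism behind Theorem \ref{GWP2} uses the improved product rules of the \emph{limit} form $\mathcal{Q}$, which come from the resonance structure of $\mathcal{K}^\star$ (Lemma \ref{product rule}) and are not available for $\mathcal{Q}^\varepsilon$, and the propagation of zero horizontal average under condition \ref{cP} is a property of the limit system, not of \eqref{filtered systemP}. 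In the paper the logic runs the other way: the smallness of $W^\varepsilon$ in $\Hmud$ gives smallness of $\underline{U^\varepsilon}=\underline{W^\varepsilon}$, which is what allows Proposition \ref{propagation norms NS horizontal} to be applied to $U^\varepsilon$ on $[0,T^{s_0}_\varepsilon)$, and the uniform $\cPHs$ bound and $T^\star_\varepsilon=\infty$ are \emph{consequences} of the bootstrap, not inputs. Second, the statement that $\left(\mathcal{Q}-\mathcal{Q}^\varepsilon\right)(U,U)$ and $\left(\mathbb{D}^\varepsilon-\mathbb{D}\right)U$ tend to zero in $L^2(\R_+;H^{-1,\sigma})$ is false: these terms oscillate with $O(1)$ amplitude and converge only in $\mathcal{D}'$. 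The integration by parts in time does produce a factor $\varepsilon$, but at the price of dividing by the phases $\omega^{a,b,c}_{k,n-k,n}$, which are non-zero off $\mathcal{K}^\star$ yet not bounded away from zero (small divisors on a general torus). This is why the paper first truncates at frequency $N$: the inverse phases are bounded by $C(N)$ on low frequencies (Lemma \ref{boundedness low freq}), the high-frequency tails $R^{\varepsilon,N}_{\osc,\HF}$, $S^{\varepsilon,N}_{\osc,\HF}$ are made small uniformly in $\varepsilon$ using the $\cPHs$ regularity of $U$ (Lemma \ref{convergence zero hi-freq}), and only then is $\varepsilon$ sent to zero, with smallness measured in $L^1\cap L^2(\R_+;H^{-1,-1/2})$ (both integrabilities are needed for a global-in-time estimate).

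Related to this, your plan to run the energy estimate directly at the level $H^{0,\sigma}$, $\sigma\geqslant 1$, does not match what the forcing allows: the remainders are only controlled in negative-index spaces, and the linearized terms $\mathcal{Q}^\varepsilon(W^\varepsilon,U^\varepsilon)$ involve bounds on $U^\varepsilon$ that are precisely what is not yet known. The paper instead applies Paicu's stability estimate (Proposition \ref{uniqueness anisotropic NS}) to the shifted unknown $\PLF=W^\varepsilon+\varepsilon(\RLF+\SLF)$ in the weak norm $\Hmud$, then interpolates with the $H^{0,s_0}$ control furnished by the bootstrap to reach all $\sigma\in[1,s)$. Without the frequency truncation, the low-norm estimate, and the interpolation/continuation step, your argument does not close.
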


The paper is divided as follows
\begin{itemize}
\item In Section \ref{preliminaries} we introduce some mathematical  tools that will be useful in the development of the paper.

\item  Section \ref{filtering operator} we provide a careful analysis of the spectral properties of the linear system whose evolution is determined by the operator $ \PA $. 
 In Subsection \ref{sec:global_splitting_bilinear} we refer to some results proved in the works\cite{embid_majda},  \cite{embid_majda2} and \cite{BMN-P} which prove how the limit bilinear interaction behaves along the the eigendirections identified in Section \ref{filtering operator} for smooth functions.
 
 \item In Section \ref{existence limit system} we prove Theorem \ref{thm:Leray_sol}. Such result is not a straightforward application of Leray Theorem since, due to the lack of the vertical diffusivity, the solutions are bounded in the space $ L^2_{\loc} \left( \R_+ ; H^{1, 0} \right) $ only, which is \textit{not compactly embedded} in $ L^2_{\loc} \left( \R_+ ; L^2 \right) $. Such observation prevents us to use standard compactness theorems in functional spaces such as Aubin-Lions lemma (see \cite{Aubin63}). Nonetheless using Fujiwara near-optimal bound (see \cite{Fujiwara_bound}) we can transform a vertical derivative $ \partial_3 $ in a multi-index of the form $ C\varepsilon \left( \partial_1^{N_1}, \partial_2^{N_2} \right) $, where $ N_1, N_2 $ may as well be large, while $ \varepsilon $ is positive and small. The system \eqref{primitive equations} has a non-zero diffusive effects in the horizontal directions, and hence we can finally prove that bilinear interactions of weakly converging (in the sense that converge w.r.t. a Sobolev topology of negative index) converge in the sense of distributions to some limiting element.

 \item In Section   \ref{sec:weak_conv} we prove Theorem \ref{thm:weak_conv_limit_system_cP}. The approach is twofold:
 \begin{itemize}
 \item Thanks to a topological argument we prove that the sequence $ \left( V^\varepsilon \right)_{\varepsilon>0} $ is compact in some weak space,
 \item A careful analysis of the bilinear interactions in the limit $ \varepsilon \to 0 $ gives us the explicit form of the bilinear limit interactions.
 \end{itemize}
 Next in Subsection \ref{sec:propagation_horizontal_average} we prove that, under some suitable geometric conditions (see Definition \ref{def:condition_P}) the limit system \eqref{lim syst} propagates globally-in-time the horizontal average of the initial data.

 \item In Section \ref{sec:prop_H0s_regularity} we prove that the limit system propagates globally-in-time $ \cPHs $ norm, at the price of having well prepared (in the sense of zero-horizontal average) initial data and domains which satisfy the Definition \ref{def:condition_P}. Hence we prove Theorem \ref{GWP2}.

\item Lastly in Section \ref{app S. method} we prove Theorem \ref{conv theorem}, i.e. that we can approximate the solutions of \eqref{primitive equations} as $\varepsilon\to 0$ with the (global) solutions of \eqref{Slim} in some topology of strong type.
\item In the Section \ref{Section energy estimates} various technical results are proved.
\end{itemize}

\section{Preliminaries.}\label{preliminaries}
This section is devoted to introduce the mathematical tools that will be used all along the paper and which are necessary to understand the contents described in the following pages. 

\subsection{Elements of Littlewood-Paley theory.}\label{elements LP}

A tool that will be widely used all along the paper is the anisotropic theory of Littlewood--Paley, which consists in doing a dyadic cut-off of the vertical frequencies.\\
Let us define the (non-homogeneous) vertical truncation operators as follows:
\begin{align*}
{\cPtv} u= & \sum_{n\in\mathbb{Z}^3} \hat{u}_n \varphi \left(\frac{\left|\check{n}_3\right|}{2^q}\right) e^{i\check{n} \cdot x}&\text{for }& q\geqslant 0\\
\triangle^v_{-1}u=& \sum_{n\in\mathbb{Z}^3} \hat{u}_n \chi \left( \left|\check{n}_3 \right| \right)e^{i\check{n} \cdot x}\\
{\cPtv} u =& 0 &\text{for }& q\leqslant -2
\end{align*}
where $u\in\mathcal{D}'\left(\mathbb{T}^3 \right)$ and  $\hat{u}_n$ are the Fourier coefficients of $u$. The functions $\varphi$ and $\chi$ represent a partition of the unity in $\mathbb{R}$, which means that are smooth functions with compact support such that
\begin{align*}
\text{Supp}\; \chi \  \subset \hspace{1mm} & B \left(0,\frac{4}{3}\right), & \text{Supp}\; \varphi\  \subset\hspace{1mm}& \mathcal{C}\left( \frac{3}{4},\frac{8}{3}\right),
\end{align*}
Moreover for each $ t\in \R $ the sequence $ \left( \chi \left( \cdot \right), \varphi \left( 2^{-q}\cdot \right) \right)_{q\in\mathbb{N}} $ is a partition of the unity. 
Let us define further the vertical cut-off operator as 
$
S^v_q u= \sum_{q'\leqslant q-1}\cPTv u.
$

\subsection{Anisotropic paradifferential calculus.}\label{paradifferential calculus}

The dyadic decomposition turns out to be very useful also when it comes to study the product between two distributions. We can in fact, at least formally, write for two distributions $u$ and $v$
\begin{align}\label{decomposition vertical frequencies}
u=&\sum_{q\in\mathbb{Z}}{\cPtv} u ; &
v=&\sum_{q'\in\mathbb{Z}}\cPTv v;&
u  v = & \sum_{\substack{q\in\mathbb{Z} \\ q'\in\mathbb{Z}}}{\cPtv} u  \cPTv v
\end{align}

We are going to perform a Bony decomposition in the vertical variable (see  \cite{bahouri_chemin_danchin_book}, \cite{Bony1981}, \cite{chemin_book} for the isotropic case and \cite{chemin_et_al},\cite{iftimie_NS_perturbation} for the anisotropic one).
\\
Paradifferential calculus is  a mathematical tool for splitting the above sum in three parts 

$$
u  v = T^v_u v+ T^v_v u + R^v\left(u,v\right),
$$
where
\begin{align*}
T^v_u v=& \sum_q S^v_{q-1} u {\cPtv} v, &
 T^v_v u= & \sum_{q'} S^v_{q'-1} v \cPTv u, &
 R^v\left( u,v \right) = & \sum_k \sum_{\left| \mu\right| \leqslant 1} \triangle^v_k u \triangle^v_{k+\mu} v.
\end{align*}

In particular the following almost orthogonality properties hold
\begin{align*}
{\cPtv} \left( \cPSvq a\; \cPTv b\right)=&0 & \text{if }& \left|q-q'\right|\geqslant 5\\
{\cPtv} \left( \cPTv a\; \triangle^v_{q'+\mu}b\right)=&0 & \text{if }& q'< q-4, \; \left| \mu \right|\leqslant 1
\end{align*}
and hence we will often use the following relation
\begin{align}
{\cPtv}\left( u  v \right)= &\sum_{\left| q -q'\right| \leqslant 4} {\cPtv}\left(S^v_{q'-1} v\; \cPTv u\right) +
\sum_{\left| q -q'\right| \leqslant 4} {\cPtv}\left(S^v_{q'-1} u \;\cPTv v\right)+
\sum_{q'\geqslant q-4}\sum_{|\mu|\leqslant 1}{\cPtv}\left(  \cPTv u\; \triangle^v_{q'+\mu}v\right)\nonumber \\
=& \sum_{\left| q -q'\right| \leqslant 4} {\cPtv}\left(S^v_{q'-1} v\; \cPTv u\right) + \sum_{q'>q-4} {\cPtv}\left( S^v_{q'+2} u \;\cPTv v\right).\label{Paicu Bony deco}
\end{align}

In the paper \cite{chemin_lerner} J.-Y. Chemin and N. Lerner introduced the following asymmetric decomposition, which was first used by J.-Y. Chemin et al. in \cite{chemin_et_al} in its anisotropic version. This particular decomposition turns out to be very useful in our context
\begin{equation}
\label{bony decomposition asymmetric}
{\cPtv} \left(uv\right) = S^v_{q-1} u {\cPtv} v
 +\sum_{|q-q'|\leqslant 4} \left\lbrace\left[ {\cPtv}, \cPSvq u \right]\cPTv v + \left( S^v_q u-\cPSvq u \right) {\cPtv}\cPTv v\right\rbrace + \sum_{q'>q-4} {\cPtv} \left(  S^v_{q'+2} v \cPTv u\right),
\end{equation}
where the commutator $\left[{\cPtv}, a\right]b$ is defined as 
$
\left[{\cPtv}, a\right]b= {\cPtv} \left( ab \right) - a {\cPtv} b.
$

All along the following we shall denote as $ \left( b_q \right)_{q\geqslant-1} $ any sequence which is summable that may depend on different parameters such that $ \sum_q b_q \leqslant 1 $ . In the same way we shall denote as $ \left( c_q \right)_q\in \ell^2 \left( \mathbb{Z} \right) $ any sequence  such that $ \sum_q c^2_q \leqslant 1 $ . As well $ C $ is a (large) positive constant independent of any parameter and $ c $ a small one, these two constants may differ implicitly from line to line. We remark that the regularity of a function can be rephrased in the following way: we say that $ u \in \cPHs $
only if there exists a sequence $ \left( c_q \right)_q $ depending on $ u $ such that
\begin{equation}
\label{eq:reg_dyadic_blocks}
\left\| \triangle_q^v u \right\|_\cPLtwo \leqslant C \ c_q \left( u \right) 2^{-qs} \left\| u \right\|_\cPHs.
\end{equation}

\subsection{Dyadic blocks and commutators as convolution operators.}
\label{sec:dy_convolution}
The dyadic blocks and the low-frequencies truncation operators can be seen as convolution operators, in particular if we denote as $ h= \mathcal{F}^{-1}\varphi $ and $ g= \mathcal{F}^{-1}\chi $ we have
\begin{align}
& {\cPtv} u= \varphi \left( 2^{-q}D \right) u = 2^{q}\int_{\mathbb{T}} h\left( 2^q y \right) u\left( x-y \right)\d y,\label{eq:dyadic_as_convolution}\\
& S_q^v u= \chi \left( 2^{-q}D \right) u = 2^{q}\int_{\mathbb{T}} g\left( 2^q y \right) u\left( x-y \right)\d y.\nonumber
\end{align}
This is due to the fact that $ {\cPtv} u \left( x \right) = \left( \mathcal{F}^v \right)^{-1}\left( \varphi \left( \cdot \right) \hat{u} \left( \cdot \right) \right) \left( x \right) $.
We introduce this alternative way to consider commutators and truncations because we need it in Section \ref{Section energy estimates}. In particular we want to express a commutator as a convolution operator, since a commutator is defined as 
$$
\left[{\cPtv}, a\right]b\left( x \right)= {\cPtv} \left( ab \right)\left( x \right) - a\left( x \right) {\cPtv} b\left( x \right),
$$
and we apply to the right hand side of the above equation the relation in \eqref{eq:dyadic_as_convolution} we obtain in fact that
\begin{equation*}
\left[{\cPtv}, a\right]b\left( x \right)= 2^{q}\int_{\mathbb{T}} h \left( x_h, x_3-y_3 \right) \left( a\left(x_h , y_3 \right)-a\left( x_h, x_3 \right) \right)b \left(x_h, y_3 \right) \d y_3.
\end{equation*}
Thanks to Taylor expansion with reminder in Cauchy form we know that
$$
a\left(x_h , y_3 \right)-a\left( x_h, x_3 \right) = \partial_3 a \left( x_h, x_3 + \tau \left( x_3-y_3 \right) \right) \left( x_3-y_3 \right),
$$
for some $ \tau\in \left( 0,1 \right) $, hence we can write the commutator as
\begin{equation}\label{eq:commutator_as_convolution}
\left[{\cPtv}, a\right]b\left( x \right)= 2^{q}\int_{\mathbb{T}}  \left( x_3-y_3 \right) h \left( x_h, x_3-y_3 \right) \partial_3 a \left( x_h, x_3 + \tau \left( x_3-y_3 \right) \right)b \left(x_h, y_3 \right) \d y_3.
\end{equation}

\subsection{Some basic estimates.}\label{basic estimates}
The interest in the use of the dyadic decomposition is that the derivative in the vertical direction of a function localized in vertical frequencies of size $2^q$ acts like the multiplication of a  factor $2^q$ (up to a constant independent of $q$ ). In our setting (periodic case) a Bernstein type inequality holds. For a proof of the following lemma we refer to the work \cite{iftimie_NS_perturbation}.

\begin{lemma}\label{bernstein inequality}
Let $u$ be a function such that $\textnormal{supp}\;\mathcal{F}^vu \subset \mathbb{T}^2_h\times 2^q\mathcal{C}$, where $\mathcal{F}^v$ denotes the Fourier transform in the vertical variable. For all integers $k$, $ p\in \left[1,\infty\right] $, $ 1\leqslant r' \leqslant r \leqslant \infty $, the following relations hold
\begin{align*}
2^{qk}C^{-k}\left\| u \right\|_{L^p_h L^r_v}\leqslant & \left\| \partial^k_{x_3}u \right\|_{L^p_h L^r_v} \leqslant 2^{qk}C^{k}\left\| u \right\|_{L^p_h L^r_v},\\
2^{qk}C^{-k}\left\| u \right\|_{ L^r_vL^p_h}\leqslant & \left\| \partial^k_{x_3}u \right\|_{ L^r_vL^p_h} \leqslant 2^{qk}C^{k}\left\| u \right\|_{ L^r_vL^p_h}.
\end{align*}

Let now $\infty \geqslant r\geqslant r' \geqslant 1$ be real numbers. Let  $\text{supp}\;\mathcal{F}^vu \subset \mathbb{T}^2_h\times 2^q B$, then
\begin{align*}
\left\| u \right\|_{L^p_h L^r_v}\leqslant & C  2^{q\left( \frac{1}{r'}-\frac{1}{r}\right)}\left\| u \right\|_{L^p_h L^{r'}_v}\\
\left\| u \right\|_{ L^r_vL^p_h}\leqslant & C  2^{q\left( \frac{1}{r'}-\frac{1}{r}\right)}\left\| u \right\|_{ L^{r'}_vL^p_h}
\end{align*}
\end{lemma}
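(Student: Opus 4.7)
My plan is to prove the three estimates by realizing $u$ as a vertical convolution against a kernel concentrated at scale $2^{-q}$, and then applying Young's convolution inequality. The key observation, made precise in Section \ref{sec:dy_convolution}, is that the spectral localization hypothesis $\textnormal{supp}\,\mathcal{F}^v u \subset \mathbb{T}^2_h \times 2^q\mathcal{C}$ forces $u$ to be invariant under a vertical Fourier multiplier of the form $\tilde\varphi(D_3/2^q)$, where $\tilde\varphi \in C_c^\infty(\mathbb{R})$ is any fixed function identically equal to $1$ on $\mathcal{C}$ and supported in a slightly larger annulus.

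\medskip

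\noindent\textbf{Step 1 (derivative estimates, annulus case).} Writing
\[
\partial_3^k u(x) = \sum_{n\in\mathbb{Z}^3} (i\check n_3)^k\, \hat u_n \varphi\bigl(|\check n_3|/2^q\bigr)\,e^{i\check n\cdot x}
 = 2^{qk}\sum_{n\in\mathbb{Z}^3} \psi_k(\check n_3/2^q)\, \hat u_n\, e^{i\check n\cdot x},
\]
with $\psi_k(\xi) := (i\xi)^k\tilde\varphi(\xi)\in C_c^\infty(\mathbb{R})$, we see that $\partial_3^k u = 2^{qk}\, H_{q,k} *_v u$, where $H_{q,k}(y_3) = 2^q\sum_{m\in\mathbb{Z}}\mathcal{F}^{-1}\psi_k\bigl(2^q(y_3+2\pi a_3 m)\bigr)$ is the periodization of a Schwartz kernel at scale $2^{-q}$. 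By a change of variables, $\|H_{q,k}\|_{L^1(\mathbb{T}^1_v)} \leq \|\mathcal{F}^{-1}\psi_k\|_{L^1(\mathbb{R})} \leq C^k$ uniformly in $q$. Applying Young's inequality in $x_3$ only and integrating in $x_h$ gives the upper bound $\|\partial_3^k u\|_{L^p_h L^r_v}\leq C^k 2^{qk}\|u\|_{L^p_h L^r_v}$; for the ordering $L^r_v L^p_h$ one exchanges the roles of the two integrals via Minkowski's inequality. For the reverse inequality one observes that on the annulus $2^q\mathcal{C}$ the symbol $(i\check n_3)^{-k}\tilde\varphi(\check n_3/2^q)$ is smooth and compactly supported, so the same argument applied to $\tilde\psi_k(\xi):=(i\xi)^{-k}\tilde\varphi(\xi)\in C_c^\infty(\mathbb{R}\setminus\{0\})$ produces an inverse kernel $\tilde H_{q,k}$ with $\|\tilde H_{q,k}\|_{L^1(\mathbb{T}^1_v)}\leq C^k$ and $u = 2^{-qk}\tilde H_{q,k} *_v \partial_3^k u$.

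\medskip

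\noindent\textbf{Step 2 (integrability gain, ball case).} Now assume $\textnormal{supp}\,\mathcal{F}^v u \subset \mathbb{T}^2_h\times 2^q B$. Pick $\tilde\chi\in C_c^\infty(\mathbb{R})$ equal to $1$ on $B$; then $u = G_q *_v u$ with $G_q(y_3)=2^q\sum_{m\in\mathbb{Z}}\mathcal{F}^{-1}\tilde\chi\bigl(2^q(y_3+2\pi a_3 m)\bigr)$. For any $a\in[1,\infty]$, a change of variables yields $\|G_q\|_{L^a(\mathbb{T}^1_v)} \leq C\, 2^{q(1-1/a)}$ uniformly in $q$. Given $1\leq r'\leq r\leq\infty$ one chooses $a$ satisfying $1+1/r = 1/r' + 1/a$ (i.e. $1-1/a = 1/r'-1/r$) and invokes the one-dimensional Young inequality
\[
\|u(x_h,\cdot)\|_{L^r_v} = \|G_q *_v u(x_h,\cdot)\|_{L^r_v} \leq \|G_q\|_{L^a_v}\|u(x_h,\cdot)\|_{L^{r'}_v} \leq C\,2^{q(1/r'-1/r)}\|u(x_h,\cdot)\|_{L^{r'}_v};
\]
raising to the $p$-th power and integrating in $x_h$ gives the desired estimate, while Minkowski provides the variant with the integration order reversed.

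\medskip

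\noindent\textbf{Main obstacle.} The only genuinely delicate point is passing rigorously from the Fourier series expression to a periodic convolution in a way that is uniform in $q$, which is precisely what the Poisson summation step above handles: one must check that the periodized kernels $H_{q,k}$ and $G_q$ satisfy the required $L^a$ bounds with constants independent of $q$. Once this is established, the statement reduces to scaling and Young's inequality; everything else is book-keeping, and no new idea beyond the convolution representation recorded in \eqref{eq:dyadic_as_convolution} is required.
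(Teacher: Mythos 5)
Your argument is correct, and it coincides with the standard proof of this anisotropic Bernstein lemma: the paper itself gives no proof but refers to \cite{iftimie_NS_perturbation}, where the estimates are obtained exactly as you do, by writing the spectrally localized function as a vertical convolution against a periodized kernel at scale $2^{-q}$ and applying Young's inequality in $x_3$ together with Minkowski's inequality to switch the order of the $L^p_h$ and $L^r_v$ norms. The one point that genuinely needs care --- the uniform-in-$q$ bounds $\left\| H_{q,k} \right\|_{L^1\left(\mathbb{T}^1_v\right)}\leqslant C^k$ and $\left\| G_q \right\|_{L^a\left(\mathbb{T}^1_v\right)}\leqslant C 2^{q\left(1-1/a\right)}$ for the periodized Schwartz kernels --- is exactly the one you flag, and it is settled by the Poisson summation/fast-decay argument you indicate.
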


The following are inequalities of Gagliardo-Niremberg type,  we will avoid to give the proofs of such tools since they are already present in \cite{paicu_rotating_fluids}.

\begin{lemma}
There exists a constant $C$ such that for all periodic vector fields $u$ on $\mathbb{T}^3$ with zero horizontal average ($\int_{\mathbb{T}^2_h}u\left( x_h, x_3 \right)\d x_h=0$) we have
\begin{equation}\label{GN type ineq}
\left\|u\right\|_{L^2_v L^4_h}
\leqslant C_1 \left\|u\right\|_{\Hud}
\leqslant C_2  \left\| u\right\|^{1/2}_{\cPLtwo}\left\| \nh u \right\|^{1/2}_{\cPLtwo}.
\end{equation}
\end{lemma}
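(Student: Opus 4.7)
The plan is to prove the two inequalities separately and chain them together; both are essentially elementary, the main subtlety being the use of the zero horizontal average hypothesis.

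For the rightmost inequality, I would argue directly in Fourier. The zero horizontal average assumption $\int_{\mathbb{T}^2_h} u(x_h,x_3)\,\d x_h = 0$ translates into $\hat{u}_n = 0$ whenever $\check{n}_h = 0$, so every nonzero Fourier mode satisfies $|\check{n}_h|\geqslant c > 0$ for a constant depending only on the torus. This allows me to replace the inhomogeneous weight: $(1+|\check{n}_h|^2)^{1/2} \leqslant C|\check{n}_h|$. Therefore
\begin{equation*}
\|u\|_\Hud^2 \leqslant C \sum_{n\in\mathbb{Z}^3} |\check{n}_h|\,|\hat{u}_n|^2 = C\sum_{n} |\hat{u}_n|\cdot\bigl(|\check{n}_h|\,|\hat{u}_n|\bigr),
\end{equation*}
and Cauchy--Schwarz on the last expression yields $\|u\|_\Hud^2 \leqslant C\|u\|_\cPLtwo \|\nh u\|_\cPLtwo$, which is exactly the claimed interpolation bound.

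For the leftmost inequality I would proceed by slicing: for a.e.\ $x_3\in\mathbb{T}^1_v$ the function $u(\cdot,x_3)$ lies in $H^{1/2}(\mathbb{T}^2_h)$, and the two-dimensional Sobolev embedding at the critical exponent gives
\begin{equation*}
\|u(\cdot,x_3)\|_{L^4(\mathbb{T}^2_h)} \leqslant C\|u(\cdot,x_3)\|_{H^{1/2}(\mathbb{T}^2_h)}.
\end{equation*}
Squaring and integrating over $x_3$ via Fubini produces
\begin{equation*}
\|u\|_{L^2_vL^4_h}^2 = \int_{\mathbb{T}^1_v}\|u(\cdot,x_3)\|_{L^4(\mathbb{T}^2_h)}^2 \,\d x_3 \leqslant C\int_{\mathbb{T}^1_v}\|u(\cdot,x_3)\|_{H^{1/2}(\mathbb{T}^2_h)}^2\,\d x_3 = C\|u\|_\Hud^2,
\end{equation*}
the last equality being the definition of the anisotropic norm (Plancherel in $x_3$ commutes with the horizontal Fourier weight). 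Note that we again rely on zero horizontal average to control the $H^{1/2}(\mathbb{T}^2_h)$ norm of each slice by its homogeneous counterpart uniformly in $x_3$.

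Concatenating the two bounds gives the statement with a suitable constant $C_2$. The only step requiring attention is the critical two-dimensional Sobolev embedding $H^{1/2}(\mathbb{T}^2) \hookrightarrow L^4(\mathbb{T}^2)$, which is standard but sits at the endpoint; it can be obtained either from the general embedding theory of Bessel-potential spaces on compact manifolds or via a Littlewood--Paley argument using the Bernstein inequalities recorded just above in the paper. No other difficulty is expected.
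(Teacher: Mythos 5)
Your proof is correct. Note that the paper does not actually prove this lemma itself --- it defers to \cite{paicu_rotating_fluids} --- so the natural comparison is with the standard argument there, which is done slice-by-slice: the two-dimensional Gagliardo--Nirenberg inequality $\left\| u\left( \cdot,x_3 \right) \right\|_{L^4_h}\lesssim \left\| u\left( \cdot,x_3 \right) \right\|_{L^2_h}^{1/2}\left\| \nh u\left( \cdot,x_3 \right) \right\|_{L^2_h}^{1/2}$ for zero-mean slices followed by Cauchy--Schwarz in $x_3$, with the intermediate $\Hud$ bound obtained on the Fourier side essentially as you do. Your packaging --- the critical embedding $H^{1/2}\left( \T^2_h \right)\hookrightarrow L^4\left( \T^2_h \right)$ on each slice, Plancherel in $x_3$ to identify $\int_{\T^1_v}\left\| u\left( \cdot,x_3 \right) \right\|^2_{H^{1/2}\left( \T^2_h \right)}\d x_3$ with $\left\| u \right\|^2_\Hud$, and then the weight substitution $\left( 1+\left| \check{n}_h \right|^2 \right)^{1/2}\leqslant C\left| \check{n}_h \right|$ plus Cauchy--Schwarz in frequency --- is an equivalent route and both steps are sound. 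Two minor remarks: the zero horizontal average is genuinely needed only for the \emph{second} inequality (without it, a function $u=u\left( x_3 \right)$ has $\nh u=0$ but $\left\| u \right\|_\Hud\neq 0$), whereas the first inequality already holds with the inhomogeneous $H^{1/2}\left( \T^2_h \right)$ norm, so your closing remark about needing the hypothesis there is superfluous (though harmless, since each slice does have zero mean); and the constant in the weight substitution depends on the torus through $\left| \check{n}_h \right|\geqslant \min\left( a_1^{-1}, a_2^{-1} \right)$, which is consistent with the statement.
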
 

From Lemma \ref{bernstein inequality} and \eqref{GN type ineq} we can deduce the following result

\begin{cor}
Let $ u $ be a periodic vector field such that $ \textnormal{Supp} \ \mathcal{F}^v u \subset \T^2_h \times 2^q B $, then
\begin{equation}\label{eq:LinfL2embedding}
\left\| u \right\|_{L^\infty_v L^2_h} \leqslant C 2^{q/2} \left\| u \right\|_{\cPLtwo},
\end{equation}
moreover if $ u $ has zero horizontal average
\begin{equation}\label{eq:LinfL4embedding}
\left\| u \right\|_{L^\infty_v L^4_h} \leqslant C 2^{q/2} \left\| u \right\|_{\cPLtwo}^{1/2} \left\| \nh u \right\|_{\cPLtwo}^{1/2}
\end{equation}

\end{cor}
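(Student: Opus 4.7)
The plan is to obtain both inequalities as a direct combination of the vertical Bernstein inequality in Lemma~\ref{bernstein inequality} with, respectively, Fubini and the Gagliardo--Nirenberg-type estimate \eqref{GN type ineq}. Since the hypothesis $\textnormal{Supp}\,\mathcal{F}^v u \subset \mathbb{T}^2_h \times 2^q B$ localizes $u$ in the vertical frequencies at scale $2^q$, the second part of Lemma~\ref{bernstein inequality} (in the $L^r_v L^p_h$ ordering) gives, for any $p\in[1,\infty]$ and any $1\leqslant r' \leqslant r \leqslant \infty$, the exchange estimate $\|u\|_{L^r_v L^p_h} \leqslant C\, 2^{q(1/r'-1/r)} \|u\|_{L^{r'}_v L^p_h}$. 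Taking $r=\infty$ and $r'=2$ produces the crucial factor $2^{q/2}$ used in both bounds.

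For \eqref{eq:LinfL2embedding}, I would apply the estimate above with $p=2$ to get $\|u\|_{L^\infty_v L^2_h} \leqslant C\, 2^{q/2} \|u\|_{L^2_v L^2_h}$, and then use Fubini to identify $\|u\|_{L^2_v L^2_h} = \|u\|_{\cPLtwo}$. This is immediate and requires no additional structural hypothesis on $u$.

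For \eqref{eq:LinfL4embedding}, the same Bernstein step with $p=4$ gives $\|u\|_{L^\infty_v L^4_h} \leqslant C\, 2^{q/2} \|u\|_{L^2_v L^4_h}$. To conclude it suffices to apply \eqref{GN type ineq} to bound $\|u\|_{L^2_v L^4_h} \leqslant C \|u\|_{\cPLtwo}^{1/2}\|\nh u\|_{\cPLtwo}^{1/2}$. Here the assumption that $u$ has zero horizontal average is exactly what allows the use of \eqref{GN type ineq}; one should only verify that this property is preserved by the vertical localization, which is immediate since vertical Fourier truncations act as convolutions in $x_3$ alone (cf.\ Section~\ref{sec:dy_convolution}) and therefore commute with horizontal averaging.

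I do not expect any substantial obstacle: the only point requiring a word of care is the order of the Bernstein inequality (it must be stated with $L^r_v$ on the outside, not $L^p_h$, which is the variant explicitly recorded in Lemma~\ref{bernstein inequality}) and the remark above on the preservation of the zero horizontal-average condition, needed exclusively for the second estimate.
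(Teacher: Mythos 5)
Your proof is correct and follows exactly the route the paper intends: the corollary is stated there as an immediate consequence of Lemma \ref{bernstein inequality} (the $L^r_vL^p_h$ variant with $r=\infty$, $r'=2$) combined with Fubini for \eqref{eq:LinfL2embedding} and with \eqref{GN type ineq} for \eqref{eq:LinfL4embedding}, which is precisely what you do. The only superfluous point is your remark on the preservation of the zero horizontal average under vertical truncation, since $u$ is already assumed to have zero horizontal average and no truncation is applied inside the proof.
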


\begin{lemma}\label{lemma:sobolev_embedding_zero_average}
Let $s$ be a real number and $\mathbb{T}^3$ a three dimensional torus. For all vector fields $u$ with zero horizontal average, the following inequality holds
\begin{equation}
\label{GN type ineq2}
 \left\|u\right\|_{H^{1/2, s}}
\leqslant C  \left\|u\right\|_{H^{0,s}}^{1/2} \left\|\nh u\right\|_{H^{0,s}}^{1/2}
\end{equation}
\end{lemma}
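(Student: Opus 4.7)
The plan is to give a short Fourier-side proof exploiting the zero horizontal average assumption. By \eqref{eq:anisot_Sob_norm} and Plancherel, squaring the claim reduces to showing
$$\sum_n (1+|\check n_h|^2)^{1/2}(1+|\check n_3|^2)^s |\hat u_n|^2 \leq C \Big(\sum_n (1+|\check n_3|^2)^s |\hat u_n|^2\Big)^{1/2} \Big(\sum_n |\check n_h|^2(1+|\check n_3|^2)^s |\hat u_n|^2\Big)^{1/2}.$$

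First I would invoke the hypothesis of zero horizontal average: it forces $\hat u_n = 0$ whenever $n_h = 0$, so on the support of the Fourier coefficients we have $|\check n_h| \geq \min(1/a_1, 1/a_2) > 0$, and consequently $(1+|\check n_h|^2)^{1/2} \leq C |\check n_h|$ uniformly in $n$. The weight on the left-hand side can therefore be replaced by $|\check n_h|(1+|\check n_3|^2)^s$, up to a harmless multiplicative constant depending only on the aspect ratios of the torus.

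Next I would split this weight as a product and apply the Cauchy--Schwarz inequality in $\ell^2(\mathbb{Z}^3)$, writing
$$|\check n_h|(1+|\check n_3|^2)^s|\hat u_n|^2 = \bigl[(1+|\check n_3|^2)^{s/2}|\hat u_n|\bigr]\bigl[|\check n_h|(1+|\check n_3|^2)^{s/2}|\hat u_n|\bigr];$$
the two resulting factors sum respectively to $\|u\|_{\Hs}^2$ and $\|\nh u\|_{\Hs}^2$, so Cauchy--Schwarz immediately produces the bound, and taking the square root yields \eqref{GN type ineq2}.

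I do not expect any genuine obstacle here: the only point that deserves care is the replacement $(1+|\check n_h|^2)^{1/2} \leq C|\check n_h|$, which truly requires the zero horizontal mean, since purely vertical modes ($n_h = 0$) would contribute to $\|u\|_{H^{1/2,s}}$ while being invisible to $\nh u$. One could equivalently phrase the argument via vertical Littlewood--Paley decomposition, applying \eqref{GN type ineq} to each block $\triangle^v_q u$ (which still has zero horizontal average, since $\triangle^v_q$ is a Fourier multiplier in the vertical variable and thus commutes with integration over $\T_h^2$) and then summing with Cauchy--Schwarz in $q$ against the weight $2^{2qs}$; this is more in the spirit of the rest of the paper and gives the same inequality up to an absolute constant.
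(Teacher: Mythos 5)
Your proof is correct. The paper itself does not prove this lemma (it defers to \cite{paicu_rotating_fluids}), and your Fourier-side argument is precisely the standard one: the zero horizontal average hypothesis kills the modes with $n_h=0$, so on the support of $\hat u_n$ one has $\left(1+\left|\check n_h\right|^2\right)^{1/2}\leqslant C\left|\check n_h\right|$ with $C$ depending only on $a_1,a_2$, and then Cauchy--Schwarz in $n$ with the weight split as you indicate yields $\left\|u\right\|_{H^{1/2,s}}^2\lesssim \left\|u\right\|_{H^{0,s}}\left\|\nh u\right\|_{H^{0,s}}$ (the $2\pi$ factors from the Fourier convention for $\nh u$ are absorbed into $C$). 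Your alternative phrasing via vertical dyadic blocks is also sound, since $\triangle^v_q$ commutes with horizontal averaging and thus preserves the zero horizontal mean.
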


\begin{cor}\label{L4Linf embedding}
Let $s>1/2$. There exists a constant $C$ such that the inequality
$$
\left\| u\right\|_{L^\infty_v L^2_h}\leqslant C\left\|u\right\|_{H^{0,s}}
$$
holds. Moreover if $u$ is of zero horizontal average we have
$$
\left\| u\right\|_{ L^\infty_v L^4_h}\leqslant C  \left\|u\right\|^{1/2}_{H^{0,s}} \left\|\nh u\right\|^{1/2}_{H^{0,s}}
$$
\end{cor}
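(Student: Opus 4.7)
The plan is to treat the two inequalities in succession, deriving the second from the first through a fibered application of the two-dimensional Gagliardo--Nirenberg inequality. The tools required are elementary: Fourier expansion in the vertical variable only, a weighted Cauchy--Schwarz inequality, and the classical Sobolev embedding $ \dot H^{1/2}\left( \mathbb{T}^2_h \right) \hookrightarrow L^4\left( \mathbb{T}^2_h \right) $ for zero-mean functions on the two-dimensional torus.

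For the first bound, I would expand $u$ in the vertical Fourier variable only: writing $u\left(x_h, x_3\right)=\sum_{n_3\in\mathbb{Z}}\widetilde{u}_{n_3}\left(x_h\right)e^{i\check{n}_3 x_3}$ with $\widetilde{u}_{n_3}\in L^2\left(\mathbb{T}^2_h\right)$, the triangle inequality gives, pointwise in $x_3$, the estimate $\left\|u\left(\cdot,x_3\right)\right\|_{L^2_h}\leqslant\sum_{n_3}\left\|\widetilde{u}_{n_3}\right\|_{L^2_h}$. Cauchy--Schwarz with the weights $\bigl(1+\left|\check{n}_3\right|^2\bigr)^{\pm s/2}$ then produces
\begin{equation*}
\left\|u\left(\cdot,x_3\right)\right\|_{L^2_h}^2 \leqslant \left(\sum_{n_3\in\mathbb{Z}}\bigl(1+\left|\check{n}_3\right|^2\bigr)^{-s}\right)\left(\sum_{n_3\in\mathbb{Z}}\bigl(1+\left|\check{n}_3\right|^2\bigr)^{s}\left\|\widetilde{u}_{n_3}\right\|_{L^2_h}^2\right).
\end{equation*}
The first series converges precisely because $s>1/2$, while Parseval in the horizontal variables identifies the second factor with $\left\|u\right\|_\cPHs^2$. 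Taking the supremum in $x_3$ concludes the first part; no zero-average assumption is required at this stage.

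For the second inequality, under the zero horizontal average hypothesis, each slice $u\left(\cdot,x_3\right)$ has zero mean on $\mathbb{T}^2_h$ for almost every $x_3$. The two-dimensional Gagliardo--Nirenberg inequality on each slice then gives
\begin{equation*}
\left\|u\left(\cdot,x_3\right)\right\|_{L^4_h}^2 \leqslant C\left\|u\left(\cdot,x_3\right)\right\|_{L^2_h}\left\|\nh u\left(\cdot,x_3\right)\right\|_{L^2_h}.
\end{equation*}
Taking the supremum in $x_3$ and bounding the product by the product of suprema yields $\left\|u\right\|_{L^\infty_v L^4_h}^2\leqslant C\left\|u\right\|_{L^\infty_v L^2_h}\left\|\nh u\right\|_{L^\infty_v L^2_h}$. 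Applying the first inequality of the corollary (which does not require zero mean, and which in any case $\nh u$ inherits from $u$) to both $u$ and $\nh u$ and taking square roots concludes the proof. There is no substantive obstacle in the argument; the only steps worth double-checking are the legitimacy of the fiberwise passage in $x_3$ (a direct consequence of Fubini) and the convergence of the vertical series $\sum_{n_3}\bigl(1+\left|\check{n}_3\right|^2\bigr)^{-s}$, which is precisely where the hypothesis $s>1/2$ enters.
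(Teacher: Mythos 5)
Your proof is correct, but it follows a genuinely different route from the one the paper relies on. The paper does not prove this corollary at all: it is stated as a consequence of the anisotropic Littlewood--Paley toolbox just assembled (Lemma \ref{bernstein inequality}, the dyadic estimates \eqref{eq:LinfL2embedding}--\eqref{eq:LinfL4embedding}, and Lemma \ref{lemma:sobolev_embedding_zero_average}), with the proofs deferred to \cite{paicu_rotating_fluids}; there one decomposes $u=\sum_q \triangle^v_q u$, applies vertical Bernstein and the horizontal Gagliardo--Nirenberg inequality on each block, and sums using Cauchy--Schwarz in $q$, which is where $s>1/2$ enters through the convergence of $\sum_q 2^{q(1-2s)}$. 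You instead avoid the dyadic machinery entirely: for the first bound you use the vertical Fourier series and weighted Cauchy--Schwarz (the same $s>1/2$ condition appearing as convergence of $\sum_{n_3}(1+|\check{n}_3|^2)^{-s}$), and for the second you apply the two-dimensional Ladyzhenskaya inequality slicewise -- legitimately, since zero horizontal average means each slice $u(\cdot,x_3)$ has zero mean on $\T^2_h$ -- take the supremum in $x_3$, and then invoke the first bound for both $u$ and $\nh u$ (the latter automatically has zero horizontal average and lies in $H^{0,s}$ whenever the right-hand side is finite). Your intermediate inequality $\left\| u \right\|_{L^\infty_v L^4_h}^2\leqslant C \left\| u \right\|_{L^\infty_v L^2_h}\left\| \nh u \right\|_{L^\infty_v L^2_h}$ is coarser than passing through $H^{1/2,s}$ as in \eqref{GN type ineq2}, but it yields exactly the stated estimate. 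What each approach buys: yours is elementary and self-contained, needing only Fubini and slicewise classical inequalities; the paper's route produces along the way the frequency-localized versions such as \eqref{eq:LinfL4embedding}, which are used later in the commutator and energy estimates of Section \ref{Section energy estimates}, so within the paper's economy the dyadic proof is the natural one.
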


Finally we state a lemma that shows that the commutator with the truncation operator in the vertical frequencies is a regularizing operator. The proof of such lemma can be found in \cite{paicu_NS_periodic}.
\begin{lemma}\label{estimates commutator}
Let $\mathbb{T}^3$ be a 3D torus and $p,r,s$ real positive numbers such that $\infty \geqslant r',s',p,r,s\geqslant 1 $ $\frac{1}{r'}+\frac{1}{s'}=\frac{1}{2}$ and $\frac{1}{p}=\frac{1}{r}+\frac{1}{s}$. There exists a constant $C$ such that for all vector fields $u$ and $v$ on $\mathbb{T}^3$ we have the inequality
$$
\left\| \left[ {\cPtv} , u\right] v \right\|_{L^2_vL^p_h}\leqslant C  2^{-q}\left\|\partial_3 u \right\|_{L^{r'} _v L^r_h} \left\|v\right\|_{L^{s'}_vL^s_h}
$$
\end{lemma}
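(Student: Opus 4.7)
The plan is to exploit the convolution representation of the commutator derived in Section \ref{sec:dy_convolution}, namely equation \eqref{eq:commutator_as_convolution}, which rewrites
\begin{equation*}
\left[{\cPtv}, u\right]v\left( x \right)= 2^{q}\int_{\mathbb{T}}  \left( x_3-y_3 \right) h \left( 2^q\left(x_3-y_3\right) \right) \partial_3 u \left( x_h, x_3 + \tau \left( x_3-y_3 \right) \right)v \left(x_h, y_3 \right) \d y_3,
\end{equation*}
with $h=\mathcal{F}^{-1}\varphi$ a fixed Schwartz function. The first step is to perform the change of variable $z_3= 2^q(x_3-y_3)$, which absorbs the prefactor $2^q$ coming from the Bernstein-type scaling and produces the expected $2^{-q}$ gain: one obtains
\begin{equation*}
\left[{\cPtv}, u\right]v\left( x \right)= 2^{-q}\int_{\mathbb{R}}\tilde h(z_3)\,\partial_3 u\!\left( x_h,\, x_3+\tfrac{\tau z_3}{2^q} \right) v\!\left(x_h,\, x_3-\tfrac{z_3}{2^q}\right)\d z_3,
\end{equation*}
where $\tilde h(z_3)=z_3 h(z_3)$ is again Schwartz (hence integrable) on the real line, using periodicity to identify the integration with $\mathbb{R}$ in the standard way.

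Next I would estimate the $L^2_v L^p_h$ norm by first applying Minkowski's integral inequality to push the norm inside the $z_3$-integral, getting
\begin{equation*}
\left\| \left[ {\cPtv} , u\right] v \right\|_{L^2_vL^p_h}\leqslant 2^{-q}\int_{\mathbb{R}}|\tilde h(z_3)|\,\bigl\| \partial_3 u\!\left( x_h, x_3+\tfrac{\tau z_3}{2^q} \right) v\!\left(x_h, x_3-\tfrac{z_3}{2^q}\right)\bigr\|_{L^2_v L^p_h}\d z_3.
\end{equation*}
Then, for each fixed $z_3$, I would apply H\"older's inequality first in the horizontal variable with the exponents $\frac{1}{p}=\frac{1}{r}+\frac{1}{s}$ and then in the vertical variable with $\frac{1}{2}=\frac{1}{r'}+\frac{1}{s'}$, which splits the product between $\partial_3 u\in L^{r'}_v L^r_h$ and $v\in L^{s'}_v L^s_h$.

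Finally, since the $L^{r'}_vL^r_h$ and $L^{s'}_vL^s_h$ norms are invariant under translations in the vertical variable, the translations by $\tau z_3/2^q$ and $-z_3/2^q$ disappear, and one is left with
\begin{equation*}
\left\| \left[ {\cPtv} , u\right] v \right\|_{L^2_vL^p_h}\leqslant 2^{-q}\|\tilde h\|_{L^1(\mathbb{R})}\left\|\partial_3 u \right\|_{L^{r'} _v L^r_h} \left\|v\right\|_{L^{s'}_vL^s_h},
\end{equation*}
which is the desired bound with $C=\|\tilde h\|_{L^1}$. There is no serious obstacle in this argument; the main conceptual step is recognizing that the factor $(x_3-y_3)$ arising from the Cauchy form of Taylor's remainder is precisely what allows the scaling to convert the $2^q$ from the kernel into a $2^{-q}$ gain, which is the reason one writes the commutator via \eqref{eq:commutator_as_convolution} in the first place.
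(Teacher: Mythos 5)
Your overall strategy --- the convolution representation \eqref{eq:commutator_as_convolution}, the rescaling that converts the prefactor $2^{q}$ into the gain $2^{-q}$, Minkowski's inequality, then H\"older horizontally with $\frac1p=\frac1r+\frac1s$ and vertically with $\frac12=\frac1{r'}+\frac1{s'}$ --- is exactly the standard argument; note the paper itself does not prove this lemma but refers to \cite{paicu_NS_periodic}, where this is essentially the proof given. There is, however, one step that does not go through as written. The parameter $\tau$ in \eqref{eq:commutator_as_convolution} comes from the mean-value form of the Taylor remainder, so it is a function $\tau=\tau\left( x_h, x_3, y_3 \right)\in\left( 0,1 \right)$ of the integration point, not a constant. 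Consequently $x_3\mapsto \partial_3 u\left( x_h, x_3+\tau z_3/2^{q} \right)$ is \emph{not} a vertical translate of $\partial_3 u$, and your final step ``the translations disappear by translation invariance of the mixed norms'' is not justified: after H\"older in the vertical variable you would be facing a nonlinear, point-dependent change of variable in $x_3$ rather than a shift.

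The repair is standard and costs nothing. Use Taylor's formula with \emph{integral} remainder, $u\left( x_h, y_3 \right)-u\left( x_h, x_3 \right)=\left( y_3-x_3 \right)\int_0^1 \partial_3 u\left( x_h, x_3+\tau\left( y_3-x_3 \right) \right)\d \tau$, so that the commutator becomes a double integral in $\left( y_3,\tau \right)$ with $\tau$ a genuine integration variable over $\left[ 0,1 \right]$. Then apply Minkowski also in $\tau$; for each \emph{fixed} pair $\left( z_3,\tau \right)$ the two factors really are vertical translates of $\partial_3 u$ and $v$, your horizontal and vertical H\"older steps apply verbatim, the translations drop out by invariance, and the $\tau$-integral contributes a factor $1$, giving the constant $C=\| \tilde h \|_{L^1\left( \R \right)}$ you claim. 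Two cosmetic remarks: the kernel identity \eqref{eq:dyadic_as_convolution} on the torus rests on the periodization of $h=\mathcal{F}^{-1}\varphi$, which is what legitimates majorizing the $z_3$-integral by $\| \tilde h \|_{L^1\left( \R \right)}$; and the paper's display \eqref{eq:commutator_as_convolution} writes $h\left( x_h, x_3-y_3 \right)$ where $h\left( 2^{q}\left( x_3-y_3 \right) \right)$ is meant --- your version of the kernel is the correct one.
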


\subsection{Preliminary results on the \NS\ equations with zero vertical diffusivity.}

A primary tool in the study of the convergence of the primitive equations \eqref{primitive equations}  to the limit system \eqref{lim syst} will be a careful study of the Navier-Stokes equation with only horizontal diffusion
\begin{equation}
\tag{NS$_h$}\label{NS horozontal}
\left\lbrace
\begin{array}{lll}
\partial_t v+ v\cdot \nabla v -\nu_h \Delta_h v+\nabla p=0& \text{in} & \mathbb{R}_+\times\mathbb{T}^3\\
\dive v =0\\
\left. v\right|_{t=0}=v_0
\end{array}
\right.
\end{equation}

This equation in the case of the periodic data on $\mathbb{T}^3$ has been carefully studied in \cite{paicu_NS_periodic}, hence we will refer to this work as we go along.\\

Indeed the equation satisfied by $U^\varepsilon$, i.e. \eqref{filtered systemP} is a Navier-Stokes equation with zero vertical diffusion and hence can be well described by the system \eqref{NS horozontal}. Here we start giving the following energy estimate for three-dimensional anisotropic Navier-Stokes equations
\begin{prop}\label{propagation norms NS horizontal}
Let $s\geqslant s_0 >\frac{1}{2}$ and $v$ a solution of \eqref{NS horozontal} belonging to the space $\mathcal{C}\left( \left[0,T\right];  H^{0,s}\right)$ whose horizontal gradient $\nh v \in L^2\left( \left[0,T\right]; H^{0,s}\right)$. Let us suppose moreover that $v=\underline{v}+\tilde{v}$ where $\underline{v}$ is the horizontal average of $v$ and $\tilde{v}$ has zero horizontal mean. Suppose moreover that $\left\|\underline{v}\left(t\right)\right\|_{H^{s_0}_v}\leqslant c a_3^{-1}\nu_h$ in $\left[0,T\right]$, then for $t\in \left[0,T\right]$
\begin{equation*}
\left\| v(t)\right\|_{H^{0,s}}^2+\nu_h\int_0^t \left\| \nh v\left(\tau \right)\right\|_{H^{0,s}}^2\d \tau
\leqslant \left\| v_0\right\|_{H^{0,s}}^2 \exp \left( C \int_0^t \left\| \nh v\left(\tau \right)\right\|_{H^{0,s_0}}^2\d \tau+C \int_0^t 
\left\| v\left(\tau \right)\right\|_{H^{0,s_0}}^2
\left\| \nh v\left(\tau \right)\right\|_{H^{0,s_0}}^2 \right)
\end{equation*}
\end{prop}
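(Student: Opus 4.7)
The plan is to perform an energy estimate localized in vertical frequency. I would apply the dyadic vertical block $\triangle^v_q$ to the equation, take the $L^2(\mathbb{T}^3)$ inner product with $\triangle^v_q v$, observe that the pressure term vanishes by the divergence-free condition, and obtain
\begin{equation*}
\frac{1}{2}\frac{d}{dt}\left\|\triangle^v_q v\right\|_{L^2}^{2} + \nu_h\left\|\nh\triangle^v_q v\right\|_{L^2}^{2} = -\int_{\mathbb{T}^3}\triangle^v_q(v\cdot\nabla v)\cdot \triangle^v_q v\, \dx.
\end{equation*}
It then remains to bound the nonlinear right-hand side, to weight by $2^{2qs}$, and to sum in $q$. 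Writing $v=\underline{v}+\tilde{v}$ and noting that the divergence-free condition together with horizontal periodicity forces $\underline{v}^3\equiv 0$, the convective term splits into $\underline{v}^h\cdot\nh v$, $\tilde{v}^h\cdot\nh v$ and $\tilde{v}^3\partial_3 v$.

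For $\tilde{v}^h\cdot\nh v$ I would use the asymmetric Bony decomposition \eqref{bony decomposition asymmetric}, estimate the paraproduct, commutator and remainder pieces via Lemma \ref{estimates commutator} and the anisotropic Gagliardo--Nirenberg inequalities of Corollary \ref{L4Linf embedding} (applied to $\tilde{v}$, which has zero horizontal average), and finally absorb the resulting horizontal derivative of $v$ into the dissipation $\nu_h\|\nh v\|_{H^{0,s}}^2$ after a Young inequality. The mean piece $\underline{v}^h\cdot\nh v$ is a transport-type term whose contribution is controlled by $\|\underline{v}\|_{H^{s_0}_v}\|\nh v\|_{H^{0,s}}\|v\|_{H^{0,s}}$ thanks to one-dimensional product laws; the smallness hypothesis $\|\underline{v}(t)\|_{H^{s_0}_v}\leqslant c\, a_3^{-1}\nu_h$ is exactly what allows this term to be absorbed by the horizontal dissipation.

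The main obstacle, as emphasized in the introduction, is the term $\tilde{v}^3\partial_3 v$, for which there is no vertical viscosity at our disposal. Here I would again apply \eqref{bony decomposition asymmetric}, and for the leading paraproduct $S^v_{q-1}\tilde{v}^3\,\partial_3\triangle^v_q v$ integrate by parts in $x_3$ against the test function $\triangle^v_q v$ so that the $\partial_3$ falls on $S^v_{q-1}\tilde{v}^3$; using the divergence-free relation $\partial_3\tilde{v}^3=-\diveh\tilde{v}^h$ this trades the vertical derivative for a horizontal one and produces a term that is controlled by $\|\nh\tilde{v}\|_{H^{0,s_0}}$ times $\|\triangle^v_q v\|_{L^2}\|\triangle^v_q v\|_{L^2}$-type quantities, hence absorbable in $\nu_h\|\nh v\|_{H^{0,s}}^2$. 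The commutator and remainder terms of the Bony decomposition carry a factor $2^{-q}$ (Lemma \ref{estimates commutator}) that compensates the bad $\partial_3$ via Bernstein's inequality (Lemma \ref{bernstein inequality}), and can then be estimated by the same embeddings of Corollary \ref{L4Linf embedding}.

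Gathering all contributions, multiplying by $2^{2qs}$ and summing in $q$ (the coupling between $q$ and $q'$ in Bony's decomposition being handled by the usual $\ell^1/\ell^2$ summability of the sequences $b_q$ and $c_q$ introduced in \eqref{eq:reg_dyadic_blocks}), I expect to reach an inequality of the form
\begin{equation*}
\frac{d}{dt}\left\|v\right\|_{H^{0,s}}^{2} + \nu_h\left\|\nh v\right\|_{H^{0,s}}^{2} \leqslant C\,\Big(\left\|\nh v\right\|_{H^{0,s_0}}^{2} + \left\|v\right\|_{H^{0,s_0}}^{2}\left\|\nh v\right\|_{H^{0,s_0}}^{2}\Big)\left\|v\right\|_{H^{0,s}}^{2},
\end{equation*}
after which Gronwall's lemma yields the stated estimate. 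The delicate point will really be the $\tilde{v}^3\partial_3 v$ term: the whole argument relies on the fact that, within each vertical dyadic block, the only place where a vertical derivative genuinely appears can be rewritten in terms of $\nh$ through the divergence-free condition, so that the missing vertical viscosity is effectively replaced by the horizontal one.
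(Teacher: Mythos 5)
The paper does not prove this proposition itself: it is taken from M.~Paicu's works \cite[Proposition 3.1]{paicu_rotating_fluids} and \cite{paicu_NS_periodic}, and your sketch follows exactly that anisotropic Littlewood--Paley energy method --- vertical dyadic localization, the asymmetric Bony decomposition \eqref{bony decomposition asymmetric}, the commutator Lemma \ref{estimates commutator}, the zero-horizontal-mean Gagliardo--Nirenberg inequalities, trading $\partial_3 \tilde{v}^3$ for $-\diveh \tilde{v}^h$, and absorption of the mean-flow terms thanks to the smallness of $\left\| \underline{v} \right\|_{H^{s_0}_v}$ --- which is also the toolbox deployed in Section \ref{Section energy estimates} of the paper. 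The only step to make explicit when writing it up is that absorbing the $\underline{v}$-terms into $\nu_h \left\| \nh v \right\|_{H^{0,s}}^2$ requires first replacing the factor $\left\| v \right\|_{H^{0,s}}$ by $\left\| \tilde{v} \right\|_{H^{0,s}} \lesssim \left\| \nh \tilde{v} \right\|_{H^{0,s}}$ (horizontal Poincar\'e, legitimate because those pairings only involve the zero-horizontal-mean part), which is where the factor $a_3^{-1}$ in the smallness hypothesis enters.
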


\begin{rem}
Proposition \ref{propagation norms NS horizontal} has been proved by M. Paicu for  $s\geqslant s_0 >\frac{1}{2}$. Indeed in \cite{paicu_rotating_fluids} the limit system was a coupling between a 2d \NS\  system and the oscillating part. Indeed the 2d \NS\  system is globally well posed if the initial data depends on $ x_h $ only and it is in $ \cPHs $ for $s\geqslant 0$. The oscillating part instead is globally well posed in   $ \cPHs $ for $ s>1/2 $. In our case though the limit flow is the sum of $ V_{\QG} $ satisfying  \eqref{quasi geostrophic equation} and the oscillating part $ U_{\osc} $ which are two three-dimensional vector fields. Now, $ U_{\osc} $ is globally well posed in $ \cPHs $ for $ s>1/2 $ (see Proposition \ref{propagation H0s norms for oscillating part}), but $ V_{\QG} $ is globally well posed in $ \cPHs $ for $ s\geqslant 1 $ (see Proposition \ref{propagation H0s norms for Omega} and Lemma \ref{higher regularity VQG}). This is why in the following as long as we are required to apply Proposition \ref{propagation norms NS horizontal} we shall use the index $ s_0>1 $ instead that $ s_0>1/2 $.\fine
\end{rem}

For a proof of Proposition \ref{propagation norms NS horizontal} we refer to the works \cite[Proposition 3.1]{paicu_rotating_fluids} and \cite{paicu_NS_periodic}.
\\

Given any vector field $ A $ we denote 
\begin{equation*}
\underline{A} \left( x_3 \right)=\frac{1}{\left| \T^2_h \right|}\int_{\T^2_h} A\left( y_h, x_3 \right)\d y_h,
\end{equation*}
and
\begin{equation*}
\tilde{A}\left( x_h, x_3 \right)= A \left( x_h, x_3 \right) - \underline{A} \left( x_3 \right).
\end{equation*}

\begin{prop}\label{uniqueness anisotropic NS}
Let $s>\frac{1}{2}$ and $\mathbb{T}^3$ an arbitrary torus  and $w\in \mathcal{C} \left( \left[0,T\right]; H^{0,s}\right), \nh w\in L^2\left(\left[0,T\right]; H^{0,s} \right)$ a solution of the problem
\begin{equation}
\left\lbrace
\begin{array}{l}
\partial_t w + w\cdot \nabla w +u\cdot \nabla w +w\cdot \nabla u -\nu_h\Delta_h w +\nabla p= f\\
\dive w = 0\\
\left.w\right|_{t=0}=w_0,
\end{array}
\right.
\end{equation}
where $u\in \mathcal{C} \left(\left[0,T\right]; H^{0,s}\right), \nh u\in L^2\left(\left[0,T\right]; H^{0,s} \right)$ a divergence-free vector field  such that its horizontal average satisfies $\left\|\underline{u}\left(t\right)\right\|_{H^s_v}\leqslant c a_3^{-1}\nu_h$ for all $t\in \left[0,T\right]$ and $f= \underline{f}+\tilde{f}$ is such that
\begin{align*}
\underline{f}\in & L^1\left( \left[0,T\right]; H_v^{-\frac{1}{2}} \right), \\
\tilde{f} \in & L^2\left(\left[0,T\right]; H^{-1,-\frac{1}{2}}\right).
\end{align*}
Then there exists a constant $C>0$ such that we have for all $t\in \left[0,T\right]$
\begin{multline*}
\left\|w(t)\right\|^2_{H^{0,-\frac{1}{2}}} +\nu_h \int_0^t \left\|\nh w (s)\right\|^2_{H^{0,-\frac{1}{2}}} \d s \\ 
\leqslant
C \left( \left\|w_0\right\|^2_{H^{0,-\frac{1}{2}}} +\int_0^t \left\|\tilde{f}(s)\right\|^2_{H^{-1,-\frac{1}{2}}}\d s +\int_0^t \left\| \underline{f}(s)\right\|_{H^{-\frac{1}{2}}_v}\d s
\right) \\
\times \exp\left\lbrace \int_0^t \left\|\underline{f}(s)\right\|_{H_v^{-\frac{1}{2}}}\d s + \int_0^t \left( 1+ \left\| w (s)\right\|^2_{H^{0,s}}\right)\left\|\nh w (s)\right\|^2_{H^{0,s}} \d s\right.
 +
\left.\int_0^t \left( 1+ \left\| u (s)\right\|^2_{H^{0,s}}\right)\left\|\nh u (s)\right\|^2_{H^{0,s}} \d s
\right\rbrace .
\end{multline*}
\end{prop}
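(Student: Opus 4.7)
The plan is to carry out an anisotropic $H^{0,-1/2}$ energy estimate on $w$, localizing in the vertical frequencies via the Littlewood--Paley blocks of Section~\ref{elements LP} and closing the argument by Gronwall. Concretely I would apply $\triangle^v_q$ to the equation, take the $\cPLtwo$ inner product with $\triangle^v_q w$, multiply by $2^{-q}$ and sum over $q\geqslant -1$. The pressure drops out of $\dive w=0$, while the horizontal dissipation produces the good term $\nu_h\|\nh w\|^2_{H^{0,-1/2}}$ that will absorb part of the nonlinear and forcing contributions. To keep track of horizontal means I split $u=\underline u+\tilde u$ and $w=\underline w+\tilde w$; the incompressibility combined with zero global mean forces $\underline u^3\equiv 0$, so that $\underline u\cdot\nabla=\underline u^h(x_3)\cdot\nh$ only acts horizontally.

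The nonlinear terms $w\cdot\nabla w$, $u\cdot\nabla w$ and $w\cdot\nabla u$ are each decomposed according to the mean/oscillating splitting and then treated with the asymmetric Bony decomposition \eqref{bony decomposition asymmetric}. Each dyadic piece is estimated with the anisotropic product rule of Lemma~\ref{lem:prod_Sob_anisotropic}, the Gagliardo--Nirenberg inequalities \eqref{GN type ineq}--\eqref{GN type ineq2}, the embedding of Corollary~\ref{L4Linf embedding} and \eqref{eq:LinfL4embedding}, together with Bernstein's Lemma~\ref{bernstein inequality} and the commutator estimate of Lemma~\ref{estimates commutator}. The key algebraic manoeuvre, needed because vertical diffusion is absent, is the systematic replacement $\partial_3 v^3=-\diveh v^h$ whenever $\partial_3$ falls on the vertical component of a divergence-free factor: this trades a lost vertical derivative for a horizontal one, which is recovered by the parabolic smoothing $\nh w\in L^2(H^{0,-1/2})$. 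The interaction $(\underline u\cdot\nh w,w)_{H^{0,-1/2}}$ is handled by putting $\underline u$ in $L^\infty_v$ via the embedding $H^{s_0}_v\hra L^\infty_v$ ($s_0>1/2$) and absorbing it into $\nu_h\|\nh w\|^2_{H^{0,-1/2}}$ thanks to the smallness hypothesis $\|\underline u(t)\|_{H^{s_0}_v}\leqslant c\,a_3^{-1}\nu_h$; this is exactly the role played by the small-mean condition, in the spirit of \cite{paicu_NS_periodic}.

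The forcing is handled in two pieces. For $\tilde f$ one pairs $\triangle^v_q\tilde f$ with $\triangle^v_q w$ through the duality $H^{-1,-1/2}\times H^{1,-1/2}$ and uses Young's inequality to absorb half of $\nu_h\|\nh w\|^2_{H^{0,-1/2}}$, leaving an additive $\|\tilde f\|^2_{H^{-1,-1/2}}$. For $\underline f$ one uses that $(\underline f,\triangle^v_q w)_\cPLtwo=|\T^2_h|(\underline f,\triangle^v_q\underline w)_{L^2_v}$ and the vertical duality $|(\underline f,\underline w)_{L^2_v}|\leqslant \|\underline f\|_{H^{-1/2}_v}\|\underline w\|_{H^{1/2}_v}$; an interpolation of $H^{1/2}_v$ between $H^{-1/2}_v$ and the $H^{s_0}_v$-control of $\underline w$ inherited from $w\in H^{0,s_0}$, combined with the elementary $ab\leqslant a(1+b^2)$, produces a contribution of the shape $\|\underline f\|_{H^{-1/2}_v}\,(1+\|w\|^2_{H^{0,-1/2}})$, which is precisely what explains the linear-plus-Gronwall appearance of $\int_0^t\|\underline f\|_{H^{-1/2}_v}$ in the stated estimate.

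Putting all contributions together one obtains a differential inequality of the form
\begin{equation*}
\tfrac{\d}{\d t}\|w\|^2_{H^{0,-1/2}}+\tfrac{\nu_h}{2}\|\nh w\|^2_{H^{0,-1/2}}\leqslant \Phi(t)\bigl(1+\|w\|^2_{H^{0,-1/2}}\bigr)+\|\tilde f\|^2_{H^{-1,-1/2}},
\end{equation*}
with $\Phi(t)=\|\underline f(t)\|_{H^{-1/2}_v}+(1+\|w(t)\|^2_{H^{0,s}})\|\nh w(t)\|^2_{H^{0,s}}+(1+\|u(t)\|^2_{H^{0,s}})\|\nh u(t)\|^2_{H^{0,s}}$ integrable on $[0,T]$ by hypothesis; Gronwall's lemma then delivers the claim. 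The main obstacle is the bookkeeping of the nonlinear interactions at negative vertical regularity: the trade $\partial_3 v^3=-\diveh v^h$ plus the asymmetric decomposition \eqref{bony decomposition asymmetric} are what make it possible to bound all cross-terms by the $H^{0,s}$ energies of $u$ and $w$ times $\|w\|^2_{H^{0,-1/2}}$, without asking any vertical regularity beyond $-1/2$ on $w$ itself; I expect the pieces $w\cdot\nabla\underline u$ and $\underline w\cdot\nh\tilde u$ to be the most delicate to organize in this scheme.
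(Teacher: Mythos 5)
The paper does not actually prove this proposition: its ``proof'' is the single citation \cite[Proposition 3.2, p.~182]{paicu_rotating_fluids}, so the only meaningful comparison is with the argument in that reference. Your outline reproduces exactly the strategy used there: a vertical Littlewood--Paley energy estimate with weight $2^{-q}$ giving the $H^{0,-1/2}$ norm, cancellation of the pressure by $\dive w=0$, the splitting into horizontal means and oscillations with $\underline u^3=\underline w^3=0$, absorption of the $\underline u^h\cdot\nh$ interaction into $\nu_h\|\nh w\|^2_{H^{0,-1/2}}$ thanks to the smallness hypothesis $\|\underline u(t)\|_{H^s_v}\leqslant c\,a_3^{-1}\nu_h$, the substitution $\partial_3 v^3=-\diveh v^h$ to trade the missing vertical derivative for a horizontal one, duality plus Young for $\tilde f$, vertical duality plus interpolation for $\underline f$, and Gronwall. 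So the approach is the right one and is the one behind the cited result. The caveat is that, as written, this is a plan rather than a proof: the whole technical substance of the proposition lies in the dyadic estimates of the cross terms at vertical regularity $-1/2$ --- precisely the pieces $w\cdot\nabla\underline u$, $\underline w\cdot\nh\tilde u$ and the commutator term produced by the asymmetric decomposition \eqref{bony decomposition asymmetric}, which you name but do not estimate, and which take several pages in \cite{paicu_rotating_fluids} (the analogous, but positive-regularity, computations in Section \ref{Section energy estimates} give an idea of the bookkeeping involved). To make your argument self-contained you would have to carry those estimates out in full; otherwise the correct move is the one the paper makes, namely to quote Paicu's Proposition 3.2 directly.
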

\begin{proof}
\cite[Proposition 3.2, p. 182]{paicu_rotating_fluids}
\end{proof}

\begin{rem}
Let us remark the fact that we impose two different kind of regularities on the exterior force. In order to obtain global results in time we shall apply this proposition for bulk forces which are $f\in L^1\left( \mathbb{R}_+, H^{-1,-1/2} \right)\cap L^2\left( \mathbb{R}_+, H^{-1,-1/2} \right)$.\fine
\end{rem}

\section{The  filtering operator $\mathbb{P}\mathcal{A}$.}\label{filtering operator}
Let us consider the following linear equation
\begin{equation}\label{eq:linear_system}
\left\lbrace
\begin{array}{l}
\partial_t V_{\text{L}}+ \PA \ V_{\text{L}}=0\\
\left. V_{\text{L}} \right|_{t=0}= V_0
\end{array}
\right.,
\end{equation}
where $\mathbb{P}$ is the Leray projection onto the divergence free vector fields, without changing $V_{\text{L}}^4$. The Fourier multiplier associated to $\mathbb{P}$ has the following form
\begin{equation}
\label{eq:Leray_projector}
\mathbb{P}_n=1-\frac{1}{|\check{n}|^2}\left(
\begin{array}{cccc}
\frac{n_1^2}{a_1^2} & \frac{n_1 n_2}{a_1 a_2} & \frac{n_1n_3}{a_1a_3} & 0\\
\frac{n_2 n_1}{a_2 a_1} & \frac{n_2^2}{a_2^2} & \frac{n_2 n_3}{a_2 a_3} &0\\
\frac{n_3 n_1}{a_3 a_1} & \frac{n_3 n_2}{a_3 a_2} &  \frac{n_2^3}{a_2^3} &0\\
0&0&0&0
\end{array}  
\right),
\end{equation}
where $\left|\check{n}\right|^2=\sum_j\frac{n_j^2}{a_j^2}$ and $1$ is the identity matrix on $\mathbb{C}^4$. The operator $ \mathcal{A} $ was defined in \eqref{matrici}. The solution to the linear equation is indeed $ V_{\text{L}}(\tau)= e^{-\tau \PA} V_0$. We denote the propagator operator $ e^{-\tau \PA}  $ as $ \mathcal{L}\left( \tau \right)  $. 
One can compute the matrix $\mathbb{P}_n\mathcal{A}$
\begin{equation}
\label{eq:PnA}
\mathbb{P}_n\mathcal{A}=
\left(
\begin{array}{cccc}
 -\frac{{n}_1 {n}_2}{\left| n \right|^2} & -1+\frac{{n}_1^2}{n^2} & 0 & -\frac{{n}_1 {n}_3}{F \left| n \right|^2} \\[3mm]
 1-\frac{{n}_2^2}{\left| n \right|^2} & \frac{{n}_1 {n}_2}{\left| n \right|^2} & 0 & -\frac{{n}_2 {n}_3}{F \left| n \right|^2} \\[3mm]
 -\frac{{n}_2 {n}_3}{\left| n \right|^2} & \frac{{n}_1 {n}_3}{\left| n \right|^2} & 0 & \frac{1}{F} \left( {1-\frac{{n}_3^2}{\left| n \right|^2}} \right) \\[3mm]
 0 & 0 & -\frac{1}{F} & 0
\end{array}
\right)
, 
\end{equation}
whose eigenvalues are 
\begin{align}
\omega^0(n)&=0, & i\;\omega^\pm(n)=& \pm  \frac{i}{F} \frac{\sqrt{{\left|\check{n}_h\right|^2}+F^2\check{n}_3^2}}{\left|\check{n}\right|},\label{eigenvalues}
\end{align}
where the eigenvalue $\omega^0$ has multiplicity 2, and we can write $\omega^\pm=\pm \omega$. The associated normalized eigenvectors are 
\begin{equation}
\begin{aligned}
e^0(n)=& \; \frac{1}{\left| \check{n} \right|_F}
 \left(
\begin{array}{c}
{-\check{n}_2}\\
{\check{n}_1}\\
0\\
{-F\check{n}_3}
\end{array}
\right), &
e^\pm(n)=&\; \frac{1}{{\left( 1+F^2 \left| \omega \left( n \right) \right|^2 \right) \left| \check{n}_h \right|^2 \left| \check{n} \right|^2}}
\left(
\begin{array}{c}
{- F\check{n_3}\left(  \check{n_2}\mp i \check{n_1} \omega \left( n \right)  \right)}\\
 {F\check{n}_3 \left( \check{n}_1 \pm i \check{n}_2 \omega \left( n \right) \right)}\\
 {\mp i F \omega(n)\left|\check{n_h}\right|^2}
\\
{\left|\check{n_h}\right|^2}
\end{array}
\right),\label{cP:eigenvectors}
\end{aligned}
\end{equation}
where $ \left| \check{n} \right|_F= \sqrt{ \check{n}_1^2 + \check{n}_2^2 + F^2 \check{n}_3^2 } $,  if $\left|n_h\right|, n_3\neq 0$, otherwise, respectively
\begin{align}
\label{eigenvectors zero}
e^\pm(0, n_3)=&\frac{1}{\sqrt{2}}\left(
\begin{array}{c}
\pm i \\
1\\
0\\
0
\end{array}
\right),
&
e^\pm(n_h, 0)=&\frac{1}{\sqrt{2}}\left(
\begin{array}{c}
0\\
0\\
\pm i\\
1
\end{array}
\right)
\end{align}
The eigenvalue $ \omega^0 $ has algebraic multiplicity 2, but there is only one eigenvector related to it, namely $ e^0 $. Indeed the matrix $ \PA $ has a nontrivial $ 2\times 2 $ Jordan block structure associated to the eigenvalue 0, hence the fourth is a generalized eigenvector $ \tilde{e}^0 $. This though is not divergence-free, hence it shall play no role in the evolution of the system \eqref{primitive equations}, for this reason it is omitted.   For a more detailed discussion on the spectral properties of the linear system we refer the reader to the papers \cite{embid_majda} and \cite{embid_majda2}. 
\\
Once we have  introduced the eigenvectors in \eqref{cP:eigenvectors} we can consider a generic divergence-free vector field $ V $ as direct sum  of the elements belonging to $ \C e^0 $ and $ \C e^-\oplus \C e^+ $. We shall call the projection of $ V $ onto $ \C e^0 $ the {\qg} part, while the projection onto $ \C e^-\oplus \C e^+ $ the oscillating part. The projection can be explicitly defined as follows
\begin{equation}
\begin{aligned}\label{eq:QG-osc}
V_{\QG} = & \mathcal{F}^{-1} \left( \left(\left. \hat{V}_n \right| e^0 \left( n \right)  \right)_{\mathbb{C}^4} e^0\left( n \right) \right), & \hspace{1cm}
V_{\osc} = & \sum_{i=\pm} \mathcal{F}^{-1} \left( \left(\left. \hat{V}_n \right| e^i \left( n \right)  \right)_{\mathbb{C}^4}  e^i\left( n \right) \right).
\end{aligned}
\end{equation}
The element $ V_{\osc} $ is called oscillating because is the only part of the initial vector field $ V_0 $ which is affected in the evolution of the system \eqref{eq:linear_system}, $ V_{\QG} $ stays still being in the kernel of $ \PA $.\\
We would like to point out the following relevant fact, the non-oscillating eigenspace $ \mathbb{C} e^0 $ is orthogonal to the oscillating eigenspace $ \mathbb{C}e^- \oplus \mathbb{C}e^+ $, whence in particular it is always true that $ V_{\QG}\perp V_{\osc} $.
\\

\noindent
In the following we shall denote as $e^a(n)$ the eigenvector of $\mathbb{P}_n\mathcal{A}$ associated with the eigenvalue $i \ \omega^a$, i.e.  
$$e^{\tau \mathbb{P}_n\mathcal{A}} \ \left( e^{in\cdot x}  e^a(n) \right)=\exp\left\{in \cdot x+ \ \tau \omega^a(n) \right\}e^a(n).$$
Let us define $U^\varepsilon=\Lminus V^\varepsilon$, we want to reformulate \eqref{primitive equations} in terms of the new unknown $ U^\varepsilon $. A straightforward computation shows that the vector field $ U^\varepsilon $ satisfies the following equation
\begin{equation}
\label{filtered systemP}\tag{FS$_\varepsilon$}
\left\lbrace
\begin{array}{l}
\partial_t U^\varepsilon+\mathcal{Q}^\varepsilon \left(U^\varepsilon,U^\varepsilon\right)-
\mathbb{D}^\varepsilon U^\varepsilon= 0\\
\dive v^\varepsilon =0\\
\bigl. U^\varepsilon\bigr|_{t=0}=V_0
\end{array}
\right.
\end{equation}
where
\begin{align}
\mathcal{Q}^\varepsilon \left(U^\varepsilon,U^\varepsilon\right)=& \Lminus \mathbb{P} \left[ \left( \Lplus U^\varepsilon \cdot \nabla\right) \Lplus U^\varepsilon\right]\label{Qeps}\\
\mathbb{D}^\varepsilon U^\varepsilon=& \Lminus\mathbf{D}\Lplus U^\varepsilon.\label{Deps}
\end{align}
We shall call the system \eqref{filtered systemP} the filtered system.\\

Before using the above results to find the limit of \eqref{filtered systemP} we introduce the "potential vorticity"
\begin{equation}
\label{potential vorticity}
\Omega^\varepsilon = -\partial_2  U^{1,\varepsilon}+ \partial_1 U^{2,\varepsilon}-F\partial_3U^{4,\varepsilon}.
\end{equation}

The potential vorticity has been introduced by J.-Y. Chemin in \cite{chemin_prob_antisym} and it is now a well-known tool in the study of primitive equation (see \cite{charve_primitive},\cite{charve_ngo_primitive},\cite{gallagher_schochet},\cite{iftimie_primitive}). The diagonalization explained in \eqref{eq:QG-osc} can as well be obtained by writing $U^\varepsilon=U^\varepsilon_{\QG}+ U_{\osc}^\varepsilon$, with
\begin{equation}
U^\varepsilon_{\QG}=\left(
\begin{array}{cccc}
-\partial_2\Delta^{-1}_F\Omega^\varepsilon, & \partial_1 \Delta^{-1}_F\Omega^\varepsilon, & 0, & -\partial_3 F\Delta^{-1}_F\Omega^\varepsilon
\end{array}\right),
\end{equation}
where $\Delta_F^{-1}$ denotes the Fourier multiplier
$$
-\Delta^{-1}_F u =\mathcal{F}^{-1}\left( \left(  \frac{1}{\check{n}_1^2+\check{n}_2^2+F^2\check{n}_3^2} \  \hat{u}_n \right)_n \right).
$$
We remark the fact that since $U^\varepsilon_{\QG}$ belongs to the kernel of $\mathbb{P}\mathcal{A}$  we obtain indeed that $U^\varepsilon_{\QG}= V^\varepsilon_{\QG}$.\\

One of the major problem is to understand exactly which is the limit for $\varepsilon\to 0$ of the forms $\mathcal{Q}^\varepsilon, \mathbb{D}^\varepsilon$ and, if possible, how to give a closed formulation for it. 
 To do so we use the explicit formulation of $\mathcal{Q}^\varepsilon, \mathbb{D}^\varepsilon$ given in equation \eqref{Qeps} and \eqref{Deps}. Let us decompose divergence-free vector field $U$  as:
$$
\mathcal{F} U (n)= \sum_{a\in \{-,0,+\}} U^a \left( n \right)= \sum_{a\in \{-,0,+\}} \left( \left. \mathcal{F}U \left( n \right) \right| e^a \left( n \right) \right)_{\mathbb{C}^4} e^a(n),
$$
and after some  computations we  obtain that;
\begin{equation}
\label{forma bilin epsilon non zero}
\mathcal{F}\left( \mathcal{Q}^\varepsilon \left( U, V \right)\right)(n)
= \sum_{a,b,c\in \{-,0,+\}} e^{-i\frac{t}{\varepsilon}\left( \omega^a(k)+\omega^b(n-k)-\omega^c(n)\right)} 
 \left( \left.  \sum_{j=1,2,3}\left( n_j-k_j\right)  U^{a,j}(k) V^{b}(n-k)\right| e^c(n)\right)_{\mathbb{C}^4} e^c(n) .
\end{equation}
In the following we will write $\omega^{a,b,c}_{k,n-k,n}=\omega^a(k)+\omega^b(n-k)-\omega^c(n)$ for the sake of conciseness, as well as $\omega^{a,b}_n=\omega^a(n)+\omega^b(n)$. With $ U^{a,j} $ we denote the $ j $-th component of the vector $ U^a = \left(\left. \hat{U} \right| e^a  \right)_{\C^4} \ e^a $ for $ a=0, \pm $.\\
Similar calculations give us that
\begin{equation}
\label{laplaciano epsilon non zero}
\mathbb{D}^\varepsilon U= \mathcal{F}^{-1}\left( \sum_{a,b\in \{-,0,+\}} e^{-i\frac{t}{\varepsilon}\omega^{a,b}_n} \left( \left. \mathbf{D} (n) U^{b}(n)\right| e^a(n)\right)_{\mathbb{C}^4} e^a(n)\right),
\end{equation}
where $ \mathbf{D} (n) $ is the Fourier symbol associated to the second-order differential operator $ \mathbf{D} $, see \eqref{matrici}.\\
Letting $\varepsilon\to 0$ we only have to use the non stationary phase theorem ( see, for instance \cite{AlinhacGerard}, \cite{bahouri_chemin_danchin_book}, \cite{Stein93}) to obtain that, if $U,V$ are smooth functions;
\begin{align}
\mathcal{Q}\left( U,V\right)= & \; \mathcal{F}^{-1}\left(
\mathbb{P}_n
 \sum_{\omega^{a,b,c}_{k,n-k,n}=0} \left(\left. \sum_{j=1}^3 \left( n_j-k_j \right) U^{a,j}(k) V^{b}(n-k)\right| e^c(n)\right)_{\mathbb{C}^4} e^c(n)\right)\label{limit quadratic},\\
\mathbb{D}U= &\; \mathcal{F}^{-1}\left( \sum_{\omega_n^{a,b}=0} \left( \left.  \mathbf{D} (n) U^{b}(n) \right| e^a(n)\right)_{\mathbb{C}^4} e^a(n)\right).\label{limit linear}
\end{align}
Here we implicitly define as $ \mathbf{D} \left( n \right) $ the Fourier symbol assciated to the matrix $ \bf D $ deifned in \eqref{matrici}.\\

\subsection{The global splitting of the limit bilinear form $ \mathcal{Q} $.}\label{sec:global_splitting_bilinear}

This section is aimed to explain how the bilinear interaction $ \mathcal{Q} $ defined in \eqref{limit quadratic} behaves along non-oscillating and oscillating subspaces $ \C e^0 $ and $ \C e^-\oplus \C e^+ $. Such kind of result is very well known in the theory of singular perturbation problems in periodic domains, and the results that we present here have been already proved by several authors in \cite{BMN-P}, \cite{embid_majda} and \cite{embid_majda2}, for this reason we will not prove them but instead we will refer to the works mentioned and references therein.\\
The results presented in the present section derive from the geometrical properties of vector decomposed as in \eqref{eq:QG-osc} and from the localization in the frequency space of the limit bilinear form $ \mathcal{Q} $, localization which reads as
\begin{equation*}
\set{ \left( k,n \right)\in \mathbb{Z}^6 \  \Big| \ \omega^a\left( k \right) + \omega^b \left( n-k \right)= \omega^c \left( n \right), \ a,b,c \in \set{ 0, \pm }\   },
\end{equation*}
where the eigenvalues are defined in \eqref{eigenvalues}.\\

\noindent In this section we will always consider \textit{smooth} vector fields, in particular given a smooth vector field $ W $ we define
\begin{align*}
\Omega_W = & \ -\partial_2 W^2 +\partial_1 W^2 -F \partial_3 W^4,&
W_{\QG} = & \ \left( 
\begin{array}{c}
-\partial_2\\
\partial_1\\
0\\
-F \partial_3
\end{array}
 \right) \Delta_{F}^{-1} \Omega_{W}
 =  \ \left( w_{\QG}, W^4_{\QG} \right), &
 W_{\osc} = & \ W-W_{\QG}.
\end{align*}
Obviously $ W_{\QG} $ and $ W_{\osc} $ are respectively the projections of $ W $ onto the non-oscillating and oscillating subspaces defined in \eqref{eq:QG-osc}.\\

\begin{lemma}\label{lem:splitting_bilinear_QG}
The following identity holds true
\begin{equation*}
\mathcal{F}^{-1} \left( \left(\left. \mathcal{F \ Q} \left( W, W \right) \right| \left| n \right|_F e^0  \right)_{\C^4} \right) = w_{\QG}\cdot \nabla \Omega_{W},
\end{equation*}
where $ \mathcal{Q} $ is defined in \eqref{limit quadratic} and $ e^0 $ is the non-oscillating eigenvector defined in \eqref{cP:eigenvectors}.
\end{lemma}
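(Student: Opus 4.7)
The plan is to reduce the claim to a pointwise algebraic identity for the quasi-geostrophic component of $W$. I would first decompose $W=W_{\QG}+W_{\osc}$ via \eqref{eq:QG-osc}, expand $\mathcal{Q}(W,W)$ bilinearly, and argue that only the QG--QG interaction contributes to the $e^{0}$-projection. Indeed, from \eqref{limit quadratic} this projection is supported on resonant triples $(a,b,0)$ with $\omega^a(k)+\omega^b(n-k)=0$; the mixed triples $(0,\pm,0)$ and $(\pm,0,0)$ are ruled out because $\omega^{\pm}$ is nonzero on $\mathbb{Z}^3\setminus\{0\}$ by \eqref{eigenvalues} and $W$ has zero mean, while the purely oscillating triples $(\pm,\pm,0)$ and $(\pm,\mp,0)$ are killed by the orthogonality of the associated trilinear coefficient to $e^{0}(n)$. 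This orthogonality is a classical geometric fact on the resonant set proved in \cite{embid_majda,embid_majda2,BMN-P} and recalled in the introduction of this subsection.

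Next, since $W_{\QG}\in\ker\PA$ the propagator $\mathcal{L}(\tau)$ acts trivially on it. Inspecting \eqref{Qeps} and \eqref{limit quadratic} restricted to the single surviving triple $(0,0,0)$ yields
\begin{equation*}
\mathcal{Q}\left( W_{\QG},W_{\QG}\right)=\mathbb{P}\left( w_{\QG}\cdot\nabla W_{\QG}\right).
\end{equation*}
Moreover a direct inspection of \eqref{cP:eigenvectors} yields the identity
\begin{equation*}
\left(\left. \hat{B}(n)\,\right|\,|n|_F \ e^{0}(n)\right)_{\C^4}=\hat{\Omega}_B(n)
\end{equation*}
(up to the derivative--Fourier convention) for any four-vector field $B$, where $\Omega_B=-\partial_2 B^1+\partial_1 B^2-F\partial_3 B^4$. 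Since the Leray projector $\mathbb{P}$ only modifies the first three components by adding a horizontal gradient, one has $\Omega_{\mathbb{P}B}=\Omega_B$, so the lemma reduces to the pointwise identity
\begin{equation*}
\Omega_{w_{\QG}\cdot\nabla W_{\QG}}=w_{\QG}\cdot\nabla\Omega_W.
\end{equation*}

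Finally I would verify this identity by direct computation using the stream function $\psi=\Delta_F^{-1}\Omega_W$, for which $w_{\QG}=(-\partial_2\psi,\partial_1\psi,0)$ and $W_{\QG}^4=-F\partial_3\psi$. Applying the operators $-\partial_2$, $\partial_1$, $-F\partial_3$ to the respective components of $w_{\QG}\cdot\nabla W_{\QG}$ and distributing the derivatives produces the transport term $w_{\QG}\cdot\nabla\Omega_W$ plus a remainder whose terms have the structure $\partial_i w_{\QG}\cdot \nh W_{\QG}^{\ell}$. After substituting the Biot--Savart expressions, each such remainder becomes a difference of two identical products of second derivatives of $\psi$, and therefore vanishes by Schwarz's theorem. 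The main obstacle is the first step: checking that the potentially nontrivial oscillating resonant triples do not contribute to the $e^{0}$-projection, which is a case-by-case algebraic exercise based on the explicit form of the eigenvectors $e^{\pm}$; the subsequent steps are computational and straightforward.
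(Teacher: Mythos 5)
Your proposal is essentially consistent with the paper, which in fact offers no proof of Lemma \ref{lem:splitting_bilinear_QG} at all: it defers entirely to \cite{embid_majda}, \cite{embid_majda2} (and \cite{BMN-P}). Your reduction is correct and fills in the routine steps the paper leaves implicit: the mixed triples $(0,\pm,0)$ and $(\pm,0,0)$ are indeed empty since $\omega^{\pm}(k)\neq 0$ for every $k\neq 0$ and the fields have zero mean; the pairing against $\left| n \right|_F e^0(n)$ is (up to the factor $i$ coming from the Fourier convention for derivatives) the Fourier coefficient of the potential vorticity; $\Omega_{\mathbb{P}B}=\Omega_B$ because the correction added by the Leray projector is a gradient in the first three components and leaves the fourth untouched; and the stream-function computation with $\psi=\Delta_F^{-1}\Omega_W$ does show that the commutator terms of the form $\partial_i w_{\QG}\cdot\nh W_{\QG}^{\ell}$ cancel pairwise by equality of mixed partials, leaving exactly $w_{\QG}\cdot\nabla\Omega_W$.

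Two caveats. First, the one genuinely nontrivial ingredient — that the resonant wave--wave triples $(\pm,\pm,0)$ and $(\pm,\mp,0)$ contribute nothing to the $e^0(n)$-projection — is invoked by citation rather than proved; this is exactly what the paper does, so your attempt matches its level of rigor, but be aware that this cancellation \emph{is} the real content of the lemma, and a self-contained proof would have to verify it from the explicit eigenvectors \eqref{cP:eigenvectors} on the resonant set. Second, your intermediate identity $\mathcal{Q}(W_{\QG},W_{\QG})=\mathbb{P}(w_{\QG}\cdot\nabla W_{\QG})$ is not literally true: restricting \eqref{limit quadratic} to the surviving triple $(0,0,0)$ yields only the projection onto $\C e^0$ of $\mathbb{P}(w_{\QG}\cdot\nabla W_{\QG})$, whereas the Leray-projected transport term generically has a nonzero oscillating component (which is precisely why the right-hand side of \eqref{quasi geostrophic equation} appears in projected form). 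This does not damage your argument, since you only ever pair the expression against $\left| n \right|_F e^0$, but the identity should be stated as an equality of $e^0$-components.
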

\begin{cor}\label{cor:splitting_bilinear_QG}
The following identity holds true
\begin{equation*}
\mathcal{F}^{-1} \left( \left(\left. \mathcal{F \ Q} \left( W, W \right) \right|  e^0  \right)_{\C^4} \right) = \left( 
\begin{array}{c}
-\partial_2\\
\partial_1\\
0\\
-F \partial_3
\end{array}
 \right) \Delta_{F}^{-1} \left( w_{\QG}\cdot \nabla \Omega_{W} \right).
\end{equation*}
\end{cor}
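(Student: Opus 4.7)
The plan is to derive the corollary as a direct algebraic consequence of Lemma \ref{lem:splitting_bilinear_QG} by a frequency-by-frequency manipulation. Since $|\check n|_F$ is a positive real scalar, the identity of the lemma is equivalent, after division by $|\check n|_F$, to
\[
\left(\mathcal{F}\mathcal{Q}(W,W)(n)\,\bigl|\,e^0(n)\right)_{\mathbb{C}^4} \;=\; \frac{1}{|\check n|_F}\,\widehat{w_{\QG}\cdot\nabla\Omega_W}(n), \qquad n\in\mathbb{Z}^3\setminus\{0\}.
\]
This scalar gives the amplitude of $\mathcal{Q}(W,W)$ along the non-oscillating eigenline $\mathbb{C}e^0(n)$, and, read as a statement about the projection onto the QG subspace, it already contains the full content of the corollary.

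To convert this scalar amplitude into the vector form appearing on the right-hand side of the corollary, I then multiply by $e^0(n)$ and take the inverse Fourier transform. Using the explicit expression
\[
e^0(n)=\frac{1}{|\check n|_F}\bigl(-\check n_2,\;\check n_1,\;0,\;-F\check n_3\bigr)^{T}
\]
from \eqref{cP:eigenvectors}, the resulting Fourier symbol becomes
\[
\frac{1}{|\check n|_F^{2}}\bigl(-\check n_2,\;\check n_1,\;0,\;-F\check n_3\bigr)^{T}\,\widehat{w_{\QG}\cdot\nabla\Omega_W}(n).
\]
I then recognise this as exactly the symbol of the Biot--Savart-type operator $\bigl(-\partial_2,\partial_1,0,-F\partial_3\bigr)^{T}\Delta_F^{-1}$ acting on the scalar $w_{\QG}\cdot\nabla\Omega_W$: the factor $|\check n|_F^{-2}$ supplies $-\Delta_F^{-1}$, while the four components $(-\check n_2,\check n_1,0,-F\check n_3)$ correspond (up to the imaginary units produced by each spatial derivative) to the differential operators $(-\partial_2,\partial_1,0,-F\partial_3)$. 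Taking $\mathcal{F}^{-1}$ delivers the right-hand side of the corollary.

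There is essentially no analytic content to check: both $W$ and $\Omega_W$ are smooth, so all expressions converge in the classical sense and no approximation argument is needed. The only subtle point is bookkeeping: one has to match the factors of $i$ coming from each derivative against the sign convention $-\Delta_F^{-1}f=\mathcal{F}^{-1}(\widehat f/|\check n|_F^{2})$, and against the absence of $i$'s in the eigenvector $e^0(n)$. Once these are tracked consistently, the identification is immediate and the corollary is proved. The entire argument amounts to observing that $e^0(n)/|\check n|_F$ is, up to the derivative phases, precisely the Fourier multiplier of the Biot--Savart reconstruction of the QG flow from its potential vorticity, which is the reason the QG component of any bilinear interaction can be rewritten in the stream-function form stated.
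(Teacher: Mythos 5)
Your derivation is correct and is precisely the route the paper intends: the corollary is stated without proof as an immediate consequence of Lemma \ref{lem:splitting_bilinear_QG}, obtained by dividing by the real scalar $\left| \check{n} \right|_F$, multiplying by $e^0(n)$ (so that the left-hand side is read, as in \eqref{eq:QG-osc}, as the projection onto $\C e^0$), and recognizing the multiplier $\left| \check{n} \right|_F^{-2}\left( -\check{n}_2, \check{n}_1, 0, -F\check{n}_3 \right)^{\intercal}$ as the symbol of $\left( -\partial_2, \partial_1, 0, -F\partial_3 \right)^{\intercal}\Delta_F^{-1}$. The only delicate point, which you correctly flag, is the bookkeeping of the factors of $i$ coming from the derivatives against the paper's conventions for $e^0$ and the normalization $\left| n \right|_F e^0$ in the lemma; you treat this at the same informal level as the paper itself does, so nothing is missing relative to the paper's own presentation.
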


\noindent
For a proof of Lemma \ref{lem:splitting_bilinear_QG} we refer the reader to \cite{embid_majda} and \cite{embid_majda2}. What has to be retained is the facts that the projection of $  \mathcal{ Q} \left( W, W \right) $ onto the potential non-oscillating subspace does not presents interactions of the oscillating part of the vector field.\\

\begin{lemma}\label{lem:splitting_bilinear_osc}
Let $ W $ be a smooth vector field, then the following identity holds true
\begin{equation*}
\left( \mathcal{Q} \left( W_{\QG}, W_{\QG} \right) \right)_{\osc}=0.
\end{equation*}
\end{lemma}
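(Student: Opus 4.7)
The plan is to trace the definition of $\mathcal{Q}$ in \eqref{limit quadratic} and exploit the fact that $W_{\QG}$ lives entirely in the non-oscillating eigenspace, so that only the index values $a=b=0$ can contribute, whereas $\osc$ only retains the components $c=\pm$. I then show that the resonance condition $\omega^a(k)+\omega^b(n-k)=\omega^c(n)$ is incompatible with $a=b=0$ and $c=\pm$ on every nonzero frequency $n\in \mathbb{Z}^3\setminus\{0\}$, which forces the Fourier coefficients of the quantity in question to vanish identically.

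First I would make precise the decomposition: by the definition \eqref{eq:QG-osc} only the $a=0$ Fourier component of $W_{\QG}$ is nonzero, i.e. $W_{\QG}^{\pm}(k)=0$ for every $k$. Substituting $U=V=W_{\QG}$ into \eqref{limit quadratic} collapses the sum over $(a,b)$ to the single pair $(0,0)$, leaving
\[
\mathcal{F}(\mathcal{Q}(W_{\QG},W_{\QG}))(n) = \mathbb{P}_n \sum_{c\in\{-,0,+\}} \sum_{\substack{k+m=n\\ \omega^0(k)+\omega^0(m)=\omega^c(n)}} \Bigl(\Bigl. \sum_{j} m_j W_{\QG}^{0,j}(k)\, W_{\QG}^{0}(m)\Bigr| e^c(n)\Bigr)_{\C^4} e^c(n).
\]
Since $\omega^0\equiv 0$, the resonance constraint reduces to $\omega^c(n)=0$.

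The second step is the arithmetic observation that, by the explicit formula \eqref{eigenvalues}, one has
\[
|\omega^{\pm}(n)| = \frac{1}{F}\frac{\sqrt{|\check{n}_h|^2+F^2 \check{n}_3^2}}{|\check{n}|} > 0 \qquad \text{for every } n\in\mathbb{Z}^3\setminus\{0\},
\]
where I treat the special cases $n_h=0$ and $n_3=0$ directly from \eqref{eigenvalues} (giving $|\omega^\pm|=1$ and $1/F$ respectively). Consequently the resonance condition $\omega^c(n)=0$ forces $c=0$ at every nonzero frequency. Therefore the entire expression above is parallel to $e^0(n)$ on each mode $n\neq 0$, and the zero-horizontal-average assumption handles $n=0$. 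Projecting onto the oscillating eigenspace $\C e^-\oplus \C e^+$, which is orthogonal to $\C e^0$, annihilates the whole expression, giving $(\mathcal{Q}(W_{\QG},W_{\QG}))_{\osc}=0$.

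I do not anticipate a serious obstacle; the statement is essentially a resonance-counting fact. The only detail to verify with a bit of care is that $\omega^{\pm}(n)$ never vanishes on $\mathbb{Z}^3\setminus\{0\}$ — including the degenerate directions $n_h=0$ or $n_3=0$ where the eigenvectors take the special form \eqref{eigenvectors zero} — since the whole argument relies on the fact that the only way two null frequencies sum to a null frequency is trivially, and this would fail if $\omega^\pm$ had a nontrivial zero locus.
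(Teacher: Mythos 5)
Your proof is correct and follows essentially the same route as the paper: since $W_{\QG}$ carries only the $a=b=0$ components and $\omega^0\equiv 0$, the resonance condition in \eqref{limit quadratic} collapses to $\omega^\pm(n)=0$, i.e. $\left|\check{n}_h\right|^2+F^2\check{n}_3^2=0$, which holds only for $n=0$ where the contribution vanishes. Your extra check that $\omega^\pm$ has no nontrivial zeros in the degenerate directions $n_h=0$ or $n_3=0$ is the same observation, just stated via the explicit values of $\left|\omega^\pm\right|$ rather than via the numerator $\left|\check{n}_h\right|^2+F^2\check{n}_3^2$.
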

\begin{proof}
 Considering the explicit formulation of the limit bilinear form $ \mathcal{Q} $ we deduce 
 \begin{equation}\label{boh6}
 \left( \mathcal{Q} \left( W_{\QG}, W_{\QG} \right) \right)_{\osc}
  = \mathcal{F}^{-1} \left( 
\sum_{\substack{ k+m=n \\ \omega^{0,0,\pm}_{k,m,n}=0 }} \left(\left. n\cdot  \left( W^{0} \left( k \right) \otimes \ W^{0}\left( m \right)  \right)\right| e^\pm \left( n \right)  \right) _{\C^4} \ e^\pm \left( n \right)
  \right).
 \end{equation}
 Let us consider hence the equation $ \omega^{0,0,\pm}_{k,m,n}=0 $, thanks to the explicit expression of the eigenvalues in \eqref{eigenvalues} then it is equivalent to the equation
$$
\left| n_h \right|^2 + {F^2}{n_3^2}=0,
$$
which is true only if $ n=0 $, and in this case the contributions arising in \eqref{boh6} are null, concluding. 
\end{proof}

\begin{cor}\label{cor:splitting_bilinear_osc}
The projection of the limit bilinear form $ \mathcal{Q} $ onto the oscillating subspace can be written as
\begin{equation*}
\left( \mathcal{Q} \left( W, W \right) \right)_{\osc}
\\
= \left( \mathcal{Q} \left( W_{\QG}, W_{\osc} \right) \right)_{\osc} + \left( \mathcal{Q} \left( W_{\osc}, W_{\QG} \right) \right)_{\osc}
+ \left( \mathcal{Q} \left( W_{\osc}, W_{\osc} \right) \right)_{\osc},
\end{equation*}
thanks to the decomposition \eqref{eq:QG-osc}.
\end{cor}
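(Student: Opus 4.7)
The plan is to derive this corollary as a direct consequence of Lemma \ref{lem:splitting_bilinear_osc} together with the bilinearity of $\mathcal{Q}$ and the orthogonal splitting \eqref{eq:QG-osc}. First I would write $W = W_{\QG} + W_{\osc}$ according to the decomposition \eqref{eq:QG-osc}, which is well defined for smooth $W$ since the projections onto $\C e^0$ and onto $\C e^- \oplus \C e^+$ are Fourier multipliers with bounded symbols.

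Next, from the explicit formula \eqref{limit quadratic} one sees immediately that $\mathcal{Q}$ is a bilinear operator on smooth vector fields (it is a restriction of the standard transport-type bilinear form, projected onto resonant frequencies). Hence bilinearity gives
\begin{equation*}
\mathcal{Q}(W,W) = \mathcal{Q}(W_{\QG}, W_{\QG}) + \mathcal{Q}(W_{\QG}, W_{\osc}) + \mathcal{Q}(W_{\osc}, W_{\QG}) + \mathcal{Q}(W_{\osc}, W_{\osc}),
\end{equation*}
and projecting onto the oscillating subspace (which is linear) produces the four corresponding oscillating projections.

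Finally I would apply Lemma \ref{lem:splitting_bilinear_osc} to the first summand, which gives $\left(\mathcal{Q}(W_{\QG}, W_{\QG})\right)_{\osc} = 0$, and the claimed identity follows. There is no real obstacle here: the only thing worth checking is that the projection operators onto $\C e^0$ and $\C e^{\pm}$ commute with $\mathcal{Q}$ in the sense used above, which is automatic since both are bounded Fourier multipliers acting componentwise on each frequency $n$, and $\mathcal{Q}$ is already diagonal in the Fourier variable. In short, the corollary is essentially a bookkeeping consequence of Lemma \ref{lem:splitting_bilinear_osc}.
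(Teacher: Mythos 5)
Your argument is correct and is exactly the one the paper intends: decompose $W=W_{\QG}+W_{\osc}$, expand $\mathcal{Q}(W,W)$ by bilinearity, project onto the oscillating subspace by linearity, and cancel the term $\left(\mathcal{Q}(W_{\QG},W_{\QG})\right)_{\osc}$ via Lemma \ref{lem:splitting_bilinear_osc}. The only superfluous remark is the claim that the projections ``commute with $\mathcal{Q}$''; nothing of the sort is needed, since the splitting is applied to the arguments and the projection $(\cdot)_{\osc}$ acts linearly on the output.
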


\section{Proof of Theorem \ref{thm:Leray_sol}.}

\label{existence limit system}

\begin{rem}
As the reader may have noted Theorem \ref{thm:Leray_sol} states the existence of  \textit{\`a la Leray}-type solutions. This can seem to be unexpected since, generally, Leray solutions are constructed thanks to compactness methods. In system \eqref{lim syst} we cannot apply any compactness method since we do not have any second-order vertical derivative $ \partial_3^2 $ and $ L^2 $ is not compactly embedded in $ H^{1, 0} $. Nonetheless the bilinear form $ \mathcal{Q} $ has better product rules than the standard bilinear form in the \NS\ equations, this will allow us to make sense (distributionally) of the term $ \mathcal{Q}\left( U,U \right) $. Moreover we require the initial potential vorticity $ \Omega_0 $ to be $ \cPLtwo $, which, roughly speaking, is "almost as" requiring the initial velocity field to be $ H^1 $. 
\end{rem}

\textit{Proof of Theorem \ref{thm:Leray_sol}} Before starting the proof we point out the following fact, \NS\ equations preserve the global average of the unknown function. This happens as well for the system \eqref{primitive equations}, whence we can consider data with zero horizontal average. Thanks to this property homogeneous and non-homogeneous Sobolev spaces are equivalent, we shall use this  constantly in the present proof. In particular they will be always non-homogeneous. This fact concerns \textit{only} the isotropic spaces $ H^s \left( \R^3 \right) $.\\
The proof is standard application of Galerkin's approximation. We define the truncation operator 
$$
J_n u =  \sum_{\left\lbrace \left. k\in \mathbb{Z}^3 \right| \left| k\right| \leqslant n\right\rbrace } \hat{u}_n e^{i \check{k}\cdot x},
$$
and consequently the approximated system
\begin{equation}
\label{approx lim syst}
\left\lbrace
\begin{array}{l}
\partial_t U_n+ J_n\mathcal{Q}\left(U_n,U_n\right)+\mathbb{D}U_n= 0\\
\dive u_n=0\\
\bigl. U_n \bigr|_{t=0}=J_n U_0,
\end{array}
\right.
\end{equation}
in the unknown $U_n$. 
We recall that for a fixed $n$, $J_n$ maps continuously any $ H^k $ space to any $ H^{k+h} $ space for $ h\geqslant0 $ thanks to Bernstein inequality. Thus \eqref{approx lim syst} is a differential equation in the space
$$
L^2_n\left( \mathbb{T}^3\right)= \left\lbrace \left. u\in \cPLtwo \right| \hat{u}_k=0 \text{ if } \left|k\right|>n\right\rbrace.
$$
Since the support of the Fourier transform of $U_n \in L^2_n\left( \mathbb{T}^3\right)$ is included in the ball of center 0 and radius $n$ and the support of $\mathcal{F} \left( U_n \otimes U_n\right)$ is included in $B_{2n}(0)$ we obtain easily that $J_n \mathcal{Q}\in \mathcal{C}\left(L^2_n\left( \mathbb{T}^3\right)\times L^2_n\left( \mathbb{T}^3\right); L^2_n\left( \mathbb{T}^3\right) \right)$. Hence Cauchy-Lipschitz theorem gives the existence of a unique solution to \eqref{approx lim syst} on a maximal interval of time $\left[ 0, T_n\right)$ taking values in $L^2_n\left( \mathbb{T}^3\right)$.\\
Moreover since 
$$
\mathcal{Q}\left(A,B\right)=\lim_{\varepsilon\to 0} \Lminus\mathbb{P} \left[ \left( \Lplus A  \cdot \nabla\right) \Lplus B \right],
$$
it is clear that $\left( \left. J_n\mathcal{Q}\left(U_n,U_n\right) \right| U_n \right)_\cPLtwo=\left( \left. \mathcal{Q}\left(U_n,U_n\right) \right| U_n \right)_\cPLtwo=0$ since $\dive u_n=0$. Hence by a standard energy estimate on the parabolic-hyperbolic equation \eqref{approx lim syst} we get
$$
\frac{1}{2}\left\| U_n(t)\right\|_\cPLtwo^2 +c \int_0^t \left\| \nh U_n(s)\right\|_\cPLtwo^2 \d s \leqslant \frac{1}{2} \left\| U_0 \right\|_\cPLtwo^2,
$$
from which for all $t \in \left[ 0, T_n \right)$ we have $\left\| U_n(t)\right\|_\cPLtwo^2\leqslant \left\| J_n U_0 \right\|_\cPLtwo^2\leqslant \left\|  U_0 \right\|_\cPLtwo^2$. We deduce that $T_n=\infty$ and for all $t>0$ $U_n(t)$ satisfies
$$
\left\| U_n(t)\right\|_\cPLtwo^2 +2c \int_0^t \left\|\nh U_n(s)\right\|_\cPLtwo^2 \d s \leqslant \left\| U_0\right\|_2^2.
$$
Considered the relation $\left\|U_n\right\|_{L^2\left(\left(0,t\right);\cPLtwo\right)}\leqslant \sqrt{t} \left\|U_n\right\|_{L^\infty\left(\left(0,t\right);\cPLtwo\right)}\leqslant \sqrt{t} \left\|U_0 \right\|_\cPLtwo$ we can say that the sequence $U_n$ is bounded in $L^\infty \left( \mathbb{R}_+; \cPLtwo\right)\cap L^2_{\text{loc}}\left( \mathbb{R}_+; H^{1,0}\right)$. By the  structure of \eqref{approx lim syst} we obtain easily that $\partial_tU_n$ is bounded in $L^2_\text{loc}\left(\mathbb{R}_+; H^{-N}\right)$ for $N$ sufficiently big (the proof of such fact is identical as the proof of Proposition \ref{prop:topological_convergence}), hence $\left( \partial_t U_n\right)_n$ is a sequence of uniformly bounded functions in $L^2_\text{loc}\left(\mathbb{R}_+; H^{-N}\right)$. We can infer via Aubin-Lions lemma \cite{Aubin63} obtaining that $U_n\to U$ in $L^2_\text{loc}\left( \mathbb{R}_+; H^{-\varepsilon}\left( \mathbb{T}^3\right) \right)$ where $ \varepsilon\in \left(0, N \right) $ up to (non-relabeled) subsequences.\\

\noindent
Since the sequence $ \left( U_n \right)_n $ converges in $ L^2_\text{loc}\left( \mathbb{R}_+; H^{-\varepsilon}\left( \mathbb{T}^3\right) \right) $ only, and products of $ H^{-\varepsilon} $ functions are, a priori, not well defined we introduce a diagonalization method which allows us to split \eqref{approx lim syst} in two systems which we will be able to handle.\\
\noindent
We rely on a diagonalization method introduced introduced by P. Embid and A. Majda in \cite{embid_majda}, in detail, we define 
\begin{align}
\Omega_n = & -\partial_2 U_n^1 + \partial_1 U_n^2 -F\partial_3 U_n^4, \label{def Omegan}\\
V_{{\QG},n}=U_{{\QG},n}= & \left(
\begin{array}{c}
\nhp\\
0\\
-F\partial_3 
\end{array}\right)\Delta^{-1}_F \Omega_n ,\label{def VQGn}\\
U_{{\osc},n}= & U_n-U_{{\QG},n}.\nonumber
\end{align}

\noindent
Applying Lemma \ref{lem:splitting_bilinear_QG} on the smooth vector field $ U_n $ we deduce that
\begin{equation*}
\left(\left. \mathcal{F} J_n\mathcal{Q}\left(U_n,U_n\right) \right| \left| n \right|_F \ e^0 \left( n \right)  \right)_{\C^4}= \mathcal{F} \left( J_n\left( v_{{\QG},n}^h\cdot\nh \Omega_n\right) \right).
\end{equation*}

\noindent
Whence the projection of the element $ J_n\mathcal{Q}\left(U_n,U_n\right) $ onto the potential space defined by the potential vorticity is the \qg\ transport $ J_n\left( v_{{\QG},n}^h\cdot\nh \Omega_n\right) $. The proof of such result is omitted in the present work, but it relies on a careful analysis of the cancellation properties induced by the limit bilinear form $ \mathcal{Q} $.\\
Applying Corollary \ref{cor:splitting_bilinear_osc} we deduce:
\begin{equation*}
\left( J_n\mathcal{Q}\left(U_n,U_n\right) \right)_{\osc}
 = \left( J_n\mathcal{Q}\left(V_{{\QG},n},U_{{\osc},n}\right) \right)_{\osc}+
\left( J_n\mathcal{Q}\left(U_{{\osc},n},V_{{\QG},n} \right) \right)_{\osc}
+ \left( J_n\mathcal{Q}\left(U_{{\osc},n},U_{{\osc},n} \right) \right)_{\osc}.
\end{equation*}

\noindent
Projecting hence \eqref{approx lim syst} onto the oscillating subspace and the potential nonoscillating subspace we obtain the following global splitting for the first equation of  \eqref{approx lim syst}:
\begin{equation}
\label{eq:splitting_approx_system}
\begin{aligned}
\partial_t\Omega_n+J_n\left( v_{{\QG},n}^h\cdot\nh \Omega_n\right)+ a_{\QG}\left(D_h \right)\Omega_n= 0, \\
\begin{multlined}
\partial_t U_{{\osc}, n}+\left( J_n\mathcal{Q}\left(V_{{\QG},n},U_{{\osc},n}\right) \right)_{\osc}+
\left( J_n\mathcal{Q}\left(U_{{\osc},n},V_{{\QG},n} \right) \right)_{\osc}
\\
+ \left( J_n\mathcal{Q}\left(U_{{\osc},n},U_{{\osc},n} \right) \right)_{\osc}
 +a_{\osc}\left(D_h\right)U_{{\osc},n}=0.
 \end{multlined}
\end{aligned}
\end{equation}
The operators $ a_{\QG} $ and $ a_{\osc} $ are nothing but the projection of the operator $ -\mathbb{D} $ onto the potential space defined by $ \Omega $ and the oscillating subspace. We avoid to give a detailed description of such operators now (see Section \ref{sec:weak_conv}), what has to be retained is that they are symbols such that there exists a positive constant $ c $ such that $ \left| a_{\QG} \left( \xi \right) \right|, \left| a_{\osc} \left( \xi \right) \right| \geq c \left| \xi_h \right|^2 $. \\
On the splitting \eqref{eq:splitting_approx_system} we can apply the same procedure as above  to obtain that $\Omega_n\to \Omega$ in $L^2_\text{loc}\left( \mathbb{R}_+; H^{-\varepsilon}\left( \mathbb{T}^3\right) \right)$, and defining $\definizioneVQG$ for $\Omega$ the limit of the sequence $\left( \Omega_n\right)_n$, and since 
$$\left(
\begin{array}{c}
\nhp\\
0\\
-F\partial_3 
\end{array}\right)\Delta^{-1}_F \in   \mathcal{L}\left( {H}^\alpha, {H}^{\alpha +1}\right), \  \alpha\in\mathbb{R}$$
 we obtain as well that 
$$
V_{{\QG},n}\to V_{\QG} \text{ in } L^2_\text{loc}\left( \mathbb{R}_+; {H}^{1-\varepsilon}\right),
$$
and $\left( V_{{\QG},n} \right)_n $ uniformly (in $ n $) bounded in $L^\infty \left( \mathbb{R}_+, H^1 \right)$ .\\
Combining the definitions \eqref{def Omegan} and \eqref{def VQGn} we can hence rewrite $V_{{\QG},n}$ as 
$$
V_{{\QG},n}= \left(
\begin{array}{c}
-\partial_2 \\
\partial_1 \\
0\\
-F \partial_3 
\end{array}\right)\Delta^{-1} _F
\left(
\begin{array}{cccc}
-\partial_2 , &
\partial_1 ,& 0, -F\partial_3
\end{array}
\right)\cdot U_n
= \Pi_{\QG} U_n,
$$
with $\Pi_{\QG}$ Fourier multiplier of order zero, hence $\Pi_{\QG}\in\mathcal{L}\left( H^\alpha \left( \mathbb{T}^3\right)\right)$ for each $\alpha\in\mathbb{R}$. This implies in particular that, defining $U_{\osc} = U-V_{\QG}$
\begin{align*}
\left\| U_{{\osc},n}-U_{\osc} \right\|_{H^{-\varepsilon}}= & \left\| \left( U_n -V_{{\QG},n}\right)-\left( U -V_{{\QG}}\right)\right\|\\
= & \left\| \left(1-\Pi_{\QG} \right) \left( U_n-U\right) \right\|_{H^{-\varepsilon}}\\
\leqslant & C \left\| U_n-U\right\|_{H^{-\varepsilon}}.
\end{align*}
This implies in particular that $  U_{{\osc},n}\to U_{\osc} $ in $L^2_\text{loc}\left( \mathbb{R}_+; H^{-\varepsilon}\left( \mathbb{T}^3\right) \right)$. The same idea can be applied to show that $ \left( U_{{\osc},n} \right)_n $ is bounded in $L^\infty \left( \mathbb{R}_+; \cPLtwo\right)$ and $ \left( \nh U_{{\osc}, n} \right)_n $ is bounded in $ L^2\left( \mathbb{R}_+; \cPLtwo\right)$.

At this point we can project $\mathcal{Q}\left( U_n, U_n \right)$ on the spaces $\mathbb{C} e^0 , \mathbb{C} e^- \oplus \mathbb{C} e^+  $ (see \eqref{cP:eigenvectors}) obtaining, thanks to the results of Corollary \ref{cor:splitting_bilinear_QG} and \ref{cor:splitting_bilinear_osc}:
\begin{equation*}
\begin{aligned}
\mathcal{Q}\left( U_n, U_n \right)= & \ \mathcal{Q}\left( U_n, U_n \right)_{\QG}+
\mathcal{Q}\left( U_n, U_n \right)_{\osc}\\
= & \
\left( -\partial_2, \partial_1, 0, -F\partial_3 \right)^\intercal
\Delta_F^{-1}\left(  v_{{\QG},n}^h\cdot \nh \Omega_n \right)
\\
& + \left( \mathcal{Q}\left( V_{{\QG},n}, U_{{\osc},n}\right) \right)_{\osc}
+\left(  \mathcal{Q}\left( U_{{\osc},n}, V_{{\QG},n}\right) \right)_{\osc}
+ \left( \mathcal{Q}\left( U_{{\osc},n}, U_{{\osc},n}\right) \right)_{\osc}.
\end{aligned}
\end{equation*}
It is matter of standard energy bounds with classical product rules in Sobolev spaces to prove that
\begin{align*}
\left( -\partial_2, \partial_1, 0, -F\partial_3 \right)^\intercal
\Delta_F^{-1}\left(  v_{{\QG},n}^h\cdot \nh \Omega_n \right) \to &
\;
\left( -\partial_2, \partial_1, 0, -F\partial_3 \right)^\intercal
\Delta_F^{-1}\left(  v_{{\QG}}^h\cdot \nh \Omega \right),\\
\left( \mathcal{Q}\left( V_{{\QG},n}, U_{{\osc},n}\right) \right)_{\osc} \to & \;
\left( \mathcal{Q}\left( V_{{\QG}}, U_{{\osc}}\right) \right)_{\osc},\\
\left( \mathcal{Q}\left( U_{{\osc},n}, V_{{\QG},n}\right) \right)_{\osc} \to & 
\;
\left( \mathcal{Q}\left( U_{{\osc}}, V_{{\QG}}\right) \right)_{\osc},
\end{align*}
in the sense of distributions as $ n\to \infty $. 
The limit of the product of terms of the form $ U_{{\osc}, n} $ is, in general, not well defined. Indeed system \eqref{Slim} lacks of vertical dissipation, hence the best we know is that $ U_{{\osc}, n}\to U_{\osc} $ in $ L^2_{\loc} \left( \R_+; H^{-\varepsilon} \right) $, but generally a product between $ H^{-\varepsilon} $ elements is not well-defined. Is in this context in fact that we shall use the improved regularity in the product which is characteristic of the bilinear form $ \mathcal{Q} $. We claim that 
\begin{equation}
\label{eq:conv_bilin_osc_existence}
\left( \mathcal{Q}\left( U_{{\osc}, n},U_{{\osc}, n} \right) \right)_{\osc}
 \xrightarrow[n\to\infty]{ \mathcal{D}' \left( \R_+\times\T^3 \right)} 
 \left( \mathcal{Q}\left( U_{{\osc}},U_{{\osc}} \right) \right)_{\osc},
\end{equation}
 The proof of \eqref{eq:conv_bilin_osc_existence} is postponed.
 Whence we finally proved that $\mathcal{Q}\left( U_n, U_n \right) \to \mathcal{Q}\left( U, U \right)$ in $\mathcal{D}' \left(\R_+\times\T^3\ \right)$, concluding.
\hfill$ \Box $

\subsection{Proof of \eqref{eq:conv_bilin_osc_existence}.}\label{sec:conv_bilin_osc_existence}
As we already stated M. Paicu in \cite{paicu_rotating_fluids} proved a similar result. We shall prove \eqref{eq:conv_bilin_osc_existence} using different techniques.\\
Defining $ \mathcal{Q}\left( A,B \right)= \dive \widetilde{\mathcal{Q}}\left( A,B \right) $, i.e.
\begin{align*}
\widetilde{\mathcal{Q}} \left( A, B \right) = & \sum_{\mathcal{K}} \left(\left. \hat{A}^a \left( k \right) \hat{B}^b\left( m \right) \right| e^c\left( n \right)  \right)_{\mathbb{C}^4}e^c\left( n \right),\\
= &  \sum_{\mathcal{K}}  \hat{A}^a\left( k \right) \hat{B}^{b,c}\left( m,n \right)
\end{align*}
where $ \hat{A}^a \left( k \right)= \left( \left. \hat{A}\left( k \right) \right| e^a \left( k \right) \right) e^a\left( k \right) $, $ \hat{B}^{b,c}\left( m,n \right)= \left( \left. \hat{B}^b \left( m \right) \right| e^c \left( n \right) \right) e^c \left( n \right) $. It suffice in fact to prove that
$$
\widetilde{\mathcal{Q}}\left( U_{{\osc}, j}-U_{\osc}, U_{{\osc}, j}+ U_{\osc} \right)\to 0,
$$
in $ \mathcal{D}' \left( \R_+\times\T^3 \right) $ as $ j\to \infty $ to conclude. To do so we consider a $ \phi\in   \mathcal{D} $ and, by Plancherel theorem
\begin{multline}
\label{eq:Leraysolbilinoscprimaeq}
\int_{\R_+\times\T^3} \phi \left( t, x \right) \widetilde{\mathcal{Q}} \left( U_{{\osc}, j}-U_{\osc}, U_{{\osc}, j}+ U_{\osc} \right) \left( t ,  x \right) \dx \d t
\\
\begin{aligned}
 = & \ 
 \int_{\R_+} \sum_{n\in\mathbb{Z}^3} \sum_{\mathcal{K}^\star_n} \hat{\phi}\left(t,  n \right)  \widehat{\left( U_{{\osc}, j}-U_{\osc} \right)}^a\left(t,  k \right)  \widehat{\left( U_{{\osc}, j}+ U_{\osc} \right)}^{b,c} \left(t,  m,n \right) \d t \\
= & \ \int_{\R_+}  \sum_{n,k_h, m_h} \hat{\phi}_n \left( t \right) \sum_{\left\{ k_3 : \left( k, \left( m_h, n_3-k_3 \right), n \right)\in \mathcal{K}^\star \right\}} \widehat{\left( U_{{\osc}, j}-U_{\osc} \right)}^a\left(t,  k \right)  \widehat{\left( U_{{\osc}, j}+ U_{\osc} \right)}^{b,c}\left(t,  m_h, n_3-k_3,n \right)\d t.
\end{aligned}
\end{multline}
We make a couple of remarks in order to simplify the notation. Since we considered the eigenvectors as normalized all along the paper the following relations are easy to deduce
\begin{align*}
&\left| \hat{U}^{b,c} \left( m,n \right) \right|\lesssim \left| \hat{U}^{b} \left( m \right) \right|\lesssim \left| \hat{U} \left( m\right) \right|.
\end{align*}
Hence from now on the terms  $ \widehat{\left( U_{{\osc}, j}-U_{\osc} \right)}^a\left(t,  k \right) $ and $ \widehat{\left( U_{{\osc}, j}+ U_{\osc} \right)}^{b,c}\left(t,  m_h, n_3-k_3,n \right) $ shall be substituted respectively to $ \widehat{\left( U_{{\osc}, j}-U_{\osc} \right)}_k $ and $ \widehat{\left( U_{{\osc}, j}+ U_{\osc} \right)}_{\left( m_h, n_3-k_3 \right)} $. Here we chose to make implicit the dependence on the variable $ t $.  We want to stress out the fact that this choice is made only to simplify the notation.
Indeed we have that
\begin{equation}
\widehat{\left( U_{{\osc}, j}-U_{\osc} \right)}_k  \widehat{\left( U_{{\osc}, j}+ U_{\osc} \right)}_{\left( m_h, n_3-k_3 \right)}
=
 k^{-\varepsilon/2}\widehat{\left( U_{{\osc}, j}-U_{\osc} \right)}_k k^{\varepsilon/2} \widehat{\left( U_{{\osc}, j}+ U_{\osc} \right)}_{\left( m_h, n_3-k_3 \right)}.\label{eq:Leraysolbilinoscsecondaeq}
\end{equation}
The set $ \left\{ k_3 : \left( n,k \right)\in \mathcal{K}^\star \right\} $ is indeed finite and, in particular, it is composed by the $ k_3 $ which satisfy the following equation
\begin{multline*}
\left(F^2\left(k_3\right){}^2+\left(k_h\right){}^2\right){}^{1/2}\left( \left(m_h\right){}^2+ \left(n_3-k_3\right){}^2\right){}^{1/2}\\
= \left( \left(k_h\right){}^2+ \left(k_3\right){}^2\right){}^{1/2}\left( \left(m_h\right){}^2+ \left(n_3-k_3\right){}^2\right){}^{1/2} 
 - \left( \left(k_h\right){}^2+ \left(k_3\right){}^2\right){}^{1/2}\left(F^2\left(n_3-k_3\right){}^2+\left(m_h\right){}^2\right){}^{1/2}.
\end{multline*}
Expanding the above equation and collecting term by term in the powers of $ k_3 $ give us the following polynomial equation
$$
\wp \left( k_3 \right)=\sum_{i=0}^8 A_i \left( k_h, m_h, n \right) k_3^i=0,
$$
where the $ A_i $ take the following form
\begin{align*}
A_8= & \left(1-4 F^2\right)\\
 A_7 = & 4 (-1 + 4 F^2)  n_3\\
A_6 = & -6  \left(F^2 k_h^2+F^2 m_h^2+\left(-1+4 F^2\right) n_3^2\right)\\
A_5 = & 4  n_3 \left(6 F^2 k_h^2+3 F^2 m_h^2+\left(-1+4 F^2\right) n_3^2\right)\\
A_4 = &
- \left(F^2 \left(-4+F^2\right) k_h^4+F^2 \left(-4+F^2\right) m_h^4 \right.
 \left.-6 F^2 m_h^2 n_3^2+\left(1-4 F^2\right) n_3^4-2 k_h^2 \left(\left(3+2 F^2+F^4\right) m_h^2+18 F^2 n_3^2\right)\right)
\end{align*}
\begin{align*}
 A_3 = & 4  k_h^2 n_3 \left(-F^2 \left(-4+F^2\right) k_h^2+\left(3+2 F^2+F^4\right) m_h^2+6 F^2 n_3^2\right)\\
 A_2 = & -2  k_h^2 \left(\left(2+F^2\right) m_h^4+\left(3+2 F^2+F^4\right) m_h^2 n_3^2\right.
 \left.+3 F^2 n_3^4+k_h^2 \left(\left(2+F^2\right) m_h^2-3 F^2 \left(-4+F^2\right) n_3^2\right)\right)\\
 A_1 = & 4  k_h^4 n_3 \left(\left(2+F^2\right) m_h^2-F^2 \left(-4+F^2\right) n_3^2\right)\\
 A_0= &-k_h^4 \left(3 m_h^4+2 \left(2+F^2\right) m_h^2 n_3^2-F^2 \left(-4+F^2\right) n_3^4\right).
\end{align*}
Although we have been giving the explicit expression of the $ A_i $'s we outline the fact that the explicit expression by itself is irrelevant, the only thing that matters is that the $ A_i $'s are  polynomials in the variables $ k_h,m_h,n $. We can hence apply the following result
 which bounds the modulus of a root of a complex root of a polynomial in terms of its coefficients, the following proposition is known as Fujiwara near-optimal bound.
\begin{prop}\label{Fujiwara_bound}
Let $ P(z)= \sum_{k=0}^n a_n z^k$ a polynomial $P\in \mathbb{C}\left[ z\right]$, let $\zeta$ be one of the $n$ complex roots of $P$, then
\begin{align*}
\left| \zeta \right| \leqslant 2 \max \left\lbrace
\left| \frac{a_{n-1}}{a_n}\right|,\left| \frac{a_{n-2}}{a_n}\right|^{1/2},\ldots ,\left| \frac{a_{1}}{a_n}\right|^{{1}/{\left( n-1 \right)}},\left| \frac{a_{0}}{a_n}\right|^{1/n}
\right\rbrace.
\end{align*}
\end{prop}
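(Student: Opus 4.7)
The plan is to argue by contradiction: let $M$ denote the maximum on the right-hand side of the claimed bound, suppose $\zeta$ is a root of $P$ with $|\zeta| > 2M$, and derive an impossibility from $P(\zeta) = 0$. The case $\zeta = 0$ is trivial (it automatically satisfies $|\zeta| \leq 2M$), so I may assume $\zeta \neq 0$ and isolate the leading monomial in $P(\zeta) = 0$.

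First I would write $a_n \zeta^n = -\sum_{k=0}^{n-1} a_k \zeta^k$ and divide through by $a_n \zeta^n$ to obtain the key identity
$$1 = -\sum_{j=1}^{n} \frac{a_{n-j}}{a_n}\, \zeta^{-j},$$
which, after applying the triangle inequality together with the pointwise bound $|a_{n-j}/a_n| \leq M^{j}$ (valid since by definition of $M$ one has $|a_{n-j}/a_n|^{1/j} \leq M$ for every $j = 1, \ldots, n$), yields
$$1 \leq \sum_{j=1}^{n} \left(\frac{M}{|\zeta|}\right)^{j}.$$

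The contradiction then comes from summing a geometric series: the hypothesis $|\zeta| > 2M$ forces $M/|\zeta| < 1/2$, so the finite sum on the right is strictly majorised by $\sum_{j=1}^{\infty} 2^{-j} = 1$, contradicting the inequality $\geq 1$ just obtained. I expect no genuine obstacle in this argument; the only delicate point is that the constant $2$ in the statement is precisely what makes the geometric tail close (any smaller prefactor would require a sharper estimate, such as the classical Cauchy or Knuth bounds), and this is built in automatically through the explicit comparison $M/|\zeta| < 1/2$ that drives the summation.
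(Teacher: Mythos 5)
Your proof is correct: the normalized identity $1 = -\sum_{j=1}^{n} (a_{n-j}/a_n)\,\zeta^{-j}$, the bound $\left| a_{n-j}/a_n \right| \leqslant M^{j}$ coming straight from the definition of $M$ as the maximum of the quantities $\left| a_{n-j}/a_n \right|^{1/j}$, and the strict comparison $\sum_{j=1}^{n}\left( M/\left|\zeta\right| \right)^{j} < \sum_{j\geqslant 1} 2^{-j} = 1$ under the assumption $\left| \zeta \right| > 2M$ give the contradiction, with the degenerate cases $\zeta=0$ and $M=0$ trivial as you note (and $a_n\neq 0$ is implicit in the statement). There is, however, nothing in the paper to compare this against: the proposition is deliberately stated without proof, the reader being referred to Fujiwara's original article, so your elementary geometric-series argument is a genuine self-contained supplement rather than a variant of the paper's route. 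Two remarks on scope. The bound as printed is in fact a slightly weakened form of Fujiwara's theorem, whose sharp version has $\left| a_0/(2a_n) \right|^{1/n}$ rather than $\left| a_0/a_n \right|^{1/n}$ in the last slot; your argument proves exactly the weaker form stated, which is all that the paper uses (in Section 4 only the fact that any root $k_3$ of $\wp$ is bounded by a fixed polynomial expression in the remaining frequencies matters, not the precise constant). Obtaining the sharp constant would indeed require treating the $j=n$ term separately, as you correctly anticipate when you observe that the prefactor $2$ is exactly what closes the geometric tail.
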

We shall omit to prove Proposition \ref{Fujiwara_bound} and refer the reader to the work \cite{Fujiwara_bound} instead.\\
Proposition \ref{Fujiwara_bound} applied on $ \wp \left( k_3 \right) $ tells us that
$$
\left| k_3 \right| \lesssim \left| n \right|^{\alpha_1} \left| m_h \right|^{\alpha_2} \left| k_h \right|^{\alpha_3},
$$
where $ k_3 $ is any root of $ \wp $, hence
$$
\left| k \right|^{\varepsilon/2}\lesssim \left| k_h \right|^{\varepsilon/2} + \left( \left| n \right|^{\alpha_1} \left| m_h \right|^{\alpha_2} \left| k_h \right|^{\alpha_3} \right)^{\varepsilon/2} .
$$
by concavity on the function $ h_\varepsilon \left( x \right) = x^{\varepsilon/2} $,
with $ \alpha_1+\alpha_2+\alpha_3 < N $ for some large and finite $ N $. Coming back to \eqref{eq:Leraysolbilinoscprimaeq} and \eqref{eq:Leraysolbilinoscsecondaeq} this means that
\begin{multline*}
\left|\int_{\R_+}  \int_{\T^3} \phi \left( x \right) \widetilde{\mathcal{Q}} \left( U_{{\osc}, j}-U_{\osc}, U_{{\osc}, j}+ U_{\osc} \right) \left( x \right) \dx \d t  \right|\\
\begin{aligned}
\lesssim  & \ 
\int_{\R_+} \sum_{n,k_h, m_h}\left| \hat{\phi}_n \right| \sum_{\left\{ k_3 : \left( k, \left( m_h, n_3-k_3 \right), n \right)\in \mathcal{K}^\star \right\}}
\left| k_h \right|^{\varepsilon/2}
\left| k\right|^{-\varepsilon/2}  \left|\widehat{\left( U_{{\osc}, j}-U_{\osc} \right)}_k \right| \\
& \
 \hspace{7cm}\times \left| \widehat{\left( U_{{\osc}, j}+ U_{\osc} \right)}_{\left( m_h, n_3-k_3 \right)} \right| \d t 
\\
 & + \int_{\R_+} \sum_{n,k_h, m_h}\left| \hat{\phi}_n \right| \sum_{\left\{ k_3 : \left( k, \left( m_h, n_3-k_3 \right), n \right)\in \mathcal{K}^\star \right\}}
\left| k\right|^{-\varepsilon/2} \left|\widehat{\left( U_{{\osc}, j}-U_{\osc} \right)}_k \right|\\
& \ 
\hspace{4cm}\times \left(  \left| n \right|^{\alpha_1} \left| m_h \right|^{\alpha_2} \left| k_h \right|^{\alpha_3} \right)^{\varepsilon/2} \left| \widehat{\left( U_{{\osc}, j}+ U_{\osc} \right)}_{\left( m_h, n_3-k_3 \right)} \right| \d t
\\
= & \ 
\int_{\R_+} \sum_{n,k_h, m_h}\left| \hat{\phi}_n \right| \sum_{\left\{ k_3 : \left( k, \left( m_h, n_3-k_3 \right), n \right)\in \mathcal{K}^\star \right\}}
\left| k_h \right|^{\varepsilon/2}
\left| k\right|^{-\varepsilon/2} \left|\widehat{\left( U_{{\osc}, j}-U_{\osc} \right)}_k \right|\\
& \ 
\hspace{7cm}\times  \left| \widehat{\left( U_{{\osc}, j}+ U_{\osc} \right)}_{\left( m_h, n_3-k_3 \right)} \right| \d t
  \\
   & \  +
\int_{\R_+} 
\sum_{n,k_h, m_h} \left| n \right|^{\frac{\alpha_1 \varepsilon}{2}} \left| \hat{\phi}_n \right| \sum_{\left\{ k_3 : \left( k, \left( m_h, n_3-k_3 \right), n \right)\in \mathcal{K}^\star \right\}}
\left| k_h \right|^{\frac{\alpha_3 \varepsilon}{2}} \left| k \right|^{-\varepsilon/2}\left|\widehat{\left( U_{{\osc}, j}-U_{\osc} \right)}_k \right|\\
& \ 
 \hspace{6cm} \times  \left| m_h \right|^{\frac{\alpha_2 \varepsilon}{2}} 
   \left| \widehat{\left( U_{{\osc}, j}+ U_{\osc} \right)}_{\left( m_h, n_3-k_3 \right)} \right|\d t\\
   = & \ I_{1,j}+I_{2,j}.
   \end{aligned}
\end{multline*}
We prove  that $ I_{2,j}\to 0 $ as $ j\to \infty $. In order to prove that $ I_{1,j}\to 0 $ the procedure is very similar (and actually simpler) to the one we are going to perform now, for this reason is omitted.
We start remarking that
\begin{align*}
\left| k_h \right|^{\frac{\alpha_3 \varepsilon}{2}} \left| k\right|^{-\varepsilon/2}\left|\widehat{\left( U_{{\osc}, j}-U_{\osc} \right)}_k \right|=  
\left( \left| k \right|^{-\varepsilon} \left|\widehat{\left( U_{{\osc}, j}-U_{\osc} \right)}_k \right| \right)^{1/2}
\left( \left| k_h \right|^{{\alpha_3 \varepsilon}} \left| \widehat{\left( U_{{\osc}, j}-U_{\osc} \right)}_k \right| \right)^{1/2},
\end{align*}
hence
\begin{multline}\label{eq:Leraysolbilinoscterzaeq}
I_{2,j}
\lesssim 
\int_{\R_+} 
\sum_{n,k_h, m_h} \left| n \right|^{\frac{\alpha_1 \varepsilon}{2}} \left| \hat{\phi}_n \right| \sum_{\left\{ k_3 : \left( k, \left( m_h, n_3-k_3 \right), n \right)\in \mathcal{K}^\star \right\}}
 \left( \left| k \right|^{-\varepsilon} \left|\widehat{\left( U_{{\osc}, j}-U_{\osc} \right)}_k \right| \right)^{1/2}\\
\times \left( \left| k_h \right|^{{\alpha_3 \varepsilon}} \left| \widehat{\left( U_{{\osc}, j}-U_{\osc} \right)}_k \right| \right)^{1/2}
   \left| m_h \right|^{\frac{\alpha_2 \varepsilon}{2}} 
   \left| \widehat{\left( U_{{\osc}, j}+ U_{\osc} \right)}_{\left( m_h, n_3-k_3 \right)} \right| \d t.
\end{multline}
Applying Lemma \ref{product rule}  we obtain
\begin{equation*}
I_{2,j}
\lesssim
\left\| \phi \right\|_{L^\infty_\loc\left( \R_+; H^{\frac{1}{2}+ \frac{\alpha_1\varepsilon}{2}}\right)}
 \left\| U_{{\osc},j}+U_{{\osc}} \right\|_{L^2_\loc\left( \R_+;H^{\frac{1}{2}+ \frac{\alpha_2\varepsilon}{2}, 0}\right)}
\left\| U_{{\osc}, j}-U_{\osc} \right\|_{L^2_\loc\left( \R_+; H^{\alpha_3\varepsilon, 0}\right) }^{1/2}
 \left\| U_{{\osc}, j}-U_{\osc} \right\|_{L^\infty_\loc\left( \R_+; H^{-\varepsilon}\right) }^{1/2}.
\end{equation*}
Both $ U_{{\osc},j}, U_{\osc} $ belong to $ L^\infty\left( \R_+; L^2 \right) $ and $ L^2 \left( \R_+; \dot{H}^{1,0} \right) $, and hence to  $ L^2_\loc \left( \R_+; L^2 \right) $ and by interpolation to $ L^2_\loc \left( \R_+; \dot{H}^{\sigma,0} \right) $ for $ \sigma\in \left( 0,1 \right) $. This means that is $ \varepsilon $ is sufficiently small the quantities $  \left\| U_{{\osc},j}+U_{{\osc}} \right\|_{L^2_\loc\left( \R_+;H^{\frac{1}{2}+ \frac{\alpha_2\varepsilon}{2}, 0}\right)}$, $
\left\| U_{{\osc}, j}-U_{\osc} \right\|_{L^2_\loc\left( \R_+; H^{\alpha_3\varepsilon, 0}\right) }^2 $ are bounded, while since 
$$
\left\| U_{{\osc}, j}-U_{\osc} \right\|_{L^\infty_\loc\left( \R_+; H^{-\varepsilon}\right) }^2 \xrightarrow{j\to\infty}0,
$$
we proved that $ I_{2,j}\to 0 $ distributionally. This implies hence that $ \mathcal{Q} \left( U_{{\osc},j}, U_{{\osc}, j} \right) \to \mathcal{Q} \left( U_{{\osc}}, U_{{\osc}} \right)  $ in a distributional sense.

\section{Weak convergence in the weak limit as $ \varepsilon\to 0 $.}\label{sec:weak_conv}

\label{the limit subsection}

In the present section we prove Theorem \ref{thm:weak_conv_limit_system_cP}.\\

Introducing the filtered system \eqref{filtered systemP} allows us to deal with a system of equations which has a closer form to the classical \NS\  system. In particular we can not have any uniform bound, in $\varepsilon$, for the norm $\left\| \partial_t V^\varepsilon\right\|_{H^s\left(\mathbb{T}^3\right)}$, but this is possible for the system \eqref{filtered systemP}. 
We recall that we denoted $ \left| n \right|_F= \sqrt{n_1^2+n_2^2+F^2n_3^2} $.\\

\noindent It is natural to ask ourselves if in the limit $ \varepsilon \to 0 $ the filtered system \eqref{filtered systemP} converges to the limit system \eqref{lim syst}.

\begin{prop}\label{prop:topological_convergence}
Let $ U_0\in \cPHs $ and $ U^\varepsilon $ be a local strong solution identified by Theorem \ref{thm:local_ex_strong_solutions} of \eqref{filtered systemP}, then the sequence $ \left( U^\varepsilon \right)_{\varepsilon > 0} $ has the following regularity uniformly in $ \varepsilon $ 
\begin{align}\label{eq:Ue_in_energy_space}
U^\varepsilon \in L^\infty \left( \R_+ ; \cPLtwo \right), &&
\nh U^\varepsilon \in L^2 \left( \R_+ ; \cPLtwo \right),
\end{align}
and is compact in the space
\begin{equation*}
L^2_{\loc} \left( \R_+ ; H^{-\eta} \left( \T^3 \right) \right),
\end{equation*}
for some $ \eta >0 $ (possibly small).
\end{prop}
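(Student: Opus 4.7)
My plan has three parts: derive the uniform energy bound \eqref{eq:Ue_in_energy_space}, control $\partial_t U^\varepsilon$ in a large negative Sobolev norm, and close with Aubin--Lions. The structural input used throughout is that the propagator $\mathcal{L}(\tau)=e^{-\tau\PA}$ is a matrix Fourier multiplier which, on each mode $n\in\mathbb{Z}^3$, acts as a unitary operator of $\C^4$: the eigenvectors $e^0,e^\pm$ of \eqref{cP:eigenvectors} form an orthonormal basis and the eigenvalues in \eqref{eigenvalues} are purely imaginary. Consequently $\mathcal{L}(\tau)$ is an isometry on $\cPLtwo$ and on every (anisotropic) Sobolev space, and it commutes with every scalar Fourier multiplier, in particular with $\nh$ and with $(1-\Delta)^\sigma$. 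This is the mechanism that will make every bound below genuinely insensitive to $\varepsilon$.

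For \eqref{eq:Ue_in_energy_space} I plan to work on \eqref{primitive equations} rather than on \eqref{filtered systemP}. Pairing with $V^\varepsilon$ in $\cPLtwo$, the transport term vanishes by $\dive v^\varepsilon=0$, the pressure term vanishes since the fourth component of $(-\nabla\Phi_\varepsilon,0)$ is zero and $v^\varepsilon$ is divergence-free, and the $\varepsilon^{-1}\mathcal{A}V^\varepsilon$ penalization contributes nothing because $\mathcal{A}$ in \eqref{matrici} is antisymmetric. The dissipation term provides $c\|\nh V^\varepsilon\|_{\cPLtwo}^2$ with $c=\min\{\nu_h,\nu_h'\}>0$. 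Since $\Lminus$ is an $L^2$-isometry commuting with $\nh$, the resulting bound transfers verbatim to $U^\varepsilon=\Lminus V^\varepsilon$.

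To bound $\partial_t U^\varepsilon$ in some $H^{-M}$ uniformly in $\varepsilon$ I read \eqref{filtered systemP} as $\partial_t U^\varepsilon=-\mathcal{Q}^\varepsilon(U^\varepsilon,U^\varepsilon)+\mathbb{D}^\varepsilon U^\varepsilon$. The linear term is the $\mathcal{L}$-conjugate of the second-order horizontal operator $\mathbf{D}$, so $\|\mathbb{D}^\varepsilon U^\varepsilon\|_{H^{-2}}\leq C\|U^\varepsilon\|_{\cPLtwo}$. For the nonlinear term I use $\dive v^\varepsilon=0$ to write
$$
\mathcal{Q}^\varepsilon(U^\varepsilon,U^\varepsilon)=\Lminus\mathbb{P}\,\dive\bigl(\Lplus U^\varepsilon\otimes\Lplus U^\varepsilon\bigr);
$$
the tensor product is uniformly bounded in $L^\infty(\R_+;L^1(\T^3))$ by $\|U^\varepsilon\|^2_{L^\infty(\cPLtwo)}$, and the Sobolev embedding $L^1(\T^3)\hookrightarrow H^{-3/2-\delta}(\T^3)$ combined with the continuity of $\dive$, of $\mathbb{P}$ (a bounded scalar/matrix Fourier multiplier, cf. \eqref{eq:Leray_projector}), and of $\mathcal{L}$ on every $H^s$, yields a uniform bound of $\mathcal{Q}^\varepsilon(U^\varepsilon,U^\varepsilon)$ in $L^\infty(\R_+;H^{-M})$ for some finite $M$. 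Hence $(\partial_t U^\varepsilon)_\varepsilon$ is uniformly bounded in $L^2_{\loc}(\R_+;H^{-M})$.

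Finally, since $\T^3$ is compact, Rellich--Kondrachov gives that $\cPLtwo\hookrightarrow H^{-\eta}(\T^3)$ is compact for every $\eta>0$. Picking any $\eta\in(0,M)$ and applying the Aubin--Lions lemma to the triple $\cPLtwo\hookrightarrow\hookrightarrow H^{-\eta}\hookrightarrow H^{-M}$ will deliver compactness of $(U^\varepsilon)_\varepsilon$ in $L^2_{\loc}(\R_+;H^{-\eta})$. The only real subtlety in the whole argument will be to check that the fast oscillations carried by $\mathcal{L}(t/\varepsilon)$ do not spoil any estimate, but this reduces to the fiber-wise unitarity of $\mathcal{L}$ observed in the first paragraph, so that all the nonlinear and dissipative estimates are performed in norms invariant under unitary Fourier multipliers.
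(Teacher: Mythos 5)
Your proof is correct and follows essentially the same route as the paper: a uniform $L^2$ energy estimate, a uniform bound on $\partial_t U^\varepsilon$ in $L^2_{\loc}\left( \R_+ ; H^{-M} \right)$, and Aubin--Lions; the only variation is that you bound $\mathcal{Q}^\varepsilon\left( U^\varepsilon, U^\varepsilon \right)$ through $L^\infty_t L^1_x \hra L^\infty_t H^{-3/2-\delta}$, whereas the paper tests against $\phi$ and uses the uniform $L^2_{\loc}\left( \R_+; H^{1,0} \right)$ bound together with an anisotropic H\"older inequality, both being routine. One small caution: $\mathcal{L}\left( \tau \right)$ is not unitary on all of $\C^4$ fiberwise (the eigenvalue $0$ carries a Jordan block, and $e^0, e^\pm$ span only the divergence-free subspace), but since every field you apply it to is divergence-free, the isometry properties you invoke are legitimate.
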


\begin{proof}
The proof of \eqref{eq:Ue_in_energy_space} is merely an $ \cPLtwo $ energy estimate on the filtered system \eqref{filtered systemP}, hence is omitted.\\
We prove now that $ \left( \partial_t U^\varepsilon \right)_{\varepsilon} $ is bounded, uniformly in $ \varepsilon $, in $ L^2_{\loc} \left( \R_+; H^{-N} \right) $ where $ N $ is large.\\
The only thing to prove is to control the bilinear interaction $ \mathcal{Q}^\varepsilon \left( U^\varepsilon, U^\varepsilon \right) $ in the $ L^2_{\loc} \left( \R_+; H^{-N} \right) $ space. Let $ \phi $ be a test function:
\begin{align*}
\left| \int_{\R_+\times \T^3} \mathcal{Q}^\varepsilon \left( U^\varepsilon, U^\varepsilon \right) \cdot \phi \ \dx \ \d t \right| = & 
\ \left| \int_{\R_+\times \T^3} \left[ \Lplus U^\varepsilon \otimes \Lplus U^\varepsilon \right] : \nabla \phi
\  \dx \ \d t \right|\\
\leqslant & \
 \left\| U^\varepsilon \right\|_{L^\infty  \left( \R_+; L^2 \right)}
 \left\| U^\varepsilon \right\|_{L^2_{\loc} \left( \R_+; H^{1,0} \right)} \left\| \nabla \phi \right\|_{L^2 \left( \R_+;  L^\infty_v L^2_h \right)}
\end{align*}
Indeed \eqref{eq:Ue_in_energy_space} assures us that $ U^\varepsilon \in L^2_{\loc}\left( \R_+ ; H^{1,0} \left( \T^3 \right) \right) $ uniformly in $ \varepsilon $, whence, by density,  we proved that  $ \left( \partial_t U^\varepsilon \right)_{\varepsilon} $ is bounded, uniformly in $ \varepsilon $, in $ L^2_{\loc} \left( \R_+; H^{-N} \right) $ where $ N $ is large.
It suffice hence to apply Aubin-Lions lemma (see \cite{Aubin63}) to deduce the claim.
\end{proof}

\noindent
Proposition \ref{prop:topological_convergence} asserts hence that (up to subsequences, not relabeled):
\begin{equation*}
U^\varepsilon = U + r^\varepsilon,
\end{equation*}
where $ r^\varepsilon $ is an $ L^2_{\loc} \left( \R_+ ; H^{-\eta} \left( \T^3 \right) \right) $ perturbation and $ U $ is a non-highly-oscillating state. In what follows we denote as $ V_{{\QG}} $ the projection onto the non-oscillating space defined in \eqref{eq:QG-osc} of the limit non-highly-oscillating state $ U $, similarly $ U_{{\osc}} $ is the projection of $ U $ onto the oscillating subspace. The element $\Omega$ is indeed defined as $\Omega= -\partial_2U^1+\partial_1 U^2 -F \partial_3 U^4$.\\

\noindent
First of all we have to make sense of a convergence of the from
\begin{equation*}
\mathcal{Q}^\varepsilon \left( U^\varepsilon, U^\varepsilon \right)\to
\mathcal{Q} \left( U, U \right), 
\end{equation*}
where $ U $ is a weak solution of the limit system \eqref{lim syst} of which we can say at best that it belongs to the space 
\begin{align}
U \in L^\infty \left( \R_+ ; \cPLtwo \right), &&
\nh U \in L^2 \left( \R_+ ; \cPLtwo \right),
\end{align}
thanks to Theorem \ref{thm:Leray_sol}, and $ \left( U^\varepsilon \right)_{\varepsilon > 0} $ a (not relabeled) sequence of local strong solutions of \eqref{filtered systemP} which satisfy \eqref{eq:Ue_in_energy_space} uniformly in $ \varepsilon $ and that converge to a limit element $ U $ in $ L^2_{\loc } \left( \R_+; H^{-\eta} \right) $ for some $ \eta >0 $. In fact in order to define $ \mathcal{Q} $ in \eqref{limit quadratic} we applied the nonstationary phase theorem for smooth function. This is obviously not the case but by mollification we can deduce the same result.

\begin{lemma}\label{lem:NSPT}
Let $ \left( U^\varepsilon \right)_{\varepsilon > 0} $ a (not relabeled) sequence of local strong solutions of \eqref{filtered systemP} which satisfy \eqref{eq:Ue_in_energy_space} uniformly in $ \varepsilon $ and that converges to a limit element $ U $ in $ L^2_{\loc } \left( \R_+; H^{-\eta} \right) $ for some $ \eta >0 $. Then the following limit holds in the sense of distributions
  \begin{equation*}
\mathcal{Q}^\varepsilon \left( U^\varepsilon, U^\varepsilon \right)\to
\mathcal{Q} \left( U, U \right).
\end{equation*}
\end{lemma}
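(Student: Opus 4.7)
Fix a test function $\phi\in\mathcal{D}(\R_+\times\T^3)$. The idea is a frequency cut-off in space combined with non-stationary phase. For $N\ge 1$ let $S_N$ denote the projector onto Fourier modes with $|\check k|\le N$, and split
\begin{equation*}
\langle \mathcal{Q}^\varepsilon(U^\varepsilon,U^\varepsilon)-\mathcal{Q}(U,U),\phi\rangle
 = \mathcal{R}^\varepsilon_N + \mathcal{M}^\varepsilon_N + \mathcal{R}_N,
\end{equation*}
where $\mathcal{R}^\varepsilon_N$ gathers the bilinear interactions in $\mathcal{Q}^\varepsilon(U^\varepsilon,U^\varepsilon)$ with at least one high-frequency factor $(1-S_N)U^\varepsilon$, $\mathcal{R}_N$ is the analogous remainder for $\mathcal{Q}(U,U)$, and $\mathcal{M}^\varepsilon_N:=\langle \mathcal{Q}^\varepsilon(S_NU^\varepsilon,S_NU^\varepsilon)-\mathcal{Q}(S_N U,S_N U),\phi\rangle$. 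The underlying algebraic fact is that, since $\PA$ is skew-adjoint on divergence-free vector fields, $\mathcal{L}(\pm t/\varepsilon)$ is an $L^2$-isometry and, being a Fourier multiplier, it commutes both with $S_N$ and with $\mathbb{P}$.

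Writing $\mathcal{Q}^\varepsilon$ in divergence form $\mathcal{Q}^\varepsilon(A,B)=\mathcal{L}(-t/\varepsilon)\mathbb{P}\,\dive[\mathcal{L}(t/\varepsilon)A\otimes\mathcal{L}(t/\varepsilon)B]$ and transferring the cut-off onto the test function we obtain, for the typical cross-term,
\begin{equation*}
\bigl|\langle \mathcal{Q}^\varepsilon((1-S_N)U^\varepsilon,U^\varepsilon),\phi\rangle\bigr|
= \biggl|\int \bigl(\mathcal{L}(t/\varepsilon)U^\varepsilon\otimes \mathcal{L}(t/\varepsilon)U^\varepsilon\bigr):\mathcal{L}(t/\varepsilon)(1-S_N)\nabla\mathbb{P}\phi \,\d x\,\d t\biggr|
 \lesssim \|U^\varepsilon\|_{L^\infty_tL^2_x}^2\,\|(1-S_N)\nabla\mathbb{P}\phi\|_{L^1_tL^\infty_x},
\end{equation*}
which vanishes as $N\to\infty$ uniformly in $\varepsilon$, since $\phi$ is smooth with compact time-support, $\|U^\varepsilon\|_{L^\infty_tL^2_x}$ is uniformly bounded, and $\mathcal{L}(t/\varepsilon)$ is $L^\infty$-bounded uniformly in $t,\varepsilon$ (via the embedding $H^s\hookrightarrow L^\infty$ for $s>3/2$ and the fact that $\mathcal{L}$ is an $H^s$-isometry). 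The symmetric cross-term and $\mathcal{R}_N$ are estimated identically (for $\mathcal{R}_N$ the operators $\mathcal{L}(\pm t/\varepsilon)$ are simply absent).

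For $\mathcal{M}^\varepsilon_N$ at fixed $N$: since $S_N$ has finite-dimensional range, the convergence $U^\varepsilon\to U$ in $L^2_{\loc}(\R_+;H^{-\eta})$ upgrades to $S_N U^\varepsilon\to S_N U$ in $L^2_{\loc}(\R_+;H^M)$ for every $M\ge 0$, whence the preceding bilinear bound yields $\langle \mathcal{Q}^\varepsilon(S_N U^\varepsilon,S_N U^\varepsilon)-\mathcal{Q}^\varepsilon(S_N U,S_N U),\phi\rangle\to 0$. There remains to show
\begin{equation*}
\langle \mathcal{Q}^\varepsilon(S_N U,S_N U)-\mathcal{Q}(S_N U,S_N U),\phi\rangle\xrightarrow[\varepsilon\to 0]{}0.
\end{equation*}
By \eqref{forma bilin epsilon non zero} and Parseval this is a \emph{finite} sum, indexed by $\{|k|,|m|,|n|\le N,\ k+m=n,\ (a,b,c)\in\{0,\pm\}^3\}$, of oscillatory integrals $\int_{\R_+}e^{-i(t/\varepsilon)\omega^{a,b,c}_{k,m,n}}g^{a,b,c}_{k,m,n}(t)\,\d t$, where $g^{a,b,c}_{k,m,n}$ is a product of Fourier coefficients of $U$ and $\phi$. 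The resonant triples reproduce exactly $\langle \mathcal{Q}(S_N U,S_N U),\phi\rangle$ by \eqref{limit quadratic}, while each non-resonant contribution vanishes as $\varepsilon\to 0$ through one integration by parts in $t$. Sending first $\varepsilon\to 0$ and then $N\to\infty$ concludes the proof.

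The main obstacle lies in this last step: the non-stationary phase argument requires a sliver of time-regularity on the $g^{a,b,c}_{k,m,n}$, which is not included in the hypotheses. It is recovered either by differentiating \eqref{lim syst} (yielding $\partial_t U\in L^2_{\loc}(\R_+;H^{-M})$ for some $M$, hence $g^{a,b,c}_{k,m,n}\in W^{1,1}_{\loc}(\R_+)$) or, more economically, by time-mollifying $U$ and sending the mollification parameter to zero at the end, the uniformity of the bilinear estimate above controlling the mollification error independently of $\varepsilon$.
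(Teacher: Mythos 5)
Your overall strategy --- truncate in frequency, identify the resonant contributions by non-stationary phase, and control the remainders --- is the same as the paper's (the paper truncates at frequency $1/\alpha$, writing $U^\varepsilon_\alpha$, $U_\alpha$), and your treatment of $\mathcal{M}^\varepsilon_N$ and of the time regularity needed for the oscillatory integrals is sound. The genuine gap is in the remainder estimate, which is the heart of the lemma. The identity you rely on,
\begin{equation*}
\left\langle \mathcal{Q}^\varepsilon\bigl((1-S_N)U^\varepsilon,U^\varepsilon\bigr),\phi\right\rangle
=\int \bigl(\Lplus U^\varepsilon\otimes\Lplus U^\varepsilon\bigr):\Lplus(1-S_N)\nabla\mathbb{P}\phi \,\dx\,\d t,
\end{equation*}
is false: by duality you may transfer onto $\phi$ only the operators applied to the \emph{output} of the bilinear map, namely $\mathcal{L}(-t/\varepsilon)$, $\mathbb{P}$ and the divergence. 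The projector $1-S_N$ acts on one argument \emph{inside} the tensor product, and a Fourier truncation does not distribute over pointwise products (nor does $\Lplus$ commute with $\otimes$). The correct dual expression is $-\int\bigl[\Lplus(1-S_N)U^\varepsilon\otimes\Lplus U^\varepsilon\bigr]:\nabla\mathbb{P}\,\Lplus\phi\,\dx\,\d t$, with the cutoff still sitting on $U^\varepsilon$.

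This is not a cosmetic slip, because uniform-in-$\varepsilon$ smallness of the high-frequency remainder is exactly where the absence of vertical viscosity bites: the hypotheses \eqref{eq:Ue_in_energy_space} control only $L^\infty_t L^2$ and \emph{horizontal} derivatives, so $\left\| (1-S_N)U^\varepsilon \right\|_{\cPLtwo}$ need not tend to zero as $N\to\infty$ uniformly in $\varepsilon$ (high purely vertical frequencies are undamped --- the same obstruction that prevents Aubin--Lions compactness in $L^2_{t,x}$ here). With the corrected duality your bound degenerates to $\left\|(1-S_N)U^\varepsilon\right\|_{L^2_{t,x}}\left\|U^\varepsilon\right\|_{L^2_{t,x}}\sup_t\left\|\nabla\mathbb{P}\Lplus\phi\right\|_{\Linfty}$, which is not small uniformly in $\varepsilon$, so the step ``$\mathcal{R}^\varepsilon_N\to 0$ as $N\to\infty$ uniformly in $\varepsilon$'' is unproven, and the subsequent exchange of the limits $\varepsilon\to0$ and $N\to\infty$ collapses. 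The paper organizes the limits so as not to need this uniform statement in that form: with the same three-term splitting it sends $\varepsilon\to0$ first at fixed truncation (using precisely your $L^2_{\loc}\left(\R_+;H^{-\eta}\right)$ convergence for the truncated difference and non-stationary phase for the resonance identification), bounds $\lim_{\varepsilon\to0}$ of the whole pairing by a quantity $c_\alpha(\phi)$ coming from the truncation errors with $c_\alpha\to0$, and concludes because the left-hand side does not depend on $\alpha$. To repair your argument you must either adopt that order-of-limits scheme, or add to the hypotheses genuine uniform vertical regularity of $U^\varepsilon$ (e.g.\ a uniform-in-$\varepsilon$ bound in $L^\infty\left(\left[0,T\right];\cPHs\right)$, which is not part of the statement of the lemma); neither is delivered by the displayed identity.
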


\begin{proof}
Let us define the mollifications
\begin{align*}
U^\varepsilon_\alpha = \mathcal{F}^{-1} \left( 1_{\set{\left| n \right|\leqslant \frac{1}{\alpha}}} \hat{U^\varepsilon} \right),
&&
U _\alpha = \mathcal{F}^{-1} \left( 1_{\set{\left| n \right|\leqslant \frac{1}{\alpha}}} \hat{U} \right).
\end{align*}
Indeed
\begin{equation}\label{eq:SF1}
\begin{aligned}
\mathcal{Q}^\varepsilon \left( U^\varepsilon, U^\varepsilon \right)-
\mathcal{Q} \left( U, U \right)
= & \ \mathcal{Q}^\varepsilon \left( U^\varepsilon, U^\varepsilon \right)
- \mathcal{Q}^\varepsilon \left( U^\varepsilon_\alpha, U^\varepsilon_\alpha \right)\\
& + \mathcal{Q}^\varepsilon \left( U^\varepsilon_\alpha, U^\varepsilon_\alpha \right) - \mathcal{Q} \left( U _\alpha, U _\alpha \right)\\
& + \mathcal{Q} \left( U _\alpha, U _\alpha \right) - \mathcal{Q} \left( U , U  \right),
\end{aligned}
\end{equation}
and 
\begin{equation}
\label{eq:SF2}
\begin{aligned}
\ \mathcal{Q}^\varepsilon \left( U^\varepsilon, U^\varepsilon \right)
- \mathcal{Q}^\varepsilon \left( U^\varepsilon_\alpha, U^\varepsilon_\alpha \right) \xrightarrow{\alpha \to 0} & \ 0,\\
 \mathcal{Q} \left( U _\alpha, U _\alpha \right) - \mathcal{Q} \left( U , U  \right)  \xrightarrow{\alpha \to 0} & 0,
\end{aligned}
\end{equation}
weakly since $ U^\varepsilon_\alpha\xrightarrow{\alpha\to 0} U^\varepsilon $, $ U _\alpha\xrightarrow{\alpha\to 0} U $  in $ L^\infty_{\loc} \left( \R_+; L^2 \left( \T^3 \right) \right) $. Being the space domain $ \T^3 $ compact we do not require a passage to subsequences on the parameter $ \alpha $ but the convergence holds true for the entire sequence. Next we can say that
\begin{align*}
 \mathcal{Q}^\varepsilon \left( U^\varepsilon_\alpha, U^\varepsilon_\alpha \right) - \mathcal{Q} \left( U _\alpha, U _\alpha \right)
= & \
\left(  \mathcal{Q}^\varepsilon \left( U^\varepsilon_\alpha, U^\varepsilon_\alpha \right) -
\mathcal{Q}^\varepsilon \left( U _\alpha, U _\alpha \right) \right)\\
&
+
\left( \mathcal{Q}^\varepsilon \left( U _\alpha, U _\alpha \right) 
 - \mathcal{Q} \left( U _\alpha, U _\alpha \right) \right),
\end{align*}
and again, for $ \alpha >0 $ fixed 
\begin{equation}\label{eq:SF3}
\mathcal{Q}^\varepsilon \left( U^\varepsilon_\alpha, U^\varepsilon_\alpha \right) -
\mathcal{Q}^\varepsilon \left( U _\alpha, U _\alpha \right) \xrightarrow{\varepsilon \to 0} 0,
\end{equation}
weakly since $ U^\varepsilon_{\alpha}\xrightarrow{\varepsilon\to 0} U_\alpha $ in $ L^2_{\loc} \left( \R_+; H^{-\eta} \right) $ due to the topological argument performed in Proposition \ref{prop:topological_convergence}, while finally we can apply the nonstationary phase theorem on $ \mathcal{Q}^\varepsilon \left( U _\alpha, U _\alpha \right) 
 - \mathcal{Q} \left( U _\alpha, U _\alpha \right) $ deducing that
 \begin{equation}\label{eq:SF4}
 \mathcal{Q}^\varepsilon \left( U _\alpha, U _\alpha \right) 
 - \mathcal{Q} \left( U _\alpha, U _\alpha \right) \xrightarrow{\varepsilon \to 0} 0,
 \end{equation}
 in the sense of distributions for  $ \alpha >0 $ fixed. Whence \eqref{eq:SF1}--\eqref{eq:SF4} imply that, fixed a (possibly small) positive $ \alpha >0 $, considering a $ \phi \in \mathcal{D} \left( \R_+\times \T^3 \right) $,  there exists a $ c_\alpha= c_\alpha \left( \phi \right) > 0 $ such that $ c_\alpha \to 0 $ as $ \alpha \to 0 $ and such that
 \begin{equation}\label{eq:SF5}
\lim _{\varepsilon\to 0} \left| \int_{\R_+\times \T^3} \left( \mathcal{Q}^\varepsilon \left( U^\varepsilon, U^\varepsilon \right)-
\mathcal{Q} \left( U, U \right) \right) \cdot \phi \ \d x \ \d t \right| \leqslant c_\alpha.
 \end{equation}
 The left-hand side of \eqref{eq:SF5} is indeed independent from the parameter $ \alpha $, whence
 \begin{equation*}
 \lim _{\varepsilon\to 0} \left| \int_{\R_+\times \T^3} \left( \mathcal{Q}^\varepsilon \left( U^\varepsilon, U^\varepsilon \right)-
\mathcal{Q} \left( U, U \right) \right) \cdot \phi \ \d x \ \d t \right| \leqslant
\lim _{\alpha \to 0} \  c_\alpha=0.
 \end{equation*}
\end{proof}

\noindent
We underline the fact that the following calculations are an adaptation of the ones present in the work \cite{gallagher_schochet} to the case of anisotropic viscosity. For this reason many calculations shall not be carried out in detail, or we shall directly refer to the work \cite{gallagher_schochet} and references therein.\\

Once the convergence for the bilinear interactions is formalized we focus to understand how the global splitting introduced in Section \ref{sec:global_splitting_bilinear} can be applied on bilinear interactions of elements which are not smooth.\\
P. Embid and A. Majda proved the following lemma in \cite{embid_majda}:
 \begin{lemma}\label{lemma:limit_QG_part_bilinear_form}
 $\mathcal{F}^{-1} \left( \left(\mathcal{F}\mathcal{Q}^\varepsilon \left(U^\varepsilon, U^\varepsilon\right)\left|\left|n\right|_F e^0(n)\right.\right)_{\mathbb{C}^4} \right)\xrightarrow{\varepsilon\to 0}v_{{\QG}}\cdot\nabla\Omega$. The limit holds in the sense of distributions.
 \end{lemma}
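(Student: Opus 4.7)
The plan is to combine Lemma \ref{lem:NSPT} with Lemma \ref{lem:splitting_bilinear_QG} by a mollification argument that bridges the regularity gap between the two statements. Introduce the zero-order Fourier multiplier
$$
P_{\QG}: W\longmapsto \mathcal{F}^{-1}\left(\left(\left.\hat{W}(n)\right|\,\left|n\right|_F\,e^0(n)\right)_{\mathbb{C}^4}\right).
$$
Being of order zero, $P_{\QG}$ is continuous on $\mathcal{D}'(\mathbb{R}_+\times\mathbb{T}^3)$, so the distributional convergence $\mathcal{Q}^\varepsilon(U^\varepsilon, U^\varepsilon)\to\mathcal{Q}(U,U)$ provided by Lemma \ref{lem:NSPT} immediately yields
$$
P_{\QG}\,\mathcal{Q}^\varepsilon(U^\varepsilon, U^\varepsilon)\ \longrightarrow\ P_{\QG}\,\mathcal{Q}(U,U)\quad\text{in }\mathcal{D}'.
$$
The entire lemma therefore reduces to the identification $P_{\QG}\mathcal{Q}(U,U)=v_{\QG}\cdot\nabla\Omega$ for the limit element $U$ coming from Theorem \ref{thm:Leray_sol}.

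To establish this identity, I would approximate $U$ by its Fourier truncations $U_\alpha=\mathcal{F}^{-1}(\mathbf{1}_{\{|n|\leqslant 1/\alpha\}}\hat{U})$, with associated potential vorticity $\Omega_\alpha$ and Biot--Savart reconstruction $v_{\QG,\alpha}$. For each fixed $\alpha>0$ the function $U_\alpha$ is smooth, so Lemma \ref{lem:splitting_bilinear_QG} applies and gives exactly $P_{\QG}\mathcal{Q}(U_\alpha, U_\alpha)=v_{\QG,\alpha}\cdot\nabla\Omega_\alpha$. On the other hand $U_\alpha\to U$ in $L^2_{\loc}(\mathbb{R}_+;\cPLtwo)$ with $\nh U_\alpha\to\nh U$ in $L^2_{\loc}(\mathbb{R}_+;\cPLtwo)$, so the same bilinearity-plus-truncation reasoning used in the proof of Lemma \ref{lem:NSPT} delivers $P_{\QG}\mathcal{Q}(U_\alpha, U_\alpha)\to P_{\QG}\mathcal{Q}(U,U)$ in $\mathcal{D}'$. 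It remains only to pass to the limit on the right hand side.

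The hard part is precisely this last convergence $v_{\QG,\alpha}\cdot\nabla\Omega_\alpha\to v_{\QG}\cdot\nabla\Omega$ in $\mathcal{D}'$, because a direct product estimate is unavailable: applying Theorem \ref{thm:Leray_sol} to the transport-diffusion equation for $\Omega$ appearing in the splitting \eqref{eq:splitting_approx_system} one only obtains $\Omega\in L^\infty_{\loc}(\mathbb{R}_+;\cPLtwo)\cap L^2_{\loc}(\mathbb{R}_+;H^{1,0})$, so $\partial_3\Omega$ lives only in a negative-order Sobolev space. The decisive structural observation is that $v_{\QG}^3\equiv 0$ and that $v_{\QG}^h=\nhp\Delta_F^{-1}\Omega$ is horizontally divergence free, whence
$$
v_{\QG}\cdot\nabla\Omega\ =\ v_{\QG}^h\cdot\nh\Omega\ =\ \diveh\bigl(v_{\QG}^h\,\Omega\bigr),
$$
and the analogous identity holds at the approximation level. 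Since $\nhp\Delta_F^{-1}$ is a Fourier multiplier of order $-1$, one has $v_{\QG,\alpha}^h\to v_{\QG}^h$ in $L^2_{\loc}(\mathbb{R}_+;H^1(\mathbb{T}^3))$ and $\Omega_\alpha\to\Omega$ in $L^2_{\loc}(\mathbb{R}_+;\cPLtwo)$. The Sobolev embedding $H^1(\mathbb{T}^3)\hookrightarrow L^6(\mathbb{T}^3)$ combined with H\"older's inequality then gives $v_{\QG,\alpha}^h\,\Omega_\alpha\to v_{\QG}^h\,\Omega$ in $L^1_{\loc}(\mathbb{R}_+;L^{3/2}(\mathbb{T}^3))$, and taking one horizontal divergence in $\mathcal{D}'$ transfers this convergence to $v_{\QG,\alpha}\cdot\nabla\Omega_\alpha\to v_{\QG}\cdot\nabla\Omega$. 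Chaining the two distributional limits yields the stated result.
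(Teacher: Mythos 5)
Your argument is correct and uses exactly the paper's ingredients, only reorganized. The paper proves the lemma by a single three-term telescoping: it inserts the mollifications $U^\varepsilon_\alpha$, applies the nonstationary phase theorem together with Lemma \ref{lem:splitting_bilinear_QG} to the smooth middle term, and removes the mollification using strong $L^2_{\loc}$ convergence; you instead factor the proof as ``apply the already-proven Lemma \ref{lem:NSPT} and then the projection, then identify $P_{\QG}\mathcal{Q}\left( U,U \right)=v_{\QG}\cdot\nabla\Omega$ by mollifying the limit $U$ alone''. This factorization is slightly cleaner (the $\varepsilon\to 0$ passage is not redone), and your treatment of the final limit $v_{\QG,\alpha}\cdot\nabla\Omega_\alpha\to v_{\QG}\cdot\nabla\Omega$ -- via $v_{\QG}\cdot\nabla\Omega=\diveh\left( v^h_{\QG}\,\Omega \right)$, the order $-1$ gain of $\nhp\Delta_F^{-1}$, and $H^1\left( \T^3 \right)\hookrightarrow L^6\left( \T^3 \right)$ -- is more explicit than the paper's, which simply invokes $L^\infty_{\loc}\left( \R_+;\cPLtwo \right)$ convergence of both factors.

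Two caveats. First, $P_{\QG}$ is not of order zero: the factor $\left| n \right|_F$ makes it a first-order multiplier. The step survives because on the torus any polynomially bounded Fourier multiplier is continuous on $\mathcal{D}'$ (it acts only on the discrete spatial frequencies of zero-mean fields), but the justification should be phrased that way. Second, you source the regularity $\Omega\in L^\infty_{\loc}\left( \R_+;\cPLtwo \right)\cap L^2_{\loc}\left( \R_+;H^{1,0} \right)$ from the transport--diffusion equation \eqref{nonosc limit} (equivalently from the splitting \eqref{eq:splitting_approx_system} and Theorem \ref{thm:Leray_sol}); at this stage of the argument that is circular, since the fact that the vorticity of the weak limit $U$ of the filtered solutions satisfies that equation is precisely what Lemma \ref{lemma:limit_QG_part_bilinear_form} is a step towards. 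What your mollification step actually requires is only that $\Omega$, built from the weak limit $U$, lies in an $L^2$-based space so that $\Omega_\alpha\to\Omega$ strongly; the paper uses the same fact implicitly (it treats $\Omega$ as an $\cPLtwo$-type object throughout Section \ref{sec:weak_conv}, consistently with the standing assumption $\Omega_0\in\cPLtwo$), so the honest statement is that this regularity is part of the setting or must come from uniform-in-$\varepsilon$ information on $\Omega^\varepsilon$, not from the limit equation being identified.
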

 \begin{proof}
Let us compute 
\begin{multline*}
\mathcal{F}^{-1} \left( \left(\mathcal{F}\mathcal{Q}^\varepsilon \left(U^\varepsilon, U^\varepsilon\right)\left|\left|n\right|_F e^0(n)\right.\right)_{\mathbb{C}^4} \right) -v_{{\QG}}\cdot\nabla\Omega
\\
\begin{aligned}
= & \ \mathcal{F}^{-1} \left( \left(\mathcal{F}\mathcal{Q}^\varepsilon \left(U^\varepsilon, U^\varepsilon\right)\left|\left|n\right|_F e^0(n)\right.\right)_{\mathbb{C}^4} \right) -
\mathcal{F}^{-1} \left( \left(\mathcal{F}\mathcal{Q}^\varepsilon  \left(U^\varepsilon_{\alpha}, U^\varepsilon_{\alpha}\right)\left|\left|n\right|_F e^0(n)\right.\right)_{\mathbb{C}^4} \right)\\
& + \mathcal{F}^{-1} \left( \left(\mathcal{F}\mathcal{Q}^\varepsilon  \left(U^\varepsilon_{\alpha}, U^\varepsilon_{\alpha}\right)\left|\left|n\right|_F e^0(n)\right.\right)_{\mathbb{C}^4} \right)
-v_{\QG, \alpha}\cdot \nabla \Omega_{\alpha}\\
&+v_{\QG, \alpha}\cdot \nabla \Omega_{\alpha}
-v_{\QG}\cdot \nabla \Omega.
\end{aligned}
\end{multline*}
The element 
\begin{equation*}
 \mathcal{F}^{-1} \left( \left(\mathcal{F}\mathcal{Q}^\varepsilon \left(U^\varepsilon, U^\varepsilon\right)\left|\left|n\right|_F e^0(n)\right.\right)_{\mathbb{C}^4} \right) -
\mathcal{F}^{-1} \left( \left(\mathcal{F}\mathcal{Q}^\varepsilon  \left(U^\varepsilon_{\alpha}, U^\varepsilon_{\alpha}\right)\left|\left|n\right|_F e^0(n)\right.\right)_{\mathbb{C}^4} \right)
\xrightarrow[\alpha \to 0]{\mathcal{D}' \left( \R_+\times \T^3 \right)} 0,
\end{equation*}
since $ U^\varepsilon_{\alpha}\xrightarrow{\alpha\to 0} U^\varepsilon $ in $ L^\infty_{\loc} \left( \R_+ ; L^2 \right) $. Next applying the nonstationary phase theorem and Lemma \ref{lem:splitting_bilinear_QG} we can say that
\begin{equation*}
\mathcal{F}^{-1} \left( \left(\mathcal{F}\mathcal{Q}^\varepsilon  \left(U^\varepsilon_{\alpha}, U^\varepsilon_{\alpha}\right)\left|\left|n\right|_F e^0(n)\right.\right)_{\mathbb{C}^4} \right)
-v_{\QG, \alpha}\cdot \nabla \Omega_{\alpha} \to 0,
\end{equation*}
as $ \varepsilon\to 0 $ in the sense of distributions. Lastly again we can argue as above in order to state that
\begin{equation*}
v_{\QG, \alpha}\cdot \nabla \Omega_{\alpha}
-v_{\QG}\cdot \nabla \Omega \xrightarrow[\varepsilon\to 0]{\mathcal{D}' \left( \R_+\times \T^3 \right)} 0,
\end{equation*}
since $ v_{\QG, \alpha}\to v_{\QG} $ and $ \Omega_{\alpha}\to \Omega $ in $ L^\infty_{\loc} \left( \R_+; L^2 \right) $, concluding.
 \end{proof}
We want to understand which are the projections of $\mathbb{D}^\varepsilon U$ on the oscillatory and non oscillatory space as $\varepsilon \to 0$. This is easily done if we consider the formulation of the limit form as it is given in \eqref{limit linear}. Let us consider the projection of the limit linear form onto the potential space defined by $ \Omega = \mathcal{F}^{-1} \left( \left(  \mathcal{F}U \left| \left| n \right|_F e^0 \right. \right)_{\mathbb{C}^4} \right) $,
$$
\left(  \mathcal{F}\mathbb{D}U \left| \left| n \right|_F e^0\right. \right)_{\mathbb{C}^4}=  \sum_{\omega_n^{a,b}=0} \left( \left. \mathbf{D} (n) U^{b}(n)\right| e^a(n)\right)_{\mathbb{C}^4} \left( \left.  e^a(n) \right|  \left| n \right|_F e^0 \right)_{\mathbb{C}^4} .
$$ 
As it has been pointed out above $ e^0 \perp e^\pm $, hence   $a=0$. On the other hand if we consider the limit set $\omega_n^{a,b}=0$ with the fact that $a=0$ we easily obtain that $\omega^b(n)\equiv 0$, whence $b=0$ as well, hence we obtained that
\begin{equation*}
\begin{aligned}
\left(\left.-\mathcal{L}\left(-\frac{t}{\varepsilon}\right) \mathbf{D} 
\mathcal{L}\left(\frac{t}{\varepsilon}\right)U^\varepsilon \right|
\mathcal{F}^{-1} \left( \left| n_F \right| e^0 \right)\right)_{\mathbb{C}^4} 
\xrightarrow{\varepsilon\to 0}& \  a_{\text{QG}}\left(D_h\right)\Omega
 = & \
\mathcal{F}^{-1}\left(  
\frac{\nu\left( n_1^2 + n_2^2 \right)+ \nu 'F^2 n_3^2 }{n_1^2 + n_2^2 +F^2 n_3^2} \left( n_1^2+n_2^2 \right) \widehat{\Omega}_n \right)
.
\end{aligned} 
\end{equation*}
In the same way, defining  $ U^a= \left( \left. \mathcal{F} U \right| e^a \right) e^a $
\begin{align*}
-\lim_{\varepsilon\to 0} \mathcal{L}\left(-\frac{t}{\varepsilon}\right) \mathbf{D} 
\mathcal{L}\left(\frac{t}{\varepsilon}\right)U^\varepsilon_\text{osc}= & \;
a_\text{osc}\left(D_h\right)U^\varepsilon_\text{osc} 
=  \;
\mathcal{F}^{-1}
\left( \sum_{\substack{\omega^{a,b}_n=0\\a,b=\pm}} \left( \left. \mathbf{D} (n) U^{b}(n) \right| e^a \left( n \right) \right)_{\mathbb{C}^4} e^a\left( n \right) \right).
\end{align*}

We want now to understand which form assumes the limit as $ \varepsilon \to 0 $ of the projection of $ \mathcal{Q}^\varepsilon\left(U^\varepsilon, U^\varepsilon\right) $ onto the oscillatory subspace $ \C e^- \oplus \C e^+ $. In particular the following result holds true:
\begin{lemma}\label{lemma:bilinear_part_osc}
For every three-dimensional torus $ \mathbb{T}^3 $ we have
 \begin{equation}
 \label{eq:bilinear_part_osc}
 \mathcal{Q}^\varepsilon\left(U^\varepsilon, U^\varepsilon\right)_{{\osc}}
\xrightarrow{\varepsilon\to 0} \left( \mathcal{Q}\left(V_{\QG},U_{\osc}\right) \right)_{\osc}+
\left( \mathcal{Q}\left(U_{\osc},V_{\QG}\right) \right)_{\osc} + \left( \mathcal{Q}\left(U_{\osc},U_{\osc}\right) \right)_{\osc}. 
 \end{equation}
\end{lemma}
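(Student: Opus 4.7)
The plan is to combine the weak convergence of the full bilinear interaction established in Lemma \ref{lem:NSPT} with the algebraic splitting of Corollary \ref{cor:splitting_bilinear_osc}. First I observe that the oscillating projection $W\mapsto W_{\osc}$ is a matrix-valued Fourier multiplier of order zero (namely $I-\Pi_{\QG}$), hence bounded on every Sobolev space and continuous in $\mathcal{D}'$. Applying it to the distributional convergence of Lemma \ref{lem:NSPT} immediately gives
\[
\mathcal{Q}^\varepsilon(U^\varepsilon,U^\varepsilon)_{\osc}\xrightarrow[\varepsilon\to 0]{\mathcal{D}'}\mathcal{Q}(U,U)_{\osc}.
\]
It therefore suffices to identify $\mathcal{Q}(U,U)_{\osc}$ with the right-hand side of \eqref{eq:bilinear_part_osc}.

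For smooth vector fields this identification is exactly Corollary \ref{cor:splitting_bilinear_osc}. To bypass the lack of regularity of the limit $U\in L^\infty(\R_+;\cPLtwo)\cap L^2(\R_+;H^{1,0})$, I truncate in frequency: set $U_\alpha=\mathcal{F}^{-1}(\mathbf{1}_{|n|\leqslant 1/\alpha}\hat U)$ and correspondingly $V_{\QG,\alpha}=\Pi_{\QG}U_\alpha$, $U_{\osc,\alpha}=U_\alpha-V_{\QG,\alpha}$. Applied pointwise in $\alpha>0$, Corollary \ref{cor:splitting_bilinear_osc} yields
\[
\mathcal{Q}(U_\alpha,U_\alpha)_{\osc}=\mathcal{Q}(V_{\QG,\alpha},U_{\osc,\alpha})_{\osc}+\mathcal{Q}(U_{\osc,\alpha},V_{\QG,\alpha})_{\osc}+\mathcal{Q}(U_{\osc,\alpha},U_{\osc,\alpha})_{\osc}.
\]
As $\alpha\to 0$, the left-hand side converges to $\mathcal{Q}(U,U)_{\osc}$ in $\mathcal{D}'$ since $U_\alpha\to U$ in $L^\infty_{\loc}(\R_+;\cPLtwo)$. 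The two cross terms converge in $\mathcal{D}'$ to $\mathcal{Q}(V_{\QG},U_{\osc})_{\osc}+\mathcal{Q}(U_{\osc},V_{\QG})_{\osc}$ by routine anisotropic Sobolev product rules, since $V_{\QG,\alpha}\to V_{\QG}$ gains one derivative of regularity via $\Omega_0\in\cPLtwo$ (so that $V_{\QG}\in L^\infty_{\loc}(\R_+;H^1)$) while $U_{\osc,\alpha}\to U_{\osc}$ in $L^\infty_{\loc}(\R_+;\cPLtwo)\cap L^2_{\loc}(\R_+;H^{1,0})$.

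The genuinely delicate contribution is the pure oscillating one $\mathcal{Q}(U_{\osc,\alpha},U_{\osc,\alpha})_{\osc}$, precisely because $U_{\osc}$ carries no vertical regularity whatsoever; this is the main obstacle and it is the same obstacle one encounters in Section \ref{sec:conv_bilin_osc_existence} to prove existence. I would therefore recycle that argument verbatim: write $\mathcal{Q}=\dive\widetilde{\mathcal{Q}}$, isolate the resonant set $\mathcal{K}^\star_n$ relative to the frequency $n$, and exploit the fact that for fixed $(k_h,m_h,n)$ the vertical frequency $k_3$ satisfies the explicit polynomial equation $\wp(k_3)=0$ appearing in Subsection \ref{sec:conv_bilin_osc_existence}. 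Fujiwara's near-optimal bound (Proposition \ref{Fujiwara_bound}) then controls $|k_3|\lesssim|n|^{\alpha_1}|m_h|^{\alpha_2}|k_h|^{\alpha_3}$, which trades the missing vertical regularity for horizontal frequencies on which we do have the $L^2_{\loc}(\R_+;H^{1,0})$ control. Reproducing the chain of estimates culminating in \eqref{eq:Leraysolbilinoscterzaeq} together with Lemma \ref{product rule} and the interpolated bound $U_{\osc,\alpha}\to U_{\osc}$ in $L^2_{\loc}(\R_+;H^{\sigma,0})$ for $\sigma<1$, one obtains $\mathcal{Q}(U_{\osc,\alpha},U_{\osc,\alpha})_{\osc}\to\mathcal{Q}(U_{\osc},U_{\osc})_{\osc}$ in $\mathcal{D}'$. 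Combining the three distributional limits concludes the proof; note that no geometric condition on $\T^3$ is needed at this stage, consistently with the statement of the lemma.
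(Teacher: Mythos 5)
Your proposal is correct, and it uses the same basic toolkit as the paper (mollification, the nonstationary phase theorem, and the algebraic splitting of Corollary \ref{cor:splitting_bilinear_osc} for smooth fields), but it is organized differently. The paper simply repeats the telescoping argument of Lemma \ref{lemma:limit_QG_part_bilinear_form} on the projected quantity: mollify $U^\varepsilon$, compare $\mathcal{Q}^\varepsilon\left( U^\varepsilon_\alpha, U^\varepsilon_\alpha \right)_{\osc}$ with the split expression for $U_\alpha$ via nonstationary phase and Corollary \ref{cor:splitting_bilinear_osc}, then remove the mollification. You instead first apply the zero-order multiplier $\Pi_{\osc}=\Id-\Pi_{\QG}$ to the already established convergence of Lemma \ref{lem:NSPT} (a legitimate shortcut here, which is not available verbatim for the quasi-geostrophic projection of Lemma \ref{lemma:limit_QG_part_bilinear_form} because of the extra weight $\left| n \right|_F$), and then identify $\mathcal{Q}\left( U,U \right)_{\osc}$ with the right-hand side by mollifying the fixed limit $U$ alone. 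The one place where your proposal is heavier than necessary is the term $\mathcal{Q}\left( U_{{\osc},\alpha}, U_{{\osc},\alpha} \right)_{\osc}$: you import the Fujiwara-based resonance analysis of Subsection \ref{sec:conv_bilin_osc_existence}, but that machinery was needed there only because the two factors converged merely in $L^2_{\loc}\left( \R_+; H^{-\varepsilon} \right)$. Here $\alpha$ truncates a single fixed $U$, so $U_{{\osc},\alpha}\to U_{\osc}$ strongly in $L^2_{\loc}\left( \R_+; \cPLtwo \right)$ (uniformly in time, in fact, with rate $\alpha^{\varepsilon}\left\| U \right\|_{L^\infty L^2}$ in $H^{-\varepsilon}$), and the crude bound $\bigl| \sum_{\mathcal{K}^\star} \hat{a}\left( k \right)\hat{b}\left( n-k \right)\hat{\phi}\left( n \right) \bigr|\lesssim \left\| a \right\|_{\cPLtwo}\left\| b \right\|_{\cPLtwo}\left\| \phi \right\|_{H^2}$, obtained by dropping the resonance constraint and using Cauchy--Schwarz, already gives the distributional limit of every term, cross terms and oscillating--oscillating term alike. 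So your argument works, and would even be slightly shorter if you replaced the Fujiwara detour by this strong-times-strong observation; the paper's version and yours are otherwise equivalent in substance, and both correctly require no geometric condition on $\T^3$.
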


\begin{proof}
We avoid to give a detailed proof of such result since the proof is very similar to the one performed in Lemma \ref{lemma:limit_QG_part_bilinear_form} but using Corollary \ref{cor:splitting_bilinear_osc} instead of Lemma \ref{lem:splitting_bilinear_QG}.
\end{proof}

The above lemmas hence states that in the limit  $ \varepsilon \to 0 $ there is no bilinear interaction of kernel elements in the equation describing the evolution of $ U_{\osc} $.\\

Whence the filtered system \eqref{filtered systemP} can be described, as $\varepsilon\to 0$, thanks to the following two systems:
\begin{align}
&\left\lbrace\label{nonosc limit}
\begin{array}{rcl}
\partial_t\Omega+v_{\QG}^h\cdot\nh \Omega+ a_{\QG}\left(D_h \right)\Omega & = & 0\\
\diveh v^h_{\QG}= \dive v_{\QG}=0\hfill\\
\Omega\Bigr|_{t=0}=\Omega_0\hfill 
\end{array}
\right.
\\
&\left\lbrace\label{osc limit}
\begin{aligned}
&
\begin{multlined}
\partial_t U_{\osc}+\left( \mathcal{Q}\left(V_{\QG},U_{\osc}\right) \right)_{\osc}+
\left( \mathcal{Q}\left(U_{\osc},V_{\QG}\right) \right)_{\osc}
\\
 + \left( \mathcal{Q}\left(U_{\osc},U_{\osc}\right) \right)_{\osc} +a_{\osc}\left(D_h\right)U_{\osc}=0
\end{multlined}\\
& \dive u_{\osc} =0\hfill\\
& U_{\osc}\Bigr|_{t=0}=U_{{\osc},0}=\left(V_0\right)_{\osc}.\hfill
\end{aligned}
\right.
\end{align}
The system \eqref{nonosc limit} represents the projection of the limit system onto the non-oscillatory potential subspace defined by $ \Omega $, and \eqref{osc limit} represents the projection onto $ \mathbb{C}e^- \oplus \mathbb{C}e^+$.\\
 
 It is easy to deduce from \eqref{nonosc limit} that if $\definizioneVQG$ then\
 \begin{equation}
 \left\lbrace
 \begin{array}{l}
  \label{quasi geostrophic equation}
 \partial_t V_{\QG} +a_{\QG}\left( D_h\right)V_{\QG}=- \left(
 \begin{array}{c}
 \nh^\perp\\
 0\\
 -\partial_3 F
 \end{array}\right)
 \Delta^{-1}_F\left( v_{\QG}^h \cdot \nh \Omega\right),\\
 \diveh v^h_{\QG}= \dive v_{\QG}=0\hfill\\
 V_{\QG}\Bigr|_{t=0}=V_{{\QG},0}= \left( \nhp, 0, -F\partial_3 \right)^\intercal \Delta_{F}^{-1}\Omega_{0}.
  \end{array}
 \right.
 \end{equation}
 We remark that in the equation \eqref{osc limit} the term $ \mathcal{Q} \left( U_{\osc}, U_{\osc} \right) $ represents a bilinear interaction between highly oscillating modes, i.e. we are taking into account some potentially resonant effect such as in  \cite{paicu_rotating_fluids}.

 The following lemma gives a connection in terms of regularity between the solutions of \eqref{nonosc limit} and \eqref{quasi geostrophic equation}, and will result to be extremely useful in the energy estimates for the global well posedness of the limit system.

\begin{lemma}\label{higher regularity VQG}
Let $\Lh^s \Lv^{s'}\Omega\in\cPLtwo$, with $\definizioneVQG$. Let $\sigma\in [0,1]$, then there exists a uniformly finite (in $ \sigma $) constant $C_\sigma$ depending only on $\sigma$ such that
\begin{align*}
\left\| \Lh^{s+\sigma}\Lv^{s'+\left( 1-\sigma \right)} v_{\QG} \right\|_{\cPLtwo} \leqslant C_\sigma \left\|\Lh^s \Lv^{s'} \Omega \right\|_{\cPLtwo}.
\end{align*}
\end{lemma}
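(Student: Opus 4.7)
The plan is to pass to Fourier side and reduce the inequality to a pointwise symbol estimate. Writing $v_{\QG}=(-\partial_2, \partial_1, 0)^\intercal \Delta_F^{-1}\Omega$, the Fourier coefficients satisfy
\begin{equation*}
\bigl|\hat{v}_{\QG}(n)\bigr| \;\leqslant\; \frac{|\check{n}_h|}{|\check{n}|_F^{\,2}}\,\bigl|\hat{\Omega}(n)\bigr|,
\qquad |\check{n}|_F^{\,2}=\check{n}_1^2+\check{n}_2^2+F^2\check{n}_3^2,
\end{equation*}
so that Plancherel's identity turns the desired inequality into
\begin{equation*}
\sum_{n\in\mathbb{Z}^3} |\check{n}_h|^{2(s+\sigma)+2}\,|\check{n}_3|^{2(s'+1-\sigma)}\,|\check{n}|_F^{-4}\,|\hat{\Omega}(n)|^2 \;\leqslant\; C_\sigma^{\,2}\sum_{n\in\mathbb{Z}^3}|\check{n}_h|^{2s}\,|\check{n}_3|^{2s'}\,|\hat{\Omega}(n)|^2.
\end{equation*}

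Therefore it suffices to establish the pointwise bound
\begin{equation*}
|\check{n}_h|^{2(\sigma+1)}\,|\check{n}_3|^{2(1-\sigma)} \;\leqslant\; C_\sigma^{\,2}\,|\check{n}|_F^{\,4},
\qquad \sigma\in[0,1],
\end{equation*}
with $C_\sigma$ bounded uniformly in $\sigma$. This is an immediate consequence of the two elementary inequalities $|\check{n}_h|^2\leqslant |\check{n}|_F^{\,2}$ and $F^2|\check{n}_3|^2\leqslant |\check{n}|_F^{\,2}$: raising the first to the power $\sigma+1$ and the second to the power $1-\sigma$ and multiplying gives
\begin{equation*}
|\check{n}_h|^{2(\sigma+1)}\,|\check{n}_3|^{2(1-\sigma)} \;\leqslant\; F^{-2(1-\sigma)}\,|\check{n}|_F^{\,4},
\end{equation*}
so one may take $C_\sigma = F^{-(1-\sigma)}$, which is bounded uniformly for $\sigma\in[0,1]$ (by $\max\{1,F^{-1}\}$).

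There is really no obstacle here; the content of the lemma is exactly the anisotropic trade‑off between horizontal and vertical regularity that is built into the symbol $|\check{n}_h|/|\check{n}|_F^{\,2}$ of the Biot–Savart operator defining $v_{\QG}$. The only point worth noticing is that the gain of one vertical derivative costs a factor $F^{-1}$, whence the dependence of $C_\sigma$ on $F$ (harmless because $F$ is a fixed parameter of the problem).
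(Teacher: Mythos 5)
Your proof is correct, and it is precisely the Fourier-multiplier argument that the paper leaves implicit (the lemma is stated there without proof): since $\hat{v}_{\QG}(n)$ is controlled by $\left|\check{n}_h\right|\left|\check{n}\right|_F^{-2}\left|\hat{\Omega}(n)\right|$, the estimate reduces by Plancherel to the pointwise bound $\left|\check{n}_h\right|^{2(\sigma+1)}\left|\check{n}_3\right|^{2(1-\sigma)}\leqslant C_\sigma^2\left|\check{n}\right|_F^4$, which follows from $\left|\check{n}_h\right|^2\leqslant\left|\check{n}\right|_F^2$ and $F^2\check{n}_3^2\leqslant\left|\check{n}\right|_F^2$ exactly as you interpolate them. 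The only cosmetic remark is that your constant $C_\sigma=F^{-(1-\sigma)}$ also depends on the fixed parameter $F$ (bounded uniformly in $\sigma\in[0,1]$ by $\max\set{1,F^{-1}}$), which is harmless and consistent with how the lemma is used.
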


 \subsection{Propagation of the horizontal average.}\label{sec:propagation_horizontal_average}

In the following lemmas we identify some conditions which suffice to guarantee that the horizontal average of $U=U_{\QG}+U_{\osc}$ solution of the limit system \eqref{osc limit}-\eqref{quasi geostrophic equation} is preserved for each time $t>0$. This turns out to be very important since we are dealing with periodic functions, hence, generally we cannot use inequalities such as the one stated in \eqref{GN type ineq} or Corollary \ref{L4Linf embedding} unless the horizontal mean of the function considered is zero. It is in this setting that the condition \ref{cP} shall play a fundamental role.

\begin{lemma}\label{propagation horizontal mean VQG}
Let $V_{\QG}$ the solution of \eqref{quasi geostrophic equation}, if we define 
$$\underline{V_{\QG}}(t, x_3)=
\frac{1}{\left| \T^2_h \right|}
\int_{\mathbb{T}^2_h} V_{\QG} \left(t,  y_h,x_3\right) \d y_h,
$$
then
$$
\partial_t \underline{V_{\QG}}(t, x_3) =0.
$$
\end{lemma}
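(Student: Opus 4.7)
The plan is to apply the horizontal averaging operator $\underline{(\cdot)}$ to the equation \eqref{quasi geostrophic equation} satisfied by $V_{\QG}$ and check that every term on the right-hand side contributes zero. In Fourier variables, taking the horizontal average amounts to keeping only the modes $(n_h, n_3) = (0, n_3)$, so the key is to show that the right-hand side of \eqref{quasi geostrophic equation} has no Fourier mass on $n_h = 0$.

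First, I would handle the linear term $a_{\QG}(D_h) V_{\QG}$. Its Fourier symbol, read off from the explicit expressions of $a_{\QG}$ and $a_{\osc}$ given after Theorem \ref{thm:weak_conv_limit_system_cP} and from the projection formulas in Section \ref{the limit subsection}, is a multiple of $|\check n_h|^2$ and therefore vanishes identically on the set $\{n_h = 0\}$. Hence $\underline{a_{\QG}(D_h) V_{\QG}} \equiv 0$.

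Next I would treat the nonlinear contribution
\[
- \left(\nh^\perp,\; 0,\; -F\partial_3\right)^{\!\intercal}\Delta^{-1}_F \left(v_{\QG}^h \cdot \nh \Omega\right).
\]
The first two components are horizontal derivatives of a periodic function, so their horizontal average vanishes trivially; the third component is identically zero. The only nontrivial check is the fourth component $F\partial_3 \Delta^{-1}_F(v_{\QG}^h \cdot \nh \Omega)$. Since $v_{\QG}^h = \nh^\perp \Delta_F^{-1}\Omega$, one has $\diveh v_{\QG}^h = 0$, and therefore
\[
v_{\QG}^h\cdot\nh \Omega = \diveh\bigl(v_{\QG}^h\, \Omega\bigr).
\]
Consequently $\int_{\T^2_h} v_{\QG}^h\cdot\nh \Omega\, \d x_h = 0$ by periodicity, so the $n_h = 0$ Fourier modes of $v_{\QG}^h \cdot \nh \Omega$ all vanish. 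Since $F\partial_3 \Delta^{-1}_F$ acts diagonally in Fourier, these zero modes are preserved, and the horizontal average of the fourth component is zero as well. Combining the two observations yields $\partial_t \underline{V_{\QG}} = 0$, which is exactly the claim.

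There is no real obstacle: the proof is essentially a Fourier computation coupled with the divergence-free character of $v_{\QG}^h$. The only subtle point to make explicit is the algebraic identity $v_{\QG}^h\cdot\nh \Omega = \diveh(v_{\QG}^h \Omega)$, which transforms a transport-type term into a horizontal divergence and is what allows the fourth component to vanish under horizontal integration.
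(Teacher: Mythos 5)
Your proposal is correct and takes essentially the same route as the paper: the paper's entire proof is the remark that $v^h_{\QG}\cdot\nh\Omega = \diveh\left(v^h_{\QG}\,\Omega\right)$, so that the nonlinear term on the right-hand side of \eqref{quasi geostrophic equation} is annihilated by horizontal averaging, which is exactly your key identity. The extra checks you spell out (the symbol $a_{\QG}$ carrying a factor $\left|\check{n}_h\right|^2$ and the multiplier $\partial_3\Delta_F^{-1}$ preserving the vanishing of the $n_h=0$ modes) are details the paper leaves implicit, not a different argument.
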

\begin{proof}
It suffices to remark that $$ \left( -\partial_2, \partial_1, 0, -F \partial_3 \right)^\intercal \Delta^{-1}_F \left( v^h_{\QG} \cdot \nh \Omega \right) = \left( -\partial_2, \partial_1, 0, -F \partial_3 \right)^\intercal \Delta^{-1}_F \diveh \left( v^h_{\QG}  \Omega \right) . $$
\end{proof}

 \begin{lemma}\label{propagation horizontal mean osc part}
 Suppose that the limit system \eqref{nonosc limit}--\eqref{quasi geostrophic equation} is well posed. Then, setting $U=V_{\QG}+U_{\osc}$  
 $$
 \partial_t \int_{\mathbb{T}^2_h} U(t, x_h,x_3)\d x_h =0,
 $$
 for almost every torus $\mathbb{T}\subset \mathbb{R}^3$.
 \end{lemma}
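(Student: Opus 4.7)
The plan is to test the evolution of $U = V_{\QG} + U_{\osc}$ against the horizontal average operator, i.e.\ to project the filtered limit system onto the zero horizontal Fourier mode $n = (0, n_3)$, and to show that every source term vanishes there. Since Lemma \ref{propagation horizontal mean VQG} already gives $\partial_t \underline{V_{\QG}} = 0$, only the contribution of $U_{\osc}$ requires work, so I would look at \eqref{osc limit} and compute its Fourier coefficient at $n = (0, n_3)$. The linear piece $a_{\osc}(D_h) U_{\osc}$ is handled immediately: its symbol is controlled by $c_2|\xi_h|^2$ and hence vanishes identically on the slice $\xi_h = 0$. The proof therefore reduces to showing that each of the three bilinear terms $(\mathcal{Q}(V_{\QG}, U_{\osc}))_{\osc}$, $(\mathcal{Q}(U_{\osc}, V_{\QG}))_{\osc}$, and $(\mathcal{Q}(U_{\osc}, U_{\osc}))_{\osc}$ contributes nothing to the Fourier coefficient at $n = (0, n_3)$.

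For the two cross-interactions with $V_{\QG}$, I would combine the structural vanishing of certain components of $V_{\QG}$ with the resonance condition. Since $V_{\QG} = (\nhp, 0, -F\partial_3)^\intercal \Delta_F^{-1} \Omega$ has identically zero third component, and its first two components vanish at $k_h = 0$, the contraction $\sum_{j=1,2,3}(n_j - k_j) V_{\QG}^{0,j}(k)$ entering $\mathcal{Q}(V_{\QG}, U_{\osc})$ is supported on $k_h \neq 0$. On that range the resonance relation $\omega^0(k) + \omega^b(n-k) = \omega^c(0, n_3)$ degenerates to $\omega^b(n-k) = \pm 1$, which after squaring produces $(1 - F^2)|\check{m}_h|^2 = 0$; using $F \neq 1$ this forces $m_h = -k_h = 0$, a contradiction. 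The symmetric interaction $(\mathcal{Q}(U_{\osc}, V_{\QG}))_{\osc}$ is dispatched analogously: the resonance at $n = (0, n_3)$ forces $\omega^a(k) = \pm 1$, hence $k_h = 0$, and the surviving part of the contraction reduces to $(n_3 - k_3) U_{\osc}^{a,3}(0, k_3)$, which is zero because the eigenvectors $e^\pm(0, k_3)$ from \eqref{eigenvectors zero} have vanishing third component.

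The main obstacle is the self-interaction $(\mathcal{Q}(U_{\osc}, U_{\osc}))_{\osc}$ at $n_h = 0$, and this is precisely where the qualifier ``almost every torus'' enters. The subcase $k_h = 0 = m_h$ is trivial since $\omega^a(k), \omega^b(m) \in \{\pm 1\}$ can only sum to $0$ or $\pm 2$, never to $\omega^c(0, n_3) = \pm 1$. The hard subcase is $k_h \neq 0$, $m_h = -k_h \neq 0$: clearing the square roots by successive squaring turns the resonance equation into a nontrivial polynomial identity $P(\check{k}_1, \check{k}_2, \check{k}_3, \check{n}_3; F^2) = 0$, with coefficients that are polynomial in $F^2$ and in the inverse squares $a_i^{-2}$, much as in the Fujiwara-type polynomial encountered in Section~\ref{sec:conv_bilin_osc_existence}. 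I would argue that for each fixed integer tuple $(k_1, k_2, k_3, n_3)$ with $(k_1, k_2) \neq (0, 0)$ this identity cuts out a proper algebraic subvariety of the parameter space $(a_1, a_2, a_3) \in \R^3_+$, hence a Lebesgue-null set, and that the countable union over all such integer tuples remains of measure zero. Consequently, for almost every torus the resonant set is empty in this regime, the last bilinear term also vanishes at $n_h = 0$, and one concludes $\partial_t \underline{U_{\osc}}(t, x_3) = 0$.
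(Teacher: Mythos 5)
Your overall strategy --- project the limit system onto the modes $n=(0,n_3)$, note that the symbol $a_{\osc}$ vanishes on $\xi_h=0$, and kill the three bilinear contributions one by one --- is exactly the paper's strategy, and your treatment of the two cross terms is essentially the paper's: for $\left( \mathcal{Q}\left( U_{\osc},V_{\QG} \right) \right)_{\osc}$ the resonance at $n=(0,n_3)$ forces $\left( 1-F^2 \right)\left| \check{k}_h \right|^2=0$, hence $k_h=0$ since $F\neq 1$, and then $e^{\pm,3}(0,k_3)=0$ from \eqref{eigenvectors zero} annihilates the contraction; for $\left( \mathcal{Q}\left( V_{\QG},U_{\osc} \right) \right)_{\osc}$ the paper argues even more directly (only the vertical transport survives horizontal averaging and $v^3_{\QG}\equiv 0$ because $e^{0,3}\equiv 0$), but your mixed structural/resonance argument for that term is also valid. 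The genuine divergence is in the self-interaction $\left( \mathcal{Q}\left( U_{\osc},U_{\osc} \right) \right)_{\osc}$: the paper does not argue ``for almost every torus'' at all; it squares the resonance relation into the explicit polynomial \eqref{polynomial resonance propagation horizontal average}, rescales by $a_3^8$, and invokes Condition \ref{cP} of Definition \ref{def:condition_P} ($F^2\in\mathbb{Q}$, one ratio $a_3^2/a_i^2$ rational, the other not algebraic of degree $\leqslant 4$) to conclude that the resulting degree-four equation has no admissible root, i.e. $\mathcal{K}^\star_{(0,\check{n}_3)}=\emptyset$. You replace this arithmetic criterion by a genericity (null-set) argument in the parameters $(a_1,a_2,a_3)$. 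That matches the literal ``almost every torus'' wording of the statement, but it buys strictly less than the paper's proof: the explicit, checkable Condition \ref{cP} is precisely what Theorems \ref{GWP2} and \ref{conv theorem} assume later, so the abstract measure-zero version alone would not feed into the rest of the paper.

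The step you cannot wave at is the properness claim: you assert that, for each fixed integer tuple $(k_1,k_2,k_3,n_3)$ with $k_h\neq 0$, the polynomial obtained by clearing square roots is not identically zero as a function of $(a_1,a_2,a_3)$, and this is where the whole content of ``almost every'' sits. It does require an argument, because the squared equation encodes all sign choices $a,b,c=\pm$ simultaneously, and one must rule out that some sign combination of $\omega^a(k)+\omega^b(m)=\omega^c(0,n_3)$ holds identically in the parameters. A workable verification: for fixed $(a_1,a_2)$ the map $a_3\mapsto \omega^a(k)+\omega^b(m)$ is real-analytic, and its limits as $a_3\to\infty$ and $a_3\to 0$ (which lie in $\{0,\pm 2/F\}$ and in $\{0,\pm 2\}$ or $\{\pm 1\pm 1/F\}$ respectively) cannot both equal $\pm 1$ when $F\neq 1$; the only degenerate tuple, $k_3=m_3=0$, forces $n=0$, which is harmless since all fields have zero global mean. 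Hence the equation holds only on a discrete set of $a_3$, and Fubini plus the countable union over tuples gives your null set. With that inserted your proof is correct; without it, ``proper algebraic subvariety'' is an assertion, not a proof.
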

 \begin{proof}
Taking in consideration the oscillatory part described by equation \eqref{osc limit} it suffices to prove that
\begin{align*}
\int_{\mathbb{T}^2_h} \mathcal{Q}\left( V_{\QG}, U_{\osc}\right) \d x_h=
 \int_{\mathbb{T}^2_h} \mathcal{Q}\left( U_{\osc}, V_{\QG} \right) \d x_h = \int_{\mathbb{T}^2_h} \mathcal{Q}\left( U_{\osc}, U_{\osc} \right) \d x_h &=0,
\end{align*}
 we consider at first the term $\int_{\mathbb{T}^2_h} \mathcal{Q}\left( V_{\QG}, U_{\osc}\right) \d x_h$. To do so we consider 
 $$
 \mathcal{F}\int_{\mathbb{T}^2_h} \mathcal{Q}\left( V_{\QG}, U_{\osc}\right) \d x_h=
  \sum_{\substack{ \omega^{0,b,c}_{k,m,\left( 0,n_3 \right)}=0\\ b, c=\pm\\k+m=\left( 0,n_3\right)} }
  \left( n_3  \hat{v}^3_{\QG} \left( k\right) \right) \hat{U}_{\osc} \left( m\right).
 $$
If we look what the term $ \hat{v}^3_{\QG} \left( k \right) $ is we can easily deduce that $  \hat{v}^3_{\QG} \left( k \right)= \hat{V}_{\QG} \left( k \right)\cdot e^0\left( k \right) e^{0,3}\left( k \right) $, where $ e^0 $ is defined in \eqref{eigenvalues} and $ e^{0,3} $ is the third component of $ e^0 $. Looking at \eqref{eigenvalues} we immediately notice that $ e^{0,3} \equiv 0 $, and hence the above value is null.\\
Next we consider the following term 
\begin{equation}
\label{eq:probl_term_propagation_horizontal_average}
\mathcal{F}\int_{\mathbb{T}^2_h}\mathcal{Q}\left(U_{\osc},V_{\QG}\right)\d x_h= 
\sum_{\substack{ \omega^{a,0,c}_{k,m,\left( 0,n_3 \right)}=0\\ a,c=\pm\\k+m=\left( 0,n_3\right)} } \left(\left. \left( \left( n_3  \hat{u}^3_{\osc} \left( k\right) \right) \hat{V}_{\QG} \left( m \right)\right) \right|  e^c\left( n \right)  \right)_{\mathbb{C}^4} e^c\left( n \right),
\end{equation}
to show that the above quantity is zero we have to study the summation set.
 Recall that the eigenvalues are given by formula \eqref{eigenvalues}, the right hand side of the above equation has been evaluated   explicitly thanks to the explicit formulation of the bilinear form $ \mathcal{Q} $. The formulation of the summation set turns out to be quite simple thanks to the relation $ n_h\equiv 0 $, writing down in fact explicitly the relation $ \omega^{a,0,c}_{k,\left( 0,n_3 \right)-k,\left( 0,n_3 \right)}=0 $
 \footnote{ $ \left( 0,n_3 \right)-k=m $ and we recover the same summation set as in \eqref{eq:probl_term_propagation_horizontal_average}. }
  we deduce that we are considering the following modes:
\begin{align*}
\mathcal{K}_{\pm} = & \left\lbrace \left. k\in \mathbb{Z}^3 \right| \omega^\pm \left( k \right)=1 \right\rbrace.
\end{align*}
 The equation $ \omega^\pm \left( k \right)=1 $ characterizing $ \mathcal{K}_{\pm} $ reads as 
$$
\frac{\left(F^2\check{k}_3^2 + { \left| \check{k}_h \right|^2} \right)^{1/2}}{\left| \check{k} \right|}=\pm F,
$$
which is equivalent to
$$
\left( F^2-1 \right)\left| k_h \right|^2=0.
$$
It is trivial that this relation is satisfied only if $ k_h\equiv 0 $, but let us consider now in detail what the element $ \left. \hat{u}^3_{\osc}\left( k \right)\right|_{k=\left( 0,k_3 \right)} $ appearing in \eqref{eq:probl_term_propagation_horizontal_average} is. By definition $ \hat{u}^3_{\osc}\left( k \right)= \left(\left. \mathcal{F} U \left( k \right) \right| e^\pm\left( k \right)  \right) e^{\pm,3}\left( k \right) $, where $ e^{\pm,3}\left( k \right) $ is the third component of the oscillating eigenvectors defined in \eqref{eigenvectors zero}, i.e. $  e^{\pm,3}\left( k \right) \equiv 0 $. Whence $  \hat{u}^3_{\osc}\left( 0,k_3 \right) \equiv 0 $ and this implies that the contribution in \eqref{eq:probl_term_propagation_horizontal_average} is zero.\\
Next we shall deal with the more complex term, namely the term
$$
\int_{\mathbb{T}^2_h} \left( \mathcal{Q}\left( U_{\osc}, U_{\osc} \right) \right)_{\osc} \d x_h,
$$
being the deduction for the other ones appearing a matter of straightforward computations.
In this term there are present interactions between perturbations which do not live in the kernel of the penalized operator. In this context the resonance set defined in Definition \ref{resonance set} shall play a fundamental role. Let us consider the explicit expression of the above term
\begin{equation*}
\int_{\mathbb{T}^2_h} \left( \mathcal{Q}\left( U_{\osc}, U_{\osc} \right) \right)_{\osc} \d x_h
 =  \; \mathcal{F}^{-1}\left(
\sum_{\substack{ \mathcal{K}^\star_{\left( 0,\check{n}_3 \right)}}} \left(\left.\sum_{j=1,2,3} U^{a,j}\left( k \right) m_j U^b\left( m \right)  \right| e^c \left( 0, n_3 \right)  \right)_{\mathbb{C}^4} e^c \left(0, n_3 \right)\right).
\end{equation*}
We prove that the above quantity is zero by proving that $ \mathcal{K}^\star_{\left( 0,\check{n}_3 \right)}=\emptyset $. Since $\check{n}_h=0 $ and we have the convolution constraint $ \check{k}+\check{m}=\check{ n} $ we immediately understand $ \check{k}_h+ \check{m}_h=0 $, i.e. $ \left| \check{k}_h \right| = \left| \check{m}_h \right|= \lambda $. Writing down the resonant equation we obtain the following equality
\begin{align*}
\frac{\left({F^2}\check{k}_3^2 +  \lambda^2 \right)^{1/2}}{\left( \lambda^2+ \check{k}_3^2 \right)^{1/2}} \pm
 \frac{\left({F^2}\check{m}_3^2 + { \lambda^2} \right)^{1/2}}{\left( \lambda^2+ \check{m}_3^2 \right)^{1/2}} = \pm 1.
\end{align*}
Taking square (twice) and after some algebraic manipulation we obtain that the above equation is equivalent to
\begin{align*}
\left(\lambda ^4+F^2 \lambda ^2 \check{m}_3^2+\check{ k}_3^2 \left(-\left(-2+F^2\right) \lambda ^2+\check{m}_3^2\right)\right){}^2
=
4 \left(\lambda ^2+\check{k}_3^2\right){}^2 \left(\lambda ^2+\check{m}_3^2\right) \left(\lambda ^2+F^2 \check{m}_3^2\right).
\end{align*}
We multiply the above equation for $ a_3^8 $, obtaining the new equality in the unknown $ \mu^2= \lambda^2 a_3^2 $
\begin{equation}\label{polynomial resonance propagation horizontal average}
\left(\mu ^4+F^2 \mu ^2 m_3^2+k_3^2 \left(-\left(-2+F^2\right) \mu ^2+m_3^2\right)\right){}^2
=4 \left(\mu ^2+k_3^2\right){}^2 \left(\mu ^2+m_3^2\right) \left(\mu ^2+F^2 m_3^2\right),
\end{equation}
and 
$$
\mu^2= \lambda^2 a_3^2= \left( \frac{a_3}{a_1} \right)^2 k_1^2 + \left( \frac{a_3}{a_2} \right)^2 k_2^2 = \mu_1 k_1^2 + \mu_2 k_2^2.
$$
Since the torus satisfies the Condition \ref{cP} we know that $ F=r_1/r_2\in \mathbb{Q} $, hence we can transform the expression in \eqref{polynomial resonance propagation horizontal average} into an equation of the form $ P\left( \mu \right)= 0 $, with $ P\in \mathbb{Z} \left[ \mu\right] $. Whence by the definition of Condition \ref{cP} given in Definition \ref{def:condition_P} we argue that
\begin{itemize}
\item If $ \mu_1= a_3^2/a_1^2 \in \mathbb{Q} $ the \eqref{polynomial resonance propagation horizontal average} can be rewritten as $ \tilde{P} \left( \mu_2 \right)= 0 $ where $ \deg \tilde{P}= 4 $, hence by hypothesis in Definition \ref{def:condition_P} we have that $ \mu_2 $ is not algebraic of degree smaller or equal than four, this implies that the equation $ \tilde{P} \left( \mu_2 \right)= 0 $ has no solution, concluding.
\item If $ \mu_2= a_3^2/a_2^2 \in \mathbb{Q} $ the procedure is  the same as above, but symmetric (see Definition \ref{def:condition_P}).
\end{itemize}
\end{proof}

We have hence identified some conditions under such we can say that the horizontal mean of the limit function $U=\displaystyle \lim_{\varepsilon\to 0}\Lminus V^\varepsilon$ is preserved. Hence if we consider initial data with zero horizontal average we can use freely  \eqref{GN type ineq} and moreover the following Poincar\'e inequality
$
\left\|U\right\|_{{\cPLp}}\leqslant C \left\|\nh U \right\|_{{\cPLp}},
$ 
 holds.

\section{Propagation of $ \cPHs $ regularity.}
\label{sec:prop_H0s_regularity}

\subsection{The quasi-geostrophic part.}
\label{sec:QG_propagation_regularity}

Subsection \ref{existence limit system} ensures us that there exists a solution $U$ for the limit system \eqref{lim syst} which is 
\begin{align*}
U\in & L^\infty \left( \mathbb{R}_+; \cPLtwo \right)&
\nh U \in & L^2 \left( \mathbb{R}_+; \cPLtwo \right).
\end{align*}
The scope of the present and following section though is to prove if, under suitable initial conditions, the equations \eqref{nonosc limit} and \eqref{osc limit} propagate  $ \cPHs $ regularity.

\begin{prop}
Let $\Omega$ be a solution of \eqref{nonosc limit}. Then if $\Omega_0\in \cPLtwo$
 $\Omega \in L^\infty\left(\mathbb{R}_+;\cPLtwo\right)$ , $\nh \Omega \in L^2\left(\mathbb{R}_+; \cPLtwo\right)$, and in particular for each $ t>0 $ the following bound holds true
 $$
 \left\| \Omega(t)\right\|^2_\cPLtwo+ 2c\int_0^t\left\|\nh \Omega(\tau)\right\|_\cPLtwo^2\d \tau \leqslant C \left\| \Omega_0 \right\|_\cPLtwo^2.
 $$
\end{prop}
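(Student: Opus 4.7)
The plan is to perform a standard $L^2$ energy estimate on the equation
\begin{equation*}
\partial_t\Omega+v_{\QG}^h\cdot\nh \Omega+ a_{\QG}\left(D_h \right)\Omega = 0,
\end{equation*}
exploiting the two structural features already established: first, the horizontal divergence-free constraint $\diveh v_{\QG}^h=0$ (which is part of the statement of \eqref{nonosc limit}), and second, the lower bound on the Fourier symbol of the dissipative operator, namely the inequality $c_1|\xi_h|^2 \leqslant |a_{\QG}(\xi)|$ announced just after the statement of Theorem \ref{thm:weak_conv_limit_system_cP}.

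To do this rigorously, I would first regularize the equation (for instance by the same Galerkin truncation $J_n$ used in the proof of Theorem \ref{thm:Leray_sol}), obtain the estimate at the approximate level, and then pass to the limit. At the approximate level, testing the equation against $\Omega_n$ in $\cPLtwo$ yields
\begin{equation*}
\frac{1}{2}\frac{d}{dt}\left\| \Omega_n(t)\right\|_\cPLtwo^2 + \ps{a_{\QG}(D_h)\Omega_n}{\Omega_n}_{\cPLtwo} = -\ps{v_{\QG,n}^h\cdot\nh \Omega_n}{\Omega_n}_{\cPLtwo}.
\end{equation*}
The right-hand side vanishes: since $\diveh v_{\QG,n}^h=0$ one has $v_{\QG,n}^h\cdot\nh \Omega_n=\diveh(v_{\QG,n}^h\Omega_n)$ and a standard integration by parts in the horizontal variables gives the cancellation. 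The left-hand dissipative term, by Plancherel and the lower bound $|a_{\QG}(\xi)|\geqslant c|\xi_h|^2$, is bounded from below by $c\|\nh\Omega_n\|_\cPLtwo^2$.

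Integrating in time yields the claimed inequality
\begin{equation*}
\left\| \Omega_n(t)\right\|_\cPLtwo^2 + 2c\int_0^t\left\|\nh \Omega_n(\tau)\right\|_\cPLtwo^2\d \tau \leqslant \left\| J_n\Omega_0 \right\|_\cPLtwo^2 \leqslant \left\| \Omega_0 \right\|_\cPLtwo^2.
\end{equation*}
The passage to the limit $n\to\infty$ follows by weak compactness (bounds in $L^\infty(\mathbb{R}_+;\cPLtwo)$ and $L^2(\mathbb{R}_+;H^{1,0})$ together with an equicontinuity in time obtained from the equation, as in Proposition \ref{prop:topological_convergence}), combined with lower semicontinuity of the norms with respect to weak convergence. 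I do not anticipate a serious obstacle here: the transport cancellation is algebraic and robust under horizontal regularization, and the symbol estimate on $a_{\QG}$ was already recorded; the only mild point of care is ensuring that $v_{\QG,n}^h$ remains horizontally divergence-free along the approximation, which is automatic because $v_{\QG,n}^h$ is constructed from $\Omega_n$ through the Biot--Savart-type Fourier multiplier $(\nh^\perp\Delta_F^{-1},0,-F\partial_3\Delta_F^{-1})$ that produces a divergence-free three-dimensional field.
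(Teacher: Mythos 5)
Your proposal is correct and follows essentially the same route as the paper, which simply observes that this is a standard $L^2$ energy estimate already carried out at the Galerkin level in the proof of Theorem \ref{thm:Leray_sol}, using exactly the two ingredients you invoke: the cancellation $\ps{v_{\QG,n}^h\cdot\nh\Omega_n}{\Omega_n}=0$ from $\diveh v_{\QG,n}^h=0$ and the coercivity of the (real, nonnegative) symbol $a_{\QG}$, which satisfies $a_{\QG}(\xi)\geqslant c\left|\xi_h\right|^2$. Your extra care about passing to the limit and about the divergence-free structure of $v_{\QG,n}^h$ built from $\Omega_n$ via the Biot--Savart-type multiplier is consistent with what the paper does implicitly.
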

 This is a standard $L^2$ energy estimate on the parabolic equation \eqref{nonosc limit} which has been already proved in Theorem \ref{thm:Leray_sol}.  
  
  \begin{prop}\label{propagation H0s norms for Omega}
  Let $\Omega$ be the solution of \eqref{nonosc limit} and let $\Omega_0\in H^{0,s}$ for some $s>0$. Then for all $t\in\mathbb{R}$  we have that $\Omega\in \anuno $ and $\nh \Omega \in \andue$, and in particular the following estimates hold:
\begin{equation}\label{stima spazi anisotropici per omega}
\left\| \Omega(t)\right\|_{\cPHs}^2 + c\int_0^t\left\| \nh \Omega(\tau)\right\|_{\cPHs}^2  \d \tau
 \leqslant
C  \left\|\Omega_0\right\|_{\cPHs} \exp\left\lbrace \frac{2C}{c} 
\left( 1+ \left\| \Omega_0 \right\|_\cPLtwo^2 \right)\left\| \Omega_0 \right\|_\cPLtwo^2
\right\rbrace
\end{equation}
 \end{prop}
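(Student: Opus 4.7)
The plan is a standard vertical Littlewood--Paley--based energy argument on the parabolic--transport equation \eqref{nonosc limit}, exploiting the fact that $v_{\QG}^h = \nabla_h^\perp \Delta_F^{-1}\Omega$ is horizontally divergence--free and the coercivity of the Fourier multiplier $a_{\QG}(D_h)\gtrsim |\xi_h|^2$. Concretely, I would apply $\triangle_q^v$ to \eqref{nonosc limit}, take the $\cPLtwo$ inner product with $\triangle_q^v \Omega$, and obtain
\begin{equation*}
\frac{1}{2}\frac{\d}{\d t}\|\triangle_q^v\Omega\|_\cPLtwo^2 + c\|\nh\triangle_q^v\Omega\|_\cPLtwo^2 \leqslant \bigl|\bigl(\triangle_q^v(v_{\QG}^h\cdot \nh\Omega)\,\bigl|\,\triangle_q^v\Omega\bigr)_\cPLtwo\bigr|,
\end{equation*}
where the dissipation on the left comes from the lower bound $a_{\QG}(\xi)\gtrsim |\xi_h|^2$ stated after Theorem~\ref{thm:weak_conv_limit_system_cP}.

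For the right--hand side I would use the asymmetric Bony decomposition \eqref{bony decomposition asymmetric} in the vertical variable:
\begin{equation*}
\triangle_q^v(v_{\QG}^h \cdot \nh\Omega) = S^v_{q-1}v_{\QG}^h \cdot \nh \triangle_q^v\Omega + \sum_{|q-q'|\leqslant 4}\bigl\{[\triangle_q^v,\Svq v_{\QG}^h]\cdot\nh\Tv\Omega + (\svq v_{\QG}^h - \Svq v_{\QG}^h)\cdot\triangle_q^v\Tv\nh\Omega\bigr\} + \sumi \triangle_q^v(S^v_{q'+2}\nh\Omega\cdot\Tv v_{\QG}^h).
\end{equation*}
The leading paraproduct term tested against $\triangle_q^v\Omega$ vanishes after integration by parts thanks to $\diveh v_{\QG}^h=0$. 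The commutator pieces are controlled by Lemma~\ref{estimates commutator}, while Bernstein's inequality (Lemma~\ref{bernstein inequality}) together with \eqref{eq:LinfL4embedding}, the product bound of Lemma~\ref{lem:prod_Sob_anisotropic}, and Lemma~\ref{higher regularity VQG} (which turns one vertical derivative of $v_{\QG}^h$ into a horizontal derivative of $\Omega$ at the cost of a uniform constant) will give bounds of the form $c_q^2\,2^{-2qs}\|\nh\Omega\|_{H^{0,s_0}}\|\Omega\|_{H^{0,s}}\|\nh\Omega\|_{H^{0,s}}$ for a summable $(c_q^2)_q$ and some $s_0\in(1/2,s]$.

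Multiplying by $2^{2qs}$, summing in $q$, and absorbing one factor of $\|\nh\Omega\|_{H^{0,s}}$ into the dissipation via a Young inequality, I obtain
\begin{equation*}
\frac{\d}{\d t}\|\Omega\|_{H^{0,s}}^2 + c\|\nh\Omega\|_{H^{0,s}}^2 \leqslant C\,\bigl(1+\|\Omega\|_{H^{0,s_0}}^2\bigr)\|\nh\Omega\|_{H^{0,s_0}}^2\,\|\Omega\|_{H^{0,s}}^2.
\end{equation*}
Gronwall's lemma, combined with the $\cPLtwo$ energy estimate $\|\Omega\|_{L^\infty_t L^2}^2 + c\int_0^t\|\nh\Omega\|_\cPLtwo^2 \lesssim \|\Omega_0\|_\cPLtwo^2$ from Theorem~\ref{thm:Leray_sol} (to control the exponent with $s_0=0$, reached by interpolating from the Bony estimate for the endpoint case $s=0$), yields \eqref{stima spazi anisotropici per omega}. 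The main technical obstacle is the bookkeeping of horizontal versus vertical regularity in the paraproduct and remainder terms: because \eqref{nonosc limit} carries no vertical dissipation, all derivatives estimated outside of the vertical block $\triangle_q^v$ must be horizontal, and one must repeatedly use Lemma~\ref{higher regularity VQG} to trade a $\partial_3$ on $v_{\QG}^h$ for a $\nh$ on $\Omega$ so as to close the estimate against the horizontal viscous term alone.
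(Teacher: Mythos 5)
Your overall strategy is the paper's: apply $\triangle^v_q$ to \eqref{nonosc limit}, test against $\triangle^v_q\Omega$, use the horizontal coercivity of $a_{\QG}(D_h)$, decompose the convection term with the asymmetric Bony decomposition \eqref{bony decomposition asymmetric}, control the pieces with Lemma \ref{estimates commutator}, Bernstein, the Gagliardo--Nirenberg-type embeddings and Lemma \ref{higher regularity VQG}, then multiply by $2^{2qs}$, sum in $q$, absorb into the dissipation and conclude by Gronwall. The problem is the form in which you claim to close the estimate. You assert a trilinear bound $c_q^2\,2^{-2qs}\left\| \nh\Omega \right\|_{H^{0,s_0}}\left\| \Omega \right\|_{\cPHs}\left\| \nh\Omega \right\|_{\cPHs}$ with $s_0\in(1/2,s]$. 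With that bound the Gronwall exponent is $\int_0^t (1+\left\| \Omega \right\|_{H^{0,s_0}}^2)\left\| \nh\Omega \right\|_{H^{0,s_0}}^2\,\d\tau$, which for $s_0>1/2$ is precisely the quantity you are trying to prove finite: the argument is circular and the resulting differential inequality is of 3D Navier--Stokes type, giving at best a local or small-data bound rather than the global estimate \eqref{stima spazi anisotropici per omega}, whose exponential depends only on $\left\| \Omega_0 \right\|_{\cPLtwo}$. Moreover the proposition is stated for every $s>0$, and for $0<s\leqslant 1/2$ no admissible $s_0$ exists at all. Your proposed remedy, ``interpolating from the Bony estimate for the endpoint case $s=0$'', is not a valid mechanism: the $s=0$ endpoint is just the trivial $L^2$ cancellation, and trilinear paraproduct bounds with this mixed norm structure do not follow from it by interpolation.

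What is actually needed, and what the paper proves as \eqref{stima forma bilineare omega}, is the estimate with the low-frequency factors measured at the $L^2$ level, namely a bound by $C\,2^{-2qs}b_q\bigl[\left\| \Omega \right\|_{\cPLtwo}^{1/2}\left\| \nh\Omega \right\|_{\cPLtwo}^{1/2}\left\| \Omega \right\|_{\cPHs}^{1/2}\left\| \nh\Omega \right\|_{\cPHs}^{3/2}+\left\| \nh\Omega \right\|_{\cPLtwo}\left\| \Omega \right\|_{\cPHs}\left\| \nh\Omega \right\|_{\cPHs}\bigr]$. This is obtained directly, not by interpolation: one uses that $v^h_{\QG}=\nhp\Delta_F^{-1}\Omega$ has zero horizontal average (Lemma \ref{propagation horizontal mean VQG}), splits $\Omega=\underline{\Omega}+\tilde{\Omega}$ and treats the horizontal-average pieces separately with Poincar\'e in $x_h$, applies the frequency-localized embeddings \eqref{eq:LinfL2embedding}--\eqref{eq:LinfL4embedding} (which cost a factor $2^{q/2}$ but keep the right-hand side $L^2$-based, the $2^{q/2}$ being compensated by the $2^{-q}$ gained from the commutator and from Bernstein), and Lemma \ref{higher regularity VQG} to convert $\partial_3 v^h_{\QG}$ into $\Omega$ or $\nh\Omega$ in $\cPLtwo$. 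With that version of the bound, after absorption the Gronwall exponent is $\int_0^t (1+\left\| \Omega \right\|_{\cPLtwo}^2)\left\| \nh\Omega \right\|_{\cPLtwo}^2\,\d\tau$, which is controlled by $c^{-1}(1+\left\| \Omega_0 \right\|_{\cPLtwo}^2)\left\| \Omega_0 \right\|_{\cPLtwo}^2$ via the basic energy estimate, and \eqref{stima spazi anisotropici per omega} follows globally for all $s>0$. Until your trilinear estimate is reworked into this $L^2$-weighted form, the proof does not close.
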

\begin{proof}
Applying the vertical truncation ${\cPtv}$ on both sides of equation \eqref{nonosc limit}, multiplying both sides for ${\cPtv}\Omega$ and taking the scalar product in $\cPLtwo$ we obtain
$$
\frac{1}{2}\frac{\d}{\d t}\left\|{\cPtv}\Omega \right\|_{\cPLtwo}^2+c\left\|{\cPtv} \nh \Omega\right\|_{\cPLtwo}^2 \leqslant 
\left| \left( \left.{\cPtv} \left( v^h_{\QG} \cdot \nh \Omega\right) \right| {\cPtv} \Omega\right)_{\cPLtwo}\right|.
$$

By use of Cauchy-Schwartz inequality and \eqref{stima forma bilineare omega} we obtain
\begin{multline}\label{ancora de molt e sommare}
\frac{1}{2}\frac{\d}{\d t}\left\|{\cPtv}\Omega \right\|_{\cPLtwo}^2+c\left\|{\cPtv} \nh \Omega\right\|_{\cPLtwo}^2 \\
\leqslant 
  C\;2^{-2qs} b_q(  t)
 \left[
 \left\| \Omega \right\|_{\cPLtwo}^{1/2} \left\| \nh\Omega \right\|_{\cPLtwo}^{1/2} \left\| \Omega \right\|_{\cPHs}^{1/2} \left\| \nh \Omega \right\|_{\cPHs}^{3/2}\right.
  + \left. \left\| \nh \Omega \right\|_\cPLtwo  \left\|\Omega \right\|_{\cPHs} \left\|\nh \Omega \right\|_{\cPHs}\right]
\end{multline}
We recall that in \eqref{ancora de molt e sommare} $ \left( b_q \right)_q $ is a $ \ell^1 \left( \mathbb{Z} \right) $ positive sequence which depends on $ \Omega $ and such that $ \sum_q b_q \left( t \right)\leqslant 1 $. 
Multiplying equation \eqref{ancora de molt e sommare} on both sides for $2^{2qs}$, summing on $q\in \mathbb{Z}$ and using the convexity inequalities $2ab \leqslant a^2+ b^2$ and $ab \leqslant \frac{1}{4}a^4 +\frac{3}{4}b^{4/3}$ we obtain
\begin{equation}\label{da applicare gronwall}
\frac{1}{2}\frac{\d}{\d t}\left\| \Omega\right\|_{\cPHs}^2 + c\left\| \nh \Omega\right\|_{\cPHs}^2
\leqslant \frac{c}{2}\left\| \nh \Omega\right\|_{\cPHs}^2
 +C \left( \left(1+\left\| \Omega \right\|_{\cPLtwo}^{2}\right) \left\| \nh\Omega \right\|_{\cPLtwo}^{2}  \right)\left\|\Omega\right\|_{\cPHs}^2
\end{equation} 
whence, applying Gronwall inequality to \eqref{da applicare gronwall} in $\left[0,t\right]$ we get the bound
\begin{equation*}
\left\| \Omega(t)\right\|_{\cPHs}^2 + c\int_0^t\left\| \nh \Omega(\tau)\right\|_{\cPHs}^2  \d \tau
 \leqslant
C  \left\|\Omega_0\right\|_{\cPHs} \exp\left\{ 2C \int_0^t \left( 1+ \left\|\Omega(s) \right\|^2_{\cPLtwo}\right)\left\|\nh \Omega (s)\right\|^2_{\cPLtwo}\d s\hfill
\right\}.
\end{equation*}
Hence, considering that $ \Omega $ is bounded in $ L^\infty \left( \R_+; \cPLtwo \right) $ and $ \nh \Omega $ is bounded in $ L^2\left( \R_+; \cPLtwo \right) $ we deduce the estimate \eqref{stima spazi anisotropici per omega}.
\end{proof}
\begin{rem}
In Proposition \ref{propagation H0s norms for Omega} we do not require the initial data to be of zero horizontal average in order to propagate $ H^{0,s} $ norms.\fine
\end{rem}

\subsection{The oscillatory part.}
\label{sec:osc_propagation_regularity}

 We can now turn our attention on the oscillatory part $U_{\osc}$ solution of the equation \eqref{osc limit}. Indeed the terms $ \mathcal{Q}\left( V_{\QG}, U_{\osc} \right) $ and $ \mathcal{Q}\left( U_{\osc}, V_{\QG} \right) $ present in \eqref{osc limit} should not present a problem in the propagation of regularity, being linear in $ U_{\osc} $. The term $  \mathcal{Q}\left( U_{\osc}, U_{\osc} \right)  $ though is a bilinear term of the form $ \sqrt{-\Delta} \left( U_{\osc} \otimes U_{\osc} \right) $. Fortunately as pointed out in Lemma \ref{product rule} the bilinear form $ \mathcal{Q} $ has better product rules than the standard \NS\ bilinear form, this will allow us to recover the global well posedness result for \eqref{osc limit} as well.
  
  \begin{lemma}
 Let $ U $ be the weak solution defined in Theorem \ref{thm:Leray_sol}, then $ U_{\osc}= U- V_{\QG} $ satisfies the energy bound
 $$
 \left\| U_{\osc} \left( t \right) \right\|_\cPLtwo^2 +c \int_0^t \left\| \nh U_{\osc} \left( \tau \right) \right\|_\cPLtwo^2\d \tau \leqslant  C  \left\| U_0\right\|_\cPLtwo^2.
 $$
  \end{lemma}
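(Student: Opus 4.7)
The plan is to deduce the bound as an immediate corollary of the global $L^2$ energy inequality for $U$ provided by Theorem \ref{thm:Leray_sol}, by exploiting the orthogonality of the quasi-geostrophic/oscillatory splitting. As pointed out in the discussion following \eqref{eigenvectors zero}, at every Fourier mode $n$ the non-oscillating eigenvector $e^0(n)$ is orthogonal in $\C^4$ to the pair $\{e^-(n), e^+(n)\}$. Consequently the two maps $U\mapsto V_{\QG}$ and $U\mapsto U_{\osc}$ defined in \eqref{eq:QG-osc} are complementary orthogonal projections in $\cPLtwo$.

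Because those projections act diagonally in Fourier variables, they commute with the horizontal gradient $\nh$, which is itself a Fourier multiplier. Mode-by-mode orthogonality therefore yields, for every $t\geqslant 0$ and every $\tau\in[0,t]$,
\begin{align*}
\left\| U(t) \right\|_\cPLtwo^2 = & \ \left\| V_{\QG}(t) \right\|_\cPLtwo^2 + \left\| U_{\osc}(t) \right\|_\cPLtwo^2, \\
\left\| \nh U(\tau) \right\|_\cPLtwo^2 = & \ \left\| \nh V_{\QG}(\tau) \right\|_\cPLtwo^2 + \left\| \nh U_{\osc}(\tau) \right\|_\cPLtwo^2.
\end{align*}
Integrating the second identity in $\tau$ and discarding the non-negative $V_{\QG}$ contributions from both sides gives
\begin{equation*}
\left\| U_{\osc}(t) \right\|_\cPLtwo^2 + c \int_0^t \left\| \nh U_{\osc}(\tau) \right\|_\cPLtwo^2 \d\tau
\leqslant
\left\| U(t) \right\|_\cPLtwo^2 + c \int_0^t \left\| \nh U(\tau) \right\|_\cPLtwo^2 \d\tau,
\end{equation*}
and an application of the energy bound from Theorem \ref{thm:Leray_sol} on the right-hand side delivers the claim.

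There is no genuine obstacle in this argument: the only point that requires a brief check is that \eqref{eq:QG-osc} defines bona fide orthogonal projections also at the degenerate frequencies $\{n_h=0\}$ and $\{n_3=0\}$, where the eigenvectors take the special form recorded in \eqref{eigenvectors zero}. At those modes the oscillating subspace $\C e^-(n)\oplus\C e^+(n)$ remains orthogonal to $\C e^0(n)$ (or $e^0$ degenerates), so the modewise Pythagorean identities above are unaffected. Note also that one does not need to attempt a direct $L^2$ estimate on the equation \eqref{osc limit}: such a strategy would force us to handle the cross terms $\mathcal{Q}(V_{\QG},U_{\osc})_{\osc}$, $\mathcal{Q}(U_{\osc},V_{\QG})_{\osc}$ and $\mathcal{Q}(U_{\osc},U_{\osc})_{\osc}$, for which no obvious cancellation is available at the $\cPLtwo$ level, whereas the orthogonality argument bypasses these issues entirely.
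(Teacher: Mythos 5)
Your proof is correct and takes essentially the same route as the paper: the paper's proof deduces the bound from the identity $U_{\osc}=\Pi_{\osc}U$ with $\Pi_{\osc}=1-\Pi_{\QG}$ a zero-order Fourier multiplier bounded on $\cPLtwo$ (and commuting with $\nh$), combined with the energy estimate of Theorem \ref{thm:Leray_sol}, exactly as you transfer the bound from $U$ to $U_{\osc}$ without touching equation \eqref{osc limit}. Your appeal to the modewise orthogonality $e^0\perp e^\pm$ merely sharpens the operator norm of the projection to $1$, a minor refinement of the same argument.
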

\begin{proof}
The proof stems from the fact that $ U_{\osc} = \Pi_{\osc} U $ where $ \Pi_{\osc}= 1 - \Pi_{\QG} $ is a pseudo-differential operator of order zero as it has been explained in the proof of Theorem \ref{thm:Leray_sol}.
\end{proof}
  
  \begin{prop}\label{propagation H0s norms for oscillating part}
  Let $U_{\osc}$ be the solution of \eqref{osc limit} and $\underline{V_{{\QG},0}}, \ \underline{U_{{\osc},0}}=0$. Let $ \T^3 $ satisfy the condition \eqref{cP} and $U_{{\osc},0},\Omega_0\in \cPHs$ for $s>1/2$, then $U_{\osc}\in \anuno$ and $\nh U_{\osc}\in \andue$ and the following bound holds
   \begin{multline*}
  \left\| U_{\osc} (t)\right\|_\cPHs^2 + c \int_0^t \left\| U_{\osc} (\tau)\right\|_\cPHs^2 \d \tau \\
  \leqslant C \left\| U_{{\osc},0} \right\|_\cPHs^2
   \exp \left\{
   \frac{2C}{c}  
   \left[   
\left\|\Omega_0\right\|_{\cPHs} \exp\left\lbrace \frac{2C}{c}   
\left( 1+ \left\| \Omega_0 \right\|_\cPLtwo^2 \right)\left\| \Omega_0 \right\|_\cPLtwo^2
\right\rbrace 
+ 
\left( 1 + \left\| U_0 \right\|_{\cPLtwo}^2\right) \left\| U_0 \right\|_{\cPLtwo}^2
\right]
 \right\}.
  \end{multline*}
  \end{prop}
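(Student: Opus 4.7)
\textit{Proof proposal.} The plan is to perform an $H^{0,s}$ energy estimate on the equation \eqref{osc limit} by dyadic decomposition in the vertical variable, exploiting two structural features: (i) the dissipative symbol $a_{\osc}(D)$ behaves like $|\xi_h|^2$, giving full horizontal smoothing, and (ii) the limit bilinear form $\mathcal{Q}$ has improved product rules (the one advertised as Lemma \ref{product rule}), so that bilinear interactions behave better than in the plain \NS\ setting. The hypothesis of zero horizontal mean for $U_{{\osc},0}$ propagates to $U_{\osc}(t)$ by Lemma \ref{propagation horizontal mean osc part} (because of Condition \ref{cP}), which licences the use of the anisotropic Sobolev embeddings of Corollary \ref{L4Linf embedding} and of Lemma \ref{lem:prod_Sob_anisotropic}. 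The global $\cPHs$ control of the quasi-geostrophic factor $V_{\QG}$, which enters as a coefficient in two of the three nonlinear terms, is available through Proposition \ref{propagation H0s norms for Omega} combined with the gain of one vertical derivative encoded in Lemma \ref{higher regularity VQG}.

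Concretely, I would apply $\cPtv$ to \eqref{osc limit}, take the $\cPLtwo$ scalar product with $\cPtv U_{\osc}$, and use the coercivity of $a_{\osc}(D)$ to recover a term $c\,\|\nh \cPtv U_{\osc}\|_\cPLtwo^2$ on the left-hand side. Then I would treat the three nonlinear contributions separately. For the two terms linear in $U_{\osc}$, namely $(\mathcal{Q}(V_{\QG},U_{\osc}))_{\osc}$ and $(\mathcal{Q}(U_{\osc},V_{\QG}))_{\osc}$, I would expand $\cPtv \mathcal{Q}$ via Bony decomposition \eqref{Paicu Bony deco}--\eqref{bony decomposition asymmetric}, then bound the paraproduct and remainder pieces using Lemma \ref{product rule} together with the vertical Bernstein inequality (Lemma \ref{bernstein inequality}) and the embedding \eqref{eq:LinfL4embedding}; Lemma \ref{higher regularity VQG} converts one factor $\partial_3 V_{\QG}$ (or $\nh V_{\QG}$) into an $\Omega$-norm at one less level of vertical regularity, and this is precisely where the global $\cPHs$ bound on $\Omega$ obtained in Proposition \ref{propagation H0s norms for Omega} is consumed. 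The outcome should be of the form $C\,2^{-2qs} b_q(t)\,(\,\ldots\,)$ with the parenthesis containing products of $\|U_{\osc}\|_\cPHs$, $\|\nh U_{\osc}\|_\cPHs$, $\|\Omega\|_\cPHs$, $\|\nh\Omega\|_\cPHs$.

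The main obstacle is the genuinely nonlinear term $(\mathcal{Q}(U_{\osc},U_{\osc}))_{\osc}$, because we have no vertical viscosity and $\mathcal{Q}$ contains one derivative. The strategy here is to use the improved product rules of $\mathcal{Q}$: by Lemma \ref{product rule} the derivative effectively falls in the horizontal direction, so one estimates the corresponding scalar product by a bound of the form $\|\nh U_{\osc}\|_{H^{0,s}}\cdot \|U_{\osc}\|_{H^{1/2,s}}\cdot \|U_{\osc}\|_{H^{0,s}}^{1/2}\cdot\|\nh U_{\osc}\|_{H^{0,s}}^{1/2}$, after invoking Lemma \ref{lemma:sobolev_embedding_zero_average} to trade $H^{1/2,s}$ for $\|U_{\osc}\|_\cPHs^{1/2}\|\nh U_{\osc}\|_\cPHs^{1/2}$, a step that crucially requires the zero horizontal mean of $U_{\osc}$.

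Multiplying by $2^{2qs}$, summing in $q\in\mathbb Z$ and using Young's inequality $ab\leqslant \tfrac{c}{2}a^2 + \tfrac{1}{2c}b^2$ (and its quartic variant) I would absorb one factor of $\|\nh U_{\osc}\|_\cPHs^2$ into the dissipation, ending with a differential inequality
\begin{equation*}
\tfrac{\d}{\d t}\|U_{\osc}\|_\cPHs^2 + c\,\|\nh U_{\osc}\|_\cPHs^2 \leqslant C\,\Phi(t)\,\|U_{\osc}\|_\cPHs^2,
\end{equation*}
where $\Phi(t)$ combines $\|\nh\Omega(t)\|_\cPHs^2$ with $(1+\|U_{\osc}(t)\|_\cPLtwo^2)\|\nh U_{\osc}(t)\|_\cPLtwo^2$. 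Gronwall's inequality then yields the claimed bound, with the exponent controlled by $\int_0^t\Phi$, which in turn is finite thanks to Proposition \ref{propagation H0s norms for Omega} and the $\cPLtwo$ energy estimate of Theorem \ref{thm:Leray_sol}; substituting the respective bounds gives exactly the exponential factor in the statement.
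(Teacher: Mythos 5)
Your overall strategy coincides with the paper's: apply $\cPtv$ to \eqref{osc limit}, do an $\cPLtwo$ estimate on each vertical block, control the $V_{\QG}$ coefficients through $\Omega$ via Proposition \ref{propagation H0s norms for Omega} together with Lemma \ref{higher regularity VQG}, use the zero horizontal mean (guaranteed by Lemma \ref{propagation horizontal mean osc part} under \ref{cP}) to invoke the anisotropic Gagliardo--Nirenberg inequalities, then multiply by $2^{2qs}$, sum, absorb by the horizontal dissipation and conclude by Gronwall, bounding the exponent with the $\Omega$-propagation estimate and the $L^2$ energy bound. For the terms linear in $U_{\osc}$ your sketch reproduces, in substance, the estimate \eqref{prima stima termine bilineare parte oscillante}.

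The genuine gap is in the quadratic term $\left( \mathcal{Q}\left( U_{\osc},U_{\osc} \right) \right)_{\osc}$. First, Lemma \ref{product rule} does not by itself make the derivative ``fall in the horizontal direction'': the vertical piece $\mathcal{Q}^3$, coming from $u^3_{\osc}\partial_3$, still carries a full vertical derivative, and the paper needs the asymmetric decomposition \eqref{bony decomposition asymmetric}, integration by parts with the divergence-free identity $\partial_3 u^3_{\osc}=-\diveh u^h_{\osc}$, the commutator gain of Lemma \ref{estimates commutator}, and vertical Bernstein (the terms $C^v_1$--$C^v_4$ in Proposition \ref{prop:bilinear_estimates_osc_part}) to trade that derivative for horizontal ones. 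Second, and more decisively, the bound you propose, $\left\| \nh U_{\osc} \right\|_{\cPHs}\left\| U_{\osc} \right\|_{H^{1/2,s}}\left\| U_{\osc} \right\|_{\cPHs}^{1/2}\left\| \nh U_{\osc} \right\|_{\cPHs}^{1/2}\simeq \left\| U_{\osc} \right\|_{\cPHs}\left\| \nh U_{\osc} \right\|_{\cPHs}^{2}$, is homogeneous at the $H^{0,s}$ level: since $\left\| \nh U_{\osc} \right\|_{\cPHs}$ appears with power exactly two and the prefactor $\left\| U_{\osc} \right\|_{\cPHs}$ is neither small nor integrable a priori, it cannot be absorbed by $c\left\| \nh U_{\osc} \right\|_{\cPHs}^2$, so this step would only close for small data and would not give the stated global bound. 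The point of the Bony splitting in \eqref{terza stima termine bilineare parte oscillante} is precisely to place the low vertical frequencies in $L^2$-level norms (through \eqref{GN type ineq}, \eqref{eq:LinfL4embedding}) while the dyadic blocks carry the $2^{-qs}$ weights, producing $\left\| \nh U_{\osc} \right\|_{\cPLtwo}\left\| U_{\osc} \right\|_{\cPHs}\left\| \nh U_{\osc} \right\|_{\cPHs}+\left\| U_{\osc} \right\|_{\cPLtwo}^{1/2}\left\| \nh U_{\osc} \right\|_{\cPLtwo}^{1/2}\left\| U_{\osc} \right\|_{\cPHs}^{1/2}\left\| \nh U_{\osc} \right\|_{\cPHs}^{3/2}$; only this yields, after Young, the coefficient $\left( 1+\left\| U_{\osc} \right\|_{\cPLtwo}^2 \right)\left\| \nh U_{\osc} \right\|_{\cPLtwo}^2$ that you correctly anticipate in $\Phi(t)$ but that your intermediate estimate does not deliver.
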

  \begin{proof}
  As in the proof of Proposition \ref{propagation H0s norms for Omega} apply the vertical truncation ${\cPtv}$ on both sides of \eqref{osc limit} and taking scalar product in $\cPLtwo$ we obtain
  \begin{multline*}
  \frac{1}{2}\frac{\d}{\d t} \left\| {\cPtv} U_{\osc}\right\|_{\cPLtwo}^2+ c \left\| {\cPtv} \nh U_{\osc}\right\|_{\cPLtwo}^2\leqslant \left|\left( \left. {\cPtv} \mathcal{Q}\left( V_{\QG},U_{\osc}\right)\right| {\cPtv} U_{\osc}\right) \right|
  \\
  +
  \left|\left( \left. {\cPtv} \mathcal{Q}\left( U_{\osc}, V_{\QG}\right)\right| {\cPtv} U_{\osc}\right) \right|
  + \left| \left( \left. {\cPtv} \mathcal{Q}\left( U_{\osc}, U_{\osc} \right) \right| {\cPtv} U_{\osc} \right) \right|.
  \end{multline*}
  
  Taking moreover in account the estimates \eqref{prima stima termine bilineare parte oscillante} and \eqref{terza stima termine bilineare parte oscillante} the above inequality turns into
  \begin{multline}\label{energy est H0s osc part}
  \frac{1}{2}\frac{\d}{\d t} \left\| {\cPtv} U_{\osc}\right\|_{\cPLtwo}^2+ c \left\| {\cPtv} \nh U_{\osc}\right\|_{\cPLtwo}^2
  \\
\begin{aligned}
 \leqslant & \ C  b_q\left( t \right) 2^{-2qs}\left\|\nh\Omega\right\|_\cPHs \left\| \nh U_{\osc} \right\|_{\cPHs} \left\|  U_{\osc} \right\|_{\cPHs}\\
&+C b_q\left(  t \right) 2^{-2qs}\left\| \Omega \right\|^{1/2}_{\cPHs} \left\| \nh\Omega\right\|^{1/2}_{\cPHs}\left\|U_{\osc}\right\|_{\cPHs}^{1/2} \left\|\nh U_{\osc}\right\|_{\cPHs}^{3/2}\\
&+Cb_q\left(   t \right) 2^{-2qs}\left\| \nh  U_{\osc} \right\|_{\cPLtwo} \left\|  U_{\osc} \right\|_{\cPHs} \left\| \nh  U_{\osc} \right\|_{\cPHs} \\
&+  C b_q\left(   t \right) 2^{-2qs} \left\|  U_{\osc} \right\|_{\cPLtwo}^{1/2} \left\| \nh  U_{\osc} \right\|_{\cPLtwo}^{1/2} \left\|  U_{\osc} \right\|_{\cPHs}^{1/2} \left\| \nh  U_{\osc} \right\|_{\cPHs}^{3/2}.
\end{aligned}  
  \end{multline}
  We recall that $ \left( b_q \right)_q $ is a $ \ell^1 \left( \mathbb{Z} \right) $ positive sequence which depends on $ \Omega $ and $ U_{\osc} $ and such that $ \sum_q b_q \left( t \right)\leqslant 1 $. 
  Multiplying both sides of \eqref{energy est H0s osc part} for $2^{2qs}$, summing over $q\in\mathbb{Z}$, and using the inequalities $2ab \leqslant a^2+ b^2$ and $ab \leqslant \frac{1}{4}a^4 +\frac{3}{4}b^{4/3}$ we obtain
  \begin{equation}\label{still to integrate in time}
\frac{\d}{\d t} \left\|  U_{\osc}\right\|_{\cPHs}^2+c \left\|  \nh U_{\osc}\right\|_{\cPLtwo}^2
\leqslant 2C
\left(
\left(1+  \left\| \Omega \right\|^{2}_{\cPHs} \right) \left\| \nh\Omega\right\|^{2}_{\cPHs}
+\left( 1+ \left\| U_{\osc}  \right\|^{2}_{\cPLtwo}  \right)\left\| \nh U_{\osc}\right\|^{2}_{\cPLtwo} 
\right)
\left\| U_{\osc} \right\|_\cPHs^2
  \end{equation}
  applying Gronwall inequality to \eqref{still to integrate in time} we obtain
   \begin{multline*}
  \left\| U_{\osc} (t)\right\|_\cPHs^2 + c\int_0^t \left\| U_{\osc} (\tau)\right\|_\cPHs^2 \d \tau \\
  \leqslant C \left\| U_{{\osc},0} \right\|_\cPHs 
  \exp \left\lbrace 2C \int_0^t 
\left( 1+ \left\| \Omega (\tau) \right\|^{2}_{\cPHs}  \right)\left\| \nh\Omega(\tau)\right\|^{2}_{\cPHs}+ 
\left( 1+ \left\| U_{\osc} (\tau) \right\|^{2}_{\cPLtwo}  \right)\left\| \nh U_{\osc}(\tau)\right\|^{2}_{\cPLtwo} \d \tau \right\rbrace,
  \end{multline*}
  concluding.
  \end{proof}
  
  \subsection{Proof of Theorem \ref{GWP2}} 
  \label{sec:pf_thm}
  At this point it is very easy to prove Theorem \ref{GWP2} Let us consider a data $V_0\in\cPHs, \Omega_0\in H^{0,s}, s\geqslant 1$ and $V_0$ with zero horizontal average. Thanks to Proposition \ref{propagation H0s norms for Omega} we have that $\Omega\in \mathcal{C}\left( \mathbb{R}_+; H^{0,s-1}\right)\cap \mathcal{C}\left( \mathbb{R}_+; H^{0,s}\right)$, $\nh \Omega \in L^2\left( \mathbb{R}_+; H^{0,s-1}\right)\cap L^2\left( \mathbb{R}_+; H^{0,s}\right)$, which in particular implies, thanks to Lemma \ref{higher regularity VQG} that $\Lv^s V_{\QG} \in \mathcal{C}\left( \mathbb{R}_+; \cPLtwo\right)$, $\nh  \Lv^s V_{\QG} \in L^2\left( \mathbb{R}_+; \cPLtwo\right)$. 
 Since $ V_{\QG} $ is defined as $ V_{{\QG}}=\Pi_{\QG} U $ where $ \Pi_{\QG} $ is a Fourier multiplier of order zero which maps continuously any $ H^{s,s'} $ space to itself, this implies that $ V_{\QG} \in L^\infty \left( \R_+, L^2 \right) $, $ \nh V_{\QG} \in L^2 \left( \R_+, L^2 \right) $ since $ U $ is so thanks to Theorem \ref{thm:Leray_sol}, hence $V_{\QG} \in \mathcal{C}\left( \mathbb{R}_+; {H}^{0,s}\right),\nh V_{\QG} \in L^2\left( \mathbb{R}_+; {H}^{0,s}\right)$. For the oscillating part it suffices to apply Proposition \ref{propagation H0s norms for oscillating part} and the proof is complete.\\
  
We outline how to prove that solutions to the limit system are $H^{0,s'}$-stable, for $ s'\in \left[-1/2,s\right) $ globally with a continuous dependence of the initial data. To do so consider the two solutions $U_1,U_2$ to the limit system
  \begin{align}
  \label{limit 1}
  \left\lbrace
  \begin{array}{l}
  \partial_tU_1 + \mathcal{Q}\left( U_1,U_1 \right) -\mathbb{D} U_1 =0\\
  \dive u_1=0\\
  U_1\Bigr|_{t=0}=U_{1,0}  
  \end{array}
  \right.\\
  \label{limit 2} \left\lbrace
  \begin{array}{l}
  \partial_tU_2 + \mathcal{Q}\left( U_2,U_2 \right) -\mathbb{D} U_2 =0\\
  \dive u_2=0\\
  U_2\Bigr|_{t=0}=U_{2,0}  .
  \end{array}
  \right.
  \end{align}
 Subtracting \eqref{limit 2} from \eqref{limit 1} and setting $U=U_1-U_2$ we obtain the following system
 \begin{equation}
 \label{limit 1 - limit 2}
 \left\lbrace
  \begin{array}{l}
  \partial_tU + \mathcal{Q}\left( U_1,U \right) +\mathcal{Q}\left( U,U_2 \right) -\mathbb{D} U =0\\
  \dive u=0\\
  U\Bigr|_{t=0}=U_{0} = U_{1,0}-U_{2,0}.
  \end{array}
  \right.
 \end{equation}
 We apply now a stability result proved by M. Paicu in \cite{paicu_NS_periodic}, namely Proposition \ref{uniqueness anisotropic NS}, to the system \eqref{limit 1 - limit 2}. This gives the following estimate
 \begin{multline*}
 \left\| U \right\|_{H^{0,-\frac{1}{2}}}^2 + c \int_0^t \left\| \nh U \left( \tau \right) \right\|_{H^{0,-\frac{1}{2}}}^2  \d\tau\\
 \leqslant C \left\| U_0 \right\|_{H^{0,-\frac{1}{2}}}^2 \exp \left\lbrace
 \int_0^t \left( 1+ \left\| U\left( \tau \right) \right\|_\cPHs^2 \right)\left\| \nh  U\left( \tau \right) \right\|_\cPHs^2 \d\tau\right.
 +
 \int_0^t \left( 1+ \left\| U_1\left( \tau \right) \right\|_\cPHs^2 \right)\left\| \nh  U_1\left( \tau \right) \right\|_\cPHs^2 \d\tau\\
 +\left.
 \int_0^t \left( 1+ \left\| U_2\left( \tau \right) \right\|_\cPHs^2 \right)\left\| \nh  U_2\left( \tau \right) \right\|_\cPHs^2 \d\tau
 \right\rbrace.
 \end{multline*}
 The argument of the exponential is indeed uniformly bounded thanks to the estimates on the limit system performed above, whence if $ \left\| U_0 \right\|_{H^{0,-\frac{1}{2}}}^2 $ is small the whole right hand side of the above equation if small. Since moreover
 $$
 \left\| U \right\|_{H^{0,s}}^2 + c \int_0^t \left\| \nh U \left( \tau \right) \right\|_{H^{0,s}}^2  \d\tau \leqslant C
 \left( \left\| U_0 \right\|_\cPHs^2  \right)
 ,
 $$
 uniformly in $ t $ by interpolation we prove the assertion stated above.
 \hfill$\Box$
  
\section{Convergence of the system as $\varepsilon\to 0$.} \label{app S. method}

\begin{rem}
We point out the fact that  Proposition \ref{uniqueness anisotropic NS} can be applied as well to systems  with the form
\begin{align*}
\partial_t w + \mathcal{Q}^\varepsilon\left(w,w\right) +\mathcal{Q}^\varepsilon\left( u,w \right) - a_h\left( D \right) w =&f, & \dive w=& 0.
\end{align*}
\fine
\end{rem}

\begin{rem}
In the present section our aim is to use Proposition \ref{propagation norms NS horizontal} and \ref{uniqueness anisotropic NS} to the systems \eqref{filtered systemP} and \eqref{lim syst}. Let us compare these two systems with \eqref{NS horozontal}: the only structural difference  between these two is that in \eqref{filtered systemP} and \eqref{lim syst} the Poincar\'e semigroup couples velocity field and temperature $ v^\varepsilon, T^\varepsilon $ in a new variable $ U^\varepsilon $, but the structure itself of the equation is unchanged. For this reason Propositions  \ref{propagation norms NS horizontal} and \ref{uniqueness anisotropic NS} can be applied in the present case.
\end{rem}

We shall require as well the following result 
\begin{lemma}\label{lem:comparison_Sob}
Let $ f\in H^{s, s'}, \ s, s'\in \R $ such that the horizontal average $ \underline{f}\in H^{s'}_v $. Than
\begin{equation*}
\left\| \underline{f} \right\|_{H^{s'}_v} \leqslant  \left\| f \right\|_{H^{s, s'}}.
\end{equation*}
\end{lemma}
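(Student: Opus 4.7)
The plan is to compute both norms in Fourier variables and observe that the left-hand side is exactly the restriction of the full anisotropic sum to the slice of frequencies $\{n_h=0\}$, which is automatically dominated by the whole sum.

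First I would recall that the horizontal average acts on the Fourier side simply as a projection onto the frequencies with $n_h=0$. Indeed, expanding $f(x)=\sum_{n\in\mathbb{Z}^3}\hat{f}_n\,e^{2\pi i\check{n}\cdot x}$ and integrating over $\mathbb{T}^2_h$ kills every horizontal mode except $n_h=(0,0)$, so that
\begin{equation*}
\underline{f}(x_3)=\sum_{n_3\in\mathbb{Z}}\hat{f}_{(0,0,n_3)}\,e^{2\pi i\check{n}_3 x_3},
\qquad
\widehat{\underline{f}}_{n_3}=\hat{f}_{(0,0,n_3)}.
\end{equation*}

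The key observation is that at a frequency of the form $n=(0,0,n_3)$ the horizontal weight $(1+|\check{n}_h|^2)^s$ equals $1$ regardless of the sign of $s$. Therefore, from the definition \eqref{eq:anisot_Sob_norm} of the anisotropic Sobolev norm,
\begin{equation*}
\|\underline{f}\|_{H^{s'}_v}^2
=\sum_{n_3\in\mathbb{Z}}\bigl(1+|\check{n}_3|^2\bigr)^{s'}\bigl|\hat{f}_{(0,0,n_3)}\bigr|^2
=\sum_{n_3\in\mathbb{Z}}\bigl(1+|\check{n}_h|^2\bigr)^{s}\bigl(1+|\check{n}_3|^2\bigr)^{s'}\bigl|\hat{f}_{(0,0,n_3)}\bigr|^2\Big|_{n_h=0}.
\end{equation*}

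Since each summand in the full sum defining $\|f\|_{H^{s,s'}}^2$ is nonnegative, we may bound the sum over the slice $\{n_h=0\}$ by the sum over all of $\mathbb{Z}^3$, obtaining
\begin{equation*}
\|\underline{f}\|_{H^{s'}_v}^2
\;\leqslant\;
\sum_{n\in\mathbb{Z}^3}\bigl(1+|\check{n}_h|^2\bigr)^{s}\bigl(1+|\check{n}_3|^2\bigr)^{s'}|\hat{f}_n|^2
=\|f\|_{H^{s,s'}}^2,
\end{equation*}
which is the desired inequality. There is no genuine obstacle here; the only point that could look delicate, namely the appearance of $s$ with possibly negative sign, is harmless precisely because the horizontal frequency vanishes on the support of $\widehat{\underline{f}}$.
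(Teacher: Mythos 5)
Your proof is correct and follows essentially the same route as the paper: identify $\underline{f}$ on the Fourier side as the restriction of $\hat{f}$ to the fiber $\{n_h=0\}$, note that the horizontal weight $(1+|\check{n}_h|^2)^s$ equals $1$ there, and bound the resulting sum by the full nonnegative sum defining $\|f\|_{H^{s,s'}}$. Your explicit remark that the sign of $s$ is irrelevant on this fiber is a small clarification the paper leaves implicit, but the argument is the same.
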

\begin{proof}
Since the element $ \underline{f} $ is the horizontal average of the function $ f $ we can indeed argue that
\begin{equation*}
\underline{f} \left( x_3 \right) = \mathcal{F}^{-1}_v \left( \left( \hat{f} \left( 0, n_3 \right) \right)_{n_3} \right),
\end{equation*}
at least in $ L^2 $.
Whence calculating explicitly the Sobolev norms
\begin{align}
\left\| \underline{f} \right\|_{H^{s'}_v}^2 = & \ \sum_{n_3\in \mathbb{Z}} \left( 1+n_3^2 \right)^{s'} \left| \hat{f} \left( 0, n_3 \right) \right|^2,\label{eq:Sob_vert}\\
\left\| f \right\|_{H^{s, s'}}^2 = & \ \sum_{n\in \mathbb{Z}^3}
 \left( 1+\left| n_h \right|^2 \right)^{s} \left( 1+n_3^2 \right)^{s'} \left| \hat{f} \left( n_h , n_3 \right) \right|^2, \label{eq:Sob_aniso}
\end{align}
Comparing the expressions in \eqref{eq:Sob_vert} and \eqref{eq:Sob_aniso} we remark that \eqref{eq:Sob_vert} is the restriction of \eqref{eq:Sob_aniso} on the fiber $ \set{ n_h=0 } $, concluding.
\end{proof}

\begin{rem}
Let us recall that Theorem \ref{thm:local_ex_strong_solutions}  implies that for each $ \varepsilon >0 $ fixed there exists a maximal time $ T^\star_\varepsilon \leqslant \infty $ such that for each $ T^\star<T^\star_{\varepsilon} $ and $ s>1/2 $ the function $ U^\varepsilon $ belongs to the space
\begin{align*}
U^\varepsilon \in L^\infty \left( \left[0,T^\star\right]; \cPHs \right),
&&
\nh U^\varepsilon \in L^2 \left( \left[0,T^\star\right]; \cPHs \right).
\end{align*}
\fine
\end{rem}

We prove that, given $V^\varepsilon_0\in\cPHs, s>1$, the solution of our filtered system \eqref{filtered systemP} converges to the solutions of the limit system \eqref{nonosc limit}, \eqref{osc limit}  in the sense that
\begin{align*}
\lim_{\varepsilon\to 0} \left( V^\varepsilon-\Lplus U\right)&=0 & \text{in } \mathcal{C}\left(\mathbb{R}_+; H^{0,\sigma}\right) \\
\lim_{\varepsilon\to 0} \nh\left( V^\varepsilon-\Lplus U\right)&=0 & \text{in } L^2\left(\mathbb{R}_+; H^{0,\sigma}\right)
\end{align*}
for $\sigma\in \left[1,s\right)$, where $U=U_{\osc}+U_{\QG}$ and $U_{\QG}=\definizioneVQG$ with $\Omega$ solution of \eqref{nonosc limit}. To do so we will use a method introduced by S. Schochet in \cite{schochet} in the framework of hyperbolic systems which allows us to deal with bulk forces which present  penalization. A suitable change of variable has to be performed so that the singular perturbations cancels among themselves. The same method has been studied in a wide generality by I. Gallagher in \cite{gallagher_schochet} in the generic context of parabolic (nonlinear) equations with singular, linear, skew-symmetric perturbation. We mention as well the works \cite{grenierrotatingeriodic} and \cite{paicu_rotating_fluids} in which such technique has been used.\\
We want to underline a major difference between the application of Schochet method in the present work and in the work  \cite{paicu_rotating_fluids}. In \cite{paicu_rotating_fluids} in fact the convergence takes place for the values of $ \sigma $ between $ 1/2 $ and $ s $. Indeed in our case $\sigma\in \left[1,s\right)$. This difference is motivated by the fact that our limit system is globally well posed in $ \cPHs, s>1 $ only. This is due to the fact that we have been proving the propagation of $ \cPHs, s>0 $ data for $ \Omega $ in Proposition \ref{propagation H0s norms for Omega} and hence we have applied Lemma \ref{higher regularity VQG} to state that $ \cPHs, s>1 $ data is propagated for $ V_{\QG} $.

Let us denote $T^\star_\varepsilon$ the maximal lifespan of $U^\varepsilon$ solution of \eqref{filtered systemP} in the space $\cPHs\left(\mathbb{T}^3\right)$ with $s>1$, which exists thanks to the work \cite{paicu_NS_periodic}. Then there exists a time $T^\star_\varepsilon\geqslant T >0$ such that $U^\varepsilon \in \mathcal{C}\left( [0,T]; \cPHs \right)$ and $\nh U^\varepsilon \in L^2 \left( [0,T]; \cPHs \right)$ uniformly in $\varepsilon$ small enough. Let us define $W^\varepsilon = U^\varepsilon -U$ defined on the interval $\left[0,T^\star_\varepsilon\right]$ taking values in $\cPHs$. We obtain that $W^\varepsilon$ satisfies the following equation
\begin{equation}\label{system Wvarepsilon}
\left\lbrace
\begin{aligned}
&
\begin{multlined}
 \partial_t W^\varepsilon + \mathcal{Q}^\varepsilon\left(W^\varepsilon,W^\varepsilon \right)+ \widetilde{\mathcal{Q}}^\varepsilon \left(U,W^\varepsilon\right)-\mathbb{D}^\varepsilon W^\varepsilon \\
= -\left( \mathbb{D}^\varepsilon-\mathbb{D}\right)U - \bigl( \mathcal{Q}^\varepsilon\left(U,U\right)- \mathcal{Q} \left(U,U\right) \bigr),
\end{multlined}
\\
& \dive w^\varepsilon=0,\\
& \bigl. W^\varepsilon \bigr|_{t=0}=0,
\end{aligned}
\right.
\end{equation}
where the form $\tilde{\mathcal{Q}}^\varepsilon$ is symmetric, bilinear  and defined via
$$
\widetilde{\mathcal{Q}}^\varepsilon\left(A,B\right)= \mathcal{Q}^\varepsilon\left(A,B\right) + \mathcal{Q}^\varepsilon\left(B,A\right).
$$

Let us define $R^\varepsilon_{\osc}\left( U\right)= \mathcal{Q}^\varepsilon\left(U,U\right)- \mathcal{Q} \left(U,U\right)$, where $\mathcal{Q}^\varepsilon\left(A,B\right)\xrightarrow[\mathcal{D}'\left(\mathbb{R}_+\times \mathbb{T}^3\right)] {\varepsilon\to 0} \mathcal{Q}\left(A,B\right)$. It is it a strongly oscillating in time function, given by the formula
$$
R^\varepsilon_{\osc} (U) = \mathcal{F}^{-1}\left( \sum_{\substack{\omega^{a,b,c}_{k,n-k,n}\neq 0\\
1\leqslant j\leqslant 3}} e^{i\frac{t}{\varepsilon}\omega^{a,b,c}_{k,n-k,n}} \left( \left.  U^{a,j}(k)\left(n_j-k_j\right) U^b\left( n-k \right)\right| e^c(n)\right)_{\mathbb{C}^4} e^c(n)
\right),
$$
where we have been using the notation $\omega^{a,b,c}_{k,n-k,n}= \omega^a(k) +\omega^b(n-k) - \omega^c(n)$, $a,b,c\in \left\lbrace \pm \right\rbrace$, $\omega^\pm (n)$ defined as in \eqref{eigenvalues},  $U^a(k)= \left( \left.\hat{U}(k)\right| e^a(k)\right)e^a(k)$ and $U^{a,j}$ is the $j$-th component of $U^a$.\\
As well the function $ S^\varepsilon_{\osc}= \left(\mathbb{D}^\varepsilon -\mathbb{D}\right)U$ is  a highly oscillating function given by the following formula
\begin{equation*}
S^\varepsilon_{\osc} \left( U\right) =\mathcal{F}^{-1}\left(\sum_{\omega^{a,b}_n \neq 0} e^{i\frac{t}{\varepsilon} \omega^{a,b}_n} \left( \left. \mathbf{D} (n) U^b (n)  \right| e^a(n)\right)_{\mathbb{C}^4} e^a(n)\right),
\end{equation*}
and as well as $R^\varepsilon_{\osc}$ even $S^\varepsilon_{\osc}\to 0$ as $\varepsilon\to 0$ only in $\mathcal{D}'$. For the rest of the section when we write the scalar product $ \left(\left. \cdot \right| \cdot  \right) $ we implicitly mean $ \left(\left. \cdot \right| \cdot  \right)_{\mathbb{C}^4} $.\\

We decompose $R^\varepsilon_{\osc}$ and $S^\varepsilon_{\osc}$ in high and low frequencies, i.e.
\begin{align*}
R^{\varepsilon,N}_{{\osc},\LF} (U) = & \mathcal{F}^{-1}\left(1_{\left\lbrace \left| n\right| \leqslant N\right\rbrace }
 \sum_{\substack{\omega^{a,b,c}_{k,n-k,n}\neq 0\\
1\leqslant j\leqslant 3}} e^{i\frac{t}{\varepsilon}\omega^{a,b,c}_{k,n-k,n}} 1_{\left\lbrace \left| k\right| \leqslant N\right\rbrace } \left( \left.   U^{a,j}(k)\left(n_j-k_j\right) U^b\left( n-k \right)\right| e^c(n)\right) e^c(n)
\right),\\
S^{\varepsilon,N}_{{\osc},\LF}\left( U\right)=& 
\mathcal{F}^{-1} \left(
1_{\left\lbrace \left| n\right| \leqslant N\right\rbrace }\sum_{\omega^{a,b}_n \neq 0}
e^{i\frac{t}{\varepsilon}\omega^{a,b}_n}
 \left( \left. \mathbf{D} (n) U^b (n) \right| e^a(n)\right) e^a(n)\right),
\end{align*}
and
\begin{align*}
R^{\varepsilon,N}_{{\osc},\HF} (U)=& R^\varepsilon_{{\osc}} (U) - R^{\varepsilon,N}_{{\osc},\LF} (U)\\
S^{\varepsilon,N}_{{\osc},\HF} (U) =& S^\varepsilon_{{\osc}}(U)-S^{\varepsilon,N}_{{\osc},\LF}(U).
\end{align*}
Indeed the subscript $ f_\HF $ stands for high frequencies and the subscript $ f_\LF $ stands for low frequencies. \\
Concerning the high frequencies terms the following lemma hold
\begin{lemma}\label{convergence zero hi-freq}
If \;$N\to \infty$ the terms $R^{\varepsilon,N}_{{\osc}, \HF} (U), S^{\varepsilon,N}_{{\osc}, \HF} (U)$ tend uniformly to 0 in $\varepsilon$ respectively in the space $L^p\left( [0,T]; H^{-1,-1/2}\right)$ and $L^p \left([0,T]; H^{-1,s}\right)$ for all $1\leqslant p\leqslant 2$, $s>1$.
\end{lemma}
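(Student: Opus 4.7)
The plan is to exploit the trivial observation that every phase factor appearing in $R^{\varepsilon}_{\osc}(U)$ and $S^{\varepsilon}_{\osc}(U)$ has modulus one, so that taking absolute values inside the Fourier series eliminates the dependence on $\varepsilon$ from the start. The uniformity in $\varepsilon$ asserted in the lemma is thus built into the argument, and the whole problem reduces to showing that the high-frequency tails of two non-oscillating quadratic (resp.\ linear) expressions in $U$ vanish in the prescribed norms. All such vanishing will be produced by Lebesgue dominated convergence applied to the Plancherel series of $U$, using the global bounds $U\in L^{\infty}(\mathbb{R}_+;\cPHs)$ and $\nh U\in L^{2}(\mathbb{R}_+;\cPHs)$, $s>1$, which are supplied by Theorem \ref{GWP2}.

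The linear term $S^{\varepsilon,N}_{\osc,\HF}(U)$ is the direct part: since $\mathbf{D}$ is of second order purely in the horizontal variables, one has $|\mathcal{F}(\mathbf{D}U)(n)|\lesssim |\check n_h|^{2}|\hat U(n)|$, whence
\begin{equation*}
\|S^{\varepsilon,N}_{\osc,\HF}(U)(t)\|_{H^{-1,s}}^{2}\lesssim \sum_{|n|>N}\frac{(1+|\check n_3|^{2})^{s}|\check n_h|^{4}}{1+|\check n_h|^{2}}\,|\hat U(t,n)|^{2}\leqslant\bigl\|\mathbf{1}_{|n|>N}\nh U(t)\bigr\|_{\cPHs}^{2}.
\end{equation*}
Since $\nh U\in L^{2}([0,T];\cPHs)$, dominated convergence in the Plancherel series and in time forces $\|S^{\varepsilon,N}_{\osc,\HF}(U)\|_{L^{2}([0,T];H^{-1,s})}\to 0$. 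Hölder in $t$ on the finite interval $[0,T]$ then yields the same convergence in $L^{p}([0,T];H^{-1,s})$ for every $1\leqslant p\leqslant 2$.

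For the bilinear term I first write $R^{\varepsilon}_{\osc}=\dive\widetilde R^{\varepsilon}_{\osc}$, with $\widetilde R^{\varepsilon}_{\osc}$ obtained from the same formula without the factor $(n_{j}-k_{j})$; the divergence structure is legitimate because the first three components of $U$ are divergence-free. Letting $J_{N}$ denote the frequency truncation to $\{|n|\leqslant N\}$, bilinearity yields the telescopic decomposition
\begin{equation*}
R^{\varepsilon,N}_{\osc,\HF}(U)=(I-J_{N})R^{\varepsilon}_{\osc}(U,U)+J_{N}R^{\varepsilon}_{\osc}\bigl((I-J_{N})U,U\bigr).
\end{equation*}
The moduli of the phases being one, one has the pointwise bound $|\mathcal{F}(\widetilde R^{\varepsilon}_{\osc}(A,B))(n)|\lesssim(|\hat A|\ast|\hat B|)(n)$, and the companion functions associated to the absolute values of the Fourier coefficients inherit the anisotropic Sobolev norms and the zero horizontal average of $A,B$. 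Lemma \ref{lem:prod_Sob_anisotropic} with $s_{1}=s_{2}=1/2$ and vertical index $s>1/2$ then produces $\|\widetilde R^{\varepsilon}_{\osc}(A,B)\|_{H^{0,s}}\lesssim\|A\|_{H^{1/2,s}}\|B\|_{H^{1/2,s}}$; combined with the continuous embedding $H^{-1,s}\hookrightarrow H^{-1,-1/2}$ and Lemma \ref{lemma:sobolev_embedding_zero_average}, Cauchy--Schwarz in time produces a bound of the form
\begin{equation*}
\|R^{\varepsilon,N}_{\osc,\HF}(U)\|_{L^{2}([0,T];H^{-1,-1/2})}\lesssim\|U\|^{1/2}_{L^{\infty}_{T}\cPHs}\|\nh U\|^{1/2}_{L^{2}_{T}\cPHs}\Bigl(\|(I-J_{N})U\|^{1/2}_{L^{\infty}_{T}\cPHs}\|(I-J_{N})\nh U\|^{1/2}_{L^{2}_{T}\cPHs}\Bigr),
\end{equation*}
plus an analogous contribution with $(I-J_{N})$ outside, which is handled by the same strategy. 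The factors involving $(I-J_{N})$ tend to zero by dominated convergence, the remaining ones are controlled by Theorem \ref{GWP2}, and Hölder in time accommodates every $1\leqslant p\leqslant 2$.

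The main technical obstacle is the mismatch between the target space $H^{-1,-1/2}$ and the hypotheses of Lemma \ref{lem:prod_Sob_anisotropic}, whose vertical index must strictly exceed $1/2$; the remedy is to absorb the missing vertical regularity into the source space and work in the stronger norm $H^{-1,s}$, then embed. This is the structural reason behind the threshold $s>1$ in Theorem \ref{conv theorem}. A second tacit requirement is that $U$ retains zero horizontal average globally in time, so that both Lemma \ref{lem:prod_Sob_anisotropic} and the Gagliardo--Nirenberg inequality \eqref{GN type ineq2} are applicable; this property is provided by Lemmas \ref{propagation horizontal mean VQG} and \ref{propagation horizontal mean osc part} under condition \ref{cP}.
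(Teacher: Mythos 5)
Your proof is correct and follows essentially the same route as the paper: the same splitting of $R^{\varepsilon,N}_{\osc,\HF}$ into an output-high-frequency piece and a piece with the truncation $1_{\{|k|\geqslant N\}}$ on one factor, the same uniform-in-$\varepsilon$ majorization by modulus-one phases, the same anisotropic product law combined with the zero-horizontal-average inequality \eqref{GN type ineq2}, dominated convergence in the Plancherel series and in time, and the trivial bound $\left\| S^{\varepsilon,N}_{\osc,\HF}\right\|_{H^{-1,s}}\lesssim \left\| 1_{\{|n|>N\}}\nh U\right\|_{\cPHs}$ for the linear term. The only minor inaccuracy is the closing aside attributing the threshold $s>1$ to the product-law mismatch, whereas in the paper that threshold comes from the global well-posedness of the quasi-geostrophic part of the limit system; this does not affect the proof of the lemma.
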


\noindent
The proof of Lemma \ref{convergence zero hi-freq} is postponed to the end of the section for the sake of clarity.\\
The term $R^{\varepsilon, N}_{{\osc},\LF} (U)$ tends only weakly to zero. In order to absorb it in the following computations we introduce the following notation
\begin{align*}
\widetilde{R}^{\varepsilon, N}_{{\osc},\LF} ( U) =& \mathcal{F}^{-1}\left(1_{\left\lbrace \left| n\right| \leqslant N\right\rbrace }
 \sum_{\substack{\omega^{a,b,c}_{k,n-k,n}\neq 0\\
1\leqslant j\leqslant 3}} \frac{e^{i\frac{t}{\varepsilon}\omega^{a,b,c}_{k,n-k,n}}}{\omega^{a,b,c}_{k,n-k,n}}
 1_{\left\lbrace \left| n\right| \leqslant N\right\rbrace } \left( \left.  U^{a,j}( t , k)\left(n_j-k_j\right) U^b\left(t,  n-k \right)\right| e^c(n)\right) e^c(n)
\right)\\
\widetilde{S}^{\varepsilon, N}_{{\osc},\LF}\left( U\right)=& 
\mathcal{F}^{-1} \left(
1_{\left\lbrace \left| n\right| \leqslant N\right\rbrace }\sum_{\omega^{a,b}_n \neq 0}
\frac{e^{i\frac{t}{\varepsilon}\omega^{a,b}_n}}{i \omega^{a,b}_n}
 \left( \left. \mathbf{D} (n) U^b (n)\right| e^a(n)\right) e^a(n)\right).
\end{align*}

We do as well the following change of unknown
\begin{equation}\label{definition psi schochet method}
\Psi^{\varepsilon,N}_\LF = W^\varepsilon +\varepsilon \left( \widetilde{R}^{\varepsilon, N}_{{\osc},\LF} (U)+\widetilde{S}^{\varepsilon,N}_{{\osc},\LF}(U)\right).
\end{equation}
Considering the substitution defined in \eqref{definition psi schochet method} into \eqref{system Wvarepsilon}, and after some algebraic manipulation we obtain that  $\PLF$ satisfies the following equation
\begin{multline}\label{equazione soddisfatta da Psi}
\partial_t  \PLF 
+\frac{1}{2} \widetilde{\mathcal{Q}}\left( \PLF, \PLF -2\varepsilon \left( \RLF (U)+\SLF(U)\right)
+2U\right)
 - \mathbb{D}^\varepsilon \PLF \\
  = \Gamma^{\varepsilon,N}\left( U\right),
\end{multline}
where 
$$
\Gamma^{\varepsilon,N}= R^{\varepsilon,N}_{{\osc}, \HF}+ S^{\varepsilon,N}_{{\osc}, \HF} + \varepsilon \Gamma^\varepsilon_N ,
$$
and 
\begin{multline}
\label{term Gamma}
\Gamma^\varepsilon_N =  \mathbb{D}^\varepsilon  \left( \RLF (U)+\SLF(U)\right)
\\ + \frac{1}{2}\widetilde{\mathcal{Q}}\left( \left( \RLF (U)+\SLF(U)\right), \varepsilon  \left( \RLF (U)+\SLF(U)\right)-2U\right)+
\left( \widetilde{R}^{\varepsilon,N,t}_{{\osc},\LF} (U)+\widetilde{S}^{\varepsilon,N,t}_{{\osc},\LF}(U)\right)
\end{multline}
and respectively
\begin{align*}
\widetilde{R}^{\varepsilon,N,t}_{{\osc},\LF}= &
\mathcal{F}^{-1}\left(1_{\left\lbrace \left| n\right| \leqslant N\right\rbrace }
 \sum_{\substack{\omega^{a,b,c}_{k,n-k,n}\neq 0\\
1\leqslant j\leqslant 3}} \frac{e^{i\frac{t}{\varepsilon}\omega^{a,b,c}_{k,n-k,n}}}{\omega^{a,b,c}_{k,n-k,n}}
 1_{\left\lbrace \left| n\right| \leqslant N\right\rbrace } \partial_t \left[\left( \left.  U^{a,j}(t, k)\left(n_j-k_j\right) U^b\left(t,  n-k \right)\right| e^c(n)\right) e^c(n)\right]
\right)\\
\widetilde{S}^{\varepsilon,N,t}_{{\osc},\LF}=& 
\mathcal{F}^{-1} \left(
1_{\left\lbrace \left| n\right| \leqslant N\right\rbrace }\sum_{\omega^{a,b}_n \neq 0}
\frac{e^{i\frac{t}{\varepsilon}\omega^{a,b}_n}}{i \omega^{a,b}_n} \  \partial_t
\left[
 \left( \left. \mathbf{D} (n) U^b ( t, n)\right| e^a(n)\right) e^a(n)\right]\right).
\end{align*}

\begin{lemma}\label{boundedness low freq}
The term $\Gamma^\varepsilon_N$ given by the relation \eqref{term Gamma} is bounded uniformly in $\varepsilon$ by a constant $C(N)$ which depend solely on $N$  in the spaces $L^p\left([0,T]; H^{-1,-1/2}\right)$ for $1\leqslant p \leqslant 2$.
\end{lemma}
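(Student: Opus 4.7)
The plan is to exploit the fact that every term entering $\Gamma^\varepsilon_N$ is spectrally localised at horizontal and vertical frequencies $|n|\leqslant N$, which reduces the problem to estimates on a finite-dimensional subspace where all Sobolev norms are equivalent, with constants depending only on $N$.

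First I would observe that the set
\[
\bigl\{(k,n)\in\mathbb{Z}^6 : |n|\leqslant N,\ |k|\leqslant N,\ \omega^{a,b,c}_{k,n-k,n}\neq 0\bigr\}
\]
is finite, so the phases $\omega^{a,b,c}_{k,n-k,n}$ (resp.\ $\omega^{a,b}_n$) are bounded from below by a strictly positive constant $c_N$. Consequently the divisions by $\omega^{a,b,c}$ and $\omega^{a,b}$ appearing in $\widetilde{R}^{\varepsilon,N}_{\osc,\LF}$ and $\widetilde{S}^{\varepsilon,N}_{\osc,\LF}$ are harmless, and since the oscillating exponentials have modulus one, Plancherel together with frequency truncation yields the pointwise-in-time estimate
\[
\bigl\|\widetilde{R}^{\varepsilon,N}_{\osc,\LF}(U)\bigr\|_{H^{-1,-1/2}} \leqslant C(N)\,\|U\|_\cPLtwo^2,\qquad
\bigl\|\widetilde{S}^{\varepsilon,N}_{\osc,\LF}(U)\bigr\|_{H^{-1,-1/2}} \leqslant C(N)\,\|U\|_\cPLtwo,
\]
uniformly in $\varepsilon$. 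The same frequency cut-off allows one to treat $\mathbb{D}^\varepsilon$ as a bounded operator of norm $\leqslant C(N)$ on the truncated space (since $\mathcal{L}(\pm t/\varepsilon)$ is unitary on each frequency block and $\mathbf{D}(n)$ is a polynomial in $n$), and likewise the bilinear form $\widetilde{\mathcal{Q}}$ acting between a truncated factor and $U$ satisfies a $C(N)$ bound in $H^{-1,-1/2}$ by Lemma \ref{lem:prod_Sob_anisotropic}.

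The only delicate contribution is the time-derivative piece $\widetilde{R}^{\varepsilon,N,t}_{\osc,\LF}(U)+\widetilde{S}^{\varepsilon,N,t}_{\osc,\LF}(U)$, where $\partial_t$ acts inside the bracket. The idea is to \emph{substitute the limit equation} \eqref{lim syst}, i.e.\ $\partial_t U = -\mathcal{Q}(U,U)+\mathbb{D}U$, so that each time-derivative factor is replaced by terms which are at most quadratic in $U$ and carry at worst two horizontal derivatives. After this substitution the same reasoning as above applies: the frequency truncation and the lower bound on $\omega^{a,b,c}$ convert the expression into a finite sum, and Lemma \ref{lem:prod_Sob_anisotropic} together with Theorem \ref{GWP2} (which provides $U\in L^\infty(\R_+;\cPHs)$, $\nh U\in L^2(\R_+;\cPHs)$ for $s\geqslant 1$) produces a bound of the form
\[
\bigl\|\widetilde{R}^{\varepsilon,N,t}_{\osc,\LF}(U)+\widetilde{S}^{\varepsilon,N,t}_{\osc,\LF}(U)\bigr\|_{H^{-1,-1/2}} \leqslant C(N)\bigl(\|U\|_\cPHs^2 + \|U\|_\cPHs\,\|\nh U\|_\cPHs\bigr),
\]
whose right-hand side lies in $L^p([0,T])$ for all $1\leqslant p\leqslant 2$ thanks to the energy bounds of Theorem \ref{GWP2}.

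Assembling the three contributions gives $\|\Gamma^\varepsilon_N\|_{L^p([0,T];H^{-1,-1/2})}\leqslant C(N)$ uniformly in $\varepsilon$, as required. The main (and only real) obstacle is the time-derivative term; once one commits to using the limit equation to eliminate $\partial_t U$, everything else is a consequence of finite-dimensional equivalences on the frequency cube $\{|n|\leqslant N\}$ combined with the global regularity of $U$ already established.
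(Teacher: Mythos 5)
Your proposal is correct and follows essentially the same route as the paper: the paper's (one-line) proof rests precisely on the frequency localization $\left| n\right|, \left| k \right| \leqslant N$, which bounds $1/\left|\omega^{a,b}_n\right|$ and $1/\left|\omega^{a,b,c}_{k,n-k,n}\right|$ by $C(N)$ and lets one trade any Sobolev regularity for powers of $N$. Your explicit handling of the time-derivative terms by substituting the limit equation $\partial_t U = -\mathcal{Q}\left( U,U \right)+\mathbb{D}U$ and invoking the global bounds on $U$ is exactly the step the paper leaves implicit behind ``we easily obtain'', and is the standard Schochet-type argument, so the two proofs coincide in substance.
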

\begin{proof}
The result is due to the fact that we are considering functions localized in a ball of radius $ N $ in the frequency space, hence we can gain all the regularity that we want at the price of a constant which behaves like a power of $ N $, and, in particular if $\omega^{a,b}_n, \omega^{a,b,c}_{k,n-k,n}\neq 0$ implies that
$$
\frac{1}{\left| \omega^{a,b}_n \right|}, \frac{1}{\left| \omega^{a,b,c}_{k,n-k,n} \right|} \leqslant C(N).
$$
Whence we easily obtain that $\Gamma^\varepsilon_N$ belongs to the space $L^p\left( \mathbb{R}_+, H^{-1,-1/2}\right)$ and that is uniformly bounded by a constant $C(N)$.
\end{proof}

We remark that for $\varepsilon$ sufficiently small the term $U- \varepsilon\left( \RLF (U)+\SLF(U)\right)$ has a small horizontal mean in $H^s_v$, 
whence we can apply Proposition \ref{uniqueness anisotropic NS} to equation \eqref{equazione soddisfatta da Psi} in order to obtain, for all $t\in \left[ 0, T^\star_\varepsilon\right] $ the following bound
\begin{multline}\label{boh1}
\left\| \PLF(t)\right\|^2_{H^{0,-1/2}}+ c \int_0^t\left\| \nh \PLF(\tau)\right\|^2_{H^{0,-1/2}} \d \tau
\leqslant \mathcal{C} \left( \left\| U_0 \right\|_{H^{0,s_0}} \right)\\ 
\times\left(
\left\| \PLF(0)\right\|^2_{H^{0,-1/2}} +\int_0^t  \left\| \Gamma^{\varepsilon , N}(\tau)\right\|_{H^{-1,-1/2}} \d \tau +\int_0^t  \left\| \Gamma^{\varepsilon , N}(\tau)\right\|_{H^{-1,-1/2}}^2 \d \tau
\right)\\
\times \exp \left\{ \int_0^t  \left\| \Gamma^{\varepsilon , N}(\tau)\right\|_{H^{-1,-1/2}} \d \tau\right.
 +\left. \int_0^t \left( 1+ \left\| \PLF(\tau)\right\|^2_{H^{0,s_0}}\right)
\left\|\nh\PLF(\tau)\right\|^2_{H^{0,s_0}} \d \tau \right\}.
\end{multline}

Since we want to obtain global in time solutions it is important to have $\Gamma^{\varepsilon,N}$ at the same time in both spaces $L^1\left(\mathbb{R}_+; H^{-1,-1/2}\right)$ and $L^2\left(\mathbb{R}_+; H^{-1,-1/2}\right)$.\\

\begin{itemize}

\item We remark the fact that writing the estimate \eqref{boh1} we have been using implicitly the bound
$$
\int_0^t \left( 1+ \left\| U(\tau)\right\|^2_{H^{0,s_0}}\right) \left\| \nh U(\tau)\right\|^2_{H^{0,s_0}} \d \tau 
\leqslant \frac{ C}{c}\ \widetilde{\mathcal{C}} \left( \left\| U_0 \right\|_{H^{0,s_0}}\right),
$$
for $ s_0>1 $, 
and we denoted $ \mathcal{C} \left( \left\| U_0 \right\|_{H^{H^{0,s_0}}} \right)= \exp \left\{\frac{ C}{c}\ \widetilde{\mathcal{C}} \left( \left\| U_0 \right\|_{H^{0,s_0}}\right)\right\}$. 

\item We used Lemma \ref{lem:comparison_Sob} to deduce the inequality
\begin{equation*}
\left\| \underline{\Gamma^{\varepsilon, N}} \left( \tau \right) \right\|_{H^{-1/2}_v} \leqslant
\left\| {\Gamma^{\varepsilon, N}} \left( \tau \right) \right\|_{H^{-1, -1/2}},
\end{equation*}
which has consequently be applied in order to deduce \eqref{boh1}.

\end{itemize}

Considering Lemma \ref{convergence zero hi-freq} we can say that for each $\eta > 0$   there exits a large enough $N$ such that, setting $\mathcal{X}= L^1\left(\mathbb{R}_+; H^{-1,-1/2}\right) \cap L^2\left(\mathbb{R}_+; H^{-1,-1/2}\right)$, 
$$
\left\| R^{\varepsilon,N}_{{\osc}, \HF} + S^{\varepsilon,N}_{{\osc}, \HF} \right\|_\mathcal{X}\leqslant\frac{\eta}{2},
$$
and thanks to Lemma \ref{boundedness low freq} for $\varepsilon$ sufficiently small
$$
\varepsilon  \left\| \Gamma ^{\varepsilon}_N\right\|_\mathcal{X}\leqslant\varepsilon C(N) \leqslant \frac{\eta}{2},
$$
whence we obtain that
$$
\left\| \Gamma^{\varepsilon,N} \right\|_\mathcal{X}\leqslant \eta.
$$

Thanks to the definition \eqref{definition psi schochet method} we can argue that for each $\eta >0$ and $ t<T $ time of local existence of the solutions,  there exists a $ \varepsilon_1=\varepsilon_1 \left( \eta, T \right) $ such that for each $ \varepsilon\in \left( 0, \varepsilon_1 \right) $:
$$
\left\| \PLF (t)- W^\varepsilon (t)\right\|_\Hmud^2 + c \int_0^t 
\left\| \nh\PLF (\tau)-\nh W^\varepsilon (\tau)\right\|_\Hmud^2  \d \tau\leqslant \varepsilon C(N) \leqslant\frac{\eta}{2},
$$
in the same way we can write
$$
\left\| \PLF (0) \right\|_\Hmud = \varepsilon \left\| 
\RLF\left( U_0\right) + \SLF \left( U_0 \right) \right\|_\Hmud
\leqslant \varepsilon C(N)\left\|U_0\right\|^2_\cPHs \leqslant\frac{\eta}{2}.
$$
Whence for $\varepsilon$ sufficiently small and $t\in \left[ 0, T^\star_\varepsilon \right)$ we have
\begin{equation}\label{boh2}
\left\| \PLF (t) \right\|^2_\Hmud + c \int_0^t 
\left\| \nh\PLF (\tau)\right\|_\Hmud^2  \d \tau
\leqslant C\eta \left( 1+ \exp \left\lbrace  \int_0^t \left\| \nh\PLF (\tau) \right\|^2_\Hz \left( 1+ \left\| \PLF (\tau) \right\|^2_\Hz\right) \d \tau \right\rbrace\right).
\end{equation}

We want to use now the definition of $\PLF$ given in \eqref{definition psi schochet method},  in particular this implies that $ \left\| \PLF \right\|= \left\| W^\varepsilon \right\| + \mathcal{O}_N(\varepsilon)$ for $N$ fixed. This means that $\PLF$ and $W^\varepsilon$ have the same norm up to an error which is comparable to $\varepsilon$ which is, anyway, considered to be small. Whence \eqref{boh2} gives us that
\begin{equation}\label{boh3}
\left\|W^\varepsilon (t) \right\|^2_\Hmud + c \int_0^t 
\left\| \nh W^\varepsilon (\tau)\right\|_\Hmud^2  \d \tau
\leqslant C\eta \left( 1+ \exp \left\lbrace  \int_0^t \left\| \nh W^\varepsilon (\tau) \right\|^2_\Hz \left( 1+ \left\| W^\varepsilon (\tau) \right\|^2_\Hz\right) \d \tau \right\rbrace\right).
\end{equation}

\noindent
For the real numbers $s'\in \left[ - 1/2, s\right]$ we introduce the following continuous function
\begin{equation*}
\label{eq:definition_f}
f_{\varepsilon, s'} (t) = \left\| W^\varepsilon (t) \right\|^2_{H^{0,s'}} +\int_0^t \left( 1+ \left\| W^\varepsilon (\tau) \right\|^2_{H^{0,s'}} \right) \left\| \nh  W^\varepsilon (\tau) \right\|^2_{H^{0,s'}} \d \tau.
\end{equation*}
The function $\left\| W^\varepsilon (t) \right\|^2_{H^{0,s'}} $ is defined on the interval $\left[0, T^\star_\varepsilon \right)$,  by use of \eqref{boh3} we get
\begin{equation}\label{boh4}
f_{\varepsilon, -1/2}(t) \leqslant C \eta ,
\end{equation}
for each $ t \in \left[0, T^\star_\varepsilon \right) $.\\
We consider now an $ s_0 >1 $ and the maximal time
\begin{equation*}
T^{s_0}_\varepsilon = \sup \set{ 0<t<T^\star_\varepsilon \ \Big|  \ 
f_{\varepsilon, s_0} \left( t \right)\leqslant 1 , \text{ for each } 0\leqslant t \leqslant T^{s_0}_\varepsilon
 }.
\end{equation*}
 Interpolating between $\Hmud$ and $\Hz$ we get
\begin{equation}\label{eq:est_f}
f_{\varepsilon, \sigma} \left( t \right)= \mathcal{O} \left( \eta^{\vartheta \left( s_0, \sigma \right)} \right) \leqslant 1, \hspace{1cm} t\in \left[ 0, T^{s_0}_\varepsilon\right),
\end{equation}
where $ 0<\vartheta \left( s_0, \sigma \right)\xrightarrow{\sigma\to s_0}0 $ and $ 0<\sigma \in \left[ -1/2, s_0 \right) $.\\
We consider at this point $U^\varepsilon = W^\varepsilon + U$, since $U$ has zero horizontal mean we can easily point out that
\begin{equation*}
 \underline{U^\varepsilon}(t)= \underline{W^\varepsilon}(t).
\end{equation*}
Whence using Lemma \ref{lem:comparison_Sob}, the definition of the function $ f_{\varepsilon, \sigma} $ given in \eqref{eq:definition_f}, and the smallness property on $ f_{\varepsilon, \sigma} $ given in \eqref{eq:est_f} we deduce:
\begin{align*}
\left\| \underline{W^\varepsilon}(t) \right\|_{H^\sigma_v} \leqslant & \ \left\| {W^\varepsilon}(t) \right\|_{H^{0,\sigma}}, \\
\leqslant & \  C \sqrt{f_{\varepsilon, \sigma} \left( t \right)} , \\
\leqslant & \ C \eta^{\vartheta /2} \ll 1.
\end{align*}

\noindent Since the horizontal average of $ \underline{U^\varepsilon} $ is small we can infer via Proposition \ref{propagation norms NS horizontal} obtaining, for $ \sigma \in \left( 1, s_0 \right) $;
\begin{equation}\label{boffo2}
\left\| U^\varepsilon (t) \right\|_{H^{0,s}}^2 +c \int_0^t \left\| \nh U^\varepsilon (\tau) \right\|_{H^{0,s}}^2 \d \tau 
 \leqslant C \left\| V_0 \right\|_{\cPHs} \exp \left(
\int_0^t \left( 1+ \left\| U^\varepsilon(\tau)\right\|^2_{H^{0,\sigma}}\right) \left\| \nh U^\varepsilon(\tau)\right\|^2_{H^{0,\sigma}} \d \tau \right),
\end{equation}
on the other hand  $0\leqslant t < T^{s_0}_\varepsilon $, and since $ U^\varepsilon= W^\varepsilon + U $ we get
\begin{equation}\label{boffo}
\int_0^t \left( 1+ \left\| U^\varepsilon(\tau)\right\|^2_{H^{0,\sigma}}\right) \left\| \nh U^\varepsilon(\tau)\right\|^2_{H^{0,\sigma}} \d s
 \leqslant f_{\varepsilon, \sigma}(t) +\int_0^t \left( 1+ \left\| U(\tau)\right\|^2_{H^{0,\sigma}}\right) \left\| \nh U(\tau)\right\|^2_{H^{0,\sigma}} \d \tau + F_\sigma(t).
\end{equation}
$ F_\sigma\left( t \right) $ in particular is defined as 
\begin{align*}
F_\sigma\left( t \right)= & \int_0^t \left( 1+ \left\| W^\varepsilon \left( \tau \right) \right\|_{H^{0,\sigma}}^2 \right) \left\| \nh U \left( \tau \right) \right\|_{H^{0,\sigma}}^2 \d\tau +\int_0^t \left( 1+ \left\| U \left( \tau \right) \right\|_{H^{0,\sigma}}^2 \right) \left\| \nh W^\varepsilon \left( \tau \right) \right\|_{H^{0,\sigma}}^2 \d\tau\\
\lesssim & \int_0^t \left( 1+ \left\| W^\varepsilon \left( \tau \right) \right\|_{H^{0,\sigma}}^2 \right) \left\| \nh U \left( \tau \right) \right\|_{H^{0,\sigma}}^2 \d\tau + \left( 1+ \left\| U \right\|^2_{L^\infty \left( \R_+; H^{0,\sigma} \right)} \right)f_{\varepsilon, \sigma}\left( t \right)\\
\lesssim & \ \left( \sup_{[0,t]} f_{\varepsilon, \sigma}  \right) \left\| \nh U \right\|_{L^2\left( \R_+; H^{0,\sigma} \right)}+ \left( 1+ \left\| U \right\|^2_{L^\infty \left( \R_+; H^{0,\sigma} \right)} \right)f_{\varepsilon, \sigma}\left( t \right),
\end{align*}
which in turn implies that, considering the above estimate in \eqref{boffo},
\begin{equation*}
\begin{aligned}
\int_0^t \left( 1+ \left\| U^\varepsilon(\tau)\right\|^2_{H^{0,\sigma}}\right) \left\| \nh U^\varepsilon(\tau)\right\|^2_{H^{0,\sigma}} \d s 
 \leqslant&\ f_{\varepsilon, \sigma}(t)
 +\int_0^t \left( 1+ \left\| U(\tau)\right\|^2_{H^{0,\sigma}}\right) \left\| \nh U(\tau)\right\|^2_{H^{0,\sigma}} \d \tau \\
& +\left( \sup_{[0,t]} f_{\varepsilon, \sigma}  \right) \left\| \nh U \right\|_{L^2\left( \R_+; H^{0,\sigma} \right)}+ \left( 1+ \left\| U \right\|^2_{L^\infty \left( \R_+; H^{0,\sigma} \right)} \right)f_{\varepsilon, \sigma}\left( t \right).
\end{aligned}
\end{equation*}
We have seen though that in $ \left[0, T^{s_0}_\varepsilon\right) $ that $ f_{\varepsilon, \sigma}\left( t \right)\leqslant 1  $ for $ \sigma\in \left( -1/2, s_0 \right) $, and since $ U\in L^\infty \left( \R_+, H^{0,\sigma} \right) $ and $ \nh U\in L^2 \left( \R_+, H^{0,\sigma} \right) $ for $ \sigma\in \left(1, s_0\right] $ (this is simply Proposition \ref{propagation H0s norms for Omega} combined with Lemma \ref{higher regularity VQG}),  we obtained that
\begin{align*}
\int_0^t \left( 1+ \left\| U^\varepsilon(\tau)\right\|^2_{H^{0,\sigma}}\right) \left\| \nh U^\varepsilon(\tau)\right\|^2_{H^{0,\sigma}} \d s \leqslant C.
\end{align*}
If we consider the above bound in \eqref{boffo2} we have hence obtained that 
$$
\left\| U^\varepsilon (t) \right\|_{H^{0,s}}^2 +c \int_0^t \left\| \nh U^\varepsilon (s) \right\|_{H^{0,s}}^2 \d s \leqslant C,
$$
for all times $t\in \left[ 0, T^{s_0}_\varepsilon\right)$ and $ s>1 $. We deduce that $T_\varepsilon^{s_0}=T^\star_\varepsilon$ and since the constant $ C $ is independent of the time $ t $,  this implies that $U^\varepsilon(t)$ can be extended in $\cPHs$ beyond $T^\star_\varepsilon$ and hence we obtain that $T^\star_\varepsilon =\infty$ as long as $\varepsilon $ is sufficiently small. Recalling that $\left\| W^\varepsilon \right\|=o(1)$ in $\left[0, T^\star_\varepsilon \right)$ we deduce that $U^\varepsilon \to U$ globally in time  in $H^{0,\sigma}$ for $-1/2\leqslant \sigma <s$.\\

\textit{Proof of Lemma \ref{convergence zero hi-freq}.} In the following the index $ s $ addressing to the anisotropic Sobolev space  $ \cPHs $ is always considered to be $ s>1 $. An interesting feature is that if $ s>1/2 $ then $ H^s_v $ is a Banach algebra. We shall use this property all along the proof. We perform at first the estimates for the term $R^{\varepsilon, N}_{{\osc}, \HF}$.  Since $U(t)$ is of zero horizontal average for all $t>0$ and $\nh U \in \andue$ we obtain that $U\in \andue$. Consequently $U\in \anuno \cap \andue$, and, interpolating $U\in L^{p'}\left( \mathbb{R}_+, \cPHs\right)$ for each $p'\in \left[2,\infty\right]$.\\
Let us observe that the term $R^{\varepsilon,N}_{{\osc}, \HF}$ can be decomposed as
$$
R^{\varepsilon,N}_{{\osc}, \HF}= R^{\varepsilon,N}_{{\osc},1}+ R^{\varepsilon,N}_{{\osc},2},
$$
where we denoted 
$$
R^{\varepsilon,N}_{{\osc},1}= \mathcal{F}^{-1}\left( 1_{\left\lbrace |n|\geqslant N \right\rbrace} R^{\varepsilon,N}_{{\osc}, \HF}\right),
$$
and
$$
R^{\varepsilon,N}_{{\osc},2} (U) =  \mathcal{F}^{-1}\left(1_{\left\lbrace \left| n\right| \leqslant N\right\rbrace }
 \sum_{\substack{\omega^{a,b,c}_{k,n-k,n}\neq 0\\
1\leqslant j\leqslant 3}} e^{i\frac{t}{\varepsilon}\omega^{a,b,c}_{k,n-k,n}} 1_{\left\lbrace \left| k\right| \geqslant N\right\rbrace } \left( \left.  U^{a,j}(k)\left(n_j-k_j\right) U^b\left( n-k \right)\right| e^c(n)\right) e^c(n).
\right)
$$

For the first term we use the fact that we are on the high frequencies and of an element in $L^p \left( \mathbb{R}_+ , H^{-1,-1/2}\right)$ which tends uniformly at zero as long as $\varepsilon \to 0$ thanks to Lebesgue theorem and Sobolev embeddings. In fact
\begin{align*}
\left\| R^\varepsilon_{\osc} \left( U \right) \right\|_{H^{-1, -1/2}} \leqslant & 
\left\| \mathcal{F}^{-1} \left( \sum_{k+m=n} \left(\left. U^a\left( k \right)\otimes U^b\left( m \right) \right| e^c\left( n \right)  \right)_{\mathbb{Z}^4} e^c \left( n \right) \right) \right\|_{H^{0,1/2}}\\
= & \left\| U \otimes U \right\|_{H^{0,1/2}}\\
\lesssim & \left\| U \right\|_{H^{1/2, s}}^2.
\end{align*}
Now, since $ U $ has null horizontal average we can apply Lemma \ref{lemma:sobolev_embedding_zero_average} to obtain finally that
$$
\left\| R^\varepsilon_{\osc} \left( U \right) \right\|_{H^{-1, -1/2}} \lesssim \left\| U \right\|_\cPHs \left\| \nh U \right\|_\cPHs.
$$
Since $L^2 \left( \left[0,T\right] \right) \subset L^{p'}\left( \left[0,T\right] \right)   $ for $ p'\in\left[ 1, 2 \right) $ if we prove that $ \left\| R^\varepsilon_{\osc} \left( U \right) \right\|_{ L^2 \left( \left[0,T\right]; H^{-1, -1/2}\right)}<\infty $ we can apply Lebesgue theorem and conclude that $ \left\| R^{\varepsilon, N}_{{\osc}, 1} \right\|_{ L^{p'} \left( \left[0,T\right]; H^{-1, -1/2}\right)}\to 0 $ as $ N \to\infty $. But this is in fact true since
\begin{equation}
\left\| \left\| U \right\|_\cPHs \left\| \nh U \right\|_\cPHs \right\|_{L^2_t}^2
= \int_0^t\left\| U \left( \tau \right) \right\|_\cPHs^2 \left\| \nh U \left( \tau \right) \right\|_\cPHs^2 \d \tau\leqslant \left\| U \right\|_{L^\infty \left( \R_+; \cPHs \right)}^2 \left\| \nh U \right\|_{L^2 \left( \R_+; \cPHs \right)}^2.
\end{equation}

For the second term we argue as follows
\begin{align*}
\left\| R^{\varepsilon,N}_{{\osc},2}\right\|_{H^{-1,-1/2}} \leqslant & \left\| \mathcal{F}^{-1} \left( \sum_{k+m=n} 1_{\{|k|\geqslant N \}} \left( \left. \left( U^a(k)\otimes U^b(m)\right)\right| e^c(n)\right) e^c(n) \right)\right\|_{H^{0,1/2}}\\
\leqslant &
\left\| \mathcal{F}^{-1} \left( \sum_{k+m=n} 1_{\{|k|\geqslant N \}} \left( \left. \hat{U}(k)\right| e^a(k)\right) e^a(k) \right) \right\|_{H^{1/2, s}} \left\|u \right\|_{H^{1/2, s}},
\end{align*}
and, using  \eqref{GN type ineq2} we obtain the following bound
\begin{equation*}
\left\| R^{\varepsilon,N}_{{\osc},2}\right\|_{H^{-1,-1/2}}
 \leqslant \left\| \mathcal{F}^{-1} \left( 1_{\{|k|\geqslant N \}} U^a(k)\right) \right\|_\cPHs^{1/2} 
\left\| \mathcal{F}^{-1} \left( 1_{\{|k|\geqslant N \}} \left(\nh U\right)^a(k)\right) \right\|_\cPHs^{1/2} \left\| U\right\|_\cPHs^{1/2} \left\| \nh U\right\|_\cPHs^{1/2},
\end{equation*}
which evidently tends to zero thanks to Lebesgue theorem.\\
For the term $S^{\varepsilon, N}_{{\osc},\HF}$ it comes straightforward since
\begin{equation}
\left\| S^{\varepsilon,N}_{{\osc},\HF} \right\|_{H^{-1,s}} = \left\| 
\mathcal{F}^{-1} \left(
1_{\left\lbrace \left| n\right| \geqslant N\right\rbrace }\sum_{\omega^{a,b}_n \neq 0}
e^{i\frac{t}{\varepsilon}\omega^{a,b}_n}
 \left( \left. \mathbf{D} (n) U^b (n) \right| e^a(n)\right) e^a(n)\right) \right\|_{H^{-1,s}}
  \leqslant C \left\| \nh U\right\|_{\cPHs}.
\end{equation}
\hfill$\Box$

\section{The energy estimates} \label{Section energy estimates}
In this Section we refer to $V_{\QG}$ and $U_{\osc}$ respectively as the solution of equation \eqref{quasi geostrophic equation} and \eqref{osc limit}. Moreover $v_{\QG},u_{\osc}$ represents the projection of the first three components of $V_{\QG}$ and $U_{\osc}$.\\
The aim of this section is essentially to give an energy bound for the bilinear term appearing in equation \eqref{osc limit}. \\
Given a generic vector field $u$ we refer to $\underline{u}$ as the horizontal average of $u$. This gives the natural decomposition $u=\underline{u}+\tilde{u}$. Since $\tilde{u}$ has zero horizontal average the results given in the Subsection \ref{basic estimates} can be applied.

\subsection{Estimates for the global well-posedness of the limit system.}

\begin{prop}
Let $\definizioneVQG$ where $\Omega$ is the potential vorticity defined in \eqref{potential vorticity}, then
\begin{multline}\label{stima forma bilineare omega}
\left( \left.{\cPtv} \left( v^h_{\QG} \cdot \nh \Omega\right) \right| {\cPtv} \Omega\right)\leqslant 
C  2^{-2qs} b_q( t)\\
\times \left[
 \left\| \Omega \right\|_{\cPLtwo}^{1/2} \left\| \nh\Omega \right\|_{\cPLtwo}^{1/2} \left\| \Omega \right\|_{\cPHs}^{1/2} \left\| \nh \Omega \right\|_{\cPHs}^{3/2}
  +  \left\| \nh \Omega \right\|_\cPLtwo  \left\|\Omega \right\|_{\cPHs} \left\|\nh \Omega \right\|_{\cPHs}\right],
\end{multline} 
where $ \left( b_q \right)_q $ is a $ \ell^1 \left( \mathbb{Z} \right) $ positive sequence which depends on $ \Omega $ and such that $ \sum_q b_q \left( t \right)\leqslant 1 $. 
\end{prop}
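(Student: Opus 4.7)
I would begin by applying the asymmetric Bony decomposition \eqref{bony decomposition asymmetric} to ${\cPtv}\bigl(v^h_{\QG}\cdot\nh\Omega\bigr)$, regarding $v^h_{\QG}$ and $\nh\Omega$ as the two factors:
\begin{equation*}
\begin{aligned}
{\cPtv}\bigl(v^h_{\QG}\cdot\nh\Omega\bigr)
= &\; \svq_{-1} v^h_{\QG}\cdot{\cPtv}\nh\Omega
+\sum_{|q-q'|\leq 4}\bigl[{\cPtv},\cPSvq v^h_{\QG}\bigr]\cdot\cPTv\nh\Omega \\
&+\sum_{|q-q'|\leq 4}\bigl(\svq v^h_{\QG}-\cPSvq v^h_{\QG}\bigr)\cdot{\cPtv}\cPTv\nh\Omega
+\sum_{q'>q-4}{\cPtv}\bigl(S^v_{q'+2}\nh\Omega\cdot\cPTv v^h_{\QG}\bigr).
\end{aligned}
\end{equation*}
The first (low-high) term contributes nothing to $\bigl({\cPtv}(v^h_{\QG}\cdot\nh\Omega)\bigm|{\cPtv}\Omega\bigr)_{\cPLtwo}$: integrating by parts in $x_h$ and using $\diveh v^h_{\QG}=\diveh(\nhp\Delta_F^{-1}\Omega)=0$ yields
\begin{equation*}
\int_{\T^3} \svq_{-1} v^h_{\QG}\cdot\nh\Bigl(\tfrac{1}{2}|{\cPtv}\Omega|^2\Bigr)\dx=0.
\end{equation*}

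Next I would estimate the commutator term through Lemma \ref{estimates commutator}, paying the gain $2^{-q}$ against the vertical derivative $\partial_3\cPSvq v^h_{\QG}$; Lemma \ref{higher regularity VQG} (with $\sigma=1$) converts the resulting norm of $\partial_3 v^h_{\QG}$ into a norm of $\nh\Omega$ at the level of $\cPLtwo$. Combined with Bernstein inequalities (Lemma \ref{bernstein inequality}) to rebalance the $2^{q'}$ factors, and with Corollary \ref{L4Linf embedding} to handle anisotropic Lebesgue integrability (using that $\Omega$, being a sum of derivatives, has zero horizontal average), this contribution matches exactly the second term $\|\nh\Omega\|_\cPLtwo\|\Omega\|_\cPHs\|\nh\Omega\|_\cPHs$ after summation. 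The diagonal correction $(\svq-\cPSvq)v^h_{\QG}\cdot{\cPtv}\cPTv\nh\Omega$ and the high-high remainder $\sum_{q'>q-4}{\cPtv}(S^v_{q'+2}\nh\Omega\cdot\cPTv v^h_{\QG})$ are of comparable type and are both treated by anisotropic H\"older (the pair $L^\infty_v L^2_h$ times $L^2_v L^\infty_h$), Bernstein, Lemma \ref{lem:prod_Sob_anisotropic} and Lemma \ref{higher regularity VQG}; trading a vertical derivative on $v^h_{\QG}$ for a horizontal one produces a bound by $\|\Omega\|_{H^{1/2,0}}\|\nh\Omega\|_{H^{1/2,s}}$, which by Lemma \ref{lemma:sobolev_embedding_zero_average} interpolates to the first term $\|\Omega\|_\cPLtwo^{1/2}\|\nh\Omega\|_\cPLtwo^{1/2}\|\Omega\|_\cPHs^{1/2}\|\nh\Omega\|_\cPHs^{3/2}$.

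To assemble the estimate in the form \eqref{stima forma bilineare omega}, I would multiply by ${\cPtv}\Omega$, use \eqref{eq:reg_dyadic_blocks} to extract the prefactor $2^{-qs}c_q(\Omega)\|\Omega\|_\cPHs$ from ${\cPtv}\Omega$ and a second $2^{-qs}c_q$ from each block in the decomposition, and invoke a Cauchy-Schwarz argument on the $\ell^2$ sequences $c_q,c_{q'}$ (noting the near-diagonal constraint $|q-q'|\leq 4$ and that the high-high tail sum over $q'>q-4$ closes against the $2^{-qs}$ decay) in order to produce the summable sequence $b_q$ with $\sum_q b_q\leq 1$.

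\textbf{The main obstacle} will be the careful bookkeeping of the anisotropic Lebesgue exponents: one must avoid ever placing $v^h_{\QG}$ in $L^\infty_h$ (since $v^h_{\QG}$ is only $H^1$-like after Lemma \ref{higher regularity VQG}), ensure that the Bernstein factors $2^{q'}$ coming from $\partial_3$ on the vertically localized block cancel precisely with the $2^{-q}$ gain from the commutator, and arrange the summation over $q'$ with $|q-q'|\leq 4$ (and $q'>q-4$) so that the resulting sequence depends only on $q$ and remains $\ell^1$. The fact that the two distinct shapes of the right-hand side of \eqref{stima forma bilineare omega} arise from two structurally different contributions (respectively the high-high interaction and the commutator) is what makes the precise form of the estimate natural.
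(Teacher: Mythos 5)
Your skeleton coincides with the paper's: the asymmetric Bony decomposition \eqref{bony decomposition asymmetric} in the vertical variable with $v^h_{\QG}$ and $\nh\Omega$ as factors, the vanishing of the low-high term by $\diveh v^h_{\QG}=0$, the commutator Lemma \ref{estimates commutator} paying the $2^{-q}$ gain against $\partial_3 \cPSvq v^h_{\QG}$, Lemma \ref{higher regularity VQG} to convert norms of $v_{\QG}$ into norms of $\Omega$, and the dyadic bookkeeping via \eqref{eq:reg_dyadic_blocks}. But there is a genuine gap: you justify applying Corollary \ref{L4Linf embedding} and \eqref{GN type ineq} to the $\Omega$-factors by claiming that $\Omega$, ``being a sum of derivatives, has zero horizontal average''. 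This is false in general: $\Omega=-\partial_2U^1+\partial_1U^2-F\partial_3U^4$, and while the horizontal derivatives do average to zero over $\T^2_h$, the term $-F\partial_3U^4$ has horizontal average $-F\partial_3\underline{U^4}\left( x_3 \right)$, which need not vanish; indeed the proposition is used in Proposition \ref{propagation H0s norms for Omega} precisely without any zero-horizontal-average hypothesis on $\Omega$. Since \eqref{GN type ineq} and \eqref{eq:LinfL4embedding} fail for functions of $x_3$ alone, you cannot place $\cPtv\Omega$ in $L^2_vL^4_h$ (or $L^\infty_vL^4_h$) by those inequalities, and this is exactly where your argument breaks.

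The paper's proof resolves this point by a decomposition you never perform: only $v^h_{\QG}$ and $\nh\Omega$ are automatically mean-free ($\underline{v^h_{\QG}}=0$ by Lemma \ref{propagation horizontal mean VQG}, and $\nh\Omega=\nh\tilde{\Omega}$), while the outer factor is split as $\cPtv\Omega=\cPtv\tilde{\Omega}+\cPtv\underline{\Omega}$. The $\tilde{\Omega}$ contributions are estimated with the Gagliardo--Nirenberg-type inequalities and produce the first term of \eqref{stima forma bilineare omega}, whereas the $\underline{\Omega}$ contributions are placed in $L^2_v$, with a horizontal Poincar\'e inequality applied to the $v_{\QG}$ factor, and generate the second term. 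In particular your attribution of the two shapes of the right-hand side to ``commutator versus high-high'' is not how they arise: each of the commutator, diagonal-correction and remainder terms splits into a $\tilde{\Omega}$ part (first shape) and an $\underline{\Omega}$ part (second shape). Once you add the splitting $\Omega=\tilde{\Omega}+\underline{\Omega}$ and treat the mean part separately, the rest of your plan goes through along the paper's lines.
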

\begin{proof}
Thanks to Bony decomposition \eqref{bony decomposition asymmetric} we can write
\begin{multline} \label{asymmetric Bony decomp Omega}
{\cPtv} \left( v^h_{\QG} \cdot \nh \Omega\right) =S^v_{q-1}v^h_{\QG} {\cPtv} \nh \Omega +
\\ \sum_{\left|q-q'\right|\leqslant 4}  \left(
\left[ {\cPtv};\cPSvq v^h_{\QG}\right] \cPTv \nh \Omega + \left( \cPSvq v^h_{\QG} -S^v_{q-1} v^h_{\QG}\right){\cPtv}\cPTv \nh \Omega
\right)
+ \sum_{q'>q-4}{\cPtv} \left( S^v_{q'+2} \nh \Omega \cPTv v^h_{\QG}   \right),
\end{multline}
and hence we can decompose  
$
\left( \left. {\cPtv} \left( v^h_{\QG} \cdot \nh \Omega\right) \right| {\cPtv} \Omega \right) = \sum_{k=1}^4 I^k_h \left( q \right).
$

First of all, since $\dive_h v^h_{\QG}=0$ we have 
$
I^1_h=0.
$ 
We remark that we proved in Lemma \ref{propagation horizontal mean VQG} that  $\underline{v^h_{{\QG}}}=0$.  Whence $v_{\QG}^h=\tilde{v}_{\QG}^h$. Moreover $\nh \Omega =\nh \tilde{\Omega}$, hence
\begin{multline*}
I^2_h\left( q \right)=\sum_{\left|q-q'\right|\leqslant 4}  \left( \left.
\left[ {\cPtv};\cPSvq v^h_{\QG}\right] \cPTv \nh \Omega \right| {\cPtv} \Omega\right)\\
= 
\sum_{\left|q-q'\right|\leqslant 4}  \left( \left.
\left[ {\cPtv};\cPSvq \tilde{v}^h_{\QG}\right] \cPTv \nh \tilde{\Omega} \right| {\cPtv} \tilde{\Omega}\right)+\left( \left.
\left[ {\cPtv};\cPSvq \tilde{v}^h_{\QG}\right] \cPTv \nh \tilde{\Omega} \right| {\cPtv} \underline{\Omega}\right)
= I^{2,1}_h\left( q \right)+I^{2,2}_h\left( q \right).
\end{multline*}
We consider first the term $I^{2,1}_h$. By H\"older inequality and Lemma \ref{estimates commutator}  we can deduce
\begin{align*}
I^{2,1}_h\left( q \right) 
\lesssim & \sum_{\left|q-q'\right|\leqslant 4} 2^{-q}\left\| \cPSvq \partial_3 \tilde{v}^h_{\QG} \right\|_{ L^\infty_v L^4_h}\left\| \cPTv \nh \tilde{\Omega}\right\|_\cPLtwo \left\| {\cPtv} \tilde{\Omega}\right\|_{L^2_vL^4_h}
\end{align*}
we can hence apply \eqref{eq:LinfL4embedding} to the term $ \left\| \cPSvq \partial_3 \tilde{v}^h_{\QG} \right\|_{ L^\infty_v L^4_h} $ and \eqref{GN type ineq} to $ \left\| {\cPtv} \tilde{\Omega}\right\|_{L^2_vL^4_h} $, and then \eqref{eq:reg_dyadic_blocks} and Lemma \ref{higher regularity VQG} in order to deduce
\begin{align}
I^{2,1}_h \left( q \right) \lesssim & \sum_{\left|q-q'\right|\leqslant 4} 2^{-q+q'/2}\left\| \partial_3 \tilde{v}^h_{\QG} \right\|_\cPLtwo^{1/2} \left\| \partial_3 \nh \tilde{v}^h \right\|_\cPLtwo^{1/2}\left\| \cPTv \nh \tilde{\Omega}\right\|_\cPLtwo \nonumber \\
& \times \left\| {\cPtv} \tilde{\Omega}\right\|_\cPLtwo^{1/2} \left\| {\cPtv} \nh \tilde{\Omega}\right\|_\cPLtwo^{1/2}\nonumber
 \\
\lesssim & b_q\left( t \right) 2^{-q/2-2qs} \left\| \Omega \right\|_{\cPLtwo}^{1/2} \left\| \nh \Omega \right\|_{\cPLtwo}^{1/2} \left\| \Omega \right\|_{\cPHs}^{1/2} \left\| \nh \Omega \right\|_{\cPHs}^{3/2}.\label{est:I21h}
\end{align}

For the following terms the tools used are the same as for the term $I^{2,1}_h\left( q \right)$, hence,  we shall not explain the procedure in  details.
For the term $I^{2,2}_h\left( q \right)$ 
\begin{align}
I^{2,2}_h \left( q \right) \lesssim & 2^{-q} \left\| \cPSvq \partial_3 \tilde{v}_{\QG}^h \right\|_{ L^\infty_v L^2_h} \left\| \cPTv \nh \tilde{\Omega}\right\|_\cPLtwo \left\| {\cPtv} \underline{\Omega}\right\|_{L^2_v}
\nonumber\\
\lesssim & b_q\left(  t \right) 2^{-2qs-q'/2} \left\| \nh \partial_3 V_{\QG} \right\|_{\cPLtwo} \left\|\Omega \right\|_{\cPHs} \left\|\nh \Omega \right\|_{\cPHs}
\nonumber\\
\lesssim & b_q\left(  t \right) 2^{-2qs-q'/2} \left\| \nh\Omega \right\|_{\cPLtwo} \left\|\Omega \right\|_{\cPHs} \left\|\nh \Omega \right\|_{\cPHs},\label{est:I22h}
\end{align}
where in the first inequality we have used \eqref{eq:LinfL2embedding} and by Poincar\'e inequality in the horizontal variable to obtain
$$
\left\| \cPSvq \partial_3 \tilde{v}_{\QG}^h \right\|_{L^\infty_v L^2_h} \lesssim 2^{q'/2} \left\| \cPSvq \nh \partial_3\tilde{v}_{\QG}^h \right\|_\cPLtwo 
$$

Next, we consider the term 
\begin{multline*}
I^3_h\left( q \right)
=\sum_{|q-q'|\leqslant 4}\left(\left.\left( \cPSvq v^h_{\QG} -S^v_{q-1} v^h_{\QG}\right){\cPtv}\cPTv \nh \Omega \right| {\cPtv} \Omega \right)\\
=\sum_{|q-q'|\leqslant 4}\left(\left.\left( \cPSvq \tilde{v}^h_{\QG} -S^v_{q-1} \tilde{v}^h_{\QG}\right){\cPtv}\cPTv \nh \tilde{\Omega}\right| {\cPtv} \tilde{\Omega}\right)
\\
+\left(\left.\left( \cPSvq \tilde{v}^h_{\QG} -S^v_{q-1} \tilde{v}^h_{\QG}\right){\cPtv}\cPTv \nh \tilde{\Omega}\right| {\cPtv} \underline{\Omega}\right)
=I^{3,1}_h\left( q \right)+I^{3,2}_h\left( q \right).
\end{multline*}

With calculations similar and since $\text{Supp}\; \mathcal{F}\left( \cPSvq v^h_{\QG} -S^v_{q-1} v^h_{\QG}\right)\subset \displaystyle\bigcup_{|q-q'|\leqslant 4} 2^q \mathcal{C}$, and hence localized from above and below in the frequency space, using respectively in the first inequality \eqref{eq:LinfL4embedding}, Bernstein inequality,  \eqref{GN type ineq}, \eqref{eq:reg_dyadic_blocks} and Lemma \ref{higher regularity VQG}
\begin{align}
I^{3,1}_h \left( q \right) \leqslant & \sum_{|q-q'|\leqslant 4} \left\| \cPSvq \tilde{v}^h_{\QG} -S^v_{q-1} \tilde{v}^h_{\QG} \right\|_{ L^\infty _vL^4_h } \left\| {\cPtv}\cPTv \nh \tilde{\Omega} \right\|_\cPLtwo \left\| {\cPtv} \tilde{\Omega} \right\|_{ L^2_v L^4_h}
\nonumber\\
\lesssim &  b_q\left(  t \right)2^{-q/2 -2qs}\left\| \partial_3 V_{\QG} \right\|_\cPLtwo^{1/2}\left\| \nh\partial_3 V_{\QG} \right\|_\cPLtwo^{1/2} \left\|\Omega\right\|_\cPHs^{1/2}\left\|\nh\Omega\right\|_\cPHs^{3/2}
\nonumber\\
\lesssim &  b_q\left(  t \right)2^{-q/2 -2qs}\left\| \Omega \right\|_\cPLtwo^{1/2}\left\| \nh\Omega \right\|_\cPLtwo^{1/2} \left\|\Omega\right\|_\cPHs^{1/2}\left\|\nh\Omega\right\|_\cPHs^{3/2}
\end{align}
The procedure for the term $I^{3,2}_h\left( q \right)$ is almost the same as the one for the term $I^{3,1}_h\left( q \right)$, except that we do not use  \eqref{GN type ineq} and we use Poincar\'e inequality in the horizontal variables
\begin{align}
 I^{3,2}_h\left( q \right) \leqslant b_q\left( t \right) 2^{-q/2-2qs} \left\|  \nh \Omega \right\|_{\cPLtwo}\left\| \Omega \right\|_{\cPHs} \left\| \nh \Omega \right\|_{\cPHs}.
\end{align}
The last term
\begin{multline*}
I^4_h\left( q \right)= \sum_{q'>q-1} \left(\left.{\cPtv} \left( S^v_{q'+2} \nh \Omega \cPTv v^h_{\QG} \right) \right| {\cPtv} \Omega\right)
\\
= \sum_{q'>q-1} \left(\left.{\cPtv} \left( S^v_{q'+2} \nh \Omega \cPTv v^h_{\QG} \right) \right| {\cPtv} \tilde{\Omega}\right) +
\sum_{q'>q-1} \left(\left.{\cPtv} \left( S^v_{q'+2} \nh \Omega \cPTv v^h_{\QG} \right) \right| {\cPtv} \underline{\Omega}\right)
 =I^{4,1}_h\left( q \right)+I^{4,2}_h\left( q \right).
\end{multline*}
Let us deal with the term $ I^{4,1}_h\left( q \right) $. Applying H\"older inequality we deduce 
\begin{align*}
I^{4,1}_h\left( q \right) \leqslant & \sum_{q'>q-1} \left\| \cPTv v^h _{{\QG}} \right\|_{L^\infty_vL^4_h} \left\| S^v_{q'+2}\nh \Omega \right\|_{\cPLtwo} \left\| {\cPtv} \tilde{\Omega} \right\|_{L^2_v L^4_h}.
\end{align*}
Using Bernstein inequality twice,\eqref{GN type ineq}, Lemma \ref{higher regularity VQG}   and lastly \eqref{eq:reg_dyadic_blocks} we deduce
\begin{align*}
\left\| \cPTv v^h _{{\QG}} \right\|_{L^\infty_vL^4_h} \lesssim & \ 2^{q'/2} \left\| \cPTv v^h _{{\QG}} \right\|_{L^2_vL^4_h}\\
\lesssim & \ 2^{-q'/2} \left\| \partial_3 \cPTv v^h _{{\QG}} \right\|_{L^2_vL^4_h}\\
\lesssim &  2^{-q'/2} \left\| \partial_3 \cPTv v^h _{{\QG}} \right\|_{\cPLtwo}^{1/2}
\left\| \partial_3 \nh \cPTv v^h _{{\QG}} \right\|_{\cPLtwo}^{1/2}\\
\lesssim &  2^{-q'/2} \left\|  \cPTv \Omega \right\|_{\cPLtwo}^{1/2}
\left\|  \nh \cPTv \Omega \right\|_{\cPLtwo}^{1/2}\\
\lesssim &  c_{q'} \left( \Omega, t \right) 2^{-q'/2 -q's} \left\|   \Omega \right\|_{\cPHs}^{1/2}
\left\|  \nh  \Omega \right\|_{\cPHs}^{1/2}.
\end{align*}
An application of \eqref{GN type ineq} and \eqref{eq:reg_dyadic_blocks} gives instead
\begin{equation}\label{est:I41h}
\left\| {\cPtv} \tilde{\Omega} \right\|_{L^2_v L^4_h} \lesssim c_{q} \left( \Omega, t \right)2^{-qs}\left\|   \Omega \right\|_{\cPHs}^{1/2}
\left\|  \nh  \Omega \right\|_{\cPHs}^{1/2},
\end{equation}
whence we deduce the bound
\begin{equation*}
I^{4,1}_h\left( q \right) \leqslant C 2^{-2qs-q/2} b_q \left( \Omega, t \right)
\left\|  \nh  \Omega \right\|_{\cPLtwo}
\left\|   \Omega \right\|_{\cPHs}
\left\|  \nh  \Omega \right\|_{\cPHs}.
\end{equation*}
To bound the term $ I^{4,2}_h\left( q \right) $ is a similar procedure and hence is omitted. Whence collecting estimates \eqref{est:I21h}--\eqref{est:I41h} we deduce the bound \eqref{stima forma bilineare omega}.
\end{proof}

\begin{prop}\label{prop:bilinear_estimates_osc_part}
Let  $V_{\QG}$ and $U_{\osc}$ respectively be the solution of equation \eqref{quasi geostrophic equation} and \eqref{osc limit}, then if the horizontal mean of $V_{\QG}$ and $U_{\osc}$ is zero (see Lemmas \ref{propagation horizontal mean VQG} and \ref{propagation horizontal mean osc part}) the following estimates hold
\begin{multline}
\left( \left. {\cPtv}\mathcal{Q}\left( V_{\QG} ,  U_{\osc}\right)\right|{\cPtv} U_{\osc}\right)_{\cPLtwo} + \left( \left. {\cPtv}\mathcal{Q}\left( U_{\osc} , V_{\QG}\right)\right|{\cPtv} U_{\osc}\right)_{\cPLtwo}\\
\begin{aligned}
\leqslant & \
C  2^{-2qs}b_q\left(   t \right)\left\|\nh\Omega\right\|_\cPHs \left\| \nh U_{\osc} \right\|_{\cPHs} \left\|  U_{\osc} \right\|_{\cPHs}\\
&+C  2^{-2qs}b_q\left( t \right)\left\| \Omega \right\|^{1/2}_{\cPHs} \left\| \nh\Omega\right\|^{1/2}_{\cPHs}\left\|U_{\osc}\right\|_{\cPHs}^{1/2} \left\|\nh U_{\osc}\right\|_{\cPHs}^{3/2}
\end{aligned}
\label{prima stima termine bilineare parte oscillante}
\end{multline}
\begin{equation}
\begin{aligned}
\left( \left. {\cPtv}\mathcal{Q}\left( U_{\osc},  U_{\osc}\right)\right|{\cPtv} U_{\osc}\right)_{\cPLtwo}
\leqslant &
 C b_q \left(  t \right) 2^{-2qs}\left\| \nh  U_{\osc} \right\|_{\cPLtwo} \left\|  U_{\osc} \right\|_{\cPHs} \left\| \nh  U_{\osc} \right\|_{\cPHs} \\
&+  C b_q \left(   t \right) 2^{-2qs} \left\|  U_{\osc} \right\|_{\cPLtwo}^{1/2} \left\| \nh  U_{\osc} \right\|_{\cPLtwo}^{1/2} \left\|  U_{\osc} \right\|_{\cPHs}^{1/2} \left\| \nh  U_{\osc} \right\|_{\cPHs}^{3/2}.
\label{terza stima termine bilineare parte oscillante}
\end{aligned}
\end{equation}
  
The sequence $ \left( b_q \right)_q $ is a $ \ell^1 \left( \mathbb{Z} \right) $ positive sequence which depends on $ \Omega, U_{\osc} $ and such that $ \sum_q b_q \left( t \right)\leqslant 1 $. 
\end{prop}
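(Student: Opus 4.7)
\medskip

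\noindent
\textbf{Proof plan.} The strategy mirrors the one already used to prove \eqref{stima forma bilineare omega}: I will reduce each of the three inner products to a sum of standard bilinear pieces by means of the asymmetric Bony decomposition \eqref{bony decomposition asymmetric}, then bound each piece using the commutator estimate of Lemma \ref{estimates commutator}, the anisotropic Bernstein inequalities of Lemma \ref{bernstein inequality}, the Gagliardo--Nirenberg-type embeddings \eqref{GN type ineq}, \eqref{eq:LinfL4embedding}, \eqref{GN type ineq2} (these apply because both $V_{\QG}$ and $U_{\osc}$ have zero horizontal average by Lemmas \ref{propagation horizontal mean VQG}--\ref{propagation horizontal mean osc part}), and the dyadic characterization \eqref{eq:reg_dyadic_blocks}. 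A preliminary step is to replace $\mathcal{Q}$ by its underlying divergence structure: since $\mathcal{Q}(A,B)$ is obtained from $\mathbb{P}[\Lplus A\cdot \nabla \Lplus B]$ via composition with Fourier multipliers of order zero (projections onto the eigendirections $e^0$, $e^\pm$ of $\PA$), one controls $\|\mathcal{Q}(A,B)\|_{H^{s,s'}}$ by the corresponding norm of $\dive(A\otimes B)$ up to a harmless constant, and one can therefore argue exactly as in the classical Navier--Stokes bilinear estimates.

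\medskip
\noindent
For the first bound \eqref{prima stima termine bilineare parte oscillante}, I will treat the sum $\mathcal{Q}(V_{\QG},U_{\osc})+\mathcal{Q}(U_{\osc},V_{\QG})$ rather than each term separately, since it is here that the divergence-free condition on $v_{\QG}^h$ and $u_{\osc}^h$ produces the cancellation that kills the diagonal dyadic interaction $S^v_{q-1}v_{\QG}^h\cdot \nh \triangle^v_q U_{\osc}$ and its symmetric counterpart after integration against $\triangle^v_q U_{\osc}$. What remains are commutator, gap, and remainder terms. The commutator pieces $[\triangle^v_q,\svq v_{\QG}^h]\nh \Tv U_{\osc}$ are estimated through Lemma \ref{estimates commutator} with $(r',r,s',s)=(\infty,4,2,4)$, producing a factor $2^{-q}\|\svq\partial_3 v_{\QG}^h\|_{L^\infty_v L^4_h}$; I dominate this factor by $\|\partial_3 v_{\QG}\|_{\cPLtwo}^{1/2}\|\partial_3\nh v_{\QG}\|_{\cPLtwo}^{1/2}$ via \eqref{eq:LinfL4embedding} and then invoke Lemma \ref{higher regularity VQG} (with $\sigma=1$) to trade the vertical derivative $\partial_3 v_{\QG}$ for $\Omega$, yielding the product $\|\Omega\|_{\cPHs}^{1/2}\|\nh\Omega\|_{\cPHs}^{1/2}$ appearing in the second line of \eqref{prima stima termine bilineare parte oscillante}. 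The gap $(\svq-S^v_{q-1})v_{\QG}^h$ and the remainder $\sum_{q'>q-4}\triangle^v_q(S^v_{q'+2}\nh U_{\osc}\,\Tv v_{\QG}^h)$ are handled along the same lines, using the frequency localization of $\svq-S^v_{q-1}$ and summing via an $\ell^1$ sequence $b_q$, to produce the first line of \eqref{prima stima termine bilineare parte oscillante} through the alternative distribution of derivatives (one $\nh$ on $\Omega$, one $\nh$ on $U_{\osc}$).

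\medskip
\noindent
For the second bound \eqref{terza stima termine bilineare parte oscillante}, the structure is identical to the estimate of the bilinear term $u\cdot\nh u$ for Navier--Stokes with only horizontal viscosity in $\cPHs$, since now both arguments of $\mathcal{Q}$ have the same regularity $U_{\osc}\in\cPHs$. I Bony-decompose and handle the commutator, gap, and remainder pieces exactly as in the proof of Proposition \ref{propagation norms NS horizontal} (compare \cite{paicu_NS_periodic}): the commutator is bounded via Lemma \ref{estimates commutator} giving $\|U_{\osc}\|_{\cPLtwo}^{1/2}\|\nh U_{\osc}\|_{\cPLtwo}^{1/2}\|U_{\osc}\|_{\cPHs}^{1/2}\|\nh U_{\osc}\|_{\cPHs}^{3/2}$, while the gap and remainder yield the other term $\|\nh U_{\osc}\|_{\cPLtwo}\|U_{\osc}\|_{\cPHs}\|\nh U_{\osc}\|_{\cPHs}$. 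The zero-horizontal-average hypothesis on $U_{\osc}$ is essential in order to apply \eqref{GN type ineq} to $\|\triangle^v_q U_{\osc}\|_{L^2_v L^4_h}$ and \eqref{eq:LinfL4embedding} to $\|\svq\partial_3 u_{\osc}^h\|_{L^\infty_v L^4_h}$.

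\medskip
\noindent
\textbf{Main obstacle.} The delicate point is not the multilinear algebra but the verification that the multiplication by the Fourier multipliers which define $\mathcal{Q}$ (i.e.\ projection onto $e^a(n)$ followed by the Leray projector) does not destroy the divergence cancellation in the paraproduct term $S^v_{q-1}v^h\cdot\nh\triangle^v_q U_{\osc}$. Rather than attacking this head-on, I will bound $\|\triangle^v_q\mathcal{Q}(A,B)\|_{\cPLtwo}$ by $\|\triangle^v_q\dive(A\otimes B)\|_{\cPLtwo}$ using the $\cPLtwo$-boundedness of the zero-order projections, and then work on $\dive(A\otimes B)$, where the divergence-free property of the horizontal components is directly available. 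Summing the resulting dyadic bounds in $q$ with weight $2^{2qs}$ then produces the $\ell^1$ sequence $b_q$ through the Cauchy--Schwarz inequality applied to products $c_q c_{q'}$ of $\ell^2$ sequences arising from \eqref{eq:reg_dyadic_blocks}, exactly as at the end of the proof of \eqref{stima forma bilineare omega}.
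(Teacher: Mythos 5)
Your plan founders on the very step you flag as the ``main obstacle'': the reduction of $\mathcal{Q}$ to $\dive\left( A\otimes B \right)$. The limit form \eqref{limit quadratic} is not a composition of zero-order projections with the full product; it is the full (filtered) product with the convolution sum \emph{restricted to the resonant set} $\mathcal{K}^\star$, i.e.\ a bilinear multiplier whose symbol is the indicator of $\set{\omega^{a,b,c}_{k,n-k,n}=0}$ in the pair $\left( k,n \right)$. A restricted sum is not dominated by the full sum, so the inequality $\left\| \cPtv \mathcal{Q}\left( A,B \right) \right\|_\cPLtwo \lesssim \left\| \cPtv \dive \left( A\otimes B \right) \right\|_\cPLtwo$ you propose is simply not available, and with it falls both halves of your argument: the cancellations you want (divergence-freeness killing the diagonal paraproduct, integration by parts on the $u^3_{\osc}\partial_3$ piece) do not transfer through such a reduction. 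In the paper these cancellations are preserved by going back to $\mathcal{Q}^\varepsilon$ in the form \eqref{Qeps}, performing the integration by parts at fixed $\varepsilon$ where $\Lplus$ is an isometry and the genuine product structure is present, and only then passing to the limit; alternatively, for the terms involving $V_{\QG}$ no cancellation is needed at all, because the extra derivative supplied by Lemma \ref{higher regularity VQG} together with the anisotropic product law of Lemma \ref{lem:prod_Sob_anisotropic} (applied blockwise via \eqref{eq:reg_dyadic_blocks}, after putting everything in divergence form and moving $\nh$ onto the test block) already closes \eqref{prima stima termine bilineare parte oscillante} without any Bony decomposition or commutator.

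There is a second, quantitative gap in your treatment of \eqref{terza stima termine bilineare parte oscillante}. You assert the estimate is ``identical'' to the $\cPHs$ energy estimate for Navier--Stokes with horizontal viscosity, but the standard machinery (Corollary \ref{L4Linf embedding}, which needs $s>1/2$ vertical regularity for the $L^\infty_v$ embedding) produces Gronwall factors at the $\cPHs$ level, e.g.\ $\left\| U_{\osc} \right\|_\cPHs^{1/2}\left\| \nh U_{\osc} \right\|_\cPHs^{1/2}$, whereas the statement requires the low-frequency factors at the $\cPLtwo$ level, $\left\| U_{\osc} \right\|_\cPLtwo^{1/2}\left\| \nh U_{\osc} \right\|_\cPLtwo^{1/2}$ and $\left\| \nh U_{\osc} \right\|_\cPLtwo$; this is exactly what makes the subsequent Gronwall argument close globally, since only the $L^2$-level quantities are globally integrable. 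Those $L^2$-level factors come from Lemma \ref{product rule}, the $H^{1/2,0}\times H^{1/2,0}\times L^2$ product law valid \emph{only} on $\mathcal{K}^\star$ thanks to the finiteness of the fibers $\set{k_3 : \left( k,n \right)\in\mathcal{K}^\star}$ (a 2D-type law, false for the unrestricted 3D product). Your proposal never invokes this lemma, and by flattening $\mathcal{Q}$ into the ordinary transport form you discard precisely the resonance structure that makes the claimed inequality true; at best you would prove a weaker bound with $\cPHs$ factors, which is not the proposition and would not suffice for Theorem \ref{GWP2}.
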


\begin{rem}
From now on $\left( \cdot \left| \cdot\right.\right)=\left( \cdot \left| \cdot\right.\right)_{\cPLtwo}$
\fine
\end{rem}

\begin{proof} We shall divide the proof of the above proposition in two parts, namely one part for each estimate.\\
In the following we always consider $ s>1/2 $, hence in particular the embedding $ H^s_v\hra L^\infty_v $ holds true. Moreover we underline the fact that $ V_{\QG} \left( t \right) $ and $ U_{\osc} \left( t \right) $ have zero horizontal average for each $ t>0 $ is the initial data has zero horizontal average thanks to the results of Lemma \ref{propagation horizontal mean VQG} and Lemma \ref{propagation horizontal mean osc part}, whence the estimates \eqref{GN type ineq} and \eqref{GN type ineq2} can be applied in this context as well as Lemma \ref{lem:prod_Sob_anisotropic}.\\

\textit{Proof of \eqref{prima stima termine bilineare parte oscillante}:}
in order to prove the estimate \eqref{prima stima termine bilineare parte oscillante} we shall substitute the bilinear form $ \mathcal{Q} $  with the transport bilinear form. This choice is done only in order to simplify the notation.\\

Indeed we have
\begin{align*}
\left| \left( \left. {\cPtv} \left( v_{\QG} \cdot \nabla U_{{\osc}} \right) \right| {\cPtv} U_{\osc} \right) \right| = & \
 \left| \left( \left. {\cPtv} \left( v^h_{\QG} \cdot \nh U_{{\osc}} \right) \right| {\cPtv} U_{\osc} \right) \right|,\\
= & \ 
\left| \left( \left. \diveh {\cPtv} \left( v^h_{\QG} \otimes U_{{\osc}} \right) \right| {\cPtv} U_{\osc} \right) \right|,\\
\left| \left( \left. {\cPtv} \left( u_{\osc} \cdot \nabla V_{\QG} \right) \right| {\cPtv} U_{\osc} \right) \right| \leqslant & \
\left| \left( \left. \diveh {\cPtv} \left( u^h_{\osc} \otimes V_{\QG} \right) \right| {\cPtv} U_{\osc} \right) \right|
\\
& \ +
\left| \left( \left. \partial_3 {\cPtv} \left( u^3_{\osc} \  V_{\QG} \right) \right| {\cPtv} U_{\osc} \right) \right|,
\end{align*}
and indeed 
\begin{equation*}
\left| \left( \left. \diveh {\cPtv} \left( v^h_{\QG} \otimes U_{{\osc}} \right) \right| {\cPtv} U_{\osc} \right) \right| +
\left| \left( \left. \diveh {\cPtv} \left( u^h_{\osc} \otimes V_{\QG} \right) \right| {\cPtv} U_{\osc} \right) \right| 
\leqslant
2 \left| \left( \left.  {\cPtv} \left( U_{\osc} \otimes V_{\QG} \right) \right| {\cPtv} \nh U_{\osc} \right) \right|,
\end{equation*}
whence
\begin{multline*}
\left| \left( \left. {\cPtv} \left( v_{\QG} \cdot \nabla U_{{\osc}} \right) \right| {\cPtv} U_{\osc} \right) \right|
+
\left| \left( \left. {\cPtv} \left( u_{\osc} \cdot \nabla V_{\QG} \right) \right| {\cPtv} U_{\osc} \right) \right|
\\
\leqslant
2 \left| \left( \left.  {\cPtv} \left( U_{\osc} \otimes V_{\QG} \right) \right| {\cPtv} \nh U_{\osc} \right) \right|
+
\left| \left( \left. \partial_3 {\cPtv} \left( u^3_{\osc} \  V_{\QG} \right) \right| {\cPtv} U_{\osc} \right) \right|\\
= B_h \left( q \right) + B_v\left( q \right).\end{multline*}
Thanks to \eqref{eq:reg_dyadic_blocks} and Lemma \ref{lem:prod_Sob_anisotropic} we deduce
\begin{align}
B_h\left( q \right) \lesssim & \ 2^{-2qs} b_q \left(  t \right) \left\| U_{\osc} \otimes V_{\QG} \right\|_{\cPHs}
\left\| \nh U_{{\osc}} \right\|_{\cPHs},\nonumber
\\
\lesssim &  \ 2^{-2qs} b_q \left(  t \right) \left\| V_{\QG} \right\|_{H^{1/2, s}} \left\| U_{\osc} \right\|_{H^{1/2, s}} \left\| \nh U_{{\osc}} \right\|_{\cPHs}.\label{123stella}
\end{align}
An application of Poincar\'e inequality and and \eqref{GN type ineq2} allow us to deduce that
\begin{align*}
\left\| V_{\QG} \right\|_{H^{1/2, s}} \lesssim & \left\| \nh V_{\QG} \right\|_{H^{1/2, s}},\\
\lesssim & \
\left\| \nh V_{\QG} \right\|_{H^{0, s}}^{1/2}
\left\| \nh^2 V_{\QG} \right\|_{H^{0, s}}^{1/2}.
\end{align*}
An application of Lemma \ref{higher regularity VQG} leads to
\begin{equation*}
\left\| \nh V_{\QG} \right\|_{H^{0, s}}^{1/2}
\left\| \nh^2 V_{\QG} \right\|_{H^{0, s}}^{1/2} \lesssim
\left\| \Omega \right\|_{H^{0, s}}^{1/2}
\left\| \nh \Omega \right\|_{H^{0, s}}^{1/2},
\end{equation*}
whence with use of \eqref{GN type ineq2} we deduce the bound
\begin{align}\label{eq:bound_Bh}
B_h\left( q \right) \lesssim &
\ 2^{-2qs} b_q\left( t \right) \left\| \Omega \right\|_{H^{0, s}}^{1/2}
\left\| \nh \Omega \right\|_{H^{0, s}}^{1/2} \left\| U_{\osc} \right\|_{H^{0, s}}^{1/2} \left\| \nh U_{{\osc}} \right\|_{\cPHs}^{3/2}.
\end{align}
The term $ B_v $ can instead be written as 
\begin{equation*}
B_v\left( q \right) = \left| \left( \left. {\cPtv} \left( \diveh u^h_{\osc} \  V_{\QG} \right) \right| {\cPtv} U_{\osc} \right) \right| + \left| \left( \left.  {\cPtv} \left( u^3_{\osc} \ \partial_3 V_{\QG} \right) \right| {\cPtv} U_{\osc} \right) \right| = B_v^1\left( q \right) + B_v^2\left( q \right).
\end{equation*}
For the term $ B^1_v\left( q \right) $, applying \eqref{eq:reg_dyadic_blocks} and Lemma \ref{lem:prod_Sob_anisotropic}
\begin{align*}
B^1_v\left( q \right) \lesssim & \ 2^{-2qs} b_q \left( \ t \right) \left( \Omega, U_{\osc} \right) \left\| \diveh u^h_{\osc} \  V_{\QG} \right\|_{H^{-1/2, s}} \left\| U_{\osc} \right\|_{H^{1/2, s}}\\
\lesssim &  \ 2^{-2qs} b_q \left(  t \right) \left\| V_{\QG} \right\|_{H^{1/2, s}} \left\| U_{\osc} \right\|_{H^{1/2, s}} \left\| \nh U_{{\osc}} \right\|_{\cPHs},
\end{align*}
which is the same estimate as \eqref{123stella} 
and whence we can deduce the same bound as for $ B_h\left( q \right) $. i.e. \eqref{eq:bound_Bh}.\\
The term $ B^2_v\left( q \right) $ is indeed less regular due to the presence of the vertical derivative. Similarly as before we can apply \eqref{eq:reg_dyadic_blocks} and Lemma \ref{lem:prod_Sob_anisotropic} to deduce
\begin{align*}
B^2_v\left( q \right) \lesssim 2^{-2qs} b_q \left(  t \right) \left\| \partial_3 V_{\QG} \right\|_\cPHs \left\| U_{\osc} \right\|_{H^{1/2, s}}^2.
\end{align*}
Poincar\'e inequality and Lemma \ref{higher regularity VQG} imply
\begin{equation*}
\left\| \partial_3 V_{\QG} \right\|_\cPHs \lesssim \left\| \partial_3 \nh V_{\QG} \right\|_\cPHs \lesssim \left\| \nh \Omega \right\|_\cPHs,
\end{equation*}
while using \eqref{GN type ineq2} we can  conclude with the following bound
\begin{align}\label{est:B2v}
B^2_v\left( q \right) \lesssim 2^{-2qs} b_q \left(  t \right) \left\| \nh \Omega \right\|_\cPHs \left\| U_{\osc} \right\|_{H^{0, s}}\left\| \nh U_{\osc} \right\|_{H^{0, s}}.
\end{align}
Whence \eqref{eq:bound_Bh} and \eqref{est:B2v} prove \eqref{prima stima termine bilineare parte oscillante}.

\textit{Proof of \eqref{terza stima termine bilineare parte oscillante}}:
Lastly we consider the term
\begin{align*}
\left(\left. {\cPtv} \mathcal{Q}\left( U_{\osc}, U_{\osc} \right) \right| {\cPtv} U_{\osc}  \right)= & \left(\left. {\cPtv} \mathcal{Q}^h\left(  U_{\osc}, U_{\osc} \right) \right| {\cPtv} U_{\osc}  \right)+\left(\left. {\cPtv} \mathcal{Q}^3\left(  U_{\osc} , U_{\osc} \right) \right| {\cPtv} U_{\osc}  \right)\\
= & C^h\left( q \right)+ C^v\left( q \right),
\end{align*}
where $ \mathcal{Q}^h $ and $ \mathcal{Q}^3 $ are respectively defined as
\begin{align}
\label{Qh}
\mathcal{Q}^h \left( U_{\osc}, U_{\osc} \right) = & \lim_{\varepsilon \to 0} \Lminus \left[
\left( \Lplus U_{\osc}^\varepsilon \right)^h \cdot \nh \Lplus U_{\osc}
\right],
\\
\label{Q3}
\mathcal{Q}^3 \left( U_{\osc}, U_{\osc} \right) = & \lim_{\varepsilon \to 0} \Lminus \left[
\left( \Lplus U_{\osc}^\varepsilon \right)^3 \partial_3 \Lplus U_{\osc}
\right]
\end{align}
By aid of Bony decomposition as in \eqref{Paicu Bony deco} we can say that
\begin{multline*}
C^h\left( q \right) = \sumf 
\left(\left. {\cPtv} \mathcal{Q}^h\left( \cPSvq   U_{\osc}, \cPTv U_{\osc} \right)  \right| {\cPtv} U_{\osc} \right)
\\
 +
 \sumi \left(\left. {\cPtv} \mathcal{Q}^h\left( \cPTv   U_{\osc}, S_{q'+2}^v U_{\osc} \right)  \right| {\cPtv} U_{\osc} \right)
=C^h_1\left( q \right)+C^h_2\left( q \right).
\end{multline*}
By use of Lemma \ref{product rule}
\begin{align*}
C^h_1\left( q \right) \lesssim & \sumf \left\| \cPSvq U_{\osc} \right\|_{\Hud} \left\| \cPTv \nh U_{\osc} \right\|_\cPLtwo \left\| {\cPtv} U_{\osc} \right\|_{\Hud},
\end{align*}
moreover since $ U_{\osc} $ is a vector field with zero horizontal average we can apply  \eqref{GN type ineq}
\begin{align*}
\left\| \cPSvq U_{\osc} \right\|_{\Hud} \lesssim & \left\|  U_{\osc} \right\|_{\cPLtwo}^{1/2} \left\| \nh  U_{\osc} \right\|_{\cPLtwo}^{1/2} , 
\\
\left\| {\cPtv} U_{\osc} \right\|_{\Hud} \lesssim & \left\| {\cPtv} U_{\osc} \right\|_{\cPLtwo}^{1/2} \left\| \nh {\cPtv} U_{\osc} \right\|_{\cPLtwo}^{1/2}, 
\end{align*}
whence thanks to \eqref{eq:reg_dyadic_blocks} and the fact that we are summing on a finite set of $ q' $
\begin{equation}\label{est:C1h}
C^h_1 \left( q \right) \lesssim b_q \left( t \right) 2^{-2qs} \left\|  U_{\osc} \right\|_{\cPLtwo}^{1/2} \left\| \nh  U_{\osc} \right\|_{\cPLtwo}^{1/2} \left\|  U_{\osc} \right\|_{\cPHs}^{1/2} \left\| \nh  U_{\osc} \right\|_{\cPHs}^{3/2}.
\end{equation}
Similar computations give us the result for $ C^h_2 $, here we sketch the procedure. Respectively using \eqref{product rule equation}, \eqref{eq:reg_dyadic_blocks} and summing on the summation set
\begin{align}
C^h_2\left( q \right) = & \sumi \left(\left. {\cPtv} \mathcal{Q}^h\left( \cPTv U_{\osc}, S_{q'+2}^v U_{\osc} \right)  \right| {\cPtv} U_{\osc} \right)\nonumber\\
\lesssim & b_q \left(  t \right) 2^{-2qs} \left\|  \nh U_{\osc} \right\|_\cPLtwo \left\| U_{\osc} \right\|_\cPHs \left\| \nh U_{\osc} \right\|_\cPHs.\label{est:C2h}
\end{align}
On the term $ C^v $ we apply instead Bony decomposition as in \eqref{asymmetric Bony decomp Omega} obtaining
\begin{multline*}
C^v\left( q \right) = \left(\left. \mathcal{Q}^3\left( S^v_{q-1} U_{\osc} , {\cPtv} U_{\osc} \right) \right|  {\cPtv} U_{\osc} \right)
\\
+
\sum _{|q-q'|\leqslant 4} \left( \left.   \mathcal{Q}^3 \left( S^v_{q-1} U_{\osc}-S^v_{q'-1} U_{\osc}, {\cPtv} \cPTv U_{\osc} \right) \right| {\cPtv} U_{\osc} \right)\\
+\sum _{|q-q'|\leqslant 4} \lim_{\varepsilon\to 0 } \left( \left. \left[ {\cPtv}, S^v_{q'-1} \left( \Lplus U_{\osc} \right)^3 \right] \cPTv \partial_3 \Lplus U_{\osc} \right| {\cPtv} \Lplus U_{\osc} \right)\\
+ \sum_{q'>q-4} \left( \left. {\cPtv} \mathcal{Q}^3 \left( \cPTv  U_{\osc} , S^v_{q'+2} U_{\osc} \right)\right| {\cPtv} U_{\osc}\right)=\sum_{k=1}^4 C^v_k\left( q \right),
\end{multline*}
where $ \mathcal{Q}^3 $ is defined in \eqref{Q3}. Let us consider the term $ C^v_1\left( q \right) $ first. Integration by parts and the fact that we are considering divergence-free vector fields gives us
\begin{align*}
C^v_1\left( q \right)= & \lim_{\varepsilon\to 0} \int_{\T^3} S^v_{q-1} \left( \Lplus U_{\osc} \right)^3 \partial_3 \Lplus {\cPtv} U_{\osc}\Lplus {\cPtv} U_{\osc} \dx\\
= & -\frac{1}{2} \lim_{\varepsilon\to 0}  \int_{\T^3} S^v_{q-1} \diveh \left( \Lplus U_{\osc} \right)^h \left| \Lplus {\cPtv} U_{\osc} \right|^2\dx.
\end{align*}
Moreover using the fact that $ \Lplus $ is an isometry on Sobolev spaces, and \eqref{product rule equation} we deduce
\begin{align}
C^v_1\left( q \right)= & \ \lim_{\varepsilon\to 0} \int_{\T^3} S^v_{q-1}\diveh \left( \Lplus U_{\osc} \right)^h \left( \Lplus {\cPtv} U_{\osc} \right)^2\dx \nonumber \\
 \lesssim & b_q ( t) 2^{-2qs} \left\| \nh U_{\osc} \right\|_\cPLtwo \left\| U_{\osc} \right\|_\cPHs \left\| \nh U_{\osc} \right\|_\cPHs.\label{est:C1v}
\end{align}
Let us consider the term $ C^v_2 $ which is defined as
\begin{align*}
C_2^v\left( q \right) = &\sum _{|q-q'|\leqslant 4} \left( \left.   \mathcal{Q}^3 \left( S^v_{q-1} U_{\osc}-S^v_{q'-1} U_{\osc}, {\cPtv} \cPTv U_{\osc} \right) \right| {\cPtv} U_{\osc} \right)\\
= & \sumf \sum_{\substack{\left( k,m,n \right)\in \mathcal{K}^\star\\ a,b,c,d=\pm}} \left( \widehat{ S^v_{q-1}U}^{a,3}\left( k \right) -\widehat{ S^v_{q'-1}U}^{a,3}\left( k \right) \right)  m_3 \widehat{{\cPtv} \cPTv U}^{b,c}\left( m, n \right) \widehat{{\cPtv} U}^d \left( n \right),
\end{align*}
where $ \hat{U}^{b,c}\left( m, n \right)= \left(\left. \hat{U}^b \left( m \right) \right| e^c\left( n \right)  \right) e^c\left( n \right) $. Since the eigenvectors $ e^c $ can always be considered normalized to norm one we deduce $ \left| \hat{U}^{b,c}\left( m, n \right) \right|\lesssim \left| U^b \left( m \right) \right| $. At this point we can use 
  Lemma \ref{product rule} to obtain the bound
\begin{align*}
C^v_2 \left( q \right)\lesssim& \sumf \sum_{a=\pm} \left\|  S^v_{q-1}{U}^{a,3}-S^v_{q'-1}{U}^{a,3}   \right\|_{\cPLtwo} \left\| {\cPtv} \cPTv \partial_3 U_{\osc}  \right\|_\Hud \left\| {\cPtv} U_{\osc} \right\|_\Hud.
\end{align*}
We remark that the term $  {U}^{a}  $ is in fact divergence-free.\\
Thanks to Lemma \ref{bernstein inequality}
\begin{align*}
\left\|  S^v_{q-1}{U}^{a,3}-S^v_{q'-1}{U}^{a,3}   \right\|_{\cPLtwo} \lesssim & 
2^{-q} \left\| \left( S^v_{q-1}-S^v_{q'-1} \right)\partial_3{U}^{a,3}  \right\|_{\cPLtwo}\\
\lesssim & 
2^{-q} \left\| \left( S^v_{q-1}-S^v_{q'-1} \right) \nh  U_{\osc} \right\|_{\cPLtwo}\\
\left\| {\cPtv} \cPTv \partial_3 U_{\osc}  \right\|_\Hud \lesssim & 2^q \left\| {\cPtv} \cPTv  U_{\osc}  \right\|_\Hud
\end{align*}
Hence using first \eqref{eq:reg_dyadic_blocks} and then \eqref{GN type ineq2}
\begin{align}
C^v_2\left( q \right) \lesssim b_q \left(  t \right) 2^{-2qs} \left\| \nh U_{\osc} \right\|_\cPLtwo \left\| U_{\osc} \right\|_{\cPHs} \left\| \nh U_{\osc} \right\|_{\cPHs}.
\end{align}
The term $ C^v_3\left( q \right)$ will be handled in a different way. First of all, writing $f_\varepsilon = \Lplus f$ and considering that commutators can be expressed as convolutions (as it has been expressed in detail in the Section \ref{sec:dy_convolution}, see equation \eqref{eq:dyadic_as_convolution}) we can write $C^v_3\left( q \right)$ as
\begin{multline*}
C^v_3\left( q \right)= \lim_{\varepsilon \to 0} \sum _{|q-q'|\leqslant 4} \intT \int_{\mathbb{T}^1_v \times \left[0,1\right]} \tilde{h}\left( 2^q y_3 \right) \left( \cPSvq \partial_3 U^3_{{\osc},\varepsilon}\right) \left( x_h, x_3 +\tau \left(x_3-y_3\right)\right)\\
\times \partial_3 \cPTv U_{{\osc}, \varepsilon}\left( x_h, x_3-y_3\right)  {\cPtv} U_{{\osc}, \varepsilon}(x) \d y_3 \d \tau \d x_h \d x_3,
\end{multline*}
with $\tilde{h}(z)= zh(z)$ and $h= \mathcal{F}^{-1}\varphi$. Taking the limit as $\varepsilon\to 0$, using the divergence free-property  we obtain the following bound
\begin{multline*}
\left| C^v_3\left( q \right) \right|\leqslant \sum _{|q-q'|\leqslant 4}  \int_{\mathbb{T}^1_v \times \left[0,1\right]}
\sum_{(k,n)\in\mathcal{K}^\star}
 \tilde{h}\left( 2^q y_3 \right) 
 \left| \mathcal{F}\left(\left( \cPSvq \nh U_{{\osc}}\right) \left( x_h, x_3 +\tau \left(x_3-y_3\right)\right)\right)(k) \right|\\
\times \left| \mathcal{F}\left(\partial_3 \cPTv U_{{\osc}}\left( x_h, x_3-y_3\right)\right)(n-k)  \widehat{{\cPtv} U_{{\osc}}}(n) \right| \d y_3 \d \tau ,
\end{multline*}
applying Lemma \ref{product rule} 
\begin{multline*}
C^v_3\left( q \right) \lesssim  \sum _{|q-q'|\leqslant 4}  \int_{\mathbb{T}^1_v \times \left[0,1\right]}
 \tilde{h}\left( 2^q y_3 \right) 
\left\| \cPSvq \nh U_{{\osc}} \left( x_h, x_3 +\tau \left(x_3-y_3\right)\right)\right\|_\cPLtwo\\
\times \left\|\partial_3 \cPTv U_{{\osc}}\left( x_h, x_3-y_3\right)\right\|_\Hud  \left\|{\cPtv} U_{{\osc}}\right\|_\Hud \d y_3 \d \tau ,
\end{multline*}
by standard calculations, localization of the term $ \partial_3 \cPTv U_{{\osc}} $ and  \eqref{GN type ineq} we obtain
\begin{align}
C^v_3\left( q \right) \lesssim  b_q( t) 2^{-2qs}  \left\|\nh U_{\osc} \right\|_\cPLtwo \left\| U_{\osc}\right\|_\cPHs \left\| \nh U_{\osc}\right\|_\cPHs.
\end{align} 
 Lastly, for the reminder term  $ C^v_4\left( q \right) $, if we apply Lemma \ref{product rule} and Lemma \ref{bernstein inequality} as for the term $C^v_2\left( q \right)$ we get
 \begin{align*}
 C^v_4\left( q \right)
  \lesssim \sumi \left\|  S^v_{q'+2} U_{\osc} \right\|_\Hud \left\| \triangle^v_{q'} \nh U_{\osc} \right\|_\cPLtwo \left\| {\cPtv} U_{\osc} \right\|_\Hud,
 \end{align*}
 hence by localization and the interpolation \eqref{GN type ineq2} we obtain
 \begin{align}\label{est:C4v}
C^v_4\left( q \right) \lesssim  b_q ( t) 2^{-2qs} \left\|  U_{\osc} \right\|_{\cPLtwo}^{1/2} \left\| \nh  U_{\osc} \right\|_{\cPLtwo}^{1/2} \left\|  U_{\osc} \right\|_{\cPHs}^{1/2} \left\| \nh  U_{\osc} \right\|_{\cPHs}^{3/2}.
\end{align}
The estimates \eqref{est:C1h}--\eqref{est:C4v} prove hence \eqref{terza stima termine bilineare parte oscillante}.
\end{proof}
\subsection{The bilinear form $\mathcal{Q}$.}\label{bil form limit Section}
In this section we state some particular property of the quadratic limit form defined in \eqref{limit quadratic}. 
In particular we state a product rule which can be applied thanks to the particular structure of the resonance set $ \mathcal{K}^\star = \displaystyle\bigcup_{n\in\mathbb{Z}^3} \mathcal{K}^\star_n $, 
which is  a crucial feature in the energy estimates for the limit system.\\
The following property has been remarked at first by A. Babin et al. in \cite{BMNresonantdomains}, but was first explicitly proved by M. Paicu in \cite{paicu_rotating_fluids}. The proof is based on the fact that, fixed $ \left( k_h, n \right) $, the fiber $ \mathcal{J}\left(  k_h, n \right)= \left\{ k_3 : \left( k,n \right)\in \mathcal{K}^\star \right\} $ is of finite cardinality.
\begin{lemma}\label{product rule}
Let $a,b \in H^{1/2,0}\left(\mathbb{T}^3\right), c\in \cPLtwo$ vector fields of zero horizontal average on $\mathbb{T}^2_h$.  Then there exists a constant $C$ which depends only of $a_1/a_2$ such that
\begin{equation}\label{product rule equation}
\left| \sum_{(k,n)\in \mathcal{K}^\star } \hat{a}\left( k \right) \hat{b}\left( {n-k} \right) \hat{c}\left( n \right) \right| \leqslant \frac{C}{a_3} \left\| a\right\|_{H^{1/2,0}\left(\mathbb{T}^3\right)} \left\|b \right\|_{H^{1/2,0}\left(\mathbb{T}^3\right)} \left\|c\right\|_\cPLtwo
\end{equation}
\end{lemma}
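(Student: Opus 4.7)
The plan rests on a structural observation, already invoked in Section \ref{sec:conv_bilin_osc_existence}: once $k_h \in \mathbb{Z}^2$ and $n \in \mathbb{Z}^3$ are fixed, the resonance equation $\omega^a(k) + \omega^b(n-k) = \omega^c(n)$ rationalizes to a polynomial identity of degree at most eight in the single unknown $k_3$. Fujiwara's near-optimal bound (Proposition~\ref{Fujiwara_bound}) therefore yields that the vertical fiber
\[
\mathcal{J}(k_h, n) = \{k_3 \in \mathbb{Z} \, : \, (k, n) \in \mathcal{K}^\star\}
\]
has uniformly bounded cardinality $N_0$, with $N_0$ depending only on $F$ and on the ratio $a_1/a_2$. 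By the symmetry of the resonance relation under permutations of the triple $(k, n-k, n)$, an analogous bound holds for the dual fiber obtained by fixing $(k_h, n_h - k_h, k_3)$ and taking the free variable to be $m_3 = n_3 - k_3$.

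My first step would be Cauchy--Schwarz in the outer variable $n$, which pulls out $\|c\|_\cPLtwo$ and reduces the trilinear estimate to the bilinear bound
\[
\|T\|_{\ell^2_n} \leqslant \frac{C}{a_3}\,\|a\|_{H^{1/2,0}}\|b\|_{H^{1/2,0}}, \quad T_n = \sum_{k_h + m_h = n_h}\sum_{k_3 \in \mathcal{J}(k_h,n)} \hat a(k_h, k_3)\,\hat b(m_h, n_3 - k_3).
\]
Next, I would apply Cauchy--Schwarz on the resonant sum with a weight of the form $|\check k_h|^{1/2 + \delta}|\check m_h|^{1/2 + \delta}$ for a small $\delta > 0$, inserted so as to leave, on one side, a factor proportional to $|\check k_h||\check m_h||\hat a|^2|\hat b|^2$ reconstructing the $H^{1/2,0}$ norms, and on the other a weighted lattice sum $\sum_{k_h}|\check k_h|^{-(1+2\delta)}|\check m_h|^{-(1+2\delta)}$, which is finite and of order $|\check n_h|^{-4\delta}$ by a scaling argument on $\mathbb{Z}^2$, the universal bound $|\mathcal{J}| \leqslant N_0$ providing the $k_3$ summation for free. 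Squaring $T_n$ and summing in $n$, the order of summation is exchanged and the dual finite-fiber property in $m_3$ is used to control the inner sum $\sum_{n_3}\sum_{k_3 \in \mathcal{J}(k_h,n)}|\hat b(m_h, n_3 - k_3)|^2$ by a plain $\ell^2_{m_3}$ norm of $\hat b(m_h, \cdot)$; Plancherel on the vertical torus of circumference $2\pi a_3$ then produces the $a_3^{-1}$ factor appearing in the statement, while the remaining horizontal double sum is closed by the 2D Sobolev embedding $H^{1/2}(\mathbb{T}^2_h) \hookrightarrow L^4_h$ for horizontally mean-free functions, whose constant depends only on the aspect ratio $a_1/a_2$.

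The main obstacle, and the reason behind the symmetric use of the finite-fiber property, is that the bound on $T_n$ produced by the direct fiber argument is essentially independent of $n_3$, so a naive summation $\sum_{n_3}|T_n|^2$ would diverge. It is only the second, dual application of the bounded-degree polynomial estimate, now in the variable $m_3$ with $k_3$ fixed, that lets us integrate out the vertical frequency against the resonant constraint and close the bound with the correct $H^{1/2,0}$ norms on the right-hand side.
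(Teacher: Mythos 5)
Your overall skeleton --- duality in $n$, exploiting that the resonant fiber in $k_3$ is finite, and closing with a two--dimensional estimate based on $H^{1/2}\left(\T^2_h\right)\hra L^4_h$ --- is the same as the paper's, but the step that is supposed to carry the horizontal part of the bound does not close. After your Cauchy--Schwarz in the convolution variable with the weight $|\check k_h|^{1/2+\delta}|\check m_h|^{1/2+\delta}$, the factor that survives next to $|\hat a|^2|\hat b|^2$ is $|\check k_h|^{1+2\delta}|\check m_h|^{1+2\delta}$, not $|\check k_h|\,|\check m_h|$ as you assert, so the norms you reconstruct are of $H^{1/2+\delta,0}$ type. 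Concretely, once you bound the vertical sums by the unrestricted $\ell^2$ norms (note that no ``dual fiber'' property is needed there: restricting a sum of nonnegative terms only decreases it) and set $A_{k_h}^2=\sum_{k_3}|\hat a(k_h,k_3)|^2$, $B_{m_h}^2=\sum_{m_3}|\hat b(m_h,m_3)|^2$, your chain of inequalities reduces the lemma to
\begin{equation*}
\sum_{n_h}\sum_{k_h}|\check n_h|^{-4\delta}\,|\check k_h|^{1+2\delta}|\check m_h|^{1+2\delta}A_{k_h}^2B_{m_h}^2
\lesssim \Bigl(\sum_{k_h}\bigl(1+|\check k_h|^2\bigr)^{1/2}A_{k_h}^2\Bigr)\Bigl(\sum_{m_h}\bigl(1+|\check m_h|^2\bigr)^{1/2}B_{m_h}^2\Bigr),
\qquad m_h=n_h-k_h,
\end{equation*}
and this is false: take $A$ supported at a single frequency $k_h^0$ with $|k_h^0|=R$ and $B$ at $m_h^0=-k_h^0+(1,0)$; the left-hand side is of order $R^{2+4\delta}$ while the right-hand side is of order $R^2$. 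The gain $|\check n_h|^{-4\delta}$ sits on the output frequency and cannot compensate the excess $|\check k_h|^{2\delta}|\check m_h|^{2\delta}$ in the high--high-to-low regime where $k_h+m_h$ is small. This is the usual obstruction at the endpoint: the 2D product law $H^{1/2}\times H^{1/2}\to L^2$ cannot be obtained by a weighted Cauchy--Schwarz/Schur argument on the Fourier side, and by the time you want to invoke $H^{1/2}\left(\T^2_h\right)\hra L^4_h$ you no longer have a product of functions to apply it to, because the weighted Cauchy--Schwarz in $k_h$ has destroyed the horizontal convolution structure.

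The repair is precisely the paper's ordering of the steps: apply Cauchy--Schwarz \emph{only} over the finite fiber $\{k_3:(k,n)\in\mathcal K^\star\}$, with no horizontal weights (its cardinality is at most $8$ by the fundamental theorem of algebra after rationalizing the resonance relation --- not by Fujiwara's bound, which controls the \emph{size} of the roots rather than their number); then Cauchy--Schwarz in $n_3$ against $\hat c$ (or Minkowski after your duality step); then drop the resonance constraint to produce the vertical profiles $A_{k_h}$, $B_{m_h}$. At that point the remaining quantity is exactly the $L^2\left(\T^2_h\right)$ pairing of $\tilde a\tilde b$ with $\tilde c$, where $\tilde a=\mathcal F_h^{-1}(A)$, $\tilde b=\mathcal F_h^{-1}(B)$, $\tilde c=\mathcal F_h^{-1}(C)$, and the zero horizontal average together with the 2D product law (equivalently $H^{1/2}\hra L^4_h$) closes the estimate, the factor $a_3^{-1}$ and the dependence on $a_1/a_2$ coming from the rescaling to the unit torus. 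With that modification your argument coincides with the paper's proof; as written, the weighted horizontal step is a genuine gap.
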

The following proof can be found  \cite[Lemma 6.6, p. 150]{monographrotating} or \cite[Lemma 6.4, p. 222]{paicu_rotating_fluids}.

\begin{proof}
We shall give the proof on the torus $\left[0,2\pi\right)^3$, whence
\begin{align}
I_{\mathcal{K}^\star}= \left| \sum_{(k,n)\in \mathcal{K}^\star } \hat{a}_k \hat{b}_{n-k} \hat{c}_n \right|\leqslant & \sum_{\left( k_h,n\right)\in \mathbb{Z}^2\times \mathbb{Z}^3} \sum_{\left\lbrace k_3:(k,n)\in \mathcal{K}^\star\right\rbrace } \left| \hat{a}_k \hat{b}_{n-k} \hat{c}_n \right|\nonumber\\
\leqslant & \sum_{\left( k_h,n\right)\in \mathbb{Z}^2\times \mathbb{Z}^3} \left| \hat{c}_n\right| \sum_{\left\lbrace k_3:(k,n)\in \mathcal{K}^\star\right\rbrace } 
\left|\hat{a}_k\right|\left| \hat{b}_{n-k}\right|,\label{res ineq 1}
\end{align}
by Cauchy-Schwarz inequality
$$
\sum_{\left\lbrace k_3:(k,n)\in \mathcal{K}^\star\right\rbrace } 
\left|\hat{a}_k\right|\left| \hat{b}_{n-k}\right|\leqslant
\left( \sum_{\left\lbrace k_3:(k,n)\in \mathcal{K}^\star\right\rbrace } 
\left|\hat{a}_k\right|^2\left| \hat{b}_{n-k}\right|^2\right)^{1/2} 
\left( \sum_{\left\lbrace k_3:(k,n)\in \mathcal{K}^\star\right\rbrace }  1\right)^{1/2},
$$
now, fixing $\left( k_h,n\right)\in \mathbb{Z}^2\times \mathbb{Z}^3$ there exists only a finite number (8) or resonant modes $k_3$, i.e. $\# \left(\left\lbrace k_3:(k,n)\in \mathcal{K}^\star\right\rbrace\right) \leqslant 8$. Let us briefly explain why this is true. We write explicitly the resonant condition $ \omega^{+,+,+}_{k, n-k,n}=0 $ (the same procedure holds for the generic case $ \omega^{a,b,c}_{k, n-k,n}=0, a,b,c\neq 0 $), this reads as 
$$
\left(\frac{\left|F k_3\right|^2+\left|k_h\right|^2}{\left|k_3\right|^2+\left|k_h\right|^2}\right)^{1/2}
+\left(\frac{\left( F\left|n_3-k_3\right| \right)^2+\left| n_h-k_h\right|^2}{\left| n_3-k_3\right|^2+\left|n_h-k_h\right|^2}\right)^{1/2}
= \left(\frac{\left|F n_3\right|^2+\left(n_h\right|^2}{\left|n_3\right|^2+\left| n_h\right|^2}\right)^{1/2}.
$$
Taking squares several times on both sides of the above equation give us an expression which is free of square roots. Moreover putting everything to common factor and recalling that $ n,k_h $ are fixed we transformed the above equation in the form $ R\left( k_3 \right)=0 $, $ R\in \R \left[x\right] $, hence thanks to fundamental theorem of algebra it has a finite number of roots.

 From this we deduce
$$
\sum_{\left\lbrace k_3:(k,n)\in \mathcal{K}^\star\right\rbrace } 
\left|\hat{a}_k\right|\left| \hat{b}_{n-k}\right|\leqslant 
\sqrt{8}\left( \sum_{\left\lbrace k_3:(k,n)\in \mathcal{K}^\star\right\rbrace } 
\left|\hat{a}_k\right|^2\left| \hat{b}_{n-k}\right|^2\right)^{1/2},
$$
which considered into inequality \eqref{res ineq 1} gives
$$
I_{\mathcal{K}^\star} \leqslant \sqrt{8}  \sum_{k_h,n_h} \sum_{n_3 } \left| \hat{c}_n\right| \left( \sum_{k_3 } 
\left|\hat{a}_k\right|^2\left| \hat{b}_{n-k}\right|^2\right)^{1/2}.
$$
Moreover
$$
\sum_{n_3 } \left| \hat{c}_n\right| \left( \sum_{k_3 } 
\left|\hat{a}_k\right|^2\left| \hat{b}_{n-k}\right|^2\right)^{1/2}
\leqslant
\left( \sum_{n_3 } \left| \hat{c}_n\right|^2 \right)^{1/2} \left(\sum_{n_3, k_3 } 
\left|\hat{a}_k\right|^2\left| \hat{b}_{n-k}\right|^2\right)^{1/2},
$$
and hence
\begin{equation}\label{res ineq 2}
I_{\mathcal{K}^\star} \leqslant \sqrt{8} \sum_{\left( k_h,n\right)\in \mathbb{Z}^2\times \mathbb{Z}^3} 
\left( \sum_{n_3 } \left| \hat{c}_n\right|^2 \right)^{1/2} \left( \sum_{p_3}\left| \hat{b}_{n_h-k_h,p_3}\right|^2\right)^{1/2} \left( \sum_{k_3} \left| \hat{a}_k\right|^2\right)^{1/2}.
\end{equation}
Let us denote at this point
\begin{align*}
\tilde{a}_{n_h}= &  \left( \sum_{n_3} \left| \hat{a}_n\right|^2\right)^{1/2}, & \tilde{b}_{n_h}= &  \left( \sum_{n_3} \left| \hat{b}_n\right|^2\right)^{1/2}, & \tilde{c}_{n_h}= &  \left( \sum_{n_3} \left| \hat{c}_n\right|^2\right)^{1/2},
\end{align*}
and the following distributions
\begin{align*}
\tilde{a}\left( x_h\right)= & \mathcal{F}_h^{-1}\left(\tilde{a}_{n_h} \right) &
\tilde{b}\left( x_h\right)= & \mathcal{F}_h^{-1}\left(\tilde{b}_{n_h} \right) &
\tilde{c}\left( x_h\right)= & \mathcal{F}_h^{-1}\left(\tilde{c}_{n_h} \right). &
\end{align*}
Whence the inequality \eqref{res ineq 2} can be read, applying Plancherel theorem and the product rules for Sobolev spaces, as
\begin{align*}
I_{\mathcal{K}^\star}\leqslant & \left( \left. \tilde{a}\tilde{b}\right| \tilde{c}\right)_{L^2\left(\mathbb{T}^2_h\right)}\\
\leqslant & \left\| \tilde{a}\tilde{b} \right\|_{L^2\left(\mathbb{T}^2_h\right)}\left\|\tilde{c}\right\|_{L^2\left(\mathbb{T}^2_h\right)}\\
\leqslant & \left\| \tilde{a}\right\|_{H^{1/2}\left(\mathbb{T}^2_h\right)}\left\|\tilde{b} \right\|_{H^{1/2}\left(\mathbb{T}^2_h\right)}\left\|\tilde{c}\right\|_{L^2\left(\mathbb{T}^2_h\right)}\\
=& \left\| a\right\|_{H^{1/2,0}\left(\mathbb{T}^3\right)} \left\|b \right\|_{H^{1/2,0}\left(\mathbb{T}^3\right)} \left\|c\right\|_\cPLtwo.
\end{align*}

To lift this argument to a generic torus $\prod_{i=1}^3 \left[ 0, 2\pi a_i\right)$ it suffice to use the transform 
$$
\tilde{v}\left( x_1, x_2, x_3\right) = v\left( a_1x_1, a_2 x_2, a_3 x_3\right),
$$
and the identity
$$
\left\| \tilde{v}\right\|_{L^2\left( \left[ 0,2\pi\right)^3\right)}= \left( a_1a_2a_3\right)^{-1/2} \left\| v\right\|_{L^2\left( \prod_{i=1}^3\left[ 0, 2\pi a_i\right)\right)}.
$$
\end{proof}

\begin{footnotesize}
\providecommand{\bysame}{\leavevmode\hbox to3em{\hrulefill}\thinspace}
\providecommand{\MR}{\relax\ifhmode\unskip\space\fi MR }
\providecommand{\MRhref}[2]{%
  \href{http://www.ams.org/mathscinet-getitem?mr=#1}{#2}
}
\providecommand{\href}[2]{#2}

\end{footnotesize}

\end{document}